\documentclass[a4paper,leqno,12pt]{amsart}
\usepackage{csquotes}
\usepackage{amsmath,amsthm}
\usepackage{amssymb}
\usepackage{xspace}
\usepackage{bm}
\usepackage{amstext}
\usepackage{comment}
\usepackage{amsfonts}
\usepackage{graphicx}
\usepackage[mathscr]{euscript}
\usepackage{amscd}
\usepackage{latexsym}
\usepackage{amssymb}
\usepackage{enumerate}
\usepackage{mathrsfs}
\usepackage[cmtip,all]{xy}
\usepackage{xcolor}
\usepackage{standalone}
\usepackage{tikz}
\usetikzlibrary{arrows.meta}
\usepackage{tikz-cd}
\usetikzlibrary{intersections}
\usetikzlibrary{calc}
\usetikzlibrary{decorations.markings}
\usetikzlibrary{arrows}
\usetikzlibrary{shapes,arrows,shapes.multipart}

\usepackage{xcolor}

\theoremstyle{plain}
\swapnumbers
    \newtheorem{thm}[figure]{Theorem}
    \newtheorem{prop}[figure]{Proposition}
    \newtheorem{lemma}[figure]{Lemma}
    
    \newtheorem{corollary}[figure]{Corollary}
        
    \newtheorem{subsec}[figure]{}
    
    \newtheorem*{thma}{Theorem A}
    
    \newtheorem*{thmc}{Theorem C}
    \newtheorem*{thmd}{Theorem D}
        \newtheorem*{thme}{Theorem E}

\theoremstyle{definition}

\theoremstyle{remark}
        \newtheorem{remark}[figure]{Remark}
        
        \newtheorem{example}[figure]{Example}

    \newtheorem{notation}[figure]{Notation}
\renewcommand{\thefigure}{\arabic{section}.\arabic{figure}}
\newenvironment{mysubsect}[2][]
{\begin{subsec}\begin{upshape}\begin{bfseries}{#2\vsn.}
\end{bfseries}{#1}}
{\end{upshape}\end{subsec}}
\newcommand{\sect}{\setcounter{figure}{0}\section}
\newcommand{\cU}{\mathcal{U}}

\newcommand{\vsn}{\vspace{2 mm}}

\newenvironment{myeq}[1][]
{\stepcounter{figure}\begin{equation}\tag{\thefigure}{#1}}
{\end{equation}}
\newcommand{\myodiag}[2][]
{\stepcounter{figure}\begin{equation}
     \tag{\thefigure}{#1}\vcenter{\xymatrix@R=10pt@C=1pt{#2}}\end{equation}}
\newcommand{\mypdiag}[2][]
{\stepcounter{figure}\begin{equation}
     \tag{\thefigure}{#1}\vcenter{\xymatrix@R=-1pt@C=5pt{#2}}\end{equation}}
\newcommand{\myqdiag}[2][]
{\stepcounter{figure}\begin{equation}
     \tag{\thefigure}{#1}\vcenter{\xymatrix@R=5pt@C=10pt{#2}}\end{equation}}
\newcommand{\myrdiag}[2][]
{\stepcounter{figure}\begin{equation}
     \tag{\thefigure}{#1}\vcenter{\xymatrix@R=3pt@C=25pt{#2}}\end{equation}}
\newcommand{\mysdiag}[2][]
{\stepcounter{figure}\begin{equation}
     \tag{\thefigure}{#1}\vcenter{\xymatrix@R=0pt@C=20pt{#2}}\end{equation}}
\newcommand{\mytdiag}[2][]
{\stepcounter{figure}\begin{equation}
     \tag{\thefigure}{#1}\vcenter{\xymatrix@R=0pt@C=-27pt{#2}}\end{equation}}
\newcommand{\myudiag}[2][]
{\stepcounter{figure}\begin{equation}
     \tag{\thefigure}{#1}\vcenter{\xymatrix@R=9pt@C=20pt{#2}}\end{equation}}
\newcommand{\myvdiag}[2][]
{\stepcounter{figure}\begin{equation}
     \tag{\thefigure}{#1}\vcenter{\xymatrix@R=16pt@C=36pt{#2}}\end{equation}}
\newcommand{\mywdiag}[2][]
{\stepcounter{figure}\begin{equation}
     \tag{\thefigure}{#1}\vcenter{\xymatrix@R=15pt@C=26pt{#2}}\end{equation}}
\newcommand{\myxdiag}[2][]
{\stepcounter{figure}\begin{equation}
     \tag{\thefigure}{#1}\vcenter{\xymatrix@R=10pt@C=2pt{#2}}\end{equation}}
\newcommand{\myydiag}[2][]
{\stepcounter{figure}\begin{equation}
     \tag{\thefigure}{#1}\vcenter{\xymatrix@R=10pt@C=12pt{#2}}\end{equation}}
\newcommand{\RO}{\operatorname{RO}}
\newcommand{\uM}{\underline{M}}
\newcommand{\uH}{\underline{H}}

\newcommand{\uC}{\underline{C}}
\newcommand{\C}{\mathbb{C}}
\newcommand{\Fun}{\operatorname{Fun}}

\newcommand{\tr}{\operatorname{tr}}

\newcommand{\ho}{\operatorname{ho}}
\newcommand{\Hom}{\operatorname{Hom}}

\newcommand{\Id}{\operatorname{Id}}
\newcommand{\Image}{\operatorname{Im}}

\newcommand{\Ker}{\operatorname{Ker}}
\newcommand{\Coker}{\operatorname{Coker}}

\newcommand{\bE}{{E_{C_2}\Zz}}

\newcommand{\res}{\operatorname{res}}
\newcommand{\Z}{\mathbb{Z}}

    \newtheorem*{corb}{Corollary B}

\newcommand{\cP}{\mathcal{P}}
\newcommand{\cF}{\mathcal{F}}
\newcommand{\cG}{\mathcal{G}}
\newcommand{\tH}{\widetilde{H}}
\newcommand{\tHG}{\tH^{\bigstar}_{\cG}}

\newcommand{\cFh}{\cF \langle H \rangle}
\newcommand{\cFH}[1]{\cF \langle H\sb{#1} \rangle}

\newcommand{\uZ}{\underline{\Z}}

\newcommand{\bMp}{\mathbb{M}_{p}(\beta)}
\newcommand{\Zp}{\mathbb{Z}/p}
\newcommand{\uFp}{\underline{\Fp}}

\newcommand{\Fp}{\mathbb{F}_p}
\newcommand{\uZp}{\underline{\Zp}}

\newcommand{\F}{\mathbb{F}_{2}}
\newcommand{\uF}{\underline{\F}}
\newcommand{\Zz}{\mathbb{Z}/{2}}

\newcommand{\bM}{\mathbb{M}}
\newcommand{\K}{\mathcal{K}_4}
\newcommand{\bs}{\bigstar}
\newcommand{\cA}{\mathcal{A}}

\newcommand{\Sp}{\mathrm{Sp}}

\begin{document}

\title{ $\RO(C_p \times C_p)$-graded cohomology of universal spaces and the coefficient ring}
%

%
%

\author{Surojit Ghosh}
\address{Department of Mathematics, Indian Institute of Technology, Roorkee, Uttarakhand-247667, India}
\email{surojit.ghosh@ma.iitr.ac.in; surojitghosh89@gmail.com}

\author{Ankit Kumar}
\address{Department of Mathematics, Indian Institute of Technology, Roorkee, Uttarakhand-247667, India}
\email{ankit\_k@ma.iitr.ac.in}

\date{\today}
\subjclass{55N91, 55P91 (primary); 57S17, 55Q91 (secondary)}
\keywords{equivariant homotopy theory, Universal spaces, Mackey functor}
%
%
%

\begin{abstract}

We compute the $\RO(C_p \times C_p)$-graded Bredon cohomology of equivariant universal and classifying spaces associated to families of subgroups, with coefficients in the constant Mackey functor $\underline{\mathbb{F}_p}$. An explicit description of the resulting coefficient ring, including its multiplicative structure, is obtained. These computations are then applied to the study of lifts of cohomology operations via the Bredon cohomology of equivariant complex projective spaces.

\end{abstract}

\maketitle

\setcounter{section}{-1}
\sect{Introduction}

Equivariant stable homotopy theory studies spaces and spectra equipped with a group action, with the goal of understanding the associated equivariant (co)homology theories. Unlike the classical non-equivariant case, equivariant cohomology is naturally graded by the real representation ring $\RO(G)$ of a finite group $G$. Its computation reveals new algebraic structures arising from fixed points, transfers, and norms. Explicit descriptions of $\RO(G)$-graded Bredon cohomology remain scarce, and their determination forms a central problem in the field.  

A natural point of departure is the computation of the coefficient ring, i.e.\ the Bredon cohomology of a point with constant Mackey functor coefficients. For $G=C_p$, these were established in the classical work of Stong and Lewis~\cite{Sto86}. Subsequent advances include the additive structure of equivariant cohomology for $G=C_{pq}$ with both $\underline{A}$ and $\underline{\Z}$-coefficients~\cite{BG19}, as well as computations for cyclic groups of square-free order that identify large parts of the multiplicative structure~\cite{BG24}. For general cyclic groups, selected graded pieces of the cohomology ring were obtained in~\cite{BD24}. These works rely on tools such as the Tate square~\cite{GM95}, computations of geometric fixed points, Bredon cohomology techniques, and fixed point methods. Together, they point toward a general challenge: to extend knowledge of the coefficient ring beyond cyclic groups.

For the Klein four group $C_2 \times C_2$ (henceforth, we denote it by $\K$), the cohomology groups were computed in~\cite{HK17}, while the Mackey functor structure and ring structure of the positive cone of $\pi_{\bigstar}H\uF$ were determined in~\cite{EB20}. In this work, we carry this program forward by determining the multiplicative structure of the coefficient ring for $C_p \times C_p$ (for both $p=2$ and odd prime $p$) by using the Tate square. Our method relies on a careful analysis of the Bredon cohomology of the universal spaces entering the Tate square. Beyond their immediate computational role, these universal spaces are of independent interest, as they control the structure of power operations in equivariant homotopy theory.  

\medskip

This perspective connects directly to power operations on equivariant $E_\infty$-ring spectra. For $N_\infty$-operads, the group of power operations is governed by the $E$-cohomology of equivariant classifying spaces of symmetric groups. In particular, explicit computations of power operations for $H\uFp$ and $MU^G$ with $G$ a finite Abelian group play a central role. As a first step, one is led to compute the $\RO(C_p \times C_p)$-cohomology of certain graph complexes for both even and odd primes, which in turn determine the $C_p$-equivariant cohomology of $B_{C_p}\Zp$, the $C_p$-equivariant classifying space of $C_p$, providing a foundation for the study of power operations in equivariant homotopy theory.  

In this direction, our first main theorem identifies the cohomology of a universal space:

\begin{thma}
\begin{enumerate}
    \item For an odd prime $p$, the polynomial part of the $\RO(\cG)$–graded Bredon cohomology of $E_{C_p}\Zp$ is 
\[
\mathrm{P}(\widetilde{H}^{\bigstar}_{\cG}(E_{C_p}\Zp_{+};\uFp))=\dfrac{
\Fp\big[
a_{\beta},u_{\beta},\kappa_{\beta},
a_{\alpha_\ell},\kappa_{\alpha_\ell},
v_{S},z_{Q},w_{Q},v_{p-1}^{\pm}
\big]
}{\mathfrak{J}},
\]
where $0\le \ell\le p-1$, and where $S$ and $Q$ range over the subsets of 
$[p-1]=\{0,1,\ldots,p-1\}$ of having cardinality at least one and at least three, respectively.  
Here $\mathfrak{J}$ denotes the ideal generated by the relations listed in Theorem~\ref{main}.

\item For $p=2$, there is an isomorphism of graded rings
\[
\widetilde{H}^{\bigstar}_{\cG}(\bE_+; \uF)\;\cong\;
\frac{
    \Bigl[
        \F[a_\beta, u_\beta]
        \;\oplus \bigoplus_{j,k \geq 1}\;
        \F\bigl\langle \Sigma^{-1}\frac{1}{a_\beta^{j}u_\beta^{k}}  \bigr\rangle
    \Bigr]
    \,[\,a_{\alpha_0}, u_{\alpha_0}, a_{\alpha_1}, u_{\alpha_1}, v^{\pm}\,]
}{
    \left( 
        v\,u_\beta - u_{\alpha_0} u_{\alpha_1},\;
        v\,a_\beta - (a_{\alpha_0}u_{\alpha_1} + u_{\alpha_0}a_{\alpha_1})
    \right)
}
\]
where $v$ is the invertible class in degree 
\(
\alpha_0 + \alpha_1 - \beta - 1,
\)
as constructed in Proposition~\ref{inv}. This appears as a result in  \cite{DV25}. 

\end{enumerate}

\end{thma}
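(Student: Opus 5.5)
The plan is to identify $E_{C_p}\Zp$ as a $\cG=C_p\times C_p$-space and then pass to a smaller group. Write $\cG=C_p\times C_p$. Here $E_{C_p}\Zp$ is the universal space for the family of subgroups $H\le\cG$ with $H\cap(1\times C_p)=1$, viewed as a $\cG$-space. That family consists of the trivial subgroup together with the $p$ "graph" subgroups $\Delta_\ell=\{(g,g^{\ell})\}$ for $0\le \ell\le p-1$.

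\textbf{Step 1: reduction to a free orbit space.} The diagonal-type subgroup $K=1\times C_p$ acts freely on $E_{C_p}\Zp$. Because of this, cohomology with $\uFp$ coefficients in degrees pulled back along the characters $\alpha_\ell$ can be described through the $C_p=\cG/K$-space $E_{C_p}\Zp/K \simeq B_{C_p}\Zp$. The $\cG$-space itself is filtered by the cellular model $S(\infty\lambda)$ built from the representation
\[
\bigoplus_{\ell}\lambda_{\alpha_\ell},
\]
the sum of the complex one-dimensional representations on which $K$ acts nontrivially. This makes the cohomology a colimit, or more precisely a Milnor sequence, of the cohomologies of the unit spheres $S(V)_+$.

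\textbf{Step 2: cofibre sequences and Euler/orientation classes.} I would use the cofibre sequences $S(V)_+\to S^0\to S^V$. From these, the classes $a_{\alpha_\ell}$ (Euler classes) and $u_{\alpha_\ell}$ or $\kappa_{\alpha_\ell}$ (orientation classes) arise from the coefficient ring $H^{\bigstar}_{C_p}$. The classes $a_\beta,u_\beta,\kappa_\beta$ are pulled back from the quotient $\cG\to\cG/K$.

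The key new input is Proposition~\ref{inv}. Since every isotropy group lies in the family, restricting to each $\Delta_\ell$ identifies the representations $\alpha_0+\alpha_1$ and $\beta+1$ (respectively, their $p$-odd analogues). Hence there is a Thom-type class $v$, or $v_{p-1}$, which is a unit.

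Multiplying by this unit and by the $u$'s shows that the cohomology is periodic. The whole $\RO(\cG)$-grading therefore reduces to the grading $\Z\oplus\Z\beta$ together with the free $\alpha$-directions. This is what produces the presentation as a module over $H^{\bigstar}_{C_p}(pt)$ with the listed generators. For $p=2$, that module is the coefficient ring $\F[a_\beta,u_\beta]\oplus\bigoplus\F\langle\Sigma^{-1}/a^ju^k\rangle$.

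\textbf{Step 3: relations.} The relations $v u_\beta=u_{\alpha_0}u_{\alpha_1}$ and $v a_\beta=a_{\alpha_0}u_{\alpha_1}+u_{\alpha_0}a_{\alpha_1}$ come from two facts: the multiplicativity of orientation classes, and the Cartan-type formula for the Euler class of a sum. Both hold after restriction to each subgroup in the family. I would check that they hold by restricting to $\Delta_0$, $\Delta_1$ and $e$, using that restriction to the family is jointly injective for a universal space of that family.

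For odd $p$ the relations in $\mathfrak{J}$ are analogous. The classes $v_S,z_Q,w_Q$ are products and normalised quotients among the $p$ characters $\alpha_\ell$, and I would obtain them by iterating Proposition~\ref{inv} over subsets $S\subseteq[p-1]$.

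\textbf{Step 4: the main obstacle.} The hardest part will be proving that there are no further relations and no hidden extensions. I would attack this with a rank count. First, compute the additive cohomology in each $\RO(\cG)$-degree via the Atiyah--Hirzebruch or cellular spectral sequence of the filtration by $S(n\lambda)$. This collapses because the $E_1$-term is concentrated appropriately with $\uFp$ coefficients. Then compare these dimensions with the Hilbert series of the proposed quotient ring.

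For odd $p$ the bookkeeping with $v_S,z_Q,w_Q$ is intricate, and there I would only claim the polynomial part, as the statement does. The nilpotent/torsion ($\Sigma^{-1}$) summand is not fully presented in that case.
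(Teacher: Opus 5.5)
Your Step~3 uses a false detection principle. Restriction from $\widetilde{H}^{\bigstar}_{\K}(\bE_+;\uF)$ to the coefficient rings of the subgroups $e,H_0,H_1$ of the family (your $\Delta_0,\Delta_1$) is \emph{not} jointly injective. Since $\alpha_i$ is trivial on $H_i$, the class $a_{\alpha_i}$ restricts to $0$ there. Hence $a_{\alpha_0}a_{\alpha_1}$ restricts to zero on $e$, $H_0$ and $H_1$. Yet it is nonzero in the ring of part~(2), and its image in $\widetilde{H}^{\bigstar}_{\K}({E\K}_+;\uF)$ is already nonzero. Subgroup restriction discards exactly the Borel (group-cohomology) information that the $a$- and $\kappa$-classes carry on a universal space. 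So it can neither establish $v a_\beta=a_{\alpha_0}u_{\alpha_1}+u_{\alpha_0}a_{\alpha_1}$ or the relations in $\mathfrak{J}$, nor rule out further relations. For odd $p$ the paper instead detects relations through the map to $\widetilde{H}^{\bigstar}_{\cG}(E\cG_+;\uFp)$, which is injective on the polynomial part. There $a_{\alpha_\ell}\mapsto(y_0-\ell y_1)u_{\alpha_\ell}$ and $\kappa_{\alpha_\ell}\mapsto(t_0-\ell t_1)u_{\alpha_\ell}$. For $p=2$, the relations of Proposition~\ref{inv} are read off via $j_1^{\bigstar}$ in degrees where it is an isomorphism.

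More fundamentally, you have no mechanism that produces the generators or bounds the answer. In the paper this mechanism is the ladder \eqref{pullp} (resp.\ \eqref{pull2}), built from the tower of families and Lemma~\ref{pbfamily}. A class on $E\cF_{i-1}$ lifts to $E\cF_i$ exactly when its image in $\widetilde{H}^{\bigstar}_{\cG}(E\cG_+;\uFp)$ lies in $\Image q_i$.

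That criterion is how $v$ arises, as a lift of $u_{\alpha_0}u_{\alpha_1-\beta}$. Your observation that $\alpha_0+\alpha_1$ and $\beta+1$ agree on every isotropy group is necessary, but it does not by itself produce a class. You still have to show that the local classes over $E\cF(H_0)$ and $E\cF(H_1)$ match over ${E\K}$, and that is precisely the kernel computation.

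The same criterion is how $w_Q$ and $z_Q$ arise. They are lifts of $\kappa_{\alpha_{s_0}}\kappa_{\alpha_{s_1}}\prod u_{\alpha_l}/u_\beta$ and $(\kappa_{\alpha_{s_0}}a_{\alpha_{s_1}}-\kappa_{\alpha_{s_1}}a_{\alpha_{s_0}})\prod u_{\alpha_l}/u_\beta$. These are not products or quotients of Thom classes, so iterating Proposition~\ref{inv} cannot give them.

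Your Step~4 replaces all of this with a rank count whose key input is only asserted: the collapse of the cellular spectral sequence of $S(nV)$. The $\cG/H_\ell$-cells contribute shifted copies of the entire $C_p$ coefficient ring, in both cones. The differentials carry genuine attaching data; compare the nonzero connecting map in Proposition~\ref{integerlevel}. No Hilbert series is actually computed.

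Finally, the $u_{\alpha_\ell}$ are not units, since each restricts to a non-invertible Thom class on $H_j$ for $j\neq\ell$. So they give no periodicity, and the grading does not reduce to $\Z\{1,\beta\}$; only $v$ (resp.\ $v_{p-1}$) is invertible. Also, the degrees pulled back from $\cG/K$ are the $\beta$-degrees, not the $\alpha_\ell$-degrees.
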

See Theorem~\ref{main} and Theorem~\ref{main2}.  
\medskip

Along the way, we also obtain the following corollary, computing the $\RO(C_p)$-graded Bredon cohomology of the classifying space $B_{C_p}\Zp$:  

\begin{corb}\begin{enumerate}
 \item For an odd $p$, the polynomial part of the $\RO(C_p)$--graded Bredon cohomology of \( B_{C_p}\Zp \) is given by
\[
\mathrm{P}(\widetilde{H}^{\bigstar_{\beta}}_{C_p}(B_{C_p}\Zp_{+};\uFp))
\;=\;
\Fp[a_{\beta},u_{\beta},\kappa_{\beta},\nu,c,b,\eta,\xi,\omega]/\sim,
\]
modulo the ideal generated by the classes mentioned in Proposition~\ref{poly-BCpZp}. 

    \item There is an isomorphism:
\[
\widetilde{H}^{\bigstar_{\beta}}_{C_2}(B_{C_2} \Z/2_+; \uF) \;\cong\;
\frac{
    \left[ \F[a_\beta, u_\beta] 
    \oplus \bigoplus_{j,k \geq 1}
   \F \langle{\Sigma^{-1}\tfrac{1}{a^j_{\beta}u^k_{\beta}}}\rangle
    \right][c, b]
}{
    (c^2 = a_\beta c + u_\beta b)
}.
\]
\end{enumerate}

\end{corb}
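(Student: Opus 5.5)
The plan is to deduce Corollary B from Theorem A via the standard identification $B_{C_p}\Zp \simeq E_{C_p}\Zp/(1\times \Zp)$. Write $\cG = C_p\times C_p$, where the first factor is the acting group $C_p$ and the second factor $N = 1\times\Zp$ is the structure group. The space $\bE$ is a $\cG$-space on which $N$ acts freely, and its $N$-orbit space is $B_{C_p}\Zp$ with the residual $C_p$-action. For a free $N$-space $X$ and a $\cG$-representation $V$ pulled back from $\cG/N\cong C_p$, we have an isomorphism
\[
\widetilde H^{V}_{\cG}(X_+;\uFp)\cong \widetilde H^{V}_{C_p}((X/N)_+;\uFp).
\]
This holds because the constant Mackey functor restricts correctly along the quotient for $N$-free cells: one checks it cell by cell on $\cG/K$ with $K\cap N=1$, where $K\cong \Delta$-type subgroups map isomorphically to their images in $C_p$. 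Hence $\widetilde H^{\bigstar_\beta}_{C_p}(B_{C_p}\Zp_+;\uFp)$ is the sub-ring of $\tHG(\bE_+;\uFp)$ consisting of gradings in the image of $\RO(C_p)\to \RO(\cG)$, pulled back along the first projection. I take $\beta$ to be this pulled-back representation.

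First, I would identify which $\RO(\cG)$-degrees are pullbacks. These are the $\Z$-span of $1$ and $\beta$; the $\alpha_\ell$, which are nontrivial on $N$, are not pullbacks. Next, I would use the invertible class from Proposition~\ref{inv}, $v$ in degree $\alpha_0+\alpha_1-\beta-1$ (respectively $v_{p-1}$ for odd $p$), to move monomials in the $\alpha$-classes into the $\beta$-graded part. For $p=2$, a monomial $a_{\alpha_0}^{i}u_{\alpha_0}^{j}a_{\alpha_1}^{k}u_{\alpha_1}^{l}v^m$ has degree in $\Z\{1,\beta\}$ exactly when the $\alpha_0$- and $\alpha_1$-weights cancel against $v^m$. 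This forces $i+j=k+l=-m$.

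The degree-$\beta$-part is then generated over $\F[a_\beta,u_\beta]\oplus(\text{torsion})$ by the degree-$\beta$ classes
\[
c := v^{-1}a_{\alpha_0}u_{\alpha_1}\ \ (\text{degree }1+\beta-2\alpha_0\ldots),
\]
adjusted suitably. Concretely, I would set $c=v^{-1}u_{\alpha_0}a_{\alpha_1}$ and $b=v^{-2}\,u_{\alpha_0}a_{\alpha_0}\,u_{\alpha_1}a_{\alpha_1}\cdot(\ldots)$, and check the degrees. More cleanly, I would take the classes $x_i=v^{-1}a_{\alpha_i}u_{\alpha_{1-i}}$ and $y=v^{-2}a_{\alpha_0}a_{\alpha_1}u_{\alpha_0}u_{\alpha_1}$, etc. The relation $va_\beta=a_{\alpha_0}u_{\alpha_1}+u_{\alpha_0}a_{\alpha_1}$ gives $x_0+x_1=a_\beta$. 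The relation $vu_\beta=u_{\alpha_0}u_{\alpha_1}$ then gives $x_0x_1=u_\beta b$ with $b=v^{-1}a_{\alpha_0}a_{\alpha_1}$. Setting $c=x_0$, these combine to
\[
c^2=c(a_\beta+x_1)=a_\beta c+u_\beta b,
\]
which is exactly the stated relation. I would then show that every invariant monomial is a product of $c,b,a_\beta,u_\beta$ by rewriting $v^{-1}u_{\alpha_0}u_{\alpha_1}=u_\beta$, and that there are no further relations, by comparing bases: Theorem A exhibits a free basis over the $\beta$-part.

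For odd $p$, the same procedure applies to Theorem~\ref{main}, using $v_{p-1}^{\pm}$ and the classes $v_S, z_Q, w_Q$. The invariant combinations produce the generators $\nu, c, b, \eta, \xi, \omega$ of Proposition~\ref{poly-BCpZp}, and the relations in $\mathfrak J$ restrict to the listed ideal.

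The main obstacle is the bookkeeping in the odd case. The degree-cancellation condition involves $p$ different $\alpha_\ell$. Proving that the stated generators span the $\beta$-graded piece, and that $\mathfrak J$ intersected with that piece is generated by the listed relations, requires a careful normal-form argument. I would first try to choose a monomial basis of $\mathrm{P}(\tHG)$ from Theorem~\ref{main} and read off the pullback-degree ones directly.
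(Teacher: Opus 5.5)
For $p=2$ your route is the paper's: identify $\widetilde{H}^{\bigstar_\beta}_{C_2}(B_{C_2}\Zz_+;\uF)$ with the $\Z\{1,\beta\}$-graded part of $\widetilde{H}^{\bigstar}_{\K}(\bE_+;\uF)$ via Proposition~\ref{res.}, then form $v^{-1}$-ratios. Your $c=v^{-1}a_{\alpha_0}u_{\alpha_1}$ is the paper's $v^{-1}u_{\alpha_0}a_{\alpha_1}$ plus $a_\beta$, and it satisfies the same relation. Its degree is $\beta$, not the tentative value you first wrote.

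The paper only verifies $c^2=a_\beta c+u_\beta b$; your pairing argument goes further. Eliminating $u_\beta=v^{-1}u_{\alpha_0}u_{\alpha_1}$ and $a_\beta=v^{-1}(a_{\alpha_0}u_{\alpha_1}+u_{\alpha_0}a_{\alpha_1})$, the polynomial part of Theorem~\ref{main2} becomes $\F[a_{\alpha_0},u_{\alpha_0},a_{\alpha_1},u_{\alpha_1},v^{\pm}]$. Its $\Z\{1,\beta\}$-graded part is the Segre product $\F[x_0,x_1,b,u_\beta]/(x_0x_1-u_\beta b)$. The relation $x_0+x_1=a_\beta$ then converts this into the stated presentation. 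That ring is free over $\F[a_\beta,u_\beta]$ on $b^n$ and $cb^n$, so the torsion summand is handled the same way. This justifies both spanning and the absence of further relations, which the paper does not. Your restriction of the quotient isomorphism to $N$-free spaces is also the correct hypothesis; the paper states Proposition~\ref{res.} for arbitrary $X$.

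For odd $p$ there is a genuine gap, and the paper's proof shares it. The paper at least writes the generators down: $\nu=\kappa_{\alpha_0}v_{\{1,\dots,p-1\}}/v_{p-1}$, $c=a_{\alpha_0}v_{\{1,\dots,p-1\}}/v_{p-1}$, $b=a_{\alpha_0}\cdots a_{\alpha_{p-1}}/v_{p-1}$, $\omega=\kappa_{\alpha_0}a_{\alpha_1}\cdots a_{\alpha_{p-1}}/v_{p-1}$, $\xi=w_{[p-1]}/v_{p-1}$, $\eta=z_{[p-1]}/v_{p-1}$. It then asserts spanning without examining relations.

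Your final step, that $\mathfrak{J}$ restricts to the ideal listed in Proposition~\ref{poly-BCpZp}, fails as soon as you perform the check. Since $\mathfrak{J}$ is the kernel of the map to $\widetilde{H}^{\bigstar}_{\cG}(E\cG_+;\uFp)$, compute the images:
$a_\beta\mapsto y_1u_\beta$, $\kappa_\beta\mapsto t_1u_\beta$, $c\mapsto y_0u_\beta$, $\nu\mapsto t_0u_\beta$, $\eta\mapsto (t_1y_0-t_0y_1)u_\beta$,
$b\mapsto u_\beta^{p-1}\prod_{j=0}^{p-1}(y_0-jy_1)=u_\beta^{p-1}(y_0^p-y_1^{p-1}y_0)$, and $\omega\mapsto t_0u_\beta^{p-1}\prod_{j=1}^{p-1}(y_0-jy_1)$.

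Two consequences follow.
\begin{enumerate}
\item The first relation must be $c^p-a_\beta^{p-1}c-u_\beta b$. The printed $a_\beta^{p-1}c^{p-1}$ has degree $2(p-1)\beta\neq p\beta$; compare Proposition~\ref{FullBC_p}.
\item $\mathfrak{J}$ also contains $\nu\omega$, $c\omega-\nu b$ and $u_\beta\eta-\kappa_\beta c+a_\beta\nu$, and none of these lies in the listed ideal.
\end{enumerate}
So part (1) cannot be proved as printed. What is needed is an actual computation of this kernel, not bookkeeping that confirms the listed ideal. One way is a normal form for $\alpha$-degree-zero monomials in $a_{\alpha_\ell},\kappa_{\alpha_\ell},v_S,w_Q,z_Q,v_{p-1}^{\pm}$, analogous to your $p=2$ Segre argument.
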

See Proposition~\ref{poly-BCpZp} and Proposition~\ref{cohomologyofclassfying}. It is worth noting that part (2) of this is a classical result that appeared in \cite{HK01}.

Once the structure of $E_{C_p}\Zp$ is in hand, we incorporate this into the Tate square to determine the coefficient ring:

\begin{thmc}
\begin{enumerate}
    \item For an odd prime $p$, the polynomial part of the $\RO(\cG)$--graded Bredon cohomology of a point with coefficients in the 
constant Mackey functor $\uFp$ is given in Theorem~\ref{cohomologyofpointoddp}.
    \item For $p=2$, the $\RO(\K)$--graded Bredon cohomology of a point with coefficients in the 
constant Mackey functor $\uF$ is given explicitly in Theorem~\ref{cohomologyofpoint}.  
\end{enumerate}

\end{thmc}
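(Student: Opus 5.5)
The coefficient ring will come out of the Tate square for $\cG=C_p\times C_p$ associated to a family $\cF$ that contains the trivial subgroup and the proper subgroups relevant to $E_{C_p}\Zp$. The square is
\[
\xymatrix{ E\cF_+\wedge H\uFp \ar[r]\ar[d] & H\uFp \ar[d]\\ E\cF_+\wedge F(E\cF_+,H\uFp)\ar[r] & F(E\cF_+,H\uFp)}
\]
with a similar diagram after smashing with $\widetilde{E\cF}$. Its upper row sits in the cofibre sequence $E\cF_+\to S^0\to\widetilde{E\cF}$. The cohomology of the universal space, $\tHG(E\cF_+;\uFp)$, is the input from Theorem A (Theorems~\ref{main} and~\ref{main2}). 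Geometric fixed point arguments identify $\widetilde{E\cF}\wedge H\uFp$ with the localization of $H\uFp$ obtained by inverting the Euler classes $a_{\alpha_\ell}$ and $a_\beta$ of the fixed-point-free irreducibles. Its homotopy is then the homotopy of $\Phi^{\cG}H\uFp$, which is known explicitly. For $p=2$ this is a polynomial algebra over $\F$ on the classes $u_{\alpha}/a_\alpha$, localized at the $a$'s.

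I would carry out the steps in this order.

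\textbf{Step 1.} Compute the Borel cohomology $\pi_{\bigstar}F(E\cF_+,H\uFp)$ directly from Theorem A.

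\textbf{Step 2.} Compute $\pi_\bigstar(E\cF_+\wedge H\uFp)$ by equivariant Anderson/Spanier–Whitehead duality from the same computation. Concretely, shift by $-1$ the "negative cone" classes $\Sigma^{-1}\frac{1}{a^j u^k}$.

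\textbf{Step 3.} Compute the Tate term $\pi_\bigstar(\widetilde{E\cF}\wedge F(E\cF_+,H\uFp))$ by inverting the $a$-classes in Step 1.

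\textbf{Step 4.} Use the pullback property of the Tate square to express $\pi_\bigstar H\uFp$ as an extension. The part on which the map to the Tate term is injective gives the "positive cone", namely the subring generated by $a_\beta,u_\beta,a_{\alpha_\ell},u_{\alpha_\ell}$ and the $v$-type classes. The cokernel of the map from the geometric part to the Tate part, desuspended, gives the negative-cone and mixed torsion classes.

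\textbf{Step 5.} Determine the multiplicative structure. Ring maps in the square are multiplicative, so the positive-cone relations, such as $vu_\beta=u_{\alpha_0}u_{\alpha_1}$ and the $a$-relation, restrict from Theorem A. Products between the cone classes and the torsion classes follow from the module structure of the boundary map. For odd $p$, the relations in the ideal $\mathfrak J$ (involving $\kappa$, $z_Q$, $w_Q$) pass through the same way, producing the presentation claimed in Theorem~\ref{cohomologyofpointoddp}.

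I expect the main obstacle to be Step 4 together with Step 5's products involving torsion classes. Deciding exactly which Tate classes are hit requires tracking a $\Z^{p+2}$-family of gradings, and hidden multiplicative extensions may occur. My first approach is to work degree by degree, with degrees written as $\bigstar=n+\sum m_\ell\alpha_\ell+m\beta$. I would use the invertible classes $v$ (Proposition~\ref{inv}) to reduce to a fundamental domain with few $\alpha$-coordinates. In that domain I would compare the answer against the Klein four computations of \cite{HK17,EB20} for $p=2$ as a consistency check.
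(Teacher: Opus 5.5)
Your plan has a genuine gap at its first structural step: the Tate square you describe does not fit the input you want to feed it. Theorem A computes the cohomology of $E_{C_p}\Zp=E\cF_{gr}$, where $\cF_{gr}=\{e,H_0,\dots,H_{p-1}\}$. Since $K\notin\cF_{gr}$, one has $\widetilde{E\cF_{gr}}\simeq S^{\infty(\alpha_0\oplus\cdots\oplus\alpha_{p-1})}$, whose $K$-fixed points are $S^0$. So only the $a_{\alpha_\ell}$ become invertible; $a_\beta$ does not. Consequently $\pi_\bigstar(\widetilde{E\cF_{gr}}\wedge H\uFp)$ is not the homotopy of $\Phi^{\cG}H\uFp$ with Laurent $a$'s adjoined, because it still remembers the $K$-level. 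If you really do invert $a_\beta$ as well, your family is that of all proper subgroups. Then $\pi_\bigstar F(E\cF_+,H\uFp)$ is no longer given by Theorem A, and you would first have to compute it, e.g. by gluing $E_{C_p}\Zp$ and $E(\cG/K)$ along $E\cG$ via Lemma~\ref{pbfamily}; your plan does not do this. (Your description of the geometric term is also off: the relations of Proposition~\ref{inv} force $a_\beta/u_\beta=a_{\alpha_0}/u_{\alpha_0}+a_{\alpha_1}/u_{\alpha_1}$, so the classes $u_\alpha/a_\alpha$ are not independent.) In the paper, $\pi_\bigstar(\widetilde{E_{C_p}\Zp}\wedge H\uFp)$ is a genuine step, not a quotation. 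It is reduced to degrees $\bigstar_\beta\in\Z\{1,\beta\}$, where $\Ker\mathcal L_{\bigstar_\beta}=0$ and $f_{\bigstar_\beta}=0$ are proved (Propositions~\ref{kernel} and~\ref{fmap}, the former using the cohomology of $B_{C_p}\Zp$). This gives $[\pi_{\bigstar_\beta}H\uFp\oplus\Sigma\pi_{\bigstar_\beta-1}(E_{C_p}\Zp_+\wedge H\uFp)]\otimes\Fp[a^{\pm1}_{\alpha_r}]$. Your Step 2 is also unjustified. Anderson duality relates $\pi_\bigstar(E\cF_+\wedge H\uFp)$ to the $\Fp$-dual of cohomology of $E\cF$ with coefficients in $\uFp^{*}$; it does not produce a $(-1)$-shift of the negative cone. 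The paper instead reads this term off the left vertical equivalence and the bottom row: $\pi_\bigstar(E_{C_p}\Zp_+\wedge H\uFp)\cong\Ker\mathcal L_\bigstar\oplus\Sigma^{-1}\Coker\mathcal L_{\bigstar+1}$. This consists mostly of desuspended classes with some $a_{\alpha_r}$ in the denominator, so it must come after your Step 3.

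Even with the correct Tate term, Steps 4 and 5 miss the classes that actually make up the answer. The positive cone is not generated by the $a$'s, $u$'s and $v$. For $p=2$ there are also the classes $k_0,k_1$ of Proposition~\ref{k_0k_1}. For example, $k_0$ lifts $k_0'=\frac{a_{\alpha_1}}{a_{\alpha_0}}u_\beta+\frac{u_{\alpha_1}}{a_{\alpha_0}}a_\beta$, which has $a_{\alpha_0}$ in the denominator yet satisfies $\partial_\bigstar(k_0')=\Sigma^{-1}u_{\alpha_1}^2/v=0$. Every relation in Theorem~\ref{cohomologyofpoint} except (3) and (6) involves $k_0$ or $k_1$. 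For odd $p$ the analogues are $k^{(i)}_{S_i}$, $\phi^{(i)}_{T_i}$ and $\psi^{(i)}_{T_i}$ (Propositions~\ref{kclass} and~\ref{phiclass}). On the torsion side, the answer is not simply the cokernel desuspended. You need the $\Xi$-criterion of Proposition~\ref{xk}, which decides exactly which mixed classes $a^{-i}_{\alpha_1}(k_1')^nu^m_{\alpha_0}a^\ell_{\alpha_0}\Sigma^{-1}\frac{1}{a^j_\beta u^k_\beta}$ lie in $\Ker\partial_\bigstar$; when $b_s<i$ this happens precisely for $k\le 2n+b_s+m$. You also need to identify elements of $\Ker\mathcal L_\bigstar$, such as $\frac{u^2_{\alpha_1}}{v^2}\Sigma^{-1}\frac{1}{a_\beta u_\beta}$, with the new classes $\Sigma^{-1}\frac{1}{a^j_{\alpha_0}u^k_{\alpha_0}}$. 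You flag this as the main obstacle, but offer only degree-by-degree inspection and comparison with the known Klein four computations, which is a consistency check rather than a proof. Finally, saying the relations restrict from Theorem A needs an injectivity statement to show your list of relations is complete. The paper detects relations through the composite $\pi_\bigstar H\uFp\to\pi_\bigstar F(E_{C_p}\Zp_+,H\uFp)\to\pi_\bigstar F(E\cG_+,H\uFp)$ and argues that this composite is injective on the polynomial part.
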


\medskip

With the coefficient ring established, a natural next step is to analyze the module structure of Bredon cohomology. For a $G$-space $X$ and a Green functor $\uM$, the Bredon cohomology $\uH^\bs_G(X;\uM)$ carries a canonical module structure over $\uH^\bs_G(S^0;\uM)$. The freeness of this module is intimately tied to the $G$-cell structure of $X$. A rich body of work explores this theme: for $G=C_p$, freeness results for complex projective spaces were established in~\cite{Lew88}; the case $G=C_2$ has been treated extensively in~\cite{DHM24}; and for $G=C_n$ with $n$ square-free, freeness theorems with $\underline{A}$- and $\uZ$-coefficients are proved in~\cite{BG19,BG24}, with applications to the cohomology of projective spaces and Grassmannians. Further freeness theorems for finite $C_p$-spaces appear in~\cite{BG21}, and extensions to general cyclic groups in~\cite{BD24}. Additional developments include computations for connected sums of projective spaces~\cite{BDK22} and a description of the equivariant cohomology of infinite complex projective space~\cite{BDK}.  

In this context, our next main theorem describes the Bredon cohomology of smash powers of the equivariant infinite complex projective space, taken over a \emph{complete universe}~$\cU$:

\begin{thmd}
\begin{enumerate}
\item For an odd prime $p$ and $r\geq 2$, the additive structure of the Bredon cohomology of the $r$-fold smash power of the equivariant projective space is given by:
\begin{align*}
    \widetilde{H}^{\bigstar}_{\cG}\!\left({\cP(\cU)^{\wedge (r)}};\uFp\right)
        \;\cong\; 
        \bigoplus_{k=0}^{\infty}\bigoplus_{ i=0 }^{p-1}\bigoplus_{ j=0 }^{p-1}\widetilde{H}^{\bigstar-j\Sigma_{\ell=0}^{p-1}\alpha_{\ell}-i\beta-k\rho_{\C}}_{\cG}\left(\cP(\cU)^{\wedge (r-1)};\uFp\right),
\end{align*}
 and $r=1$ is given as
\begin{align*}
    \widetilde{H}^{\bigstar}_{\cG}\left(\cP(\cU);\uFp\right)= \bigoplus_{t=0}^{\infty} \bigoplus_{i=0}^{p-1}\bigoplus_{j=0}^{p-1}\widetilde{H}^{\bigstar-j\Sigma_{\ell=0}^{p-1}\alpha_{\ell}-i\beta-k\rho_{\C}}_{\cG}\left(S^0;\uFp\right) 
\end{align*}
with $(i,j,k)\neq (0,0,0)$.    
    \item For $p=2$ and $r\geq 2$, the additive structure of the Bredon cohomology of the $r$-fold smash power of the equivariant projective space is given by:
\begin{align*}
        \widetilde{H}^{\bigstar}_{\K}\!\left({\cP(\cU)^{\wedge (r)}};\uF\right)
        \;\cong\;
        \bigoplus_{i=1}^{  \infty}&
        \widetilde{H}^{\bigstar-i\rho_{\C}}_{\K}\!\left({\cP(\cU)^{\wedge (r-1)}};\uF\right) \\
        \bigoplus_{i=0}^{  \infty}&\Big(\widetilde{H}^{\bigstar-i\rho_{\C}-2\alpha_0}_{\K}\!\left({\cP(\cU)^{\wedge (r-1)}};\uF\right) \\
        &\oplus \widetilde{H}^{\bigstar-i\rho_{\C}-2\alpha_1-2\beta}_{\K}\!\left({\cP(\cU)^{\wedge (r-1)}};\uF\right) \\
        &\oplus \widetilde{H}^{\bigstar-i\rho_{\C}-2\alpha_0-2\alpha_1-2\beta}_{\K}\!\left({\cP(\cU)^{\wedge (r-1)}};\uF\right)\Big),
\end{align*}
with the base case $r=1$ is given by
 \begin{align*}
\widetilde{H}^{\bigstar}_{\K}\!\left({\cP(\cU)};\uF\right)
        \;\cong\;
        \bigoplus_{i=1}^{  \infty}&
        \widetilde{H}^{\bigstar-i\rho_{\C}}_{\K}\!\left({S^0};\uF\right) 
         \bigoplus_{i=0}^{  \infty} \Big(\widetilde{H}^{\bigstar-i\rho_{\C}-2\alpha_0}_{\K}\!\left({{S^0}};\uF\right) \\
        &\oplus \widetilde{H}^{\bigstar-i\rho_{\C}-2\alpha_1-2\beta}_{\K}\!\left({{S^0}};\uF\right) 
        \oplus \widetilde{H}^{\bigstar-i\rho_{\C}-2\alpha_0-2\alpha_1-2\beta}_{\K}\!\left({{S^0}};\uF\right)\Big),
\end{align*}
\end{enumerate}
\end{thmd}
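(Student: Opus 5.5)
Theorem D is a freeness statement, and I would prove it by the standard cellular-filtration argument: first show that $\cP(\cU)$ has a filtration whose subquotients are representation spheres, then show that the resulting long exact sequences split. Over a complete $\cG$-universe, $\cU \cong \bigoplus^\infty (\C_0\oplus\C_{\alpha_0}\oplus\cdots)$, where the sum runs over all irreducible complex characters of $\cG=C_p\times C_p$. Since these characters form the group $\widehat{\cG}\cong \cG$ of order $p^2$, one copy of the regular complex representation $\rho_\C$ contains $p^2$ lines. I would order the lines as $\lambda_1,\lambda_2,\ldots$, cycling through one copy of $\rho_\C$ at a time, and filter $\cP(\cU)$ by the subspaces $\cP(\lambda_1\oplus\cdots\oplus\lambda_n)$. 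Each subquotient is a Thom space $S^{\lambda_n^{-1}\otimes(\lambda_1\oplus\cdots\oplus\lambda_{n-1})}$. Its real degree is $k\rho_\C$, plus a partial sum of characters twisted by $\lambda_n^{-1}$, where $k$ is the number of complete copies of $\rho_\C$ already passed. Re-expressing the characters $\lambda_n^{-1}\lambda_m$ in the basis $\beta,\alpha_0,\ldots,\alpha_{p-1}$ then produces the degrees $j\sum_\ell\alpha_\ell+i\beta+k\rho_\C$ for odd $p$, and the four types $0,\ 2\alpha_0,\ 2\alpha_1+2\beta,\ 2\alpha_0+2\alpha_1+2\beta$ for $p=2$. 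Matching this list against the statement is bookkeeping. Because the cohomology only depends on the sphere up to stable equivalence of representations, I can use the invertible classes ($v$ from Proposition~\ref{inv}, and $v_{p-1}$) to identify spheres whose representations have the same fixed-point dimensions.

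The key step is to show that the cofibre sequences $\cP_{n-1}\to\cP_n\to S^{V_n}$ induce short exact sequences in $\widetilde{H}^{\bigstar}_{\cG}(-;\uFp)$, and that these split as modules over the coefficient ring $\widetilde H^\bigstar_\cG(S^0)$ computed in Theorem C. I would follow the Lewis/Ferland strategy. First show, inductively, that $\widetilde H^\bigstar_\cG(\cP_{n-1})$ is free on generators in the degrees $V_1,\ldots,V_{n-1}$. Then the connecting map $\widetilde H^{\bigstar}(\cP_{n-1})\to \widetilde H^{\bigstar+1}(S^{V_n})$ is determined by its values on the free generators, and these values land in groups $\widetilde H^{V_m+1}_\cG(S^{V_n})=\widetilde H_\cG^{V_m-V_n+1}(S^0)$. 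I would argue that these groups vanish, using the explicit description of the coefficient ring from Theorem C together with fixed-point dimension counts. The ordering of the lines is chosen exactly so that, for $m<n$, the difference $V_m-V_n$ has nonpositive fixed-point dimension for every subgroup, and is compared in a way that places $V_m-V_n+1$ in the vanishing range. Once the connecting map is zero, the sequence is short exact, and it splits because its right-hand term is a free module. Passing to the colimit over $n$ gives the $r=1$ statement; the $\lim^1$ term vanishes because each fixed degree stabilizes, as the cell dimensions grow.

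I expect the vanishing argument to be the main obstacle. The coefficient ring of Theorem C has many nonzero classes in mixed degrees (the $\Sigma^{-1}\tfrac{1}{a^ju^k}$ families, and for odd $p$ the classes $\kappa$, $z_Q$, $w_Q$), so the ordering must be chosen carefully. If no ordering makes every connecting map vanish outright, I would fall back on the restriction-to-fixed-points argument. Compute the $H$-fixed points of $\cP(\cU)$ for each subgroup $H$; each is a disjoint union of nonequivariant projective spaces, whose cohomology is free with only even cells. Then use the injectivity of the restriction and geometric fixed-point maps on the relevant classes to force the differentials to be zero.

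For $r\ge2$, I would use the cellular filtration of $\cP(\cU)^{\wedge(r-1)}$ smashed with the one above. Since $\widetilde H^\bigstar_\cG(\cP(\cU))$ is free with cells $S^{V_n}$, there is an equivariant Künneth isomorphism; equivalently, smashing the same split filtration with $\cP(\cU)^{\wedge(r-1)}$ gives subquotients $\Sigma^{V_n}\cP(\cU)^{\wedge(r-1)}$. The same vanishing argument, now applied degreewise using the inductively known free module $\widetilde H^\bigstar_\cG(\cP(\cU)^{\wedge(r-1)})$, shows that these sequences split, and this yields the recursive formula of the statement.
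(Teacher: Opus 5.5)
You follow the paper's route: filter by projective subspaces, show every connecting map vanishes because its values on free generators lie in zero groups $\tH^{V_m-V_n+1}_{\cG}(S^0)$, split, then induct on $r$. The step you flag as the main obstacle is not just hard but false, and neither of your two routes can establish it.

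\textbf{The ordering you need does not exist.} View the characters as points of $\widehat{\cG}\cong(\Z/p)^2$ and cycle through one copy of $\rho_\C$ at a time. Nestedness $V_1\subseteq V_2\subseteq\cdots$, even up to exponent (as multisets of kernels), forces $\lambda_1,\dots,\lambda_p$ onto a single affine line. The next character $\lambda_{p+1}$ then sees the $p$ points of that line in $p$ distinct directions, none equal to the direction of the line, so $V_p\not\subseteq V_{p+1}$. For $\K$ this already happens at the third cell: in any ordering, the second and third cells are $2\chi$ and $2\chi'+2\chi''$, where $\chi,\chi',\chi''$ are the three nontrivial characters.

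\textbf{The connecting maps are genuinely nonzero.} Take $p=2$ and the paper's filtration $\cP(\C)\subset\cP(\C\oplus\C_{\alpha_0})\subset\cP(\C\oplus\C_{\alpha_0}\oplus\C_{\alpha_1})$, with cells $0$, $2\alpha_0$, $2\alpha_1+2\beta$.
\begin{itemize}
\item Restrict to $H_0$, where $\alpha_0$ is trivial and $\alpha_1,\beta$ become $\sigma$. The filtration becomes $\cP(\C)\subset\cP(\C^2)\subset\cP(\C^2\oplus\C_\sigma)$ with cells $0,2,4\sigma$.
\item The connecting map sends the degree-$2$ generator into $\tH^{3-4\sigma}_{C_2}(S^0;\uF)\cong H^2(\mathbb{R}P^3;\F)\cong\F$.
\item If that image were zero, $H\uF\wedge\cP(\C^2\oplus\C_\sigma)_+$ would split as $H\uF\vee\Sigma^{2}H\uF\vee\Sigma^{4\sigma}H\uF$.
\item The nested filtration $\cP(\C)\subset\cP(\C\oplus\C_\sigma)\subset\cP(\C\oplus\C_\sigma\oplus\C)$, where all target groups do vanish, gives instead $H\uF\vee\Sigma^{2\sigma}H\uF\vee\Sigma^{2+2\sigma}H\uF$.
\item These two have different $\pi^{C_2}_{2\sigma}$ (dimension $2$ versus $1$), so the image is nonzero.
\end{itemize}
Restriction commutes with connecting maps, so the $\K$-equivariant connecting map sends the generator in degree $2\alpha_0$ to a nonzero class in $\tH^{1+2\alpha_0-2\alpha_1-2\beta}_{\K}(S^0;\uF)$.

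For odd $p$ the same happens at the $(p+1)$-st cell. Restricted to $H_0$, the lexicographic filtration becomes $\cP(\C)\subset\cdots\subset\cP(\C^p)\subset\cP(\C^p\oplus\C_\lambda)$ with $\lambda$ nontrivial. Comparing with the nested order $\lambda,1,\dots,1$ (for instance in cohomological degree $2$, dimension $2$ versus $1$) shows the last connecting map is nonzero.

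So your fallback of restricting to subgroups or fixed points does not kill these differentials; it detects them. The cell degrees cannot simply be taken as generator degrees. A correct argument would have to analyse these nonzero attaching maps directly (Kronholm-type shifts of generators), and the $r\ge 2$ induction inherits the same problem.

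The paper's own one-line argument has the same hole. It appeals to Proposition~\ref{vanish}, but for $\gamma=V_m-V_n+1$ with $V_m,V_n$ in the same copy of $\rho_\C$ one has $|\gamma^{\cG}|=1$ (respectively $|\gamma^{\K}|=1$). So its hypothesis fails exactly where it is needed.

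Separately, $v$ and $v_{p-1}$ are units only in the cohomology of $E_{C_p}\Zp$, not in $\tH^{\bigstar}(S^0)$. They cannot be used to identify representation spheres when computing over a point.
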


\medskip

Finally, we turn to a third direction of inquiry: the structure of equivariant Steenrod operations. In the classical (non-equivariant) setting, Steenrod operations provide a fundamental layer of structure on cohomology. In the equivariant context, however, explicit descriptions are subtle and comparatively rare. In general, for $E \in \Sp^G$, the $\RO(G)$-graded $E$-cohomology operations are given by $\pi^G_{\bigstar}(F(E,E))$, while the algebra of co-operations is $\pi^G_{\bigstar}(E \wedge E)$. When $E = H\uFp$, these are denoted by $\cA^{\bigstar}_G$ and $\cA^G_{\bigstar}$, respectively.  

For any subgroup $H \leq G$, there is a restriction map
\[
\res^G_H : \cA^{\bigstar}_G \longrightarrow \cA^{\bs}_H.
\]
Since $\RO(H)$ is a retract of $\RO(G)$, one may ask when an operation defined in 
$\cA^{\bs}_H$ can be lifted to an operation in $\cA^{\bs}_G$. Concretely, a cohomology 
operation $\theta \colon H\uFp \to \Sigma^\alpha H\uFp$ (of degree $\alpha \in \RO(H)$) 
is said to be \emph{liftable} if there exists an operation $\tilde{\theta} \in \cA^{\bs}_G$ 
of the same degree such that, after restricting from $G$ to $H$, the action of 
$\tilde{\theta}$ coincides with that of $\theta$.  

The central question is therefore: which $H$-equivariant cohomology operations admit lifts to $G$-equivariant operations? For cyclic groups of prime order, Caruso~\cite{Car99} proved that the only nontrivial liftable operation is the Bockstein homomorphism, using explicit computations of Bredon cohomology of Eilenberg--Mac\,Lane spaces and the structure of the coefficient ring. More recently, in~\cite{BDK} the author gave an alternative proof of Caruso’s result in the case $G=C_p$.  

Finally we investigate the analogous question for the group $C_p\times C_p$, and prove the following:  

\begin{thme}
Let $\theta$ be a cohomology operation of degree $2r(p-1)$ and $2r$ for odd $p$ and $p=2$ case, respectively which does not involve the B\"ockstein $\beta$.  
Then there does not exist an equivariant cohomology operation $\tilde{\theta}$ lifting $\theta$.
\end{thme}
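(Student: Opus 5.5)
We argue as in Caruso's $C_p$ case and its reproof in \cite{BDK}: a lift $\tilde{\theta}$ would act naturally on $\cG$-equivariant cohomology, and we test it on $\cP(\cU)$. The setting is $\cG=C_p\times C_p$ and a subgroup $H\cong C_p$; for $p=2$ one can equally take the trivial subgroup. Since $\RO(H)$ is a retract of $\RO(\cG)$, the degree $2r(p-1)$ (resp.\ $2r$) of $\theta$ becomes an element of $\RO(\cG)$. Suppose a lift $\tilde{\theta}\in\cA^{\bs}_{\cG}$ of that degree exists.

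\textbf{Step 1: the class $x$.} By Theorem~D with $r=1$, $\widetilde{H}^{\bs}_{\cG}(\cP(\cU);\uFp)$ is a free module over the coefficient ring of Theorem~C. The generators sit in degrees $k\rho_{\C}+i\beta+j\sum_\ell\alpha_\ell$ (resp.\ $i\rho_{\C}$, $i\rho_\C+2\alpha_0$, and so on for $p=2$). Take the bottom generator $x$. Its restriction to $H$ (or to $e$) is the standard generator of $H^2(\C P^\infty)$, or of its $C_p$-equivariant analogue, up to a unit. Evaluate $\tilde{\theta}(x)$. By freeness it is a finite sum $\sum_g c_g\, g$, where the $g$ are module generators and each coefficient $c_g$ lies in $\widetilde{H}^{\bs}_{\cG}(S^0;\uFp)$ in degree $|x|+|\theta|-|g|$.

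\textbf{Step 2: degree bookkeeping.} Restricting, $\res(\tilde{\theta}(x))=\theta(x_H)$. Since $\theta$ does not involve the Bockstein, it is a combination of Steenrod powers $P^I$. Its value on $x_H$ is a nonzero multiple of $x_H^{p^r}$, up to lower terms that vanish on the degree-$2$ class. The goal is to show that no generator $g$ restricting to $x_H^{p^r}$ admits a nonzero $c_g$ in the required degree with nonzero restriction. Here $|x|+|\theta|-|g|$ is a virtual representation of actual dimension $0$. The nontrivial-irreducible part of it is a nonzero combination of $\alpha_\ell$, $\beta$ and trivial summands, because $|g|$ carries $\rho_\C$ multiples that do not match the trivial degree $|\theta|$.

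\textbf{Step 3: vanishing of coefficients (main obstacle).} Using the explicit description of the coefficient ring in Theorem~\ref{cohomologyofpointoddp} (resp.\ Theorem~\ref{cohomologyofpoint}), list the classes in such degrees. These are the invertible class $v$, products of the $u$'s and $v$ giving units in restriction, and $a$-classes and negative-cone classes. The $a$-classes and negative-cone classes restrict to zero. The degree-$0$, restriction-nonzero classes form a subring of monomials $u^{m}v^{n}$ in prescribed degrees, and the claim is that none of these monomials has the degree demanded in Step 2. Checking this uses the explicit relations, for instance $vu_\beta=u_{\alpha_0}u_{\alpha_1}$, and for odd $p$ the $\kappa$, $z_Q$, $w_Q$ classes. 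This case analysis is where I expect the real work: one must confirm that no combination of these relations produces a restriction-nonzero class in the needed degree.

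\textbf{Step 4: conclusion.} Once every $c_g$ with nonzero restriction is ruled out, $\res\tilde{\theta}(x)=0\neq\theta(x_H)$, a contradiction. For $p=2$ the argument is identical, with $|\theta|=2r$ and Theorem~D(2) supplying the generators. Finally, an operation that does involve Bockstein factors escapes the argument, because it kills the even class $x$ automatically; this is why the hypothesis is needed.
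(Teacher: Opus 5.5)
There is a genuine gap in Steps~2 and~4: the test class you chose does not detect $\theta$. The bottom generator $x$ of $\cP(\cU)$ has underlying degree $2$, so $\theta(x_H)$ lies in degree $2+2r(p-1)$. That equals $2p^r$ only for $r=1$, so your claim that $\theta(x_H)$ is a nonzero multiple of $x_H^{p^r}$ is a degree error.

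Worse, instability gives $P^i x=0$ for $i\ge 2$ on a degree-$2$ class, so Bockstein-free operations of degree $2r(p-1)$ generally annihilate $x$. For $p$ odd and $r=2$, the relevant part of the reduced-power algebra is spanned by $P^2$ (since $P^1P^1=2P^2$), and $P^2x=0$; likewise $Sq^4x=0$ for $p=2$. Then the underlying restriction of $\tilde{\theta}(x)$ is $\theta(x)=0$, and no contradiction arises whatever Step~3 gives.

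The missing idea, which is what the paper does, is to test on the $r$-fold smash power $\cP(\cU)^{\wedge r}$. The splitting of Theorem~\ref{cohomologyofcp} contains, as the product of the $r$ bottom cells, a summand $\uFp$ in degree $r\beta$ (resp.\ $2r\alpha_0$ for $p=2$). Its underlying class is $x_1\otimes\cdots\otimes x_r$, and $\theta$ acts nontrivially on it. By the Cartan formula, $P^r(x_1\cdots x_r)=x_1^p\cdots x_r^p\neq 0$. More generally, a Milnor basis element $P(R)$ of degree $2r(p-1)$ has $\sum_j r_j\le r$ and sends $x_1\cdots x_r$ to the sum of monomials in which $r_j$ of the variables are raised to the power $p^j$. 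These monomial sets are disjoint for different $R$, so no nonzero combination kills the class. This is why the degree of $\theta$ is matched to the number of smash factors.

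Step~3 is also not carried out. The route you propose is heavier than needed: it enumerates restriction-nonzero monomials in the $u$'s, $v$, $\kappa$, $z_Q$, $w_Q$ inside the coefficient ring, which for odd $p$ the paper only describes in its polynomial part. The paper's Proposition~\ref{pnof} replaces this with a fixed-point dimension count. For each summand degree $\alpha$ contributing to degree $r\beta+2r(p-1)$, restrict to $K$ and check that $|\res_K\alpha|=0$ forces $|(\res_K\alpha)^K|>0$. By Proposition~\ref{mackey}, the corresponding $K$-Mackey functor is then $\uFp^{*}$ (zero restriction) or vanishes, so no top-level class restricts nontrivially to the underlying level.

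Finally, $\theta$ here is a nonequivariant operation, compared with the underlying map of $\tilde{\theta}$. Working relative to a subgroup $H\cong C_p$ while describing $\theta$ as a combination of classical $P^I$ conflates $\cA^{\bs}_{C_p}$ with the ordinary Steenrod algebra.
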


See Theorem~\ref{nonlift}.

\begin{mysubsect}{Organization} We organize the rest of the paper as follows.  
In Section~2, first we work on odd prime $p$ and compute the polynomial part of $\RO(\cG)$-graded Bredon cohomology of the equivariant universal space $E_{C_p}\Zp$ in Theorem~\ref{main}.  
As a consequence, we obtain the corresponding $\RO(C_p)$-graded Bredon cohomology of the classifying space $B_{C_p}\Zp$ in Proposition~\ref{poly-BCpZp}. In Section~3, we compute $p=2$ case with the $\RO(\K)$-graded cohomology of $\bE$ in Theorem~\ref{main2} and $\RO(C_2)$-graded cohomology of classfying space $B_{C_2}\Zz$ in Proposition~\ref{cohomologyofclassfying}.  

In Section~4 and Section~5, we apply the Tate square formalism to $E_{C_p}\Zp$ and $E_{C_2}\Zz$ respectively, in order to compute the $\RO(\cG)$ and $\RO(\K)$-graded Bredon cohomology rings of a point with coefficients in $\uFp$ and $\uF$, respectively, as stated in Theorem~\ref{cohomologyofpointoddp} and Theorem~\ref{cohomologyofpoint} respectively.

Finally, in Section~6, we determine the additive structure of the Bredon cohomology of equivariant complex projective space for a certain complete universe and 
Further, the cohomology of the $r$-fold smash product of the equivariant infinite complex projective space in Theorem~\ref{cohomologyofcp}.  
The section concludes with Theorem~\ref{nonlift}, where we establish the non-liftability of certain Steenrod squares in the equivariant setting.  
\end{mysubsect}

\begin{mysubsect}{Notation} 
We use the following notation throughout the paper.  
\begin{enumerate}
\item We will use the symbol $p$ to denote an arbitrary prime number, except when a particular value of $p$ is explicitly specified.

\item We denote by $G$ a finite group, and by $C_p \times C_p$ the elementary abelian group of rank two at the prime $p$.
For odd primes $p$, we write $\mathcal G$ for this group, whereas for $p=2$ we denote it by $\K$.

\item We write $\sigma$ for the sign representation of group $C_2$ and $\xi$ for the two dimensional real irreducible $C_p$-representation given by rotation by the angle $2\pi/p$.

\item For group $C_p \times C_p=\langle{a,b\colon a^p=b^p=1}\rangle$, all $p$-order subgroup are given by $H_{i}=\langle{ab^i}\rangle$ and $K=\langle{b}\rangle$ where $0 \leq i \leq p-1$. 
\item Associated to each $p$-order subgroup $H_i$ of $\mathcal{G}$, we denote its irreducible real representations by $\alpha_i^{\, j}$ and for the subgroup $K$, we write its irreducible representations as $\beta^{\, j}$ for $1 \le j \le (p-1)/2$.  
Accordingly, the corresponding irreducible representations of $\mathcal{K}_4$ are denoted by $\alpha_0$, $\alpha_1$, and $\beta$.

\item We will use $
\bigstar$ for the $\RO(G)$-grading and $\ast$ to indicate the integer grading. In particular for $\cG,$ we consider $\bs=\Sigma_{i=0}^{p-1}a_i\alpha_i+b\beta+c$, 
where $a_i,b,c\in \Z$ (see Proposition \ref{exponentequivalence}) and $\bigstar_{\beta}$ if $a_i$'s are zero.
\item We denote by $\bM_{p}(\xi)$ and $\bM_{2}(\sigma)$ be the cohomology of a point for group $C_p$ (odd $p$) and $C_2$ respectively, that is,
\[
\bM_{p}(\xi) := \bigoplus_{m,n \in \Z}\widetilde{H}^{m+n\xi}_{C_p}(S^0;\uFp),\quad \bM_{2}(\sigma) := \bigoplus_{m,n\in \Z}\widetilde{H}^{m+n\sigma}_{C_2}(S^0;\uF)
\]
as a graded rings, where $\xi$ and $\sigma$ be the real irreducible representations of $C_p$ and $C_2$ respectively. We may also use $\bs_\xi$ to denote the $\RO(C_p)$-grading of the form $m+n\xi$.

\item Square brackets $[\,]$ indicate polynomial generators, while angle brackets $\langle \,\rangle$ indicate additive generators. For example,
\[
\Fp\langle a, b \rangle[c]
\]
denotes the algebra with additive generators $a, b$ and polynomial generator $c$, so its elements are of the form $lac^i+mbc^j$ for all $i,j \geq 0$ and $l,m\in \Fp$.
\item For the negative suspension classes of the form  $\Sigma^{-1}\frac{\kappa^{\epsilon}_{\beta}}{a^j_{\beta}u^k_{\beta}}$, $\Sigma^{-1}\frac{\kappa^{\epsilon}_{\alpha_i}}{a^j_{\alpha_i}u^k_{\alpha_i}}$, we always consider $j,k\geq1$ and $\epsilon=0,1$. 
   
\item We write $[i]$ to denote the set $\{0,1,\cdots,i\}$.
\item The notation $\cU$ is used for the complete $\cG$-universe, while $\rho_{\C}$ is used for the complex regular representation of the corresponding group.
\item For a map $q$, we use $\Image q$ to denote the image of $q$.
\item The notation $\mathrm{P}(\widetilde{H}^{\bigstar}_{G}(X_+;\uFp))$ is used to denote the polynomial part of $\widetilde{H}^{\bigstar}_{G}(X_+;\uFp)$. For example, $\mathrm{P}(\widetilde{H}^{\bigstar}_{C_2}(S^0;\uF))=\F[a_{\sigma},u_{\sigma}]$ where $\widetilde{H}^{\bigstar}_{C_2}(S^0;\uF)=\F[a_{\sigma},u_{\sigma}]\bigoplus \F\langle{\Sigma^{-1}\frac{1}{a^{k}_{\beta}u^{k}_{\beta}}}\rangle$. 
\item $2^\mathbb{N}$ denote the collection of all subsets of natural numbers $\mathbb{N}$.
\end{enumerate}
\end{mysubsect}

\paragraph{\textbf{Acknowledgement.}} The first author (SG) would like to thank Guoqi Yan for helpful suggestions. The research of the second author (AK) is supported by the University Grants Commission, India.

\sect{Preliminaries on Bredon Cohomology}
Bredon cohomology is the natural extension of singular cohomology to the equivariant setting, where a group $G$ acts on a space $X$. We recall some standard constructions of $G$--spaces.  
For an orthogonal representation $V$ of a finite group $G$, one has the unit disk $D(V)$ and the unit sphere $S(V)$ with the natural $G$-action, and the one-point compactification $S^V$. The equivariant analogue of CW complexes are \emph{$G$-CW complexes}, built inductively from cells of the form $G/H \times D^n$ for subgroups $H \leq G$, attached along $G/H \times S^{n-1}$

In this framework, one replaces constant abelian coefficient groups with a more structured notion: Mackey functors. Furthermore, grading is promoted from the integers $\Z$ to the real representation ring $\RO(G)$, allowing one to track cohomological degrees twisted by representations of $G$. For more details, one can check \cite{May96}.  

\vsn

\begin{mysubsect}{Mackey functors} The foundational algebraic object in this theory is the \emph{Burnside category} $\mathrm{Burn}_G$, whose objects are finite $G$-sets of the form $G/H$, and morphisms are spans of $G$-maps, up to isomorphism and group completion. Explicitly, a morphism $G/H \to G/K$ is a formal linear combination of diagrams
\[
G/H \xleftarrow{f} S \xrightarrow{g} G/K
\]
where $S$ is a finite $G$-set and $f$, $g$ are $G$-maps. The composition in $\mathrm{Burn}_G$ is given by pullback along these spans.

A \emph{Mackey functor} $\uM$ for $G$ is a contravariant additive functor:
\[
\uM: \mathrm{Burn}_G^{\mathrm{op}} \to \mathrm{Ab}.
\]
Equivalently, a Mackey functor consists of a pair of compatible systems of restriction and transfer maps:
\[
\res_K^H: \uM(G/H) \to \uM(G/K), \quad \mathrm{for } \quad K \leq H,
\]
\[
\mathrm{tr}_K^H: \uM(G/K) \to \uM(G/H), \quad \mathrm{for } \quad K \leq H,
\]
satisfying natural coherence conditions such as the double coset formula. The category of Mackey functors $\mathrm{Mack}_G$ is an abelian symmetric monoidal category, with \emph{box product} and internal Hom given by Day convolution.

\vsn

\begin{example}
A basic example is the \emph{constant Mackey functor} $\uC$, defined by $\uC(G/H) = C$ for an abelian group $C$, with restriction maps given by identity and transfers $\mathrm{tr}_K^H$ given by multiplication by $[H:K]$ for $K \leq H$. Its dual $\uC^\ast$ is obtained by interchanging restrictions and transfers, with $\uC^\ast(G/H)=\uC(G/H)$ on orbits.  

For instance, when $G=\K$ (the Klein four group) and $C=\F$, the corresponding Lewis diagrams are:
\[
\begin{array}{cc}
\xymatrix{
  & \F \ar@/^.5pc/[d]^{1} \ar@/^.5pc/[dr]^{1} \ar@/_.5pc/[dl]_{1} \\
  \F \ar@/_.5pc/[dr]_{1} \ar@/_.5pc/[ur]^{0}   
    &  \F \ar@/^.5pc/[u]_{0}  \ar@/^.5pc/[d]^{1} 
    &  \F \ar@/^.5pc/[ul]_{0} \ar@/^.5pc/[dl]^{1} \\
  & \F \ar@/^.5pc/[u]_{0} \ar@/_.5pc/[ul]^{0} \ar@/^.5pc/[ur]_{0}\\
  & \uF &
}
&
\xymatrix{
  & \F \ar@/^.5pc/[d]^{0} \ar@/^.5pc/[dr]^{0} \ar@/_.5pc/[dl]_{0} \\
  \F \ar@/_.5pc/[dr]_{0} \ar@/_.5pc/[ur]^{1}   
    &  \F \ar@/^.5pc/[u]_{1}  \ar@/^.5pc/[d]^{0} 
    &  \F \ar@/^.5pc/[ul]_{1} \ar@/^.5pc/[dl]^{0} \\
  & \F \ar@/^.5pc/[u]_{1} \ar@/_.5pc/[ul]^{1} \ar@/^.5pc/[ur]_{1}\\
  & \uF^{\ast} &
}
\end{array}
\]
Here the top $\F$ is the value at $\K/\K$, the three middle $\F$'s correspond to $\K/H_0,\K/H_1,\K/K$ (from left to right), and the bottom $\F$ is the value at $\K/e$. Downward arrows indicate restrictions and upward arrows indicate transfers.
\end{example}

Following examples of Mackey functors will appear in Proposition \ref{mackey}.

\begin{example}
    For the group $C_p$ (where $p$ can be an even or odd prime), the Mackey functor $\langle{\uFp}\rangle$ is defined by
\[
\langle{\uFp}\rangle(C_p/C_p)=\Fp,\qquad
\langle{\uFp}\rangle(C_p/e)=0,
\]
with restriction and transfer maps
\[
\operatorname{res}^{C_p}_{e}=0,
\qquad
\operatorname{tr}^{C_p}_{e}=0.
\]

\end{example}

\begin{example}
    For the group $C_2$, the Mackey functor $\langle{\Lambda}\rangle$ is defined by
\[
\langle{\Lambda}\rangle(C_2/C_2)=0,\qquad
\langle{\Lambda}\rangle(C_2/e)=\F,
\]
with restriction and transfer maps
\[
\operatorname{res}^{C_2}_{e}=0,
\qquad
\operatorname{tr}^{C_2}_{e}=0.
\]

\end{example}

\vsn
For a $G$-space $X$ and a Mackey functor $\uM$, the $\RO(G)$-graded Bredon cohomology groups are defined via equivariant stable homotopy theory as:
\[
\tH_G^\alpha(X_+; \uM) := [X_+, S^\alpha \wedge H\uM]^G,
\]
where $\alpha \in \RO(G)$ is a virtual real representation of $G$, $S^\alpha$ is the associated representation sphere, and ${H}\uM$ is the Eilenberg–Mac Lane $G$-spectrum representing Bredon cohomology with coefficients in $\uM$. 

For $X = S^0$, these cohomology groups define the coefficient ring of the theory:
\[
\tH_G^\bigstar(S^0; \uM),
\]
which inherits a graded ring structure from the smash product. This ring often serves as the base over which the cohomology of other $G$-spaces becomes a module via the map $X_+ \to S^0.$

The $\RO(G)$ graded cohomology theories can be formulated in a Mackey
functor--valued setting.  The following definition makes this explicit.

Let $X$ be a based $G$ space, let $\underline{M}$ denote a Mackey functor,
and  $\alpha \in \RO(G)$.  
The \emph{Mackey functor valued cohomology} of $X$ with coefficients in $\underline{M}$ is
the Mackey functor
\[
\underline{H}^\alpha_G(X;\underline{M})(G/K) \;=\;
\widetilde{H}^\alpha_G(G/K_+ \wedge X;\,\underline{M}),
\]
defined for each subgroup $K \leq G$. The restriction and transfer maps are induced from the
appropriate maps of $G$-spectra.
\end{mysubsect}

\vsn

\begin{mysubsect}{Some special classes in Bredon cohomology}
The $\RO(G)$-graded cohomology ring $\tH_G^\bigstar(S^0; \uFp)$ contains distinguished classes associated to representations \cite{HHR15}:
\begin{enumerate}
    \item For any real $G$-representation $V$ with no nontrivial fixed point, there is a canonical \emph{Euler class}
    \[
    a_V \in \tilde{H}_G^V(S^0; \uFp),
    \]
    represented by the inclusion $S^0 \hookrightarrow S^V$.
    
    \item If $V$ is oriented representation, there exists a \emph{Thom class}
    \[
    u_V \in \tH_G^{V - \dim V}(S^0; \uFp),
    \]
    arising from the cofiber sequence $S(V)_{+} \to D(V)_{+} \to S^V$.
\end{enumerate}

These classes satisfy natural multiplicative identities:
\[
a_V \cdot a_W = a_{V \oplus W}, \quad u_V \cdot u_W = u_{V \oplus W}.
\]

The behaviour of these generators under restriction and transfer maps is controlled by the structure of the representation $V$ over various subgroups $H \leq G$, which can significantly impact the internal structure of the cohomology ring.

\vsn 

\begin{remark}\label{cpgrad}
   For $G=C_p$, one observes that it suffices to consider the grading of the form $m+n \xi$ with $m,n \in \Z$ as $S^\alpha \wedge H\uFp \simeq H\uFp$ whenever both the dimensions of $\alpha$ and $\alpha^{C_p}$ are zero (See \cite[Appendix B]{FL04}). We use $\bigstar_{\xi}$ to denote this grading.
\end{remark}

We record the following computations for later use in the subsequent section
\begin{prop}{\cite{BG21}}{\label{c_2}}\begin{enumerate}
    \item For an odd prime $p$, the cohomology of a point in $\uFp$ coefficient is given by
    \[
    \bM_{p}(\xi)=\dfrac{\Fp[a_{\xi},\kappa_{\xi},u_{\xi}]}{(\kappa^2_{\xi})}\oplus \bigoplus_{j,k\geq 1} \Fp\langle{\Sigma^{-1}\frac{\kappa^{\epsilon}_{\beta}}{a^j_{\xi}u^k_{\xi}}}\rangle,\quad \epsilon=0,1.
    \]
    with $|\kappa_{\xi}|=\xi-1$, and $\xi$ is an $2$--dimensional real irreducible representation of group $C_p$.
    \item The $\RO(C_2)$–graded cohomology of a point with coefficients in $\uF$ is
\[
\bM_{2}(\sigma)=\F[a_{\sigma},u_{\sigma}] \oplus \bigoplus_{j,k \ge 1} \F\langle{\Sigma^{-1}\frac{1}{a_\sigma^j u_\sigma^k}\rangle}
\]
   where $\sigma$ denotes the sign representation of group $C_2$. 
\end{enumerate} 
\end{prop}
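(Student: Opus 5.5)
The computation in Proposition~\ref{c_2} is the classical coefficient ring of $C_p$ and $C_2$ (Stong, Lewis), so I would recover it with the Tate square for the family $\{e\}$. For $G=C_p$ there is the cofibre sequence $EC_{p+}\to S^0\to \widetilde{EC_p}$, and $\widetilde{EC_p}\simeq \operatorname{colim}_n S^{n\xi}$ (respectively $\operatorname{colim}_n S^{n\sigma}$ for $C_2$). By Remark~\ref{cpgrad} it suffices to work in gradings $m+n\xi$ (respectively $m+n\sigma$).

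\textbf{Step 1: the Borel part.} The first step computes $\widetilde{H}^{\bigstar}_{C_p}(EC_{p+};\uFp)$. Since $EC_p$ is free, $u_\xi$ restricts to an orientation, and the Thom isomorphism makes $u_\xi$ invertible there. Hence
\[
\widetilde{H}^{m+n\xi}_{C_p}(EC_{p+};\uFp)\cong H^{m+2n}(BC_p;\Fp)=\bigl(\Lambda(\kappa)\otimes\Fp[a]\bigr)[u^{\pm}],
\]
where $a$ is the Euler class of $\xi$ (it corresponds to the degree-$2$ polynomial generator) and $\kappa$ is the degree-one exterior class. I would identify $\kappa$ with the image of $\kappa_\xi$, defined via the cofibre $C_{p+}\to S^0\to S^{\xi-1}$-type cell structure of $S(\xi)$, so that $\kappa_\xi^2=0$. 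For $p=2$ one gets $H^*(BC_2;\F)=\F[x]$, with $x^2$ identified with $a_\sigma^2u_\sigma^{-2}$, which yields $\F[a_\sigma,u_\sigma^{\pm}]$.

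\textbf{Step 2: the geometric part.} Next I compute $\widetilde{H}^{\bigstar}(\widetilde{EC_p})$. Here $a_\xi$ becomes invertible, and the geometric fixed points of $H\uFp$ give
\[
\Phi^{C_p}H\uFp \simeq H\Fp[b],\qquad |b|=-2 \text{ (with an exterior factor for odd $p$)}.
\]
Reading this in $\RO(C_p)$-gradings gives a ring of the form $\Fp[a_\xi^{\pm},u_\xi]\otimes\Lambda(\kappa_\xi)$. Alternatively, I would compute directly using the cellular chain complex of $S^{n\xi}$, with cells $C_p/C_p$ in dimension $0$ and free cells in dimensions $1,\dots,2n$, together with the constant Mackey functor. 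This gives the groups $\widetilde{H}^{m+n\xi}_{C_p}(S^0)$ for $n\ge 0$ directly, namely the positive cone $\Fp[a_\xi,u_\xi,\kappa_\xi]/\kappa_\xi^2$.

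\textbf{Step 3: the long exact sequence.} The last step runs the long exact sequence of the cofibration. The map $\widetilde{H}^{\bigstar}(\widetilde{EC_p})\to \widetilde{H}^{\bigstar}(EC_{p+})$ sends the parts $u_\xi^{\ge0}$ and $a_\xi^{\ge0}$ isomorphically onto the overlap. The cokernel, consisting of the classes $a_\xi^{-j}u_\xi^{-k}$ times $\kappa_\xi^\epsilon$ with $j,k\ge1$, then contributes via the connecting map the shifted classes $\Sigma^{-1}\kappa_\xi^{\epsilon}/(a_\xi^ju_\xi^k)$. The kernel is exactly the polynomial part, which identifies it as $\Fp[a_\xi,\kappa_\xi,u_\xi]/(\kappa_\xi^2)$. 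Products of two negative classes vanish for degree reasons, because they land in a region where the groups are zero.

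\textbf{Main obstacle.} I expect the hardest step to be the careful bookkeeping of the connecting homomorphism for odd $p$ with the $\kappa$ classes. In particular one must verify that $\kappa_\xi$ is nonzero and survives to both sides, which requires a check via the cellular chains of $S^{n\xi}$. Beyond that the argument is the standard one, and I would ultimately cite \cite{Sto86,BG21} for the details.
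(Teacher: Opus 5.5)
Your Tate-square strategy is the one behind \cite{BG21}, which the paper simply cites here. However, Steps~2--3 as written conflate the two isotropy-separation sequences, and this is a real gap rather than a matter of notation.

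The group $\widetilde{H}^{\bigstar}_{C_p}(\widetilde{EC_p};\uFp)=\pi_{-\bigstar}F(\widetilde{EC_p},H\uFp)$ is not $\Fp[a_\xi^{\pm},u_\xi]\otimes\Lambda(\kappa_\xi)$. Since $\widetilde{EC_p}=\operatorname{colim}_n S^{n\xi}$, it is $\lim_n\widetilde{H}^{\bigstar-n\xi}_{C_p}(S^0;\uFp)$ along multiplication by $a_\xi$. That is the $a_\xi$-divisible part of the very negative cone you are trying to compute. The ring you wrote is $\pi_{-\bigstar}(\widetilde{EC_p}\wedge H\uFp)$, which does not occur in the cohomology sequence of $EC_{p+}\to S^0\to\widetilde{EC_p}$.

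That sequence also has no map from the $\widetilde{EC_p}$-term to the $EC_{p+}$-term: the composite through $\widetilde{H}^{\bigstar}_{C_p}(S^0;\uFp)$ is zero. Moreover, the classes $\kappa_\xi^{\epsilon}a_\xi^{-j}u_\xi^{-k}$ lie in neither $\Fp[a_\xi,u_\xi^{\pm}]\otimes\Lambda(\kappa_\xi)$ nor $\Fp[a_\xi^{\pm},u_\xi]\otimes\Lambda(\kappa_\xi)$. So the ``cokernel'' of Step~3 is not the cokernel of any map you have set up.

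The missing ingredient is the third corner $\widetilde{EC_p}\wedge F(EC_{p+},H\uFp)$, with $\pi_\bigstar\cong\Fp[a_\xi^{\pm},u_\xi^{\pm}]\otimes\Lambda(\kappa_\xi)$ obtained by inverting $a_\xi$ in the Borel ring. You need it together with the Mayer--Vietoris sequence of the homotopy pullback square
\[
\cdots\to\pi_\bigstar H\uFp\to\pi_\bigstar F(EC_{p+},H\uFp)\oplus\pi_\bigstar(\widetilde{EC_p}\wedge H\uFp)\to\pi_\bigstar\bigl(\widetilde{EC_p}\wedge F(EC_{p+},H\uFp)\bigr)\to\pi_{\bigstar-1}H\uFp\to\cdots.
\]
First check that both maps into the Tate ring are the evident inclusions. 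The Borel ring is $a_\xi$-torsion free, and the geometric side maps by a ring map fixing $a_\xi^{\pm},u_\xi,\kappa_\xi$. Then the kernel is the intersection $\Fp[a_\xi,u_\xi]\otimes\Lambda(\kappa_\xi)$. The cokernel is spanned by $\kappa_\xi^{\epsilon}a_\xi^{-j}u_\xi^{-k}$ with $j,k\ge 1$, and these desuspend to the negative cone. This is the shape of your Step~3, but it needs this square, not the cofibre sequence you named.

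Separately, the vanishing of products of two negative classes is not a degree phenomenon. For $C_2$, the generator of $\widetilde{H}^{2-2\sigma}_{C_2}(S^0;\uF)$ squares into
\[
\widetilde{H}^{4-4\sigma}_{C_2}(S^0;\uF)\cong\widetilde{H}^{4}(S^{4\sigma}/C_2;\F)\cong\widetilde{H}^{3}(\mathbb{RP}^3;\F)\neq 0 .
\]
This uses $\widetilde{H}^{m-n\sigma}_{C_2}(S^0;\uF)\cong\widetilde{H}^{m}(S^{n\sigma}/C_2;\F)$ and the fact that $S^{n\sigma}/C_2$ is the unreduced suspension of $\mathbb{RP}^{n-1}$. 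Lens spaces give the same picture for odd $p$.

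The correct argument is module-theoretic. The negative classes lie in the image of $\pi_\bigstar(EC_{p+}\wedge H\uFp)\to\pi_\bigstar H\uFp$ and map to zero in Borel cohomology. Meanwhile $\pi_\bigstar H\uFp$ acts on $\pi_\bigstar(EC_{p+}\wedge H\uFp)\cong\pi_\bigstar(EC_{p+}\wedge F(EC_{p+},H\uFp))$ through the Borel ring. Hence any product of two negative classes is zero.
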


\begin{prop}{\cite{BG21}}
\begin{enumerate}
  \item For an odd prime $p$, the Bredon cohomology of free $C_p$--space $EC_p$ is given by
  \[
\widetilde{H}^{\bigstar_{\xi}}_{C_p}(E{C_p}_{+};\uFp)=\frac{\Fp[a_{\xi},\kappa_{\xi},u^{\pm}_{\xi}]}{(\kappa^2_{\xi})}.
\]

    \item For an $p=2$, the Bredon cohomology of free $C_2$--space $EC_2$ is given by
\[
\widetilde{H}^{\bigstar}_{C_2}(E{C_2}_{+};\uF)=\F[a_{\sigma},u^{\pm}_{\sigma}].
\]
\end{enumerate}
    
\end{prop}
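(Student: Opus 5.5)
We compute $\widetilde{H}^{\bigstar}_{C_p}(E{C_p}_{+};\uFp)$ by first using freeness to pass to non-equivariant data, and then locating the Euler and Thom classes.

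\emph{Step 1 (reduction to non-equivariant cohomology).} Since $EC_p$ is a free $C_p$-CW complex, maps out of $EC_{p+}$ into $S^{V}\wedge H\uFp$ only see the underlying spectrum with its $C_p$-action. More precisely, $EC_{p+}\wedge H\uFp$ is equivalent to the cofree/Borel theory, so
\[
\widetilde{H}^{m+n\xi}_{C_p}(EC_{p+};\uFp)\cong \widetilde{H}^{m+n\xi}(EC_{p+}\wedge_{C_p} S^{-n\xi} )
\]
is a twisted cohomology of $BC_p$. The twisting is by the local system determined by the orientation character of $n\xi$. Because $\xi$ is complex (a rotation), it is orientable, and the local system is trivial. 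Hence this group is $H^{m+2n}(BC_p;\Fp)$. By Remark~\ref{cpgrad} it suffices to treat degrees $m+n\xi$.

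\emph{Step 2 (periodicity by $u_\xi$).} The Thom class $u_\xi\in\widetilde{H}^{\xi-2}$ restricts to the underlying orientation class, which is a generator of $\widetilde H^0(S^0)$ after desuspension. Multiplication by $u_\xi$ is therefore the Thom isomorphism for the oriented bundle $EC_p\times_{C_p}\xi$. This makes $u_\xi$ a unit on $EC_{p+}$. The whole ring is then $H^\ast(BC_p;\Fp)[u_\xi^{\pm}]$, with $H^\ast(BC_p;\Fp)$ placed in the integer degrees.

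\emph{Step 3 (identifying generators).} For odd $p$, $H^\ast(BC_p;\Fp)=\Lambda(x)\otimes\Fp[y]$ with $|x|=1$ and $|y|=2$. Under the Thom isomorphism we identify the classes as follows:
\begin{itemize}
\item $a_\xi u_\xi^{-1}$ is the Euler class of $\xi$. Its integer degree is $\xi-(\xi-2)=2$, and it maps to the mod $p$ Euler class of the associated line bundle, which up to a unit is $y$.
\item The class $\kappa_\xi$ of degree $\xi-1$ is the image of the corresponding class of $\bM_p(\xi)$ from Proposition~\ref{c_2}. Thus $\kappa_\xi u_\xi^{-1}$ has integer degree $1$.
\end{itemize}
The main obstacle is showing that $\kappa_\xi u_\xi^{-1}$ restricts to $x\neq 0$. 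I would try to detect this through the cofibre sequence $C_{p+}\to S^0\to S^{\xi}$ smashed with $EC_{p+}$, or through the Bockstein $\beta x=y$. Then $\kappa_\xi^2=0$ follows from $x^2=0$ (graded commutativity at odd $p$), and the ring is $\Fp[a_\xi,\kappa_\xi,u_\xi^{\pm}]/(\kappa_\xi^2)$.

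For $p=2$ the argument is the same, except that $\sigma$ is not orientable. The twisted local system appears, but $\uF$ coefficients kill the sign. Here $H^\ast(BC_2;\F)=\F[w]$, where $w$ corresponds to $a_\sigma u_\sigma^{-1}$ in integer degree $1$. This gives $\F[a_\sigma,u_\sigma^{\pm}]$.
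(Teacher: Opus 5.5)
Your Steps 1 and 2 are sound. For a free $C_p$-spectrum, Bredon cohomology with $\uFp$ coefficients is ordinary cohomology of the orbit Thom spectrum. (In your first display the degree on the right should be $m$, not $m+n\xi$.) Orientability of $\xi$, or mod-$2$ coefficients for $\sigma$, gives $\widetilde{H}^{m+n\xi}_{C_p}(EC_{p+};\uFp)\cong H^{m+2n}(BC_p;\Fp)$. Since $u_\xi$ acts as a Thom class, the ring is $H^\ast(BC_p;\Fp)[u_\xi^{\pm}]$. Your identification of $a_\xi u_\xi^{-1}$ with a unit multiple of $y$ via the mod-$p$ Euler class is also correct. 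The paper gives no proof of this statement; it only cites it. Its own analogous computations for $E\cG$ and $E\cF(H_i)$ use a homotopy fixed point spectral sequence that collapses at $E_2$, which is the same Borel/Thom-isomorphism mechanism as yours.

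The genuine gap is the step you flag yourself and never resolve: you must show that the image of $\kappa_\xi$ in $\widetilde{H}^{\xi-1}_{C_p}(EC_{p+};\uFp)\cong H^1(BC_p;\Fp)\,u_\xi$ is nonzero. Without that you cannot present the ring with $\kappa_\xi$ as a generator. Note that $\kappa_\xi^2=0$ is automatic (it already holds in $\bM_p(\xi)$), so nonvanishing is the only issue.

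The first tool you propose also fails for odd $p$: the cofibre of $C_{p+}\to S^0$ is not $S^{\xi}$. The relevant sequence is $S(\xi)_+\to S^0\to S^{\xi}$, where $S(\xi)$ is a free circle, and it closes the gap quickly.

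\begin{itemize}
\item Since $\widetilde{H}^{\xi-1}_{C_p}(S^{\xi};\uFp)\cong\widetilde{H}^{-1}_{C_p}(S^0;\uFp)=0$, the restriction $\widetilde{H}^{\xi-1}_{C_p}(S^0;\uFp)\to\widetilde{H}^{\xi-1}_{C_p}(S(\xi)_+;\uFp)\cong H^1(S(\xi)/C_p;\Fp)\cong\Fp$ is injective. Hence $\kappa_\xi|_{S(\xi)_+}\neq 0$.
\item Because $S(\xi)$ is free, the collapse map $S(\xi)_+\to S^0$ factors as $S(\xi)_+\to EC_{p+}\to S^0$. So $\kappa_\xi|_{EC_{p+}}\neq 0$.
\item The target group is one-dimensional, spanned by $x\,u_\xi$, so $\kappa_\xi|_{EC_{p+}}=\lambda\,x\,u_\xi$ with $\lambda\in\Fp^{\times}$.
\end{itemize}

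Your Bockstein idea also works if carried out. The same cofibre sequence with $\uZ$ coefficients gives $\widetilde{H}^{\xi-1}_{C_p}(S^0;\uZ)=0$, so the equivariant Bockstein is injective on $\Fp\{\kappa_\xi\}$ and sends $\kappa_\xi$ to a unit multiple of $a_\xi$. By naturality, the restriction of $\kappa_\xi$ to $EC_{p+}$ then has nonzero Bockstein, hence is nonzero.

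With either argument in place, the monomial basis $a_\xi^i\kappa_\xi^{\epsilon}u_\xi^n$ maps bijectively onto the basis $y^ix^{\epsilon}u_\xi^{i+\epsilon+n}$ up to units, which finishes the odd case. For $p=2$ only the Euler-class identification is needed, and you already have it.
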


In fact, the Mackey functor structures of 
$\underline{H}^{\!\bigstar}_{C_p}(S^{0};\uFp)$ and $\underline{H}^{\!\bigstar}_{C_2}(S^{0};\uF)$ were described explicitly by Ferland in \cite{Fer99}.

\begin{prop}{\label{mackey}}
\begin{enumerate}
     
\item For an odd prime $p$
    \begin{myeq}
\underline{H}^{\alpha}_{C_p}(S^{0};\uFp) =
\begin{cases}
\uFp & \text{if } |\alpha| = 0,\; |\alpha^{C_{p}}| \leq 0, \\[6pt]
\uFp^{*} & \text{if } |\alpha| = 0,\; |\alpha^{C_{p}}| > 0, \\[6pt]

\langle \Fp \rangle & \text{if } |\alpha| > 0,\; |\alpha^{C_{p}}| \leq 0 \quad \text{and $|\alpha|$ even} \\[6pt]
\langle \Fp \rangle & \text{if } |\alpha| < 0,\; |\alpha^{C_{p}}| > 1 \quad \text{and $|\alpha|$ odd} \\[6pt]
0 & \text{otherwise.}
\end{cases}
\end{myeq}

\item For $p=2$

    \begin{myeq}
\underline{H}^{\alpha}_{C_2}(S^{0};\uF) =
\begin{cases}
\uF & \text{if } |\alpha| = 0,\; |\alpha^{C_{2}}| \leq 0, \\[6pt]
\uF^{*} & \text{if } |\alpha| = 0,\; |\alpha^{C_{2}}| > 0, \\[6pt]
\langle \Lambda \rangle & \text{if } |\alpha| = 0,\; |\alpha^{C_{2}}| = 1, \\[6pt]
\langle \F \rangle & \text{if } |\alpha| > 0,\; |\alpha^{C_{2}}| \leq 0, \\[6pt]
\langle \F \rangle & \text{if } |\alpha| < 0,\; |\alpha^{C_{2}}| > 1, \\[6pt]
0 & \text{otherwise.}
\end{cases}
\end{myeq}
\end{enumerate}
\end{prop}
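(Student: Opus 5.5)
The plan is to compute the two levels of each Mackey functor separately and then pin down the restriction and transfer maps between them, using only Proposition~\ref{c_2}, the definition $\underline{H}^{\alpha}_{C_p}(S^0;\uFp)(G/K)=\widetilde{H}^{\alpha}_{C_p}(G/K_+;\uFp)$, and elementary Mackey functor identities. By Remark~\ref{cpgrad} it suffices to treat $\alpha=m+n\xi$ for odd $p$ and $\alpha=m+n\sigma$ for $p=2$. In these gradings $|\alpha|=m+2n$ and $|\alpha^{C_p}|=m$ for odd $p$, while $|\alpha|=m+n$ and $|\alpha^{C_2}|=m$ for $p=2$.

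\textbf{Underlying level.} The value at $C_p/e$ is
\[
\widetilde{H}^{\alpha}_{C_p}({C_p}_+;\uFp)\cong \widetilde{H}^{|\alpha|}(S^0;\Fp),
\]
by the induction/restriction adjunction. This is $\Fp$ when $|\alpha|=0$ and $0$ otherwise. Consequently, whenever $|\alpha|\neq 0$ the Mackey functor is concentrated at $C_p/C_p$: it is $\langle\Fp\rangle$ or $0$, according to whether the corresponding group in $\bM_p(\xi)$ (resp.\ $\bM_2(\sigma)$) is nonzero.

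\textbf{Top level for $|\alpha|\neq 0$.} I will read off the top groups by listing the degrees of the additive generators of Proposition~\ref{c_2}. For odd $p$ these are $a_\xi^i\kappa_\xi^\epsilon u_\xi^k$, of degree $i\xi+\epsilon(\xi-1)+k(\xi-2)$, together with $\Sigma^{-1}\kappa^\epsilon/(a^j_\xi u^k_\xi)$. One checks that the positive cone contributes exactly when $|\alpha|>0$ is even and $|\alpha^{C_p}|\le 0$. It also contributes when $|\alpha|$ is odd via $\kappa$; these are the cases that must be matched against the stated parity conditions. The negative cone contributes exactly when $|\alpha|<0$ is odd and $|\alpha^{C_p}|>1$. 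The $C_2$ case is the same bookkeeping with $\F[a_\sigma,u_\sigma]$ and $\Sigma^{-1}1/(a^j_\sigma u^k_\sigma)$.

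\textbf{The case $|\alpha|=0$.} Here both levels can be nonzero, and the main work is identifying the maps. Since the coefficients are constant, $\tr\circ\res$ is multiplication by $p$ on the top level, and $\res\circ\tr$ is the norm, which is multiplication by $p$ on the trivial $C_p$-module $\Fp$. Both composites therefore vanish, so at most one of $\res$, $\tr$ is nonzero whenever both levels equal $\Fp$.

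When $|\alpha^{C_p}|\le 0$, i.e.\ $\alpha=k(\xi-2)$ with $k\ge 0$, the top generator is $u_\xi^k$. Its restriction is $u^k$ of the underlying orientation class, a unit in $\widetilde{H}^0(S^0)$. Hence $\res$ is an isomorphism, $\tr=0$, and we get $\uFp$.

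When $|\alpha^{C_p}|>0$, the top generator is a class of the form $\Sigma^{-1}\kappa^\epsilon/u^k$ (for $p=2$, $\Sigma^{-1}1/(a^ju^k)$ with $j+k$ balancing the degree). I will show it lies in the image of the transfer. To do this I use the cofibre sequence $S(\xi)_+\to S^0\to S^{\xi}$ (resp.\ ${C_2}_+\to S^0\to S^\sigma$) and the associated long exact sequence, in which the map from the free cell is the transfer. The top class restricts to zero for degree reasons, so $\res=0$ and $\tr$ must be the nonzero (iso) map, giving $\uFp^{*}$.

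\textbf{Main obstacle.} For $p=2$, the boundary case $|\alpha^{C_2}|=1$ is where the listed cases overlap, namely the $\langle\Lambda\rangle$ line. I would treat it separately by checking directly in $\bM_2(\sigma)$ whether the top group vanishes there, and then resolve which description holds. I expect this boundary bookkeeping, together with the identification of transfers on the negative-cone classes, to be the delicate step. For this I would lean on Ferland's explicit computation \cite{Fer99} as a cross-check.
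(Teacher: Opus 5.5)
The paper gives no argument for this proposition; it is quoted from Ferland~\cite{Fer99}. Your direct route is a sensible self-contained alternative: underlying level by the induction adjunction, top level read off from Proposition~\ref{c_2}, and $\res$, $\tr$ pinned down by $\tr\circ\res=0=\res\circ\tr$, the unit $u_\xi^k$, and a boundary map. But it breaks at exactly the two steps you defer. You say the odd-$|\alpha|$ positive-cone classes ``must be matched against the stated parity conditions'', and you assert that ``the negative cone contributes exactly when $|\alpha|<0$ is odd''. Neither can be established, because for odd $p$ the statement as written is false.

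Two counterexamples:
\begin{enumerate}
\item The class $\kappa_\xi$ spans $\widetilde{H}^{\xi-1}_{C_p}(S^0;\uFp)\cong\Fp$; the paper itself treats $\Fp\{\kappa_\xi\}$ as nonzero in the proof of Proposition~\ref{exponentequivalence}. The underlying group $\widetilde{H}^{1}(S^0;\Fp)$ is zero. So $\underline{H}^{\xi-1}_{C_p}(S^0;\uFp)=\langle\Fp\rangle$, although $|\alpha|=1$ is odd.
\item Take $\alpha=2-2\xi$, so $|\alpha|=-2$ and $|\alpha^{C_p}|=2$. The cofibre sequence $S(2\xi)_+\to S^0\to S^{2\xi}$ gives $\widetilde{H}^{2-2\xi}_{C_p}(S^0;\uFp)\cong\widetilde{H}^{2}_{C_p}(S^{2\xi};\uFp)\cong H^1(S(2\xi)/C_p;\Fp)\cong\Fp$. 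This is an $\epsilon=1$ negative-cone class with $|\alpha|$ even.
\end{enumerate}

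In general, the cellular chains of $S^{n\xi}$ show the following:
\begin{enumerate}
\item $\widetilde{H}^{n\xi-k}_{C_p}(S^0;\uFp)\cong\Fp$ exactly for $0\le k\le 2n$;
\item $\widetilde{H}^{m-n\xi}_{C_p}(S^0;\uFp)\cong\Fp$ exactly for $2\le m\le 2n$, with $n\ge 1$.
\end{enumerate}
These are the regions $\{|\alpha|\ge 0,\ |\alpha^{C_p}|\le 0\}$ and $\{|\alpha|\le 0,\ |\alpha^{C_p}|\ge 2\}$. There is no parity restriction beyond the automatic $|\alpha|\equiv|\alpha^{C_p}|\pmod 2$. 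So your method actually proves the odd-$p$ formula with both parity clauses deleted. You should state and prove that corrected version rather than try to reconcile your computation with the given one.

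For $p=2$ you also leave the overlap at $|\alpha^{C_2}|=1$ unresolved, and it settles in one line. In the long exact sequence of ${C_2}_+\to S^0\to S^\sigma$, the restriction $\widetilde{H}^0_{C_2}(S^0;\uF)\to\widetilde{H}^0_{C_2}({C_2}_+;\uF)$ is an isomorphism. Hence $\widetilde{H}^{1-\sigma}_{C_2}(S^0;\uF)\cong\widetilde{H}^1_{C_2}(S^\sigma;\uF)=0$, and the answer there is $\langle\Lambda\rangle$. So the $\uF^{*}$ line has to read $|\alpha^{C_2}|>1$; the remaining $p=2$ lines are correct.

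One smaller point concerns the $\uFp^{*}$ case. There $\res=0$ is not a degree consequence, since the underlying group is $\Fp$ in that degree. It follows from $\res\circ\tr=0$ once you have shown the transfer is onto, so prove surjectivity first.
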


\end{mysubsect}
\begin{mysubsect}{Equivariant universal spaces}
 A \emph{family} $\cF$ of subgroups of $G$ is a collection of subgroups closed under conjugation and under taking subgroups. For any such family, one associates a $G$-space $E\cF$ satisfying the fixed-point condition \cite{May96}:
\[
E\cF^H = 
\begin{cases}
\simeq  \ast & \text{if } H \in \cF, \\
\emptyset & \text{otherwise}.
\end{cases}
\]
The $G$-space $E\cF$ is characterized up to $G$-homotopy equivalence by this universal property. It is initial among $G$-CW complexes whose isotropy lies in $\cF$, and plays a key role in equivariant homotopy theory, particularly in isotropy separation techniques.

One fundamental cofibre sequence associated with such a family is:
\[
E\cF_+ \to S^0 \to \widetilde{E\cF},
\]
which gives rise to a long exact sequence in Bredon cohomology, and can be interpreted as separating $\cF$-isotropy from the rest of the space.

Suppose now that $\cF \subset \cF'$ are families of subgroups such that $\cF' \setminus \cF = \{H\}$ for some subgroup $H \leq G$. Then there is a natural cofibre sequence of $G$-spaces:
\begin{myeq}
E\cF_+ \to E\cF'_+ \to E\cFh,    
\end{myeq}

where $E\cFh$ is a $G$-space characterized by its fixed-point behavior:
\[
E\cFh^K \simeq 
\begin{cases}
S^0 & \text{if } K = H, \\
\ast & \text{otherwise}.
\end{cases}
\]
Note that $\cFh$ is not in general a family, but the associated space $E\cFh$ is still well-defined and useful in constructing cell decompositions of universal spaces via successive cofiberings.

\vsn

Now, consider two finite groups $G$ and $\Pi$. Define a family $\cF_{gr}$ of subgroups of $G \times \Pi$ by

\begin{myeq}{\label{family}}
\cF_{gr} := \left\{ H \leq G \times \Pi \mid H \cap (1 \times \Pi) = e \right\}.    
\end{myeq}

The associated universal space $E_{G}\Pi := E\cF_{gr}$ is a $G \times \Pi$-space with free $\Pi$-action. The quotient
\[
B_G \Pi := E_G \Pi / \Pi
\]
is called the \emph{$G$-equivariant classifying space} for principal $\Pi$-bundles. It generalises the classical classifying space in the equivariant context and models the homotopy theory of $\Pi$-bundles in a $G$-equivariant setting.
\end{mysubsect}

 For a subgroup $H \leq G \times \Pi$, we write $\cF(H)$ to denote the family of all subgroups contained in $H$. For an abelian group  $G \times \Pi$, each family $\cF(H)$ corresponds canonically to the subgroup lattice of quotient group $G \times \Pi/H$, and we can identify
\[
E\cF(H) \simeq E(G \times \Pi / H).
\]

For the elementary abelian group $\cG = \langle a,b \colon a^p=b^p=1\rangle $, all subgroups of order $p$ are given by
\[
H_i=\langle a\, b^{\,i}\rangle \qquad (0 \le i \le p-1),
\]
together with the subgroup
\[
K=\langle b\rangle .
\]

Thus $\cG$ has exactly $p+1$ distinct subgroups of order $p$.
 For each $i$, there is a canonical cofibre sequence of $\cG$-spaces:
\begin{myeq}\label{cofseq3}
E{\cG}_+ \xrightarrow{q_i} E\cF(H_i)_+ \to E\cFH{i},
\end{myeq}
where the rightmost term is defined as the homotopy cofiber of the map $q_i$. 

Whereas the family, $\cF_{gr}$ for $\cG$, defined in \eqref{family} is given by 
\[
\mathcal{F}_{\mathrm{gr}} = \{e, H_{i}\colon 0 \leq i \leq p-1\},
\]

To compute the $\RO(C_p\times C_p)$--graded Bredon cohomology of $E_{C_{p}}{\Zp}:=E\cF_{gr}$, we follow the following tower of cofiber sequences: 

\begin{myeq}\label{tower}
\xymatrix@C=0.5cm{E\cF(H_0)_+ ={E \cF_{0}}_+ \ar[r] & {E\cF_{1}}_+ \ar[d] \ar[r] & \cdots {E\cF_{i-1}}_+ \ar[d] \ar[r]^{j_i} & {E\cF_{i}}_+ \ar[d] \ar[r] & \cdots {E\cF_{p-1}}_+ = {E_{C_p}\Z/p}_+ \ar[d] \\ &  E\cFH{1} &  E\cFH{i-1} & E\cFH{i} & E\cFH{p-1}}
\end{myeq}
Note that the family $\cF_{i}$ is given by $\{e,H_0,\cdots,H_{i}\}$.

For an odd prime \(p\), let \(\alpha_i^{\,j}\) and \(\beta^{\,j}\) denote the two-dimensional real irreducible representations of \(\mathcal G\) with kernels \(H_i\) and \(K := \langle b\rangle\), respectively, for \(1 \le j \le (p-1)/2\). 

\begin{prop}\label{exponentequivalence}
Let $p$ be an odd prime. Then for each integer $1 \le j \le (p-1)/2$ and each $0 \le i \le p-1$, there are equivalences in $\ho\,\Sp^{\cG}$ given by
\[
S^{\alpha_i^{\,j}} \wedge H\uFp \;\simeq\; S^{\alpha_i} \wedge H\uFp
\quad\text{and}\quad
S^{\beta^{\,j}} \wedge H\uFp \;\simeq\; S^{\beta} \wedge H\uFp .
\]
\end{prop}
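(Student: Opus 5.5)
Both representations $\alpha_i^{\,j}$ and $\alpha_i$ are pulled back along the quotient $\cG \to \cG/H_i \cong C_p$ from the $C_p$-representations $\xi^{j}$ and $\xi$ (rotation by $2\pi j/p$ and $2\pi/p$). So I will construct a single equivariant map $f\colon S^{\alpha_i} \to S^{\alpha_i^{\,j}}$ and show that $H\uFp \wedge f$ is an equivalence. The same argument applies to $\beta^{\,j}$ and $\beta$, with $K$ replacing $H_i$, so I only treat $\alpha_i$.

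\textbf{Construction of $f$.} View $\alpha_i=\C$ with the generator of $\cG/H_i$ acting by $\zeta=e^{2\pi\sqrt{-1}/p}$, and $\alpha_i^{\,j}=\C$ with the generator acting by $\zeta^j$. The map $z\mapsto z^j$ is $\cG$-equivariant $\alpha_i\to\alpha_i^{\,j}$, since $(\zeta z)^j=\zeta^j z^j$. It is proper, so it extends to a based map $f\colon S^{\alpha_i}\to S^{\alpha_i^{\,j}}$ of one-point compactifications.

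\textbf{Effect on fixed points.} For each subgroup $L\le\cG$ I check $f^L$:
\begin{itemize}
\item If $L\le H_i$, then $L$ acts trivially on both spheres, so $f^L$ is the underlying map $S^2\to S^2$, $z\mapsto z^j$. It has degree $j$, which is a unit in $\Fp$ because $1\le j\le (p-1)/2<p$.
\item If $L\not\le H_i$, then $L$ surjects onto $\cG/H_i$, and both fixed point sets are $S^0=\{0,\infty\}$. Since $f$ fixes $0$ and $\infty$, $f^L$ is the identity.
\end{itemize}

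\textbf{Upgrading to an equivalence after smashing.} Let $C$ be the cofiber of $f$. It suffices to show $H\uFp\wedge C\simeq *$. Since $f^L$ is a homeomorphism for $L\not\le H_i$, $C$ has contractible geometric fixed points for every such $L$. By isotropy separation this gives $C\simeq E\cF(H_i)_+\wedge C$. The family $\cF(H_i)$ is pulled back from the free family, so I can write $E\cF(H_i)\simeq E(\cG/H_i)$, as the paper notes.

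Next I filter $E\cF(H_i)_+$ by its skeleta, whose cells are of the form $\cG/L_+\wedge D^n$ with $L\le H_i$. Each cell contributes $\cG/L_+\wedge H\uFp\wedge C$. Its homotopy is computed by the $L$-equivariant homotopy of $H\uFp\wedge C$, which is the Mackey functor value at $\cG/L$. On the orbits $\cG/L$ with $L\le H_i$, the spheres $S^{\alpha_i}$ and $S^{\alpha_i^{\,j}}$ restrict to trivial $L$-spheres. There $f$ is an underlying self-map of $S^2$ of degree $j$, which is invertible mod $p$. Hence $\res^{\cG}_L(H\uFp\wedge C)\simeq *$ for all $L\le H_i$. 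A colimit over the skeleta then gives $E\cF(H_i)_+\wedge H\uFp\wedge C\simeq *$.

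\textbf{Expected main obstacle.} The delicate step is this last one: passing from "equivalence after restriction to $L\le H_i$" to an equivalence of $H\uFp$-modules over $E\cF(H_i)_+$. The cleanest way is to use that $E\cF(H_i)_+\wedge(-)$ depends only on these restrictions; concretely, $E\cF(H_i)_+\wedge X$ lies in the localizing subcategory generated by the $\cG/L_+\wedge X$ with $L\le H_i$. The condition $p\nmid j$ is used exactly there, and it is the only place the hypothesis on $j$ enters.
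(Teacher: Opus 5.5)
Your proof is correct, and it follows a genuinely different route from the paper's.

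The paper never writes down a map. It computes the homotopy Mackey functors $\underline{H}^{\alpha_0^{\,j}-\alpha_0-k}_{\cG}(S^0;\uFp)$ from the long exact sequence of $S(\alpha_0)_+\to S^0\to S^{\alpha_0}$. It feeds in the known orbitwise values of $\underline{H}^{\alpha_0^{\,j}-k}_{\cG}$ and chases restrictions and transfers through classes such as $\kappa_{\alpha_0}$ and $a_{\alpha_0^{\,j}}$. This shows the functors are $\uFp$ for $k=0$ and zero otherwise. Uniqueness of Eilenberg--Mac\,Lane spectra then gives $S^{\alpha_0^{\,j}-\alpha_0}\wedge H\uFp\simeq H\uFp$.

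You instead use the explicit map $z\mapsto z^j$. Its $L$-fixed points are the identity of $S^0$ for $L\not\le H_i$, and of degree $j$ (a unit mod $p$) for $L\le H_i$. You then conclude by isotropy separation and cellular induction over $E\cF(H_i)_+$.

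What each route buys:
\begin{itemize}
\item Your argument needs no prior cohomology computation. It works verbatim for any finite group and any map of representation spheres whose fixed-point degrees are prime to $p$.
\item It produces a specific equivalence of $H\uFp$-modules with visible effect on the canonical classes: $a_{\alpha_i}\mapsto a_{\alpha_i^{\,j}}$, and $u_{\alpha_i}\mapsto j\,u_{\alpha_i^{\,j}}$ for the complex orientations. This is convenient when the reduction of gradings to $\sum a_i\alpha_i+b\beta+c$ is later used multiplicatively.
\item The paper's route delivers the equivalence only abstractly, via uniqueness of Eilenberg--Mac\,Lane spectra. It also rests on a more delicate orbit-by-orbit analysis.
\end{itemize}

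Two small remarks. First, the step you flag as the main obstacle is already settled by your own cell induction: each $\cG/L_+\wedge H\uFp\wedge C\simeq\cG_+\wedge_L\res_L(H\uFp\wedge C)$ is contractible. It can be settled even faster by noting that $\Phi^L(H\uFp\wedge C)\simeq *$ for every $L$. For $L\not\le H_i$ this holds because $\Phi^L C\simeq *$. For $L\le H_i$ it holds because $\Phi^L C\simeq\Sigma^2 S/j$ and $\Phi^L H\uFp$ is a ring spectrum receiving a ring map from $(H\uFp)^L$, so $p=0$ in its $\pi_0$ and $j$ acts invertibly.

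Second, $p\nmid j$ is not used only in the degree step. It is also what makes $(\alpha_i^{\,j})^L=0$ for $L\not\le H_i$, although this is built into $\alpha_i^{\,j}$ having kernel exactly $H_i$.
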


\begin{proof}
We prove the statement for $\alpha_0$; the remaining cases are analogous.

It suffices to show that
\[
\underline{\pi_k}\bigl(S^{\alpha_0^{\,j}-\alpha_0}\wedge H\uFp\bigr)
\;\cong\;
\underline{H}^{\alpha_0^{\,j}-\alpha_0-k}_{\cG}(S^0;\uFp)
\]
is concentrated in degree $k=0$. By the uniqueness of equivariant Eilenberg--Mac Lane spectra, this implies the desired equivalence.

By Remark~\ref{cpgrad}, it is enough to compute the values of the Mackey functor
$\underline{H}^{\alpha_0^{\,j}-\alpha_0-k}_{\cG}(S^0;\uFp)$
on the orbits $\cG/\cG$ and $\cG/H$, where $H$ is a subgroup of order $p$. We will show that for $k=0$ it takes the value $\Fp$ on both orbits, with restriction map the identity and transfer map zero, and that it vanishes for all $k\neq 0$.

Consider the long exact sequence in $\RO(\cG)$-graded cohomology induced by the cofiber sequence
\[
S(\alpha_0)_+ \longrightarrow S^0 \longrightarrow S^{\alpha_0} :
\]
\[
\begin{aligned}
0 \to\;& 
\uH^{\alpha^j_0-2-\alpha_0}_{\cG}(S^0;\uFp)
\to \uH^{\alpha^j_0-2}_{\cG}(S^0;\uFp)
\to \uH^{\alpha^j_0-2}_{\cG}(S(\alpha_0)_{+};\uFp) \\
\to\;&
\uH^{\alpha^j_0-1-\alpha_0}_{\cG}(S^0;\uFp)
\to \uH^{\alpha^j_0-1}_{\cG}(S^0;\uFp)
\to \uH^{\alpha^j_0-1}_{\cG}(S(\alpha_0)_{+};\uFp) \\
\to\;&
\uH^{\alpha^j_0-\alpha_0}_{\cG}(S^0;\uFp)
\to \uH^{\alpha^j_0}_{\cG}(S^0;\uFp)
\to 0 .
\end{aligned}
\]

From the vanishing of the remaining terms in the relevant degrees, we see that
\[
\underline{H}^{\alpha^j_0-\alpha_0-k}_{\cG}(S^0;\uFp)=0
\qquad
\text{for all } k\in \Z\setminus\{0,1,2\}.
\]

For $k=1,2$, it suffices to show that
\[
\underline{H}^{\alpha^j_0-\alpha_0-k}_{\cG}(S^0;\uFp)(\cG/\cG)=0 .
\]
This follows since the map
\[
\uH^{\alpha^j_0-k}_{\cG}(S^0;\uFp)
\longrightarrow
\uH^{\alpha^j_0-k}_{\cG}(S(\alpha_0)_+;\uFp)
\]
is an isomorphism at the $\cG/\cG$-level for $k=1,2$, using the compatibility of the restriction and transfer maps.

For $k=0$, we have
\[
\uH^{\alpha^j_0-\alpha_0}_{\cG}(S^0;\uFp)(\cG/\cG)
\;\cong\;
\uH^{\alpha^j_0}_{\cG}(S^0;\uFp)(\cG/\cG)
=
\Fp\{a_{\alpha^j_0}\}.
\]

For a subgroup $H\neq H_0$ of order $p$, consider the following exact sequences:
\[
\begin{array}{cc}
\xymatrix{
\cG/\cG \colon & \Fp\{\kappa_{\alpha_0}\} \ar[r]^{} \ar@/^.5pc/[d]^{1} &\Fp \ar[r]^{} \ar@/^.5pc/[d]^{}& \widetilde{H}^{\alpha^j_{0}-\alpha_0}_{\cG}(S^0;\uFp) \ar[r]^{1} \ar@/^.5pc/[d]^{} &\Fp\{a_{\alpha^j_0}\} \ar@/^.5pc/[d]^{1}\to 0   \\
\cG/H  \colon &  \Fp\{\kappa_{\xi}\} \ar@/^.5pc/[u]_{0} \ar[r]^{1} &\Fp \ar[r]^{0} \ar@/^.5pc/[u]_{} &\Fp \ar[r]^{1} \ar@/^.5pc/[u]_{} &\Fp\{a_{\xi}\} \ar@/^.5pc/[u]_{0}\to 0   
}
\end{array}
\]
where $\xi=\res^{\cG}_H(\alpha_0)$. Commutativity shows that the restriction map is the identity and the transfer map is zero at $\uH_{\cG}^{\alpha^j_0-\alpha_0}(S^0;\uFp)$.

For $H=H_0$, we have the exact sequences
\[
\begin{array}{cc}
\xymatrix{
\cG/\cG \colon & \Fp\{\kappa_{\alpha_0}\} \ar[r]^{1} \ar@/^.5pc/[d]^{0} &\Fp \ar[r]^{} \ar@/^.5pc/[d]^{}& \widetilde{H}^{\alpha^j_{0}-\alpha_0}_{\cG}(S^0;\uFp) \ar[r]^{1} \ar@/^.5pc/[d]^{} &\Fp\{a_{\alpha^j_0}\} \ar@/^.5pc/[d]^{0}\ar[r]^{} & 0   \\
\cG/H_0  \colon &  0 \ar@/^.5pc/[u]_{0} \ar[r]^{0} &\Fp \ar[r]^{1} \ar@/^.5pc/[u]_{} &\Fp \ar[r]^{0} \ar@/^.5pc/[u]_{} & \quad 0 \ar@/^.5pc/[u]_{0} \quad\ar[r]^{}    
    &   0
}
\end{array}
\]

This shows that $\tr_{H_0}^{\cG}=0$, while the identity
\[
\res_e^{H_0}\res_{H_0}^{\cG}
=
\res_e^{H_1}\res_{H_1}^{\cG}
=\Id
\]
implies that $\res_{H_0}^{\cG}$ is the identity. This completes the proof.
\end{proof}

\begin{remark}
    
Since the representation ring of $\cG$ is given by
\[
\RO(\cG)
=
\Z\{\,1,\alpha_i^{\,j},\beta^{\,j}
\mid
0\le i\le p-1,\;
1\le j\le (p-1)/2
\},
\]
the preceding proposition shows that it suffices to consider gradings of the form
\[
\bigstar
=
\sum_{i=0}^{p-1} a_i\alpha_i + b\beta + c,
\qquad
a_i,b,c\in\Z.
\]

In contrast, for the Klein four group $\K$, there are three nontrivial one-dimensional real representations $\alpha_0$, $\alpha_1$, and $\beta$, with kernels $H_0$, $H_1$, and $K$, respectively. Consequently,
\[
\RO(\K)
=
\Z\{\,1,\alpha_0,\alpha_1,\beta\,\}.
\]

Since the cases $p=2$ and odd $p$ exhibit fundamentally different behavior, we begin by treating the case where $p$ is odd.

\end{remark}

\sect{Bredon cohomology of \(E_{C_p}\mathbb{Z}/p\) for odd primes \(p\)}

In this section, we compute the \(\RO(\mathcal G)\)-graded Bredon cohomology of certain equivariant universal spaces. 
To this end, we begin by analyzing the spaces \(E\mathcal F(H_i)\) associated to the family of subgroups generated by \(H_i\).

For the computation of 
\(\widetilde{H}^{\bigstar}_{\mathcal G}(E\mathcal F(H_i)_{+};\uFp)\), 
we use the identification
\[
E\mathcal F(H_i) \;\cong\; E\mathcal G/H_i \;\simeq\; S(\infty \alpha_i),
\]
which provides a natural skeletal filtration of \(E\mathcal F(H_i)\).
The associated graded pieces of this filtration satisfy
\[
E\mathcal F(H_i)^{(s)} / E\mathcal F(H_i)^{(s-1)}
\;\cong\;
\bigvee_{e \in I(s)} \mathcal G/H_i{}_+ \wedge S^s,
\]
where \(I(s)\) indexes the \(s\)-cells.

Applying the homotopy fixed point spectral sequence \cite{HM17}, where the $E_1$--page is given by: 
\[E_1^{s,t}(V)=\pi_{t-s}^{\cG}\Fun(E\cF(H_i)^{(s)}/E\cF(H_i)^{(s-1)}, \Sigma^{-V} H\uFp)\]
for $V\in \RO(\cG)$.  
By making some identification using adjunctions, one can calculate the $E_2$--page as:
\[
E^{s,V}_2= H^s (B(\cG/H_i); \pi^{H_i}_{\res_{H_i}(V)}(H\uFp)) \Rightarrow H^{s - V}_\cG(E\cF(H_i); \uFp),
\]
with differentials $d_r: E^{s,V}_r \to E^{s+r,V-r+1}_r$ for $r\geq 2.$ Since the action of $\cG/H_i$ on the coefficient is trivial (as everything is in $\Fp$). Therefore, the $E_2$--page using Proposition~\ref{c_2} can be written as 
\[E_2^{\ast, \bigstar}=\bMp\otimes\frac{\Fp[t,y]}{(t^2)} \otimes\Fp[u_{\alpha_i}^{\pm}, u_{\alpha_j - \beta}^\pm:j \neq i]\]
where the bidegrees are given by
 $|t|=(1,0)$, $|y|=(2,0)$, $|u_\beta|=(0,{\beta}-|\beta|)$, $|a_{\beta}|=(0,{\beta})$, $|\kappa_{\beta}|=(0,{\beta}-1)$, $|u_{\alpha_i}|= (0,{\alpha_i} -|{\alpha_i}|)$ and $|u_{\alpha_j -\beta}|= (0,{\alpha_j} -\beta).$

One derives that all the differentials are zero and hence the spectral sequence collapses at $E_2$--page. Since the spectral sequence is over $\Fp$, all the extensions are trivial. Thus,

\begin{myeq}\label{comh1}
\widetilde{H}^{\bigstar}_{\cG}(E\cF(H_i)_+;\uFp) \cong \bMp \otimes \dfrac{\Fp[t,y,u^{\pm}_{\alpha_i},u^{\pm}_{\alpha_j-\beta}\colon j \neq i]}{(t^2)}.
\end{myeq}

A similar computation can be used to compute the Bredon cohomology of free space $E\cG$, whose $E_2$--page is given by
\[E_2^{s, V}=H^s(B\cG;\pi_{\dim(V)}(H\Fp)) \Rightarrow \widetilde{H}^{s-V}_{\cG}(E{\cG}_+;\uFp)\]
with differential $d_2:E_2^{s, V} \to E_2^{s+2, V-1}$.
Since $\pi_{\dim(V)}(H\Fp)$ is concentrated in $\dim(V)=0$. Thus, the spectral sequence collapses at the  $E_2$-page. Therefore,
\begin{myeq}\label{cal1}
\widetilde{H}^{\bigstar}_{\cG}(E{\cG}_+;\uFp) \cong \dfrac{\Fp[t_0, t_1,y_0,y_1,u_{\alpha_i}^\pm, u_{\beta}^\pm\colon 0\leq i \leq p-1]}{(t^2_0,t^2_1)} 
\end{myeq}
where the bidegrees are given by
 $|t_l|=(1,0)$, $|y_l|=(2,0)$ for $l=0,1$.

Before we proceed to understand the Bredon cohomology of $E\cFH{i}$, which can be calculated with the help of cofiber sequence $\eqref{cofseq3}$, we describe map $q_i$ on cohomology as follow:
\begin{prop}\label{q_i-map}
The induced map 
\[
q^{\bigstar}_{i}:
\widetilde{H}^{\bigstar}_{\cG}(E\cF(H_i)_{+};\uFp)
\longrightarrow 
\widetilde{H}^{\bigstar}_{\cG}(E\cG_{+};\uFp)
\]
acts on generators by
\[
\begin{alignedat}{2}
t &\longmapsto t_0 - i t_1, &\qquad 
y &\longmapsto y_0 - i y_1, \\[2pt]
u_{\beta} &\longmapsto u_{\beta}, &
u_{\alpha_i} &\longmapsto u_{\alpha_i}, \\[2pt]
u_{\alpha_j-\beta}^{\pm} &\longmapsto (u_{\alpha_j}u_{\beta}^{-1})^{\pm}, & & (j\neq i), \\[4pt]
a_{\beta} &\longmapsto y_1 u_{\beta}, &
\kappa_{\beta} &\longmapsto t_1 u_{\beta}, \\[3pt]
 \Sigma^{-1}\!\dfrac{\kappa_{\beta}}{u_{\beta}^{j}a_{\beta}^{k}} &\longmapsto 0, &
\Sigma^{-1}\!\dfrac{1}{u_{\beta}^{j}a_{\beta}^{k}} &\longmapsto 0.
\end{alignedat}
\]
\end{prop}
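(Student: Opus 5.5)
The plan is to obtain $q_i^{\bigstar}$ from naturality of the homotopy fixed point spectral sequences used to prove \eqref{comh1} and \eqref{cal1}. The map $q_i\colon E\cG\to E\cF(H_i)\simeq E(\cG/H_i)$ can be taken to be a cellular map covering the quotient $\pi_i\colon\cG\to\cG/H_i$. It therefore induces a map of spectral sequences from the $E_2$-page $H^s(B(\cG/H_i);\pi^{H_i}_{\res V}H\uFp)$ to $H^s(B\cG;\pi_{\dim V}H\Fp)$. On coefficients this map is restriction from $H_i$ to the trivial subgroup. On group cohomology it is inflation along $\pi_i$. Both spectral sequences collapse at $E_2$ and have trivial extensions. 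So $q_i^{\bigstar}$ is a ring map, and I can compute it generator by generator, modulo checking that no filtration jump occurs. A filtration jump cannot occur here, because the target is concentrated in the filtration forced by the degree of each generator.

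\textbf{Step 1: the classes $t$ and $y$.} Identify $H^1(B\cG;\Fp)=\Hom(\cG,\Fp)$ with basis $t_0,t_1$ dual to $a,b$, and take $y_0,y_1$ to be their Bocksteins. The class $t$ is a generator of $\Hom(\cG/H_i,\Fp)$. Its inflation is the homomorphism $\cG\to\Fp$ whose kernel is $H_i=\langle ab^i\rangle$, so it is a linear combination of $t_0,t_1$ vanishing on $ab^i$. With the normalization of $t$ (and of the basis $t_0,t_1$) fixed in \eqref{comh1}, \eqref{cal1}, this combination is $t_0-it_1$. Since inflation commutes with the Bockstein, $y\mapsto y_0-iy_1$. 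The bookkeeping of this sign and normalization convention is routine.

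\textbf{Step 2: the Thom classes.} The classes $u_{\alpha_i}$, $u_\beta$ and $u_{\alpha_j-\beta}$ are Thom classes defined for the representation itself, so naturality sends each to the class of the same name. On $E\cG_+$ all $u$'s are invertible, so $u_{\alpha_j-\beta}\mapsto u_{\alpha_j}u_\beta^{-1}$.

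\textbf{Step 3: the $\bMp$ generators.} For $a_\beta$ and $\kappa_\beta$, I restrict to $E\cG$, where Bredon cohomology is Borel cohomology. There, $a_\beta$ becomes the mod $p$ Euler class of the $2$-plane bundle given by $\beta$, written in the $u_\beta$-twisted grading. Since $\beta$ is pulled back from a character of $\cG/K$, and $\cG/K$ is detected by the coordinate dual to $b$ under the same convention as in Step 1, the Euler class is $y_1u_\beta$. The class $\kappa_\beta$ is the lift of the Bockstein-preimage, so $\kappa_\beta\mapsto t_1u_\beta$. The relation $\kappa_\beta^2=0$ is compatible with $t_1^2=0$.

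\textbf{Step 4: the negative classes.} The classes $\Sigma^{-1}\kappa_\beta^\epsilon/(u_\beta^ja_\beta^k)$ with $j,k\ge1$ have underlying nonequivariant degree that is negative. They also sit in filtration $0$ of a degree in which the Borel target is zero: the target is $\Fp[t_\ell,y_\ell]/(t_\ell^2)$ tensored with invertible $u$'s, so it lives in nonnegative $s$ with integer part matching. Hence these classes map to $0$. A supporting check is that they are annihilated by a power of $a_\beta$, while the image of $a_\beta$, namely $y_1u_\beta$, is a nonzerodivisor in the target.

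The main obstacle is Step 3. It requires identifying the images of $a_\beta$ and $\kappa_\beta$ exactly, including which of $t_0,t_1$ appears. I would settle this by restricting further to the subgroup $K$ and to $H_i$: $a_\beta$ restricts to $0$ on $K$, so its image must involve only the coordinate detecting $\cG/K$. I would then compare with Proposition~\ref{c_2} for $C_p$ to fix the nonzero scalar.
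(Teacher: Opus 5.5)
Your argument is correct in substance and follows the paper's route. The class $t$ comes from $H^1=\Hom(-,\Fp)$ and inflation along $\cG\to\cG/H_i$; your ``combination vanishing on $ab^i$'' is the paper's coset computation $aH_i=b^{-i}H_i$. The class $y$ follows from the Bockstein. The classes $a_\beta,\kappa_\beta,u_\beta,u_{\alpha_i}$ follow from naturality through the point: your Euler/Thom-class reasoning is exactly what the paper encodes in diagram \eqref{locsquare}, $q_i^{\bigstar}\circ\pi_i^{\bigstar}=q^{\bigstar}$. The $\Sigma^{-1}$-classes vanish for degree reasons, and your $a_\beta^k$-annihilation check is a nice independent confirmation of that last point.

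However, the sentence asserting that no filtration jump occurs is false and should be removed.
\begin{itemize}
\item In the source spectral sequence, $a_\beta$ and $\kappa_\beta$ sit in filtration $0$. In the Borel target, the degrees $\beta$ and $\beta-1$ are concentrated in filtrations $2$ and $1$.
\item On $E_2$, the filtration-$0$ map is induced by $\res^{H_i}_e$ on coefficients, which kills $a_\xi$ and $\kappa_\xi$. So the spectral-sequence comparison would predict $a_\beta\mapsto 0$ and $\kappa_\beta\mapsto 0$. The true images $y_1u_\beta$ and $t_1u_\beta$ are precisely filtration jumps.
\item The comparison of spectral sequences is valid only for $t$, $y$ and the $u$-classes, whose source filtration equals their underlying degree. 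This, rather than ``Thom class naturality'', is what handles $u_{\alpha_j-\beta}$, which is not a class pulled back from the point.
\item For $a_\beta$ and $\kappa_\beta$ you must argue through $S^0$, as your Step 3 in fact does. This also settles the naming ambiguity: degree $\beta$ of the source contains $a_\beta$, $t\kappa_\beta$ and $yu_\beta$, so ``$a_\beta$'' must mean the pullback of the Euler class along $\pi_i$.
\end{itemize}

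Finally, the method in your last paragraph cannot pin down the image of $a_\beta$. Restricting to $K$ or to $H_i$ kills $t_0t_1u_\beta$, which lies in degree $\beta$ of the target. What forces a multiple of $y_1u_\beta$ is that the Euler class is inflated from $B(\cG/K)$, whose $H^2$ is spanned by $y$. Once that is fixed, $\kappa_\beta$ is determined either because the Bockstein is injective in degree $\beta-1$ of the target, or by inflating $\kappa_\xi\mapsto tu_\xi$ from $\cG/K\cong C_p$.
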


 \begin{proof}
 Let $H_i$ be the size $p$ subgroup of $\cG$ as defined earlier. We consider  $\overline{(1,b)}$ be a fixed generator of $\cG/H_i$. 

Since $\Fp$ is trivial $\cG$--module, hence one can identifies 
\[\widetilde{H}_{\cG}^1(E\cF(H_i)_{+},\uFp)=\Hom(\cG/H_i,\Fp), \quad \widetilde{H}_{\cG}^1(E\cG_{+},\uFp)=\Hom(\cG/H_i,\Fp),
\]
which gives the following commutative square diagram
\[
\begin{tikzcd}[row sep=large, column sep=large]
\Hom(\cG/H_i,\Fp) \arrow[r, "q^{\ast}_{i}"] \arrow[d, "\cong" '] 
  & \Hom(\cG,\Fp) \arrow[d, "\cong"] \\
\Fp\{t\} \arrow[r] 
  & \Fp\{t_0,t_1\}.
\end{tikzcd}
\]
Here the left vertical isomorphism $\Hom(G/H_i,\Fp)\cong \Fp\{t\}$ sends a morphism $\psi:G/H_i\to\Fp$ to $\psi(\overline{(1,b)})$ and right vertical isomorphism sends $\varphi:\cG\to\Fp$ to $(\varphi(1,b),\varphi(a,1))$.

Note that computing the lower horizontal map is equivalent to calculating $0 \leq l \leq p-1$ such that $aH_i=b^lH_i$ where $aH_i=\{a^2b^i,a^3b^{2i},\cdots, a\}$ and $b^lH_i=\{ab^{i+l},a^2b^{2i+l},\cdots, b^l\}$, which gives $l=-i$. Hence, the lower horizontal map is given by $t\mapsto t_0-it_1$. Action on $y$ follows from the fact that $\beta(x)=y$ where $\beta$ is the B\"{o}ckstein in ordinary cohomology.

For the action of other classes, consider the following commutative diagram of rings 
\begin{myeq}\label{locsquare}
\xymatrix{ & \widetilde{H}^\bigstar_\cG(E\cF(H_i)_{+};\uFp) \ar[dl]_{q_i^\bigstar}  \\ \widetilde{H}^\bigstar_\cG(E\cG_+;\uFp)   & & \widetilde{H}^\bigstar_\cG(S^0;\uFp) \ar[dl]^{\pi_j^\bigstar}\ar[ll]_{q^\bigstar} \ar[ul]_{\pi_i^\bigstar} \\ & \widetilde{H}^\bigstar_\cG(E\cF(H_j)_{+};\uFp) \ar[ul]^{q_j^\bigstar} }
\end{myeq}
The action of ring homomorphism $q_i^\bigstar$ on other classes follow from the fact that $a_{\alpha_i},a_{\alpha_j},a_{\beta},u_{\alpha_i},u_{\alpha_j},u_{\beta},\kappa_{\alpha_i},\kappa_{\alpha_j}$, and 
$\kappa_{\beta}$ maps nontrivially under $q^{\bigstar}$, whereas the negative suspension classes map to zero due to degree reasons.
 \end{proof}

From the preceding discussion, the classes 
$a_{\alpha_i}, \kappa_{\alpha_i}, a_{\beta}, \kappa_{\beta}\in 
\widetilde{H}^{\bigstar}_{\cG}(S^{0};\uFp)$ act on 
$\widetilde{H}^{\bigstar}_{\cG}(E\cG_{+};\uFp)$ by multiplication with the classes  
$(y_{0}-iy_{1})u_{\alpha_i}$, $(t_{0}-it_{1})u_{\alpha_i}$, $y_{1}u_{\beta}$, and 
$t_{1}u_{\beta}$, respectively.  
Using these identifications, we may replace the integer–graded generators by the 
corresponding $\RO(\cG)$–graded classes.  
This yields the re–expressed descriptions of  
$\widetilde{H}^{\bigstar}_{\cG}(E\cG_{+};\uFp)$ and 
$\widetilde{H}^{\bigstar}_{\cG}(E\cF(H_i)_{+};\uFp)$ that will be used in the subsequent computations.

\begin{prop}\label{cohomologyofeg}
For an odd prime $p$,
\begin{enumerate}

\item The $\RO(\cG)$–graded Bredon cohomology of the free 
$\cG$–spaces $E\cG$ is given by:
\[
\widetilde{H}^{\bigstar}_{\cG}(E\cG_{+};\uFp)
\;\cong\;
\dfrac{
\Fp\!\left[
a_{\alpha_j},a_{\beta},
\kappa_{\alpha_j},\kappa_{\beta},
u^{\pm}_{\alpha_j},u^{\pm}_{\beta} \colon 0 \le j \le p-1
\right]
}{\!\!\sim},
\]
where the relations $\sim$ are induced from the following substitutions
\begin{align*}
a_{\alpha_j} &= (y_{0}-jy_{1})\,u_{\alpha_j}, &
\kappa_{\alpha_j} &= (t_{0}-jt_{1})\,u_{\alpha_i}, \\
a_{\beta} &= y_{1}\,u_{\beta}, &
\kappa_{\beta} &= t_{1}\,u_{\beta}, \\
t_{0}^{2} &= t_{1}^{2}=0.
\end{align*}

\item For each $0 \le i \le p-1$, the $\RO(\cG)$–graded Bredon cohomology of $E\cF(H_i)$ is
\[
\widetilde{H}^{\bigstar}_{\cG}(E\cF(H_i)_{+};\uFp)
\;\cong\;
\bMp \otimes
\dfrac{\Fp\!\left[
a_{\alpha_i},\kappa_{\alpha_i},
u^{\pm}_{\alpha_i},
u^{\pm}_{\alpha_j-\beta}\;:\; j\neq i
\right]}{(\kappa_{\alpha_i}^{2})}.
\]
\end{enumerate}
\end{prop}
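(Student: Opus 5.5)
The plan is to obtain both parts by rewriting the integer-graded descriptions \eqref{cal1} and \eqref{comh1} in terms of $\RO(\cG)$-graded Euler, Thom and $\kappa$-classes. The dictionary is supplied by the ring maps in \eqref{locsquare}, namely $q^\bigstar$ and $\pi_i^\bigstar$, together with Proposition~\ref{q_i-map}.

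\textbf{Part (1).} Start from \eqref{cal1}:
\[
\widetilde{H}^{\bigstar}_{\cG}(E\cG_+;\uFp)\cong \Fp[t_0,t_1,y_0,y_1,u^{\pm}_{\alpha_j},u^{\pm}_\beta]/(t_0^2,t_1^2).
\]
Here the $u$'s are invertible because $E\cG$ is free, so each $S^{V}\wedge E\cG_+$ is equivalent to $S^{|V|}\wedge E\cG_+$.

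First compute the images under $q^\bigstar$ of $a_{\alpha_j},\kappa_{\alpha_j},a_\beta,\kappa_\beta$. Factor $q$ through $E\cF(H_j)$ and use Proposition~\ref{q_i-map}. On $E\cF(H_j)\simeq S(\infty\alpha_j)$, the Euler class $a_{\alpha_j}$ pulls back from $E(\cG/H_j)$ to the generator $y$ times the Thom class $u_{\alpha_j}$. This is the standard fact that, once Thom classes are invertible, the Euler class of a complex line is its first Chern class times $u$. Likewise $\kappa_{\alpha_j}$, which detects the Bockstein-predecessor, maps to $t\,u_{\alpha_j}$. Applying $q_j^\bigstar$ then gives
\[
a_{\alpha_j}\mapsto (y_0-jy_1)u_{\alpha_j},\qquad \kappa_{\alpha_j}\mapsto (t_0-jt_1)u_{\alpha_j}.
\]
The $\beta$-classes are handled by the same argument for $K$, with the character $t_1$.

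Next, since the $u$'s are units, solve these substitutions for the integer classes: $y_1=a_\beta u_\beta^{-1}$, $t_1=\kappa_\beta u_\beta^{-1}$, $y_0=a_{\alpha_0}u_{\alpha_0}^{-1}$ and $t_0=\kappa_{\alpha_0}u_{\alpha_0}^{-1}$. This shows that the proposed generators generate the ring, and the relations $\sim$ are exactly those obtained by eliminating the $t$'s and $y$'s. For example, $a_{\alpha_j}u_{\alpha_j}^{-1}=a_{\alpha_0}u_{\alpha_0}^{-1}-j\,a_\beta u_\beta^{-1}$, and $\kappa_\beta^2=0$ follows from $t_1^2=0$. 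No further relations can occur, because the substitution is an isomorphism of presentations.

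\textbf{Part (2).} Start from \eqref{comh1}, whose coefficient factor is $\bMp$, the cohomology of a point for $\cG/H_i\cong C_p$ pulled back along the inflation/restriction to the $K$-grading. Since $E\cF(H_i)$ is $\cG/H_i$-free, the same argument as in Part~(1) identifies $y u_{\alpha_i}=a_{\alpha_i}$ and $t u_{\alpha_i}=\kappa_{\alpha_i}$, with $t^2=0$ becoming $\kappa_{\alpha_i}^2=0$. This replaces $\Fp[t,y,u^\pm_{\alpha_i}]/(t^2)$ by $\Fp[a_{\alpha_i},\kappa_{\alpha_i},u^{\pm}_{\alpha_i}]/(\kappa^2_{\alpha_i})$ and yields the claimed tensor product.

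The main obstacle is the identification $a_{\alpha_j}=y\,u_{\alpha_j}$ and $\kappa_{\alpha_j}=t\,u_{\alpha_j}$ up to units. I would check this by restricting to a $C_p$-quotient, using the known $C_p$ computation $\widetilde{H}^{\bigstar_\xi}_{C_p}(EC_{p+})=\Fp[a_\xi,\kappa_\xi,u_\xi^{\pm}]/(\kappa_\xi^2)$. Comparing $\Fp$-dimensions degreewise via the homotopy fixed point spectral sequence then pins down the scalars, which may be normalised to $1$.
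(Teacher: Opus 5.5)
Your proposal is correct and is essentially the paper's argument. The paper likewise starts from the homotopy fixed point computations \eqref{cal1} and \eqref{comh1}, uses Proposition~\ref{q_i-map} and the square \eqref{locsquare} to see that $a_{\alpha_j},\kappa_{\alpha_j},a_\beta,\kappa_\beta$ act by $(y_0-jy_1)u_{\alpha_j}$, $(t_0-jt_1)u_{\alpha_j}$, $y_1u_\beta$, $t_1u_\beta$, and then trades the integer-graded generators for these $\RO(\cG)$-graded classes using invertibility of the $u$'s.

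Two small slips, neither of which affects the argument. First, the $u$'s are invertible because of the Thom isomorphism after smashing with $H\uFp$; the $\cG$-spectra $S^V\wedge E\cG_+$ and $S^{|V|}\wedge E\cG_+$ are not themselves equivalent in general. Second, the $\bMp$ factor in \eqref{comh1} is the coefficient ring of the isotropy group $H_i$ in the grading $\res_{H_i}\beta$, while $\cG/H_i$ is the free direction that contributes $\Fp[t,y]/(t^2)$.
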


We now turn to the computation of the Bredon cohomology of the $E\cFH{i}$.  
These calculations are controlled by the cofiber sequence \eqref{cofseq3}, and in 
particular by the computations of the cokernel and the kernel of the induced map 

\[
q_i^\bigstar: \tHG(E\cF(H_i)_+; \uFp) \to \tHG(E\cG_+; \uFp).
\]
Before we calculate them, we observe the following algebraic expression which make it easier: 
\begin{prop}
\label{alglemma}
Let \(p\) be a prime.  As an abelian group, there is an isomorphism
\[
\frac{\Fp[x^{\pm}, y^{\pm}]}{\Fp\left[\frac{x}{y}, \frac{y}{x}, x, y\right]} \cong x^{-1} \Fp\left[x^{-1}, \left(\frac{y}{x}\right)^{\pm}\right].
\]

\end{prop}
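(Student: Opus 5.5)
The plan is to compare monomial bases on both sides. The Laurent ring $\Fp[x^{\pm},y^{\pm}]$ has $\Fp$-basis $\{x^a y^b : a,b\in\Z\}$. Every subspace that occurs is spanned by a subset of these monomials. For such subspaces, the quotient has as basis the images of the complementary monomials. So the statement reduces to identifying two sets of monomials.

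\emph{Step 1: the subring $R=\Fp[\tfrac{x}{y},\tfrac{y}{x},x,y]$.} I will show that $R$ is exactly the span of the monomials $x^a y^b$ with total degree $a+b\ge 0$.
\begin{itemize}
\item[(i)] Each generator $x/y$, $y/x$, $x$, $y$ is a monomial of total degree $0$ or $1$. Products of monomials are monomials, and total degree is additive. Hence $R$ is spanned by monomials of total degree $\ge 0$.
\item[(ii)] Conversely, let $a+b=n\ge 0$. Then
\[
x^a y^b = x^{n}\left(\tfrac{y}{x}\right)^{b}.
\]
Here $x^n$ is a nonnegative power of $x$, and $(y/x)^{b}$ is a power of $y/x$ or of $x/y$ according to the sign of $b$. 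So this monomial lies in $R$.
\end{itemize}

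\emph{Step 2: the quotient.} By Step 1, the quotient $\Fp[x^{\pm},y^{\pm}]/R$ has $\Fp$-basis the classes of $x^a y^b$ with $a+b\le -1$. Reparametrize by $n=a+b$ and $b$, writing each such monomial uniquely as
\[
x^{n}\left(\tfrac{y}{x}\right)^{b}, \qquad n\le -1,\; b\in\Z.
\]

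\emph{Step 3: the right-hand side.} The space $x^{-1}\Fp[x^{-1},(y/x)^{\pm}]$ has $\Fp$-basis
\[
x^{-1-k}\left(\tfrac{y}{x}\right)^{b}, \qquad k\ge 0,\; b\in\Z.
\]
These monomials are linearly independent in the Laurent ring, since $(n,b)\mapsto(a,b)=(n-b,b)$ is a bijection. With $n=-1-k$, this is the same indexing set as in Step 2.

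\emph{Step 4: the isomorphism.} Sending each basis class to the matching monomial gives an isomorphism of $\Fp$-vector spaces, hence of abelian groups. It is induced by the composite
\[
x^{-1}\Fp\left[x^{-1},\left(\tfrac{y}{x}\right)^{\pm}\right]\hookrightarrow \Fp[x^{\pm},y^{\pm}] \twoheadrightarrow \Fp[x^{\pm},y^{\pm}]/R,
\]
and Steps 1--3 show this composite is bijective.

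The only step needing care is Step 1(ii): checking that $R$ contains every monomial of nonnegative total degree, and in particular that negative powers of one variable are reached through $x/y$ or $y/x$. The total-degree grading makes this immediate, so I expect no real obstacle.
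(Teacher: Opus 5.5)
Your proof is correct and is essentially the paper's argument. The paper defines a projection $\phi$ that kills monomials of nonnegative total degree, keeps those of negative total degree, and asserts its kernel is $\Fp[\frac{x}{y},\frac{y}{x},x,y]$; your composite of inclusion followed by quotient is just the inverse of the map $\phi$ induces on the quotient. Your Step 1 actually verifies that this subring is exactly the span of monomials of total degree at least $0$, which the paper leaves as ``straightforward'', and working directly with monomials avoids the sign errors in the paper's case formulas for $\phi$.
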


\begin{proof}
For $m, n \in\Z$, define a map $\phi: \Fp[x^{\pm}, y^{\pm}] \to x^{-1} \Fp[x^{-1}, (y/x)^{\pm}]$,
\[
\phi(x^ny^m)=
\begin{cases}
    0, & \text{if } m,n\geq 0\\
    \frac{1}{x^ny^m}, & \text{if } m,n<0\\
   y^{-(m+n)}(\frac{x}{y})^n ,& \text{if } m<0;n\geq0;m+n<0 \\
   x^{-(n+m)}(\frac{x}{y})^m ,& \text{if } n<0;m\geq0;m+n<0 \\
   0, & \text{otherwise.} 
\end{cases}
\]
It is straightforward to verify that $\phi$ is a surjective homomorphism with kernel given by the subalgebra $\Fp[\frac{x}{y}, \frac{y}{x}, x, y]$. 
\end{proof}

This proposition enables an explicit computation of $\Coker q_i^{\bigstar - 1}$. Since the multiplicative structure is trivial on the  $\Sigma^{-1}\Coker q_i^{\bigstar - 1}$, we obtain the following description of the cohomology ring:

\begin{prop}\label{pcohefbhi}
For \(0 \le i \le p-1\), the reduced Bredon cohomology
\(\widetilde{H}^{\bigstar}_{\mathcal G}(E\cFH{i};\uFp)\)
is given by
\begin{align*}
\widetilde{H}^{\bigstar}_{\mathcal G}(E\cFH{i};\uFp)
\;=\;&
\bigoplus_{k_1,k_2 \ge 1}
\Fp\Bigl\langle
\Sigma^{-1}\frac{\kappa_{\beta}^{\epsilon}}{a_{\beta}^{k_1}u_{\beta}^{k_2}}
\Bigr\rangle
\,
\frac{
\Fp\bigl[
a_{\alpha_i}, \kappa_{\alpha_i},
u_{\alpha_i}^{\pm},
u_{\alpha_j-\beta}^{\pm} \colon j \neq i
\bigr]
}{
(\kappa_{\alpha_i}^2)
}
\\[6pt]
&\;\oplus\;
\Sigma^{-1}u_{\beta}^{-1}
\Fp\bigl[
\kappa_{\alpha_i}, \kappa_{\alpha_j}, \kappa_{\beta},
a_{\alpha_i}, a_{\alpha_j}, a_{\beta},
\left(\tfrac{u_{\alpha_j}}{u_{\beta}}\right)^{\pm},
u_{\alpha_i}^{\pm}, u_{\beta}^{-1}
\colon j \neq i
\bigr]
\mod \Image q_i^{\bigstar}.
\end{align*}
\end{prop}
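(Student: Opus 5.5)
The approach is to apply the long exact sequence in $\RO(\cG)$-graded cohomology to the cofiber sequence \eqref{cofseq3}, $E\cG_+\xrightarrow{q_i}E\cF(H_i)_+\to E\cFH{i}$. This gives a short exact sequence
\[
0\to \Sigma^{-1}\Coker q_i^{\bigstar-1}\to \widetilde{H}^{\bigstar}_{\cG}(E\cFH{i};\uFp)\to \Ker q_i^{\bigstar}\to 0 ,
\]
and over $\Fp$ it splits additively. So the proof reduces to two algebraic tasks: determine $\Ker q_i^\bigstar$ and $\Coker q_i^\bigstar$, using the explicit descriptions of source and target in Proposition~\ref{cohomologyofeg} and the formulas for $q_i^\bigstar$ in Proposition~\ref{q_i-map}.

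\emph{Kernel.} Write the source as $\bMp\otimes R_i$, where $R_i=\Fp[a_{\alpha_i},\kappa_{\alpha_i},u^{\pm}_{\alpha_i},u^{\pm}_{\alpha_j-\beta}\colon j\neq i]/(\kappa_{\alpha_i}^2)$. By Proposition~\ref{q_i-map}, $q_i^\bigstar$ is injective on the polynomial part $\Fp[a_\beta,u_\beta,\kappa_\beta]/(\kappa_\beta^2)\otimes R_i$. The reason is that $a_\beta\mapsto y_1u_\beta$, $\kappa_\beta\mapsto t_1u_\beta$, $a_{\alpha_i}\mapsto(y_0-iy_1)u_{\alpha_i}$ and $\kappa_{\alpha_i}\mapsto (t_0-it_1)u_{\alpha_i}$. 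The elements $y_1,\,y_0-iy_1,\,t_1,\,t_0-it_1$ are algebraically independent modulo $t_0^2=t_1^2=0$, and the remaining generators go to units. The negative classes $\Sigma^{-1}\kappa_\beta^\epsilon/(a_\beta^{k_1}u_\beta^{k_2})$ all map to zero. Hence
\[
\Ker q_i^\bigstar=\bigoplus_{k_1,k_2\ge1}\Fp\Big\langle\Sigma^{-1}\frac{\kappa_\beta^{\epsilon}}{a_\beta^{k_1}u_\beta^{k_2}}\Big\rangle\otimes R_i ,
\]
which is the first summand of the statement.

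\emph{Cokernel.} The target is generated over the image of $a_{\alpha_i},\kappa_{\alpha_i},a_{\alpha_j},\kappa_{\alpha_j},a_\beta,\kappa_\beta$ by the Laurent monomials in $u_{\alpha_i},u_{\alpha_j},u_\beta$. The image contains all of $u_{\alpha_i}^{\pm}$, all of $(u_{\alpha_j}/u_\beta)^{\pm}$, and only nonnegative powers of $u_\beta$. Applying Proposition~\ref{alglemma} with $x=u_\beta$ and $y=u_{\alpha_j}$, the quotient of the Laurent part by $\Fp[x/y,y/x,x,y]$ is $u_\beta^{-1}\Fp[u_\beta^{-1},(u_{\alpha_j}/u_\beta)^\pm]$. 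Carrying the Euler and $\kappa$ classes along gives the cokernel as
\[
u_\beta^{-1}\Fp[\kappa\text{'s},a\text{'s},(u_{\alpha_j}/u_\beta)^\pm,u_{\alpha_i}^\pm,u_\beta^{-1}]
\]
modulo $\Image q_i^\bigstar$. Desuspending then gives the second summand.

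The step I expect to be the main obstacle is justifying the cokernel description rigorously. Here the Euler classes $a_{\alpha_j}$ $(j\ne i)$ are \emph{not} in the image while $a_\beta$ is. One has $a_{\alpha_j}=(y_0-jy_1)u_{\alpha_j}$, whereas the image only reaches combinations of $y_0-iy_1$ and $y_1$ times $u_{\alpha_j}u_\beta^{-1}$-multiples. I would handle this by working in the integer-graded presentation via $t_0,t_1,y_0,y_1$, checking degree by degree which monomials are hit, and only then rewriting in the $\RO(\cG)$-graded classes. This is why the statement leaves the answer ``mod $\Image q_i^\bigstar$'' rather than giving a closed basis. Finally, I would remark that products involving $\Sigma^{-1}\Coker$ classes vanish, since the connecting map is a module map whose image squares to zero, so the ring structure is as stated.
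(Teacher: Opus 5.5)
Your route is the paper's own. You use the long exact sequence of \eqref{cofseq3}, identify the kernel of $q_i^{\bigstar}$ as the non-polynomial part of $\bMp$ tensored with $\Fp[a_{\alpha_i},\kappa_{\alpha_i},u^{\pm}_{\alpha_i},u^{\pm}_{\alpha_j-\beta}]/(\kappa_{\alpha_i}^2)$ (injectivity on the polynomial part plus multiplicativity), and read the cokernel off Proposition~\ref{alglemma}. The skeleton is sound and more detailed than the paper's sketch.

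One assertion in your last paragraph is false, so the obstacle you anticipate does not exist. For $j\neq i$, the classes $a_{\alpha_j}$ and $\kappa_{\alpha_j}$ \emph{do} lie in $\Image q_i^{\bigstar}$. They come from the coefficient ring, and $q^{\bigstar}=q_i^{\bigstar}\circ\pi_i^{\bigstar}$ by \eqref{locsquare}. Explicitly, using $y_0-jy_1=(y_0-iy_1)+(i-j)y_1$,
\[
a_{\alpha_j}=q_i^{\bigstar}\bigl(a_{\alpha_i}u_{\alpha_i}^{-1}u_{\beta}u_{\alpha_j-\beta}+(i-j)\,a_{\beta}u_{\alpha_j-\beta}\bigr),\qquad
\kappa_{\alpha_j}=q_i^{\bigstar}\bigl(\kappa_{\alpha_i}u_{\alpha_i}^{-1}u_{\beta}u_{\alpha_j-\beta}+(i-j)\,\kappa_{\beta}u_{\alpha_j-\beta}\bigr).
\]
So they are merely redundant generators of the second summand. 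Your explanation of why the statement is phrased ``modulo $\Image q_i^{\bigstar}$'' rests on this mistake.

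What does deserve a sentence is your step ``carrying the Euler and $\kappa$ classes along''. Since $t_0-it_1,\,t_1,\,y_0-iy_1,\,y_1$ is a change of variables, the target has $\Fp$-basis
\[
\kappa_{\alpha_i}^{\epsilon}a_{\alpha_i}^{m}\kappa_{\beta}^{\epsilon'}a_{\beta}^{m'}\,u_{\alpha_i}^{b}u_{\beta}^{e}\prod_{j\neq i}u_{\alpha_j}^{c_j}.
\]
In this basis, $\Image q_i^{\bigstar}$ is exactly the span of the elements with $e+\sum_j c_j\ge 0$. Hence the cokernel is literally $\Fp[\kappa_{\alpha_i},a_{\alpha_i},\kappa_\beta,a_\beta]/(\kappa_{\alpha_i}^2,\kappa_\beta^2)$ tensored with the quotient of Proposition~\ref{alglemma}. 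That lemma is applied to all $u_{\alpha_j}$, $j\neq i$, simultaneously, together with $u_{\alpha_i}^{\pm}$.

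Had you written the basis with $y_1^{m'}t_1^{\epsilon'}$ in place of $a_\beta^{m'}\kappa_\beta^{\epsilon'}$, the condition on the $u_\beta$-exponent would instead read $e+\sum_j c_j\ge m'+\epsilon'$. The cokernel would then not look like a tensor product. This choice of basis is the only real subtlety, and no degree-by-degree check is needed.
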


\begin{proof}
Consider the long exact sequence in cohomology associated to the cofibre sequence \eqref{cofseq3}:
\[
\cdots \;\longrightarrow\; 
\tHG(E\cFH{i}; \uFp) \;\longrightarrow\; 
\tHG(E\cF(H_i)_+; \uFp) \xrightarrow{\,q_i^\bigstar\,} 
\tHG(E\cG_+; \uFp) 
\;\longrightarrow\; \cdots.
\]
By Propositions~\ref{q_i-map}, \ref{cohomologyofeg}, and \ref{alglemma}, the maps in this sequence can be described explicitly.  
The desired description of $\tHG(E\cFH{i}; \uFp)$ then follows by computing the kernel and cokernel of 
\(
q_i^\bigstar.
\)
\end{proof}

To study the $\RO(\cG)$--graded Bredon cohomology of $E\cF_{i}$, we make use of the relationship between the universal spaces for the families considered earlier, captured by the following structural lemma:

\begin{lemma}\label{pbfamily}
Let $\cF_1, \cF_2$ be families of subgroups of a finite group $\cG$. Then the diagram
\[
\xymatrix{
E(\cF_1 \cap \cF_2)_+ \ar[r] \ar[d] & E\cF_1{}_+ \ar[d] \\
E\cF_2{}_+ \ar[r] & E(\cF_1 \cup \cF_2)_+
}
\]
is a pullback in the category of genuine $\cG$-spectra.
\end{lemma}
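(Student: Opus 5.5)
The plan is to check the pullback property on geometric fixed points, or more simply on genuine fixed points. In genuine $\cG$-spectra, a commutative square is a pullback if and only if it is a pushout, since the category is stable. A square of $G$-spectra is a pushout precisely when it becomes a homotopy pushout of spectra after applying the categorical fixed point functor $(-)^H$ for every subgroup $H\le\cG$. So it suffices to verify the claim for suspension spectra of $G$-spaces on $H$-fixed points. Because $\Sigma^\infty_{\cG}$ commutes with homotopy pushouts, we may instead show that the square of based $\cG$-spaces is a homotopy pushout. That in turn can be checked on $H$-fixed points for each $H$, using the unstable space-level criterion; the disjoint basepoints are harmless.

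\textbf{Step 1: the maps.} The maps in the square come from the universal property of $E\cF$. Since $\cF_1\cap\cF_2\subseteq \cF_k\subseteq\cF_1\cup\cF_2$, there are $\cG$-maps, unique up to homotopy, between the corresponding universal spaces. Uniqueness also makes the square commute up to homotopy. Note that $\cF_1\cup\cF_2$ is again a family, being closed under conjugation and subgroups, and likewise $\cF_1\cap\cF_2$.

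\textbf{Step 2: fixed points.} Fix $H\le \cG$. By the defining property of universal spaces, each $E\cF^H$ is contractible if $H\in\cF$ and empty otherwise. There are four cases.
\begin{itemize}
\item If $H\notin\cF_1\cup\cF_2$, all four $H$-fixed point spaces are empty, so after adding basepoints every corner is $S^0$ with identity maps. The square is trivially a pushout.
\item If $H\in\cF_1\setminus\cF_2$, the square reads $\emptyset\to *,\ \emptyset\to *$ on the left column and $*\to *$ on the right. This is the square $\emptyset,*,\emptyset,*$, which is a homotopy pushout.
\item The case $H\in\cF_2\setminus\cF_1$ is symmetric.
\item If $H\in\cF_1\cap\cF_2$, all corners are contractible, which is again a homotopy pushout.
\end{itemize}
In every case the square of fixed points is a homotopy pushout of spaces, and adding disjoint basepoints preserves this.

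\textbf{Step 3: passing to spectra.} A square of $\cG$-spaces that is levelwise a homotopy pushout on all fixed points is a $\cG$-homotopy pushout. Here one can use elmendorf's theorem, or simply build the homotopy pushout $P$ via a double mapping cylinder and compare $P\to E(\cF_1\cup\cF_2)$ on fixed points. Since $P$ is a $\cG$-CW complex whose fixed points have the same homotopy type as $E(\cF_1\cup\cF_2)^H$, $P$ satisfies the universal property, so the comparison map is a $\cG$-equivalence. Applying $\Sigma^\infty_{\cG}(-)_+$ preserves homotopy pushouts, and in the stable category pushout squares coincide with pullback squares. This gives the claim.

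The only mildly delicate point is the comparison in Step 3, namely that the double mapping cylinder has the correct fixed points. This follows because taking $H$-fixed points commutes with pushouts along cofibrations and with products with $[0,1]$.
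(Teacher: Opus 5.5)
Your proof is correct, but it takes a different route from the paper.

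\textbf{The paper's route.} The paper stabilizes first. It uses three facts about the geometric fixed point functors $\Phi^H$: they are exact, they jointly detect equivalences, and they satisfy $\Phi^H\Sigma^\infty_{\cG}X_+\simeq\Sigma^\infty X^H_+$. So the square is a pullback as soon as each square of spectra $\Sigma^\infty(E\cF)^H_+$ is one, which the paper checks by comparing horizontal fibers.

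\textbf{Your route.} You stay unstable until the very end. You use the same four-case fixed point analysis together with the Whitehead/Elmendorf criterion to show that $E\cF_1\cup^h_{E(\cF_1\cap\cF_2)}E\cF_2\simeq E(\cF_1\cup\cF_2)$ as $\cG$-spaces. Only then do you apply $\Sigma^\infty_{\cG}$. From the stable world you need just two facts: $\Sigma^\infty_{\cG}$ preserves homotopy pushouts, and pushout and pullback squares coincide in a stable category.

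\textbf{Comparison.} Your argument avoids geometric fixed points entirely. It also yields a stronger, space-level statement, namely that $E(\cF_1\cup\cF_2)$ is the $\cG$-homotopy pushout of $E\cF_1$ and $E\cF_2$ over $E(\cF_1\cap\cF_2)$. The paper's argument is shorter once the geometric fixed point machinery is taken for granted.

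\textbf{Minor slips.}
\begin{enumerate}
\item $\emptyset_+$ is the one-point space, not $S^0$. So when $H\notin\cF_1\cup\cF_2$, all four corners are a point. The conclusion is unaffected.
\item In the case $H\in\cF_1\setminus\cF_2$, the left column is $\emptyset\to\emptyset$ and both rows are $\emptyset\to *$. Your description of this square is garbled, though it is indeed a homotopy pushout.
\item Drop your opening reduction to categorical fixed points. Taken literally it would not work directly, because by tom Dieck splitting $(\Sigma^\infty_{\cG}X)^H\not\simeq\Sigma^\infty X^H$; this is precisely why the paper uses $\Phi^H$. Your actual argument never relies on that reduction.
\end{enumerate}
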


\begin{proof}
This follows from the fact that geometric fixed points detect weak equivalences and commute with suspension spectra. The maps on fibers are equivalences, hence the square is a homotopy pullback.
\end{proof}

As a consequence, one obtains a diagram of long exact sequences in the Bredon cohomology.

   \begin{myeq}{\label{pullp}}
\xymatrix@C=.5cm
{
\cdots  \widetilde{H}^{\bigstar}_{\cG}({E\cF_{i}}_+;\uFp) \ar[d]_-{\iota^{\bigstar}_i}\ar[r]^-{j^{\bigstar}_i}  &  \widetilde{H}^{\bigstar}_{\cG}({E\cF_{i-1}}_+;\uFp) \ar[d]^-{\eta^{\bigstar}_{i-1}}\ar[r]^-{\partial^{\bigstar}_{i-1}} &  \widetilde{H}^{\bigstar+1}_{\cG}(E\cFH{i};\uFp) \ar[d]^{=}  \cdots\\
\cdots \widetilde{H}^{\bigstar}_{\cG}(E\cF(H_i)_+;\uFp) \ar[r]^-{q^{\bigstar}_i} & \widetilde{H}^{\bigstar}_{\cG}(E\cG_+;\uFp) \ar[r]^-{\bar{\partial}_{i-1}^{\bigstar}} & \widetilde{H}^{\bigstar+1}_{\cG}(E\cFH{i};\uFp)  \cdots
}.
\end{myeq}

\begin{prop}{\label{integerlevel}}
    At the integer level, the boundary map $\partial^{\bigstar}_{0}:\widetilde{H}^{\bigstar}_{\cG}(E\cF(H_0)_{+};\uFp)\to\widetilde{H}^{\bigstar+1}_{\cG}(E\cFH{1};\uFp)$ is non-trivial. Moreover, it map classes 
    $\frac{\kappa_{\alpha_0}}{u_{\alpha_0}} \mapsto \Sigma^{-1}\frac{\kappa_{\beta}}{u_{\beta}}$ and 
    $\frac{a_{\alpha_0}}{u_{\alpha_0}} \mapsto \Sigma^{-1}\frac{a_{\beta}}{u_{\beta}}$.   
\end{prop}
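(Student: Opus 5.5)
We use the ladder \eqref{pullp} with $i=1$. Since $\cF_0=\cF(H_0)$, the top boundary map $\partial^{\bigstar}_0$ is the connecting map of the cofibre sequence $E\cF_{0+}\to E\cF_{1+}\to E\cFH{1}$. Commutativity of the right square gives
\[
\partial^{\bigstar}_0=\bar\partial^{\bigstar}_0\circ\eta^{\bigstar}_0 .
\]
Here $\eta^{\bigstar}_0=q_0^{\bigstar}$ is the map induced by $E\cG_+\to E\cF(H_0)_+$, and $\bar\partial^{\bigstar}_0$ is the connecting map of the cofibre sequence \eqref{cofseq3} for $i=1$. So the proof splits into two steps. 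First compute $q_0^{\bigstar}$ on the two classes. Then identify their images in $\Coker q_1^{\bigstar}\cong \Sigma^{-1}$-part of $\widetilde{H}^{\bigstar+1}_{\cG}(E\cFH{1};\uFp)$, using the description in Proposition~\ref{pcohefbhi}.

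For the first step, Proposition~\ref{q_i-map} with $i=0$ gives
\[
q_0^{\bigstar}\Bigl(\frac{\kappa_{\alpha_0}}{u_{\alpha_0}}\Bigr)=t_0, \qquad q_0^{\bigstar}\Bigl(\frac{a_{\alpha_0}}{u_{\alpha_0}}\Bigr)=y_0 .
\]
These are the integer-degree classes in degrees $1$ and $2$ of $\widetilde{H}^{\ast}_{\cG}(E\cG_+;\uFp)$, which is \eqref{cal1}. We then need to decide whether $t_0$ and $y_0$ lie in $\Image q_1^{\bigstar}$ in these integer degrees. By \eqref{comh1} and Proposition~\ref{cohomologyofeg}(2), in integer degrees $1$ and $2$ the source $\widetilde{H}^{\ast}_{\cG}(E\cF(H_1)_+;\uFp)$ is spanned by the integer-degree products of $\bMp$ with $\kappa_{\alpha_1}/u_{\alpha_1}$, $a_{\alpha_1}/u_{\alpha_1}$, $u_{\alpha_j-\beta}^{\pm}$. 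Proposition~\ref{q_i-map} with $i=1$ sends these to combinations of $t_0-t_1$, $y_0-y_1$, $\kappa_\beta/u_\beta\mapsto t_1$ and $a_\beta/u_\beta\mapsto y_1$. The task is to list all integer-degree monomials and check their images; the negative classes $\Sigma^{-1}(\cdots)$ map to zero.

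The main obstacle is precisely this image computation. On the nose, $t_0=(t_0-t_1)+t_1$ appears to lie in the image, which would make $\partial_0$ vanish on it. So I would carefully recheck the identification of $\eta_0^{\bigstar}$. In the ladder \eqref{pullp}, the middle vertical map comes from the pullback square of Lemma~\ref{pbfamily} for $\cF_1=\cF(H_0)\cup\cF(H_1)$, and the relevant quotient is $\Coker q_1^{\bigstar}$ restricted to the $\beta$-twisted part, with $u_\beta$ inverted only formally. I would first try to show that in $\Coker q_1^{\bigstar}$, computed via Proposition~\ref{alglemma} with $x=u_\beta$ and $y=u_{\alpha_1}$, the class $t_1=\kappa_\beta/u_\beta$ survives as the class named $\Sigma^{-1}\kappa_\beta/u_\beta$ in Proposition~\ref{pcohefbhi}. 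Modulo the image, $t_0\equiv t_1$ and $y_0\equiv y_1$. This would give $\bar\partial_0(t_0)=\Sigma^{-1}\kappa_\beta/u_\beta$ and $\bar\partial_0(y_0)=\Sigma^{-1}a_\beta/u_\beta$, both nonzero.

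To confirm that these are nonzero, I would fall back on a dimension count. Compute $\widetilde{H}^{n}_{\cG}(E\cF_{1+};\uFp)$ for $n=1,2$ independently, via the Mayer--Vietoris sequence from Lemma~\ref{pbfamily} or from the non-equivariant identification $(E\cF_1)/\cG$. Then compare with $\dim \widetilde{H}^{n}_{\cG}(E\cF(H_0)_+)$. A strict drop in dimension forces $\partial^{n}_0\neq 0$. The nonvanishing together with the degree constraint then pins down the stated images up to a nonzero scalar, which can be normalized to $1$.
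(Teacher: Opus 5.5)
Your outline matches the paper's: factor $\partial^{\bigstar}_0=\bar\partial^{\bigstar}_0\circ q^{\bigstar}_0$ through the ladder \eqref{pullp}, then test $q_0^{\bigstar}(\kappa_{\alpha_0}/u_{\alpha_0})=t_0$ and $q_0^{\bigstar}(a_{\alpha_0}/u_{\alpha_0})=y_0$ against $\Image q_1^{\bigstar}$. However, you leave exactly this test undecided, and the reason you give for doubting it is false. Neither $\kappa_\beta/u_\beta$ nor $a_\beta/u_\beta$ is a class of $\widetilde{H}^{\bigstar}_{\cG}(E\cF(H_1)_+;\uFp)$. In \eqref{comh1} the only units are $u_{\alpha_1}^{\pm}$ and $u_{\alpha_j-\beta}^{\pm}$ ($j\neq 1$), and $u_\beta$ is not a unit of $\bMp$. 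A monomial of integer degree must therefore have exponent $0$ on $u_{\alpha_1}$ and on every $u_{\alpha_j-\beta}$, since nothing else cancels the $\alpha_j$. Its $\bMp$-factor then has integer degree, so it lies in $\widetilde{H}^{0}_{C_p}(S^0;\uFp)=\Fp$. Hence the integer-graded part is just $\Fp[t,y]/(t^2)=H^*(B(\cG/H_1);\Fp)$, and Proposition~\ref{q_i-map} gives $\Image q_1^{1}=\Fp\{t_0-t_1\}$ and $\Image q_1^{2}=\Fp\{y_0-y_1\}$. So $t_0,y_0\notin\Image q_1^{\bigstar}$, and
$\partial_0(\kappa_{\alpha_0}/u_{\alpha_0})=\bar\partial_0(t_0)=\bar\partial_0(t_1)\neq 0$, $\partial_0(a_{\alpha_0}/u_{\alpha_0})=\bar\partial_0(y_0)=\bar\partial_0(y_1)\neq 0$. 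These are precisely the classes named $\Sigma^{-1}\kappa_\beta/u_\beta$ and $\Sigma^{-1}a_\beta/u_\beta$ in Proposition~\ref{pcohefbhi}. This observation, which the paper's ``not equal'' argument uses implicitly, is the missing idea. As written, your proposal lists $\kappa_\beta/u_\beta\mapsto t_1$ among the images of $q_1$ and then hopes $t_1$ survives in $\Coker q_1$. These two claims are incompatible, and you never decide between them.

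Your fallback is a legitimate alternative route to non-triviality, but you do not carry it out, and its last step fails. Mayer--Vietoris on the orbit spaces of the pushout in Lemma~\ref{pbfamily} does give $\widetilde{H}^n_{\cG}(E\cF_{1+};\uFp)=0$ for $n=1,2$. The map $H^n(B(\cG/H_0);\Fp)\oplus H^n(B(\cG/H_1);\Fp)\to H^n(B\cG;\Fp)$ sends the generators to the independent pairs $t_0,\,t_0-t_1$ and $y_0,\,y_0-y_1$; it is bijective in degree $1$, and the degree-$0$ map is onto. Hence $\partial^n_0$ is injective for $n=1,2$. But ``nonvanishing plus degree pins the image down up to a scalar'' is wrong for the $a$-class. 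Since $q_1^{3}$ is injective in integer degrees, $\widetilde{H}^{3}_{\cG}(E\cFH{1};\uFp)\cong\Coker q_1^{2}$, which is two-dimensional, spanned by the classes of $y_1$ and $t_0t_1$. Identifying the image as $\Sigma^{-1}a_\beta/u_\beta$ therefore needs the factorization together with $y_0\equiv y_1 \bmod \Image q_1^{2}$, not a degree argument. Rescaling is not available either, because both the source and target classes are already fixed. Combining the completed dimension count with your congruences $t_0\equiv t_1$ and $y_0\equiv y_1$ would give a full proof.
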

\begin{proof}
    At degree $1$, the classes $\frac{\kappa_{\alpha_0}}{u_{\alpha_0}}\in \widetilde{H}^{1}_{\cG}(E\cF(H_0)_{+};\uFp)$ and  $\frac{\kappa_{\alpha_1}}{u_{\alpha_1}}\in \widetilde{H}^{1}_{\cG}(E\cF(H_1)_{+};\uFp)$ maps to themself in $\widetilde{H}^{1}_{\cG}(E\cG;\uFp)$ which are not equal, via morphisms $q^{\bigstar}_0 (= \eta_0^\bs)$ and $q^{\bigstar}_1$ respectively, which forces class $\frac{\kappa_{\alpha_0}}{u_{\alpha_0}}$ maps to nonzero class $\Sigma^{-1}\frac{\kappa_{\beta}}{u_{\beta}}$ due to the commutativity of diagram~\eqref{pullp} and degree reasons. Analogous strategy works for the class $\frac{a_{\alpha_0}}{u_{\alpha_0}}$.
\end{proof}

Since calculating the cohomology of $E\cF_{i}$ is equivalent to computing the kernel and cokernel of the corresponding boundary map $\partial^{\bigstar}_{i-1}$. The following proposition computes the kernel and cokernel of the boundary map for $i=1$:
\begin{prop}{\label{boundary_0}}
    The kernel and cokernel of the boundary map $\partial^{\bigstar}_{0}:\widetilde{H}^{\bigstar}_{\cG}(E\cF(H_0);\uFp)\to \widetilde{H}^{\bigstar+1}_{\cG}(E\cFH{1};\uFp) $ are given by:
    \begin{enumerate}
        \item \begin{align*}
\Ker \partial_{0}^{\bigstar}=\bigoplus_{k_1,k_2\geq 1}\Fp\langle{\Sigma^{-1}\frac{\kappa^{\epsilon}_{\beta}}{a^{k_1}_{\beta}u^{k_2}_{\beta}}\colon \epsilon=0,1}\rangle\frac{[a_{\alpha_0},\kappa_{\alpha_0},u^{\pm}_{\alpha_0},u^{\pm}_{\alpha_j-\beta}\colon j\neq 0]}{(\kappa^2_{\alpha_0})}\\
\bigoplus \Fp[a_{\beta},\kappa_{\beta},u_{\beta},a_{\alpha_0},\kappa_{\alpha_0},u_{\alpha_0},a_{\alpha_1},\kappa_{\alpha_1},u_{\alpha_1},({u_{\alpha_0}u_{\alpha_1-\beta}})^{\pm},{u^{\pm}_{\alpha_j-\beta}}\colon j\neq 0,1]/\sim 
\end{align*}
where $a_{\alpha_1}:=(\frac{a_{\alpha_0}}{u_{\alpha_0}}u_{\beta}-a_{\beta})u_{\alpha_1-\beta}$ and $\kappa_{\alpha_1}:=(\frac{\kappa_{\alpha_0}}{u_{\alpha_0}}u_{\beta}-\kappa_{\beta})u_{\alpha_1-\beta}$, 
subject to the relations induced from $\widetilde{H}^{\bigstar}_{\cG}(E\cF(H_0)_{+};\uFp)$. 

\item \begin{align*}    
\Coker \partial^{\bigstar}_{0}=\bigoplus_{k_1,k_2\geq 1}\Fp\langle{\Sigma^{-1}\frac{\kappa^{\epsilon}_{\beta}}{a^{k_1}_{\beta}u^{k_2}_{\beta}}\colon \epsilon=0,1}\rangle\frac{[a_{\alpha_1},\kappa_{\alpha_1},u^{\pm}_{\alpha_1},u^{\pm}_{\alpha_j-\beta}\colon j\neq 1]}{(\kappa^2_{\alpha_1})}\\
\bigoplus \Sigma^{-1}u^{-1}_{\beta}\Fp[\kappa_{\alpha_0},\kappa_{\alpha_1},\kappa_{\alpha_j},\frac{\kappa_{\beta}}{u_{\beta}},a_{\alpha_0},a_{\alpha_1},a_{\alpha_j},\frac{{a_{\beta}}}{u_{\beta}},\frac{u_{\alpha_0}}{u_{\beta}},\frac{u_{\alpha_1}}{u_{\beta}},(\frac{u_{\alpha_0}u_{\alpha_1}}{u_{\beta}})^{\pm},(\frac{u_{\alpha_j}}{u_{\beta}})^{\pm},u^{-1}_{\beta}\colon j\neq0, 1]/\sim
\end{align*}
subject to the relations induced from $\widetilde{H}^{\bigstar}_{\cG}(E\cFH{1};\uFp)$ and modulo $\Image \partial^{\bigstar}_0$.
    \end{enumerate}
\end{prop}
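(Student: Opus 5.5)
The plan is to read off kernel and cokernel from the ladder \eqref{pullp} with $i=1$. In that case $E\cF_0=E\cF(H_0)$ and $\eta^{\bigstar}_0=q^{\bigstar}_0$, so $\partial^{\bigstar}_0=\bar\partial^{\bigstar}_0\circ q^{\bigstar}_0$, where $\bar\partial^{\bigstar}_0$ is the connecting map of the cofibre sequence \eqref{cofseq3} for $H_1$. Exactness of the bottom row gives $\Ker\bar\partial^{\bigstar}_0=\Image q^{\bigstar}_1$. Hence
\[
\Ker\partial^{\bigstar}_0=(q^{\bigstar}_0)^{-1}\bigl(\Image q^{\bigstar}_1\bigr),
\]
and the problem becomes the algebraic one of comparing the two subrings $\Image q^{\bigstar}_0$ and $\Image q^{\bigstar}_1$ inside $\widetilde{H}^{\bigstar}_{\cG}(E\cG_+;\uFp)$, using the explicit formulas of Proposition~\ref{q_i-map} and the presentation of Proposition~\ref{cohomologyofeg}.

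The kernel is assembled from three contributions.
\begin{enumerate}
\item The summand of negative classes $\Sigma^{-1}\kappa_\beta^{\epsilon}/(a_\beta^{k_1}u_\beta^{k_2})$ tensored with the $\alpha_0$-polynomial algebra lies in $\Ker q^{\bigstar}_0$ by Proposition~\ref{q_i-map}, so it lies in $\Ker\partial^{\bigstar}_0$. This gives the first summand of (1).
\item For the polynomial part, I will find which monomials in $t,y,a_\beta,\kappa_\beta,u_\beta,u_{\alpha_0}^{\pm},u_{\alpha_j-\beta}^{\pm}$ have $q_0$-images expressible through $q_1$-images. Since $q_1(t)=t_0-t_1$, $q_1(y)=y_0-y_1$ and $u_{\alpha_1}\mapsto u_{\alpha_1}$, the key identities are
\[
\Bigl(\tfrac{a_{\alpha_0}}{u_{\alpha_0}}u_\beta-a_\beta\Bigr)u_{\alpha_1-\beta}=(y_0-y_1)u_{\alpha_1},
\qquad
\Bigl(\tfrac{\kappa_{\alpha_0}}{u_{\alpha_0}}u_\beta-\kappa_\beta\Bigr)u_{\alpha_1-\beta}=(t_0-t_1)u_{\alpha_1}.
\]
These are exactly the images of $a_{\alpha_1}$ and $\kappa_{\alpha_1}$ from $E\cF(H_1)$, which justifies the stated definitions of $a_{\alpha_1}$ and $\kappa_{\alpha_1}$.
\item Together with $u_\beta$, $u_{\alpha_0}$ (a non-negative power, since $q_1$ does not invert $u_{\alpha_0}$) and the invertible combination $u_{\alpha_0}u_{\alpha_1-\beta}$, which maps to $u_{\alpha_0}u_{\alpha_1}u_\beta^{-1}$ and is a unit on both sides, these generate the preimage. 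This gives the second summand of (1), with relations inherited from $E\cF(H_0)$.
\end{enumerate}
Proposition~\ref{integerlevel} serves as a sanity check: $\kappa_{\alpha_0}/u_{\alpha_0}$ and $a_{\alpha_0}/u_{\alpha_0}$ alone are not in the kernel, because their images hit $\Sigma^{-1}\kappa_\beta/u_\beta$ and $\Sigma^{-1}a_\beta/u_\beta$.

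For the cokernel, I use the description in Proposition~\ref{pcohefbhi} for $i=1$. Its first summand (negative classes times the $\alpha_1$-algebra) consists of $\Ker q^{\bigstar}_1$ shifted by the boundary. The image of $\partial_0$ lies in the $\Coker q^{\bigstar-1}_1$ part, because $\partial_0$ factors through $\bar\partial_0$. Hence this summand survives intact in $\Coker\partial^{\bigstar}_0$.

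The second summand is $\Sigma^{-1}\Coker q^{\bigstar-1}_1$, and I quotient it further by $\bar\partial_0(\Image q_0)$. Concretely, the result is
\[
\Sigma^{-1}\,\widetilde{H}^{\bigstar-1}_{\cG}(E\cG_+)\big/\bigl(\Image q_0+\Image q_1\bigr).
\]
To describe it, I apply Proposition~\ref{alglemma} with $x=u_\beta$ and $y$ the relevant $u_{\alpha}$'s, as in Proposition~\ref{pcohefbhi}. Adding $\Image q_0$ now also kills the monomials in which $u_{\alpha_0}/u_\beta$ appears with non-negative exponent, together with $a_{\alpha_0}$ and $\kappa_{\alpha_0}$ multiples. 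This leaves the generators $u_{\alpha_0}/u_\beta$ and $u_{\alpha_1}/u_\beta$ appearing only through the unit $u_{\alpha_0}u_{\alpha_1}/u_\beta$ and the listed polynomial variables, matching (2) modulo $\Image\partial_0$.

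The main obstacle is the second step: proving that the preimage is generated exactly by the listed classes, and in particular that no other mixed monomials in $t_0,t_1,y_0,y_1$ (with $t_0^2=t_1^2=0$) lie in $\Image q_0\cap\Image q_1$. My first attempt will be a degree-by-degree comparison. I change variables to $s=t_0-t_1$ and $t_1$ (and similarly for the $y$'s), so that $\Image q_1$ becomes a polynomial algebra in one set of variables and the comparison reduces to a monomial basis count in each $\RO(\cG)$-degree.
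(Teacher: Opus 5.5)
Your route is the paper's. The kernel is $(q_0^{\bigstar})^{-1}(\Image q_1^{\bigstar})$, read off from the ladder \eqref{pullp}. The cokernel is governed by $\partial_0^{\bigstar}=\bar\partial_0^{\bigstar}\circ q_0^{\bigstar}$, and this factorization is exactly what produces the paper's list $\partial_0^{\bigstar}(a_{\alpha_0}/u_{\alpha_0})=\Sigma^{-1}a_\beta/u_\beta$, $\partial_0^{\bigstar}(\kappa_{\alpha_0}/u_{\alpha_0})=\Sigma^{-1}\kappa_\beta/u_\beta$, $\partial_0^{\bigstar}(u_{\alpha_0}^{-n})=\Sigma^{-1}u_{\alpha_0}^{-n}$, $\partial_0^{\bigstar}(u^{n}_{\alpha_1-\beta})=\Sigma^{-1}u_\beta^{-n}u_{\alpha_1}^{n}$. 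Writing the second cokernel summand as $\Sigma^{-1}\widetilde H^{\bigstar}_{\cG}(E\cG_+;\uFp)/(\Image q_0^{\bigstar}+\Image q_1^{\bigstar})$ packages the same data more tidily. The generation check you single out for the kernel is one the paper simply asserts, so your degreewise comparison adds something real.

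The one step that is wrong as written is your description of what $\Image q_0^{\bigstar}$ kills. For a $u$-monomial $M=u_\beta^{b}\prod_{j}u_{\alpha_j}^{e_j}$, put $\sigma_0(M)=b+\sum_{j\neq 0}e_j$ and $\sigma_1(M)=b+\sum_{j\neq 1}e_j$. By Proposition~\ref{q_i-map}, $M\in\Image q_i^{\bigstar}$ exactly when $\sigma_i(M)\ge 0$. Each factor $y_1$ or $t_1$ raises this threshold by one, in the basis $\{y_0,y_1,t_0,t_1\}$ for $q_0$ and $\{y_0-y_1,y_1,t_0-t_1,t_1\}$ for $q_1$.

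So adding $\Image q_0^{\bigstar}$ kills precisely the monomials with $\sigma_0\ge 0>\sigma_1$. The sign of the exponent of $u_{\alpha_0}/u_\beta$ is not the criterion: $u_\beta^{-1}$ and $u_\beta^{-1}\cdot u_{\alpha_0}/u_\beta$ survive, while $u_{\alpha_0}^{-n}=u_\beta^{-n}(u_{\alpha_0}/u_\beta)^{-n}$ and $(u_{\alpha_1}/u_\beta)^{n}$ die.

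The survivors are the monomials with $\sigma_0<0$ and $\sigma_1<0$. These are exactly $u_\beta^{-1}$ times non-negative powers of $u_{\alpha_0}/u_\beta$, $u_{\alpha_1}/u_\beta$ and $u_\beta^{-1}$, times arbitrary powers of $u_{\alpha_0}u_{\alpha_1}/u_\beta$ and $u_{\alpha_j}/u_\beta$. Hence $u_{\alpha_0}/u_\beta$ and $u_{\alpha_1}/u_\beta$ remain as separate generators in (2), not only through the unit.

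Likewise, $a_{\alpha_0}$- and $\kappa_{\alpha_0}$-multiples are not killed wholesale; for example, $\Sigma^{-1}u_\beta^{-1}a_{\alpha_0}$ is nonzero in the cokernel. What $\Image q_0^{\bigstar}$ adds in the $t,y$-directions are relations such as $\Sigma^{-1}a_\beta/u_\beta=0$. With this criterion in place of yours, the quotient does reproduce (2).
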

\begin{proof}
Computation of $\Ker \partial^{\bigstar}_0$ follows from the fact that $x \in \Ker \partial^{\bigstar}_0$ if and only if there exists $y\in \widetilde{H}^{\bigstar}_{\cG}(E\cF(H_1)_{+};\uFp)$ such that $q^{\bigstar}_{0}(x)=q^{\bigstar}_{1}(y)$, which gives $\Ker \partial^{\bigstar}_{0}=(q^{\bigstar}_0)^{-1}(\Image q^{\bigstar}_{0} \cap \Image q^{\bigstar}_{1})$.

For the cokernel $\Coker(\partial^{\bigstar}_{0})$, note that 
$\partial^{\bigstar}_{0}$ acts on the relevant classes from Proposition~\ref{integerlevel} as
\[
\partial_0^\bigstar\!\left( \frac{a_{\alpha_0}}{u_{\alpha_0}} \right)
    = \Sigma^{-1} \frac{a_\beta}{u_\beta},
\qquad
\partial_0^\bigstar\!\left( \frac{\kappa_{\alpha_0}}{u_{\alpha_0}} \right)
    = \Sigma^{-1} \frac{\kappa_\beta}{u_\beta},
\]
\[
\partial_0^\bigstar(u_{\alpha_0}^{-n})
    = \Sigma^{-1} u_{\alpha_0}^{-n},
\qquad
\partial_0^\bigstar(u_{\alpha_1 - \beta}^{\,n})
    = \Sigma^{-1} u_\beta^{-n} u_{\alpha_1}^{n}, \quad n\geq 1.
\]
From this data, the claimed description of 
$\Coker(\partial_0^\bigstar)$ follows immediately.
\end{proof}

From Proposition~\ref{boundary_0}, Proposition~\ref{coker} and diagram~\eqref{pullp}, one sees that the Bredon cohomology of \(E\cF_{i}\) is given by as the summand of classes from $\Ker \partial^{\bigstar}_{i-1}$ and $\Coker \partial^{\bigstar-1}_{i-1}$, but following proposition tells that classes from $\Coker \partial^{\bigstar-1}_{i-1}$ always vanish through $\partial^{\bigstar}_{i}$.
\begin{prop}\label{coker}
The cokernel of the boundary map
\[
\partial^{\bigstar}_{\,i-1} :
\widetilde{H}^{\bigstar}_{\cG}(E\cF_{i-1+};\uFp)
\;\longrightarrow\;
\widetilde{H}^{\bigstar+1}_{\cG}(E\cF(H_i)_{+};\uFp)
\]
is contained in the kernel of the boundary map
\[
\partial^{\bigstar}_{\,i} :
\widetilde{H}^{\bigstar}_{\cG}(E\cF_{i+};\uFp)
\;\longrightarrow\;
\widetilde{H}^{\bigstar+1}_{\cG}(E\cF(H_{i+1})_{+};\uFp).
\]
that is,
\[
\Coker(\partial^{\bigstar}_{\,i-1})
\;\subseteq\;
\Ker(\partial^{\bigstar}_{\,i}).
\]
\end{prop}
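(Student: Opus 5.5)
We would prove the inclusion by a factorization argument rather than by tracking individual classes. First we pin down what $\Coker(\partial^{\bigstar}_{i-1})$ means inside $\widetilde{H}^{\bigstar}_{\cG}(E\cF_{i+};\uFp)$. The tower \eqref{tower} contains the cofibre sequence
\[
E\cF_{i-1+}\xrightarrow{j_i} E\cF_{i+}\longrightarrow E\cFH{i}.
\]
Its long exact sequence shows that the cokernel of the connecting map $\partial^{\bigstar}_{i-1}$, whose target is the cohomology of $E\cFH{i}$, injects into $\widetilde{H}^{\bigstar}_{\cG}(E\cF_{i+};\uFp)$. It does so via the map $c_i^{\bigstar}$ induced by the collapse $c_i\colon E\cF_{i+}\to E\cFH{i}$. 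So the statement to prove is $\partial^{\bigstar}_i\circ c_i^{\bigstar}=0$.

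Next we use diagram \eqref{pullp}, written for the index $i+1$. It comes from the homotopy pullback of Lemma~\ref{pbfamily} applied to $\cF_i$ and $\cF(H_{i+1})$, whose intersection is $\{e\}$ and whose union is $\cF_{i+1}$. The identification of the third vertical arrow as the identity gives the factorization
\[
\partial^{\bigstar}_{i}\;=\;\bar{\partial}^{\bigstar}_{i}\circ \eta^{\bigstar}_{i}.
\]
Here $\eta_i\colon E\cG_+\to E\cF_{i+}$ is the map induced by the inclusion of families $\{e\}\subset\cF_i$. It therefore suffices to show that $\eta^{\bigstar}_i\circ c_i^{\bigstar}=0$, that is, that the composite of spaces $c_i\circ\eta_i\colon E\cG_+\to E\cF_{i+}\to E\cFH{i}$ is null.

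This null-homotopy is the key step. Since $\{e\}\subset\cF_{i-1}$, the map $\eta_i$ factors, up to $\cG$-homotopy (by universality of $E\cF_{i-1}$), as
\[
E\cG_+\xrightarrow{\eta_{i-1}} E\cF_{i-1+}\xrightarrow{j_i}E\cF_{i+}.
\]
The composite $c_i\circ j_i$ of two consecutive maps in a cofibre sequence is null-homotopic, so $c_i\circ\eta_i\simeq c_i\circ j_i\circ\eta_{i-1}\simeq *$. Applying cohomology then gives $\eta_i^{\bigstar}c_i^{\bigstar}=0$, hence $\partial_i^{\bigstar}c_i^{\bigstar}=0$, which is the claimed inclusion $\Coker(\partial^{\bigstar}_{i-1})\subseteq\Ker(\partial^{\bigstar}_i)$.

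The main obstacle I expect is bookkeeping rather than mathematics. One must check that the square in \eqref{pullp} really commutes with the boundary maps as labelled. In particular, the identity on the $E\cFH{i+1}$ term should come from the equivalence of cofibres in the pullback square, namely
\[
\operatorname{cof}(E\cF_{i+}\to E\cF_{i+1+})\simeq\operatorname{cof}(E\cG_+\to E\cF(H_{i+1})_+).
\]
This is exactly Lemma~\ref{pbfamily}, and I would verify it on geometric fixed points. One must also reconcile the statement's target $E\cF(H_i)$ with the intended $E\cFH{i}$. As a consistency check, for $i=1$ one can test the conclusion on the explicit classes $\Sigma^{-1}u_{\beta}^{-1}(\cdots)$ from Proposition~\ref{boundary_0}: they should restrict to zero in $\widetilde{H}^{\bigstar}_{\cG}(E\cG_+;\uFp)$.
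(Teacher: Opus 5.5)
Your proof is correct and is the paper's argument: the paper observes that a class coming from $\widetilde{H}^{\bigstar}_{\cG}(E\cFH{i};\uFp)$ is killed by $j_i^{\bigstar}$ by exactness, and that $\partial_i^{\bigstar}=\bar{\partial}_i^{\bigstar}\circ\eta_i^{\bigstar}=\bar{\partial}_i^{\bigstar}\circ\eta_{i-1}^{\bigstar}\circ j_i^{\bigstar}$ factors through $j_i^{\bigstar}$. This is exactly your null-homotopy $c_i\circ j_i\circ\eta_{i-1}\simeq *$ read in cohomology. Your version only makes explicit the bookkeeping that the paper's one-line proof leaves implicit: the target should be $E\cFH{i}$ rather than $E\cF(H_i)$, the cokernel is that of $\partial^{\bigstar-1}_{i-1}$, and the map the paper calls $j_{i-1}^{\bigstar}$ should be $j_i^{\bigstar}$.
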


\begin{proof}
    For $x\in \Coker \partial^{\bigstar-1}_{i-1}$, implies $j^{\bigstar}_{i-1} x=0$, then proof follows from the fact that  $\partial^{\bigstar}_{i}$ factor through $j^{\bigstar}_{i-1}$.       
\end{proof}

 Because explicitly computing the cokernel is lengthy and technical, in the odd \(p\) case we will concentrate exclusively on the polynomial part from the kernel. The corresponding part of the Bredon cohomology $\widetilde{H}^{\bigstar}_{\cG}({E\cF_{i}}_{+};\uFp)$ will be denoted by $\mathrm{P}(\widetilde{H}^{\bigstar}_{\cG}({E\cF_{i}}_{+};\uFp))$, the polynomial part of $\widetilde{H}^{\bigstar}_{\cG}({E\cF_{i}}_{+};\uFp)$.

     To proceed further, we introduce the following new classes in $\widetilde{H}^{\bigstar}_{\cG}({E\cF_{i}}_{+};\uFp)$.

\begin{prop}\label{invertiable}
For each nonempty subset $S\subseteq [i]= \{0,1,\dots,i\}$ with $i\ge 1$, there exists a class
\[
v_{S}\in \widetilde{H}^{\bigstar}_{\cG}(E\cF_{i+} ;\uFp).
\]
satisfying the following properties;
\begin{enumerate}
     \item $v_{S_1}v_{S_2}=\begin{cases}
        v_{S_1\cup S_2}u_{\beta}, & S_1 \cap S_2=\phi\\
        v_{S_1\cup S_2}v_{S_1\cap S_2}, & S_1 \cap S_2\neq\phi
    \end{cases}
    $
    \item $v_{S}a_{\beta}
    - m\,v_{S\setminus\{j_1\}}a_{\alpha_{j_1}}
    + m\,v_{S\setminus\{j_2\}}a_{\alpha_{j_2}}$,
    for $j_1\neq j_2\in S$ and  $m\in \Fp$ satisfying $m^{-1}=j_2-j_1$;
    
    \item $v_{S}\kappa_{\beta}
    - m\,v_{S\setminus\{i\}}\kappa_{\alpha_{i}}
    +m\,v_{S\setminus\{j\}}\kappa_{\alpha_{j}}$,
    under the same conditions as above;
  
\end{enumerate}
Moreover, for $S=[i]$, the corresponding class
$v_i := v_S$ is invertible in $\widetilde{H}^{\bigstar}_{\cG}(E\cF_{i+};\uFp)$.
\end{prop}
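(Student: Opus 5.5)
Since the restriction map $\widetilde{H}^{\bigstar}_{\cG}(E\cF_{i+};\uFp)\to\widetilde{H}^{\bigstar}_{\cG}(E\cG_+;\uFp)$ is injective on the polynomial part (the relevant classes lie in $\Ker\partial^{\bigstar}_{i-1}$, i.e. in intersections of images of the $q^{\bigstar}_j$), I will construct $v_S$ by first writing down an explicit element of $\widetilde{H}^{\bigstar}_{\cG}(E\cG_+;\uFp)$ and then showing it lifts. In the description of Proposition~\ref{cohomologyofeg}(1), set
\[
v_S:=u_{\beta}\prod_{j\in S}\frac{u_{\alpha_j}}{u_{\beta}},\qquad |v_S|=\sum_{j\in S}\alpha_j-(|S|-1)\beta-2(|S|-1).
\]
For $|S|=2$, say $S=\{0,1\}$, this is $u_{\alpha_0}u_{\alpha_1-\beta}$, which is exactly the unit appearing in $\Ker\partial^{\bigstar}_0$ in Proposition~\ref{boundary_0}. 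The multiplicative rule (1) then holds in $E\cG$ by pure bookkeeping of exponents, with the convention $v_{\{j\}}=u_{\alpha_j}$. For $S_1\cap S_2=\emptyset$ the product $v_{S_1}v_{S_2}$ carries one extra factor of $u_\beta$ compared with $v_{S_1\cup S_2}$. In the overlapping case, the factors $u_{\alpha_j}/u_\beta$ for $j\in S_1\cap S_2$ are counted twice, and $v_{S_1\cap S_2}$ supplies exactly these. By injectivity, the relations then hold in $E\cF_i$.

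Relations (2) and (3) are checked in $E\cG$ via the substitutions $a_{\alpha_j}=(y_0-jy_1)u_{\alpha_j}$ and $a_\beta=y_1u_\beta$. Take $j_1\ne j_2\in S$. Then
\[
v_{S\setminus\{j_1\}}a_{\alpha_{j_1}}-v_{S\setminus\{j_2\}}a_{\alpha_{j_2}}
=\frac{v_S u_\beta}{u_{\alpha_{j_1}}}(y_0-j_1y_1)\frac{u_{\alpha_{j_1}}}{u_\beta}\cdots .
\]
Carried out carefully, both terms equal $v_S$ times the linear forms $(y_0-j_1y_1)$ and $(y_0-j_2y_1)$ respectively. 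Their difference is $(j_2-j_1)y_1v_S=m^{-1}v_Sa_\beta/u_\beta\cdot u_\beta$, which gives (2). Relation (3) follows the same way with $t$'s in place of $y$'s.

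The main work is showing that $v_S$ lies in $\widetilde{H}^{\bigstar}_{\cG}(E\cF_{i+};\uFp)$. I will argue by induction on $i$ along the tower \eqref{tower}, using diagram \eqref{pullp}. For $S\subseteq[i-1]$, the class already exists in $E\cF_{i-1}$, and I need $\partial^{\bigstar}_{i-1}(v_S)=0$, equivalently $\eta^{\bigstar}_{i-1}(v_S)\in\Image q^{\bigstar}_i$. This holds because $v_S$ is a product of the units $u_{\alpha_i}$ and $(u_{\alpha_j}/u_\beta)^{\pm}=q_i^{\bigstar}(u^{\pm}_{\alpha_j-\beta})$ times $u_\beta$, all of which lie in the image of $q_i^{\bigstar}$ by Proposition~\ref{q_i-map}.

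For $S\ni i$, I will write $v_S=v_{S\setminus\{i\}}\cdot (u_{\alpha_i}/u_\beta)$. The class $u_{\alpha_i}/u_\beta=u_{\alpha_i-\beta}$ lies in $\Image q_j^{\bigstar}$ for every $j\neq i$. The point is therefore that the products used lie in $\bigcap_{j\le i}\Image q_j^{\bigstar}$, which is the description of the kernel generalising Proposition~\ref{boundary_0}. I expect this identification of the polynomial part as an intersection of images to be the main obstacle, since it requires iterating the argument of Proposition~\ref{boundary_0} and controlling the cokernel contributions via Proposition~\ref{coker}.

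Finally, for $S=[i]$, the inverse should be $v_{[i]}^{-1}=u_\beta^{-1}\prod_{j\le i}u_\beta/u_{\alpha_j}$. For each $j\le i$, this element is $q_j^{\bigstar}$ of $u_{\alpha_j}^{-1}\prod_{l\ne j}u_{\alpha_l-\beta}^{-1}$, and these elements are compatible. Hence it lies in the intersection of images, and so in $E\cF_i$, which proves that $v_i$ is invertible.
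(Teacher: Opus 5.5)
Your argument is essentially the paper's. Both construct $v_S$ by induction along the tower \eqref{tower}, using that a class $x\in\widetilde{H}^{\bigstar}_{\cG}(E\cF_{i-1+};\uFp)$ lifts to $E\cF_{i}$ exactly when $\eta^{\bigstar}_{i-1}(x)\in\Image q^{\bigstar}_{i}$. Both also set $v_S=u_{\alpha_i-\beta}\,v_{S\setminus\{i\}}$ when $i\in S$. Two of your steps go beyond what the paper writes down: the check that $\eta^{\bigstar}_{i-1}(v_S)=q^{\bigstar}_i\bigl(u_\beta\prod_{j\in S}u_{\alpha_j-\beta}\bigr)$ for $S\subseteq[i-1]$, and the verification of (1)--(3) in $\widetilde{H}^{\bigstar}_{\cG}(E\cG_{+};\uFp)$.

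The step you single out as the main obstacle is not one. You never need to identify the polynomial part. You only need $\bigcap_{j\le i}\Image q^{\bigstar}_j\subseteq \Image \eta^{\bigstar}_i$, and this follows by induction on $i$ from $\partial^{\bigstar}_{i-1}=\bar{\partial}^{\bigstar}_{i-1}\circ\eta^{\bigstar}_{i-1}$ in \eqref{pullp}. If $z=\eta^{\bigstar}_{i-1}(x)$ lies in $\Image q^{\bigstar}_i$, then $\partial^{\bigstar}_{i-1}(x)=0$, so $x=j^{\bigstar}_i(x')$ for some $x'$, and $\eta^{\bigstar}_i(x')=\eta^{\bigstar}_{i-1}(j^{\bigstar}_i(x'))=z$. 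Concretely, apply your $S\subseteq[i-1]$ argument to $x=u_{\alpha_i-\beta}v_{S\setminus\{i\}}$, whose image is $q^{\bigstar}_i\bigl(u_{\alpha_i}\prod_{l\in S\setminus\{i\}}u_{\alpha_l-\beta}\bigr)$. Here $u_{\alpha_i-\beta}$ is itself lifted to $E\cF_{i-1}$ the same way, since $u_{\alpha_i}/u_\beta\in\Image q^{\bigstar}_j$ for all $j\ne i$. The cokernel classes of Proposition~\ref{coker} only make the lift non-unique; they never obstruct it. The same induction lifts your explicit inverse of $v_i$.

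Some corrections:
\begin{enumerate}
\item The degree is $|v_S|=\sum_{j\in S}\alpha_j-(|S|-1)\beta-2$, so that $|v_{\{j\}}|=|u_{\alpha_j}|=\alpha_j-2$. It is not $\sum_{j\in S}\alpha_j-(|S|-1)\beta-2(|S|-1)$.
\item In (2), each term is $v_{S\setminus\{j\}}a_{\alpha_j}=v_Su_\beta(y_0-jy_1)$, not $v_S(y_0-jy_1)$. The factor $u_\beta$ is what turns $(j_2-j_1)y_1$ into $m^{-1}a_\beta$.
\item Your parenthetical does not justify injectivity of $\eta^{\bigstar}_i$. Membership in intersections of images says which classes lift, whereas $\Ker\eta^{\bigstar}_i$ contains the image of $\widetilde{H}^{\bigstar}_{\cG}(E\cFH{i};\uFp)$. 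So transporting (1)--(3) back to $E\cF_i$ rests on the same injectivity-on-the-polynomial-part assertion the paper uses in Theorem~\ref{main}. For $p=2$ the paper checks the needed vanishing degree by degree in Proposition~\ref{inv}. For $v_i\cdot v_i^{-1}=1$ no extra check is needed, since $\eta^{\bigstar}_i$ is injective in integer degree $0$.
\end{enumerate}
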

\begin{remark}
For a singleton subset $\{j\}\subseteq [i]$ we declare $v_{\{j\}} := u_{\alpha_j}$. 
\end{remark}
\begin{proof}[Proof of Proposition~\ref{invertiable}]
We construct the class $v_S$ inductively on $i$.

\medskip
\noindent\textbf{Base case.}
For $i=1$, the subsets of $\{0,1\}$ give rise to classes
\[
v_{\{0\}} := u_{\alpha_0}, \qquad 
v_{\{1\}} := u_{\alpha_1},
\]
and a class $v_1$ whose image in 
$\widetilde{H}^{\bigstar}_{\cG}(E\cF(H_0)_{+};\uFp)$ is 
$u_{\alpha_0}\,u_{\alpha_1-\beta}$.
All of these live in $\widetilde{H}^{\bigstar}_{\cG}(E\cF_{1+};\uFp)$ with $u_{\alpha_0}u_{\alpha_1-\beta}$ is invertiable as describe in Proposition~\ref{boundary_0}.

\medskip
\noindent\textbf{Inductive step.}
Assume the classes $v_S$ have been constructed for level $i-1\ge 1$; that is,
for every subset
\[
S'=\{i-1 \ge j_k > j_{k-1} > \cdots > j_0 \ge 0\} \subseteq [i-1],
\]
there exists a class $v_S' \in \widetilde{H}^{\bigstar}_{\cG}(E\cF_{(i-1)+};\uFp)$.

Now let
\[
S=\{i \ge l_m > l_{m-1} > \cdots > l_0 \ge 0\} \subseteq [i].
\]
We define $v_S$ as follows:
\[
v_S =
\begin{cases}
\text{the preimage of } v_S \in \widetilde{H}^{\bigstar}_{\cG}(E\cF_{(i-1)+};\uFp),
&\text{if } l_m \neq i,\\[6pt]
u_{\alpha_i - \beta}\, v_{\{l_{m-1},\dots,l_0\}},
&\text{if } l_m = i.
\end{cases}
\]
the latter expression lies in $\Ker(\partial_{i-1}^{\bigstar})$, so the class is
well-defined in $\widetilde{H}^{\bigstar}_{\cG}(E\cF_{i+};\uFp)$.

\medskip

For $S=[i]$, the corresponding element $v_{i}$ is the preimage of class $u_{\alpha_i-\beta}v_{i-1}$. Since $u_{\alpha_i-\beta}v_{i-1}$ and its inverse belong in $\Ker \partial^{\bigstar}_{i-1}$.
Thus $v_i$ is invertible as well.
\end{proof}

In the same way, the next proposition constructs new classes corresponding to a subset of $[i]$.

\begin{prop}\label{wz}
Let $2 \le i \le p-1$, and $Q \subseteq [i]$ be a subset of cardinality 
at least three. Then there exist classes
\[
w_{Q},\; z_{Q} \;\in\; \widetilde{H}^{\bigstar}_{\cG}(E\cF_{i+};\uFp).
\]
\end{prop}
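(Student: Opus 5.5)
The plan is to mimic the inductive construction of the classes $v_S$ in Proposition~\ref{invertiable}. We build $w_Q$ and $z_Q$ as elements of $\Ker\partial^{\bigstar}_{i-1}$, pulled back along $j^{\bigstar}_i$ in the long exact sequence attached to the tower \eqref{tower}. Heuristically, $z_Q$ and $w_Q$ should be the ``$a$-type'' and ``$\kappa$-type'' companions of $v_Q$. They arise from relations (2) and (3) of Proposition~\ref{invertiable} applied to a subset $Q$ containing three indices $j_1,j_2,j_3$. Such a $Q$ gives two independent linear relations, whose difference produces a combination of the form
\[
v_{Q\setminus\{j_1\}}a_{\alpha_{j_1}}\cdot(\text{unit}) + v_{Q\setminus\{j_2\}}a_{\alpha_{j_2}}\cdot(\text{unit}) + v_{Q\setminus\{j_3\}}a_{\alpha_{j_3}}\cdot(\text{unit}).
\]
In $\widetilde{H}^{\bigstar}_{\cG}(E\cG_+;\uFp)$ this combination is divisible by $u_\beta$ (and similarly with $\kappa$ in place of $a$). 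The quotient by $u_\beta$ is the candidate for $z_Q$ (respectively $w_Q$).

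\textbf{Base case $i=2$, $Q=[2]$.} Work in $\widetilde{H}^{\bigstar}_{\cG}(E\cF(H_0)_+;\uFp)$, where $u_{\alpha_j-\beta}$ is invertible for $j\neq 0$. Define $z_{[2]}$ there by an explicit formula. The natural guess is
\[
z_{[2]}=\Bigl(\tfrac{a_{\alpha_0}}{u_{\alpha_0}}u_\beta-a_\beta\Bigr)u_{\alpha_1-\beta}\,u_{\alpha_2-\beta}\,u_{\alpha_0}\cdot(\text{suitable scalar combination}),
\]
and $w_{[2]}$ is the same expression with $\kappa$ in place of $a$. This is the degree-shifted analogue of the classes $a_{\alpha_1},\kappa_{\alpha_1}$ appearing in Proposition~\ref{boundary_0}.

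We then check that this class maps, under $q_0^\bigstar$, into $\Image q_1^\bigstar\cap\Image q_2^\bigstar$ inside $\widetilde{H}^{\bigstar}_{\cG}(E\cG_+;\uFp)$. For this we use the explicit substitutions of Proposition~\ref{q_i-map} and Proposition~\ref{cohomologyofeg}:
\[
a_{\alpha_j}=(y_0-jy_1)u_{\alpha_j},\qquad a_\beta=y_1u_\beta,
\]
together with the analogous formulas for $\kappa$. Membership in both images amounts to an identity in $y_0,y_1$ (respectively $t_0,t_1$) that is linear in the Bockstein/degree-one classes. Such an identity can always be solved, because any linear form in $y_0,y_1$ is a combination of $y_0-jy_1$ and $y_1$. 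By the description
\[
\Ker\partial^{\bigstar}_{i-1}=(q^{\bigstar}_{i-1})^{-1}\bigl(\Image\eta_{i-1}^\bigstar\cap\Image q_i^\bigstar\bigr)
\]
from the proof of Proposition~\ref{boundary_0}, iterated through diagram~\eqref{pullp}, the class then lies in the kernel of each boundary map and lifts to $\widetilde{H}^{\bigstar}_{\cG}(E\cF_{2+};\uFp)$.

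\textbf{Inductive step.} This proceeds exactly as in Proposition~\ref{invertiable}. If $\max Q<i$, take the preimage under $j_i^\bigstar$ of the class already constructed at level $i-1$. This is possible since, by Proposition~\ref{coker} and exactness, classes of $\widetilde{H}^{\bigstar}_{\cG}(E\cF_{i-1+};\uFp)$ in $\Ker\partial_{i-1}^\bigstar$ lift. If $\max Q=i$, set
\[
z_Q:=u_{\alpha_i-\beta}\,z_{Q\setminus\{i\}}\quad\text{(when }|Q|\ge 4\text{)},
\]
or use the base-case formula with the indices of $Q$ (when $|Q|=3$), and similarly for $w_Q$. We then verify membership in $\Ker\partial^{\bigstar}_{i-1}$, i.e.\ that $\eta_{i-1}^\bigstar$ of the class lies in $\Image q_i^\bigstar$. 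The factor $u_{\alpha_i-\beta}$ is a unit in $\widetilde{H}^{\bigstar}_{\cG}(E\cF(H_i)_+;\uFp)$ only after inverting $u_{\alpha_i}$ and $u_\beta$ appropriately, so this check again reduces to a linear-algebra statement in $y_0,y_1$ (respectively $t_0,t_1$).

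The main obstacle is the base-case verification. We must show that the chosen combination lies simultaneously in the images of $q_0^\bigstar,q_1^\bigstar,q_2^\bigstar$, and that it is genuinely new, i.e.\ not a polynomial in $v_S$, $a$, $\kappa$, $u$. Showing it is new requires tracking the scalars $m=(j_2-j_1)^{-1}$ and confirming that division by $u_\beta$ is legitimate only after combining three indices, which is exactly why $|Q|\ge3$ is needed. We would first settle this by a direct computation in the $E\cG$ model, using the explicit forms $t_0-jt_1$ and $y_0-jy_1$.
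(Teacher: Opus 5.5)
There is a genuine gap. The classes you are trying to build are not the ones this proposition is about, and the mechanism you propose cannot produce new classes.

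In the paper, $w_Q$ and $z_Q$ are lifts to $\widetilde{H}^{\bigstar}_{\cG}(E\cF_{i+};\uFp)$ of the \emph{quadratic} elements
\[
\kappa_{\alpha_{s_0}}\kappa_{\alpha_{s_1}}\prod_{l\in Q\setminus\{s_0,s_1\}}\frac{u_{\alpha_l}}{u_\beta},\qquad \bigl(\kappa_{\alpha_{s_0}}a_{\alpha_{s_1}}-\kappa_{\alpha_{s_1}}a_{\alpha_{s_0}}\bigr)\prod_{l\in Q\setminus\{s_0,s_1\}}\frac{u_{\alpha_l}}{u_\beta}
\]
of $\widetilde{H}^{\bigstar}_{\cG}(E\cG_+;\uFp)$, where $s_0<s_1$ are the two smallest elements of $Q$. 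In terms of $t_0,t_1,y_0,y_1$ these are $(s_0-s_1)t_0t_1\prod_{j\in Q}u_{\alpha_j}/u_\beta^{|Q|-2}$ and $(s_0-s_1)(t_0y_1-t_1y_0)\prod_{j\in Q}u_{\alpha_j}/u_\beta^{|Q|-2}$. For $Q=[2]$ they are $\kappa_{\alpha_0}\kappa_{\alpha_1}u_{\alpha_2-\beta}$ and $(\kappa_{\alpha_0}a_{\alpha_1}-\kappa_{\alpha_1}a_{\alpha_0})u_{\alpha_2-\beta}$.

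Your ``$a$-type/$\kappa$-type'' candidates fail in three ways.

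(i) Two instances of relation (2) of Proposition~\ref{invertiable} for the same $Q$ both express $v_Qa_\beta$. Their difference is therefore the zero element, and dividing it by $u_\beta$ yields nothing.

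(ii) Your base-case formula is an old class. By Proposition~\ref{boundary_0}, $(\frac{a_{\alpha_0}}{u_{\alpha_0}}u_\beta-a_\beta)u_{\alpha_1-\beta}$ \emph{is} $a_{\alpha_1}$. So your $z_{[2]}$ is a multiple of $a_{\alpha_1}v_{\{0,2\}}$, and your $w_{[2]}$ is a multiple of $\kappa_{\alpha_1}v_{\{0,2\}}$.

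(iii) The linear-algebra step is false. Take a linear form $L$ in $y_0,y_1$. In the degree of $L\prod_{j\in Q}u_{\alpha_j}/u_\beta^{|Q|-1}$, the only polynomial class of $E\cF(H_k)$ with $k\in Q$ is $a_{\alpha_k}\prod_{j\in Q\setminus\{k\}}u_{\alpha_j-\beta}$. A $y_1$-component would need $u_{\alpha_k-\beta}$, which is not a class there. So membership in $\Image q_k^{\bigstar}$ forces $L\in\Fp\,(y_0-ky_1)$, and two distinct $k\in Q$ force $L=0$. The same holds with $t_0,t_1$ in place of $y_0,y_1$. Expanding $L$ in the basis $y_0-jy_1,\,y_1$ does not help, because the $y_1$ direction is exactly what fails to be in the image.

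The missing idea is that the degree-two expressions $t_0t_1$ and $t_0y_1-t_1y_0$ are unchanged by the substitutions defining the maps $q_k^{\bigstar}$. Indeed, $t_1(t_0-kt_1)=-t_0t_1$ and $(t_0-kt_1)y_1-t_1(y_0-ky_1)=t_0y_1-t_1y_0$. Hence, up to sign,
\[
u_{\alpha_k}u_\beta\,t_0t_1=\kappa_\beta\kappa_{\alpha_k},\qquad u_{\alpha_k}u_\beta\,(t_0y_1-t_1y_0)=\kappa_{\alpha_k}a_\beta-\kappa_\beta a_{\alpha_k}
\]
holds for every $k$ simultaneously. For instance, for each $k\in Q$,
\[
t_0t_1\prod_{j\in Q}\frac{u_{\alpha_j}}{u_\beta^{|Q|-2}}=\pm\, q_k^{\bigstar}\Bigl(\kappa_\beta\kappa_{\alpha_k}\prod_{j\in Q\setminus\{k\}}u_{\alpha_j-\beta}\Bigr).
\]
For $k\in[i]\setminus Q$ one uses $u_\beta u_{\alpha_k}^{-1}\kappa_\beta\kappa_{\alpha_k}\prod_{j\in Q}u_{\alpha_j-\beta}$ instead. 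These elements therefore lie in the intersection of all the images, and they lift through the tower \eqref{tower} by the same induction as the $v_S$.

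With these base classes, your inductive recipe $z_Q=u_{\alpha_i-\beta}z_{Q\setminus\{i\}}$ agrees with the paper's formula. The hypothesis $|Q|\ge 3$ enters only to supply a factor $u_{\alpha_l}/u_\beta$ beyond the pair $s_0,s_1$, not a third linear relation.
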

\begin{remark}
In the above proposition, we can define these classes for a set of cardinality two as follows:
\[
\begin{cases}
w_{\{s<t\}}=\kappa_{\alpha_s}\kappa_{\alpha_t}\\
z_{\{s<t\}}=\kappa_{\alpha_s}a_{\alpha_t}-\kappa_{\alpha_t}a_{\alpha_s}
\end{cases}
\]
\end{remark}
\begin{proof}[Proof of Proposition~\ref{wz}]
Choose consecutive elements $s_0 < s_1$ of $Q$ with $s_0 = \min(Q)$.  
Consider the classes in 
$\widetilde{H}^{\bigstar}_{\cG}(E\cG_{+};\uFp)$ given by
\[
\kappa_{\alpha_{s_0}}\kappa_{\alpha_{s_1}}
\prod_{\substack{l\in Q \\ l\ne s_0,s_1}}
\frac{u_{\alpha_l}}{u_{\beta}},
\qquad
\left(\kappa_{\alpha_{s_0}} a_{\alpha_{s_1}}
      - \kappa_{\alpha_{s_1}} a_{\alpha_{s_0}}\right)
\prod_{\substack{l\in Q \\ l\ne s_0,s_1}}
\frac{u_{\alpha_l}}{u_{\beta}}.
\]
Define $w_Q$ and $z_Q$ to be their respective preimages under the canonical map from 
$\widetilde{H}^{\bigstar}_{\cG}(E\cF_{i+};\uFp)$ to $\widetilde{H}^{\bigstar}_{\cG}(E\cG_{+};\uFp)$.
  
The existence of these preimages follows by induction on $i$,  
analogous to the construction of the classes in Proposition~\ref{invertiable}.
The base case occurs at $i=2$, where for $Q=[2]$ the corresponding classes
are the preimages of
\[
\kappa_{\alpha_0}\kappa_{\alpha_1}u_{\alpha_2-\beta},
\qquad
(\kappa_{\alpha_0}a_{\alpha_1} - \kappa_{\alpha_1}a_{\alpha_0})\,u_{\alpha_2-\beta}.
\]
\end{proof}

Combining the newly introduced classes with Proposition~\ref{coker} enables a computation of the kernel of the boundary map $\partial^{\bigstar}_{i}$ for $1\le i\le p-2$, and hence the corresponding Bredon cohomology $\mathrm{P}(\widetilde{H}^{\bigstar}_{\cG}(E{\cF_{i}}_{+};\uFp))$: 
\begin{thm}{\label{main}}
    The polynomial part of the $\RO(\cG)$–graded Bredon cohomology of ${E\cF_{i}}$ is given by
\[
\mathrm{P}(\widetilde{H}^{\bigstar}_{\cG}(E{\cF_{i}}_{+};\uFp))
\;\cong\;
\dfrac{\Fp\big[
a_{\beta},u_{\beta},\kappa_{\beta},
a_{\alpha_\ell},\kappa_{\alpha_\ell},a_{\alpha_j},\kappa_{\alpha_j},
v_{S},z_{Q},w_{Q},v_{i}^{\pm},u^{\pm}_{\alpha_j-\beta}
\colon \ell \in [i], j \in [p-1]\setminus [i]\big]}{\mathfrak{J}_i},
\]
Moreover, for $i=p-1$
\[
\mathrm{P}(\widetilde{H}^{\bigstar}_{\cG}(E_{C_p}\Zp_{+};\uFp))
\;\cong\;
\dfrac{\Fp\big[
a_{\beta},u_{\beta},\kappa_{\beta},
a_{\alpha_\ell},\kappa_{\alpha_\ell}
,v_{S},z_{Q},w_{Q},v_{p-1}^{\pm}
\colon \ell \in [i]\big]}{\mathfrak{J}_{p-1}},
\]

where \(S\) and \(Q\) range over the subsets of 
\([i]\) of cardinality at least one and at least three, respectively. 
$\mathfrak{J}_i$ denotes the relations of classes which can be pulled back from the map $\widetilde{H}^{\bigstar}_{\cG}(E{\cF_{i}}_{+};\uFp) \to \widetilde{H}^{\bigstar}_{\cG}(E\cG_{+};\uFp)$:
\begin{align*}
a_{\alpha_l} &\mapsto (y_{0}-ly_{1})\,u_{\alpha_l}, &
\kappa_{\alpha_l} &\mapsto (t_{0}-lt_{1})\,u_{\alpha_l}, \\
a_{\beta} &\mapsto y_{1}\,u_{\beta}, &
\kappa_{\beta} &\mapsto t_{1}\,u_{\beta}, \\
w_{Q}&\mapsto\kappa_{\alpha_{s_0}}\kappa_{\alpha_{s_1}}
\prod_{\substack{d\in Q\setminus\{s_0,s_1\} }}
\frac{u_{\alpha_d}}{u_{\beta}}, &
z_Q&\mapsto\left(\kappa_{\alpha_{s_0}} a_{\alpha_{s_1}}
      - \kappa_{\alpha_{s_1}} a_{\alpha_{s_0}}\right)
\prod_{\substack{d\in Q\setminus\{s_0,s_1\} }}
\frac{u_{\alpha_d}}{u_{\beta}}
\\
 v_{S}&\mapsto\frac{\prod_{j \in S}u_{\alpha_j}}{u^{|S|-1}_{\beta}}
\end{align*}
\end{thm}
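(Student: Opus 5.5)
We argue by induction on $i$ along the tower \eqref{tower}, using at each stage the ladder of long exact sequences \eqref{pullp}. The key structural input is the observation behind Proposition~\ref{boundary_0}. A class $x\in\widetilde{H}^{\bigstar}_{\cG}(E\cF_{i-1+};\uFp)$ lifts to $E\cF_{i}$ exactly when $\partial^{\bigstar}_{i-1}(x)=0$. By the homotopy pullback of Lemma~\ref{pbfamily}, this happens precisely when $\eta^{\bigstar}_{i-1}(x)\in\Image q_i^{\bigstar}$. Hence the polynomial part of $\Ker\partial^{\bigstar}_{i-1}$ is identified with the fibre product
\[
\Image\eta^{\bigstar}_{i-1}\,\cap\,\Image q_i^{\bigstar}\;\subseteq\;\widetilde{H}^{\bigstar}_{\cG}(E\cG_{+};\uFp).
\]
By Proposition~\ref{coker}, the contributions from $\Coker\partial^{\bigstar-1}_{i-1}$ are non-polynomial (they are $\Sigma^{-1}$-classes) and die further along the tower, so they may be discarded. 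The base case $i=1$ is Proposition~\ref{boundary_0}(1). There $v_1$ is the lift of $u_{\alpha_0}u_{\alpha_1-\beta}$, and $a_{\alpha_1},\kappa_{\alpha_1}$ are recovered from the displayed formulas.

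For the inductive step, assume $\mathrm{P}(\widetilde{H}^{\bigstar}_{\cG}(E\cF_{i-1+};\uFp))$ has the stated form. We first show that every listed generator at level $i$ lies in the fibre product above.
\begin{itemize}
\item The classes $a_\beta,u_\beta,\kappa_\beta,a_{\alpha_\ell},\kappa_{\alpha_\ell}$ come from $S^0$, so by \eqref{locsquare} they lie in both images.
\item The classes $v_S$, $v_i^{\pm}$ are handled by Proposition~\ref{invertiable}. The classes $w_Q,z_Q$ are handled by Proposition~\ref{wz}.
\item The classes $u^{\pm}_{\alpha_j-\beta}$ for $j>i$ are in $\Image q_i^{\bigstar}$ by Proposition~\ref{q_i-map}. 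Modulo units, they are in $\Image\eta^{\bigstar}_{i-1}$ by the inductive hypothesis.
\end{itemize}
Their images in $\widetilde{H}^{\bigstar}_{\cG}(E\cG_+;\uFp)$ are the displayed formulas, for example $v_S\mapsto\prod_{j\in S}u_{\alpha_j}/u_\beta^{|S|-1}$. Multiplicativity of $\eta^{\bigstar}_i$ then gives a ring map from the free polynomial algebra on these generators. Its kernel is by definition $\mathfrak J_i$, provided we know that $\eta^{\bigstar}_i$ is injective on the polynomial part. We obtain injectivity inductively from the injectivity of $(q_0^{\bigstar},\dots,q_i^{\bigstar})$ on polynomial classes, which we check by an $\RO(\cG)$-degree count.

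The main obstacle is \emph{completeness}: every element of $\Image\eta^{\bigstar}_{i-1}\cap\Image q_i^{\bigstar}$ must be a polynomial in the listed generators. Since $\widetilde{H}^{\bigstar}_{\cG}(E\cG_+;\uFp)$ is $\Fp[t_0,t_1,y_0,y_1]/(t_0^2,t_1^2)$ tensored with a Laurent ring in the $u$'s, we work one $\RO(\cG)$-degree at a time. In a fixed degree $\sum a_\ell\alpha_\ell+b\beta+c$, the $u$-monomial is forced, so the question reduces to comparing two finite-dimensional subspaces of $\Fp[t_0,t_1,y_0,y_1]/(t_0^2,t_1^2)$.
\begin{itemize}
\item $\Image q_i^{\bigstar}$ consists of polynomials in $t_0-it_1$, $y_0-iy_1$, $t_1$, $y_1$ with the $u_{\alpha_i}$ and $u_\beta$ powers constrained.
\item By induction, $\Image\eta^{\bigstar}_{i-1}$ is generated by products of the linear forms $y_0-\ell y_1$ and $t_0-\ell t_1$ for $\ell<i$, with negative $u_\beta$-powers only through $v_S,w_Q,z_Q$.
\end{itemize}
The plan is to filter by the exponent of $u_\beta^{-1}$. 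Negative powers of $u_\beta$ can only be absorbed by $u_{\alpha_\ell}/u_\beta$ factors, so by induction on this exponent each element reduces to $v_S$ times something of lower filtration. The exterior part, involving at most two $t$'s, is exactly what produces the $w_Q$ and $z_Q$ classes. The linear-algebra input is that the forms $y_0-\ell y_1$ for distinct $\ell$ are pairwise independent. This independence yields the relations (2) and (3) of Proposition~\ref{invertiable}, with $m^{-1}=j_2-j_1$, and lets us express any intersection element in the generators.

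Finally, for $i=p-1$ there are no remaining $j\notin[i]$, so the $u^{\pm}_{\alpha_j-\beta}$ generators disappear, and we obtain the second displayed formula.
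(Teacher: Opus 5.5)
Your proposal follows the paper's proof. It identifies $\Ker\partial^{\bigstar}_{i-1}$ with $(\eta^{\bigstar}_{i-1})^{-1}(\Image\eta^{\bigstar}_{i-1}\cap\Image q^{\bigstar}_{i})$ via the ladder \eqref{pullp}, takes the generators from Propositions~\ref{invertiable} and~\ref{wz}, and reads off $\mathfrak{J}_i$ from injectivity of the polynomial part into $\widetilde{H}^{\bigstar}_{\cG}(E\cG_{+};\uFp)$; your degree-by-degree completeness sketch goes further than the paper, which only asserts generation. One correction: Proposition~\ref{coker} says the classes from $\Coker\partial^{\bigstar-1}_{i-1}$ lie in $\Ker\partial^{\bigstar}_{i}$, so they survive up the tower rather than die, and they drop out of the polynomial part because they map to zero in $\widetilde{H}^{\bigstar}_{\cG}(E\cG_{+};\uFp)$.
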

\begin{proof}
The above expression follows from the similar fact used in Proposition~\ref{boundary_0}, that any class lie in $\Ker \partial^{\bigstar}_{i-1}$ if and only if there exists $y\in \widetilde{H}^{\bigstar}_{\cG}(E\cF(H_i)_{+};\uFp)$ such that $\eta^{\bigstar}_{i-1}(x)=q^{\bigstar}_{i}(y)$, which gives $\Ker \partial^{\bigstar}_{i-1}=(\eta^{\bigstar}_{i-1})^{-1}(\Image \eta^{\bigstar}_{i-1} \cap \Image q^{\bigstar}_{i})$. Proposition~\ref{invertiable} and Proposition~\ref{wz} gives the existence of such classes, whereas the relations $\mathfrak{J}_i$ follows from the fact that the map 
    \[\widetilde{H}^{\bigstar}_{\cG}({E\cF_{i}}_{+};\uFp) \to \widetilde{H}^{\bigstar}_{\cG}({E\cG}_{+};\uFp)
    \] 
    is an injective map for the polynomial part $\mathrm{P}(\widetilde{H}^{\bigstar}_{\cG}({E\cF_{i}}_{+};\uFp))$.
\end{proof}

\begin{mysubsect}{Bredon cohomology of $B_{C_p}\Zp$}
The canonical projection
\[
\pi\colon C_p\times C_p \longrightarrow C_p\times C_p / (1 \times C_p) 
\]
induces a pullback homomorphism on representation rings
\[
\pi^\ast\colon \RO(C_p\times C_p/(1 \times C_p)) \longrightarrow \RO(C_p\times C_p).
\]
This allow us to relate cohomology theories along $\pi$:

\begin{prop}\label{res.}
Let $\alpha \in \mathrm{im}(\pi^\ast)$, then there is a natural equivalence
\[
\widetilde{H}^{\pi^\ast(\alpha)}_{C_p\times C_p}(X; \uFp) \;\cong\; \widetilde{H}^\alpha_{C_p}(X/(1 \times C_p); \uFp),
\]
where the right-hand side is the $\RO(C_p)$-graded Bredon cohomology of the quotient $X/(1\times C_p)$.
\end{prop}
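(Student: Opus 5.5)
The plan is to build a natural comparison map and then check it cell by cell. Write $G=C_p\times C_p$, $N=1\times C_p$ and $\overline{G}=G/N\cong C_p$, and let $\inf$ denote inflation along $\pi$. Two standard facts about the constant Mackey functor do the work.

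\begin{enumerate}
\item The categorical $N$-fixed points satisfy $(H\uFp_G)^N\simeq H\uFp_{\overline{G}}$. To see this, compare homotopy Mackey functors: the value at $\overline{G}/\overline{L}$ is $\uFp(G/L)=\Fp$ for $L\supseteq N$, and the restriction maps are identities.
\item For a finite $\overline{G}$-CW spectrum $Y$ and any $G$-spectrum $E$, one has $(\inf Y\wedge E)^N\simeq Y\wedge E^N$.
\end{enumerate}

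Applying both with $Y=S^{\alpha}$ gives $(S^{\pi^\ast\alpha}\wedge H\uFp)^N\simeq S^{\alpha}\wedge H\uFp_{\overline{G}}$. The adjunction $\inf\dashv(-)^N$ then yields
\[
[\inf(X/N)_+,\,S^{\pi^\ast\alpha}\wedge H\uFp]^G\;\cong\;[(X/N)_+,\,S^{\alpha}\wedge H\uFp]^{\overline{G}}=\widetilde{H}^{\alpha}_{C_p}(X/N;\uFp).
\]
Precomposing with the quotient map $X\to\inf(X/N)$ defines a natural transformation
\[
\Phi_X\colon \widetilde{H}^{\alpha}_{C_p}(X/N;\uFp)\longrightarrow \widetilde{H}^{\pi^\ast(\alpha)}_{G}(X;\uFp).
\]
Both sides are cohomology theories on $G$-CW complexes, since $X\mapsto X/N$ preserves cofibre sequences, wedges and cell structures. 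By the usual induction over skeleta, the five lemma, and the Milnor $\lim^1$ sequence for infinite complexes, it therefore suffices to prove that $\Phi_{G/H_+}$ is an isomorphism for every orbit $G/H$, in all degrees $\pi^\ast\alpha+n$.

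For the orbit check, note that $G/H/N=G/HN$. After the adjunction, $\Phi_{G/H_+}$ becomes the restriction
\[
\res^{HN}_{H}\colon \widetilde{H}^{\pi^\ast\alpha}_{HN}(S^0;\uFp)\to\widetilde{H}^{\pi^\ast\alpha}_{H}(S^0;\uFp).
\]
Since $|N|=p$, every subgroup $H$ either contains $N$ or satisfies $H\cap N=e$, so there are three cases.

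\begin{enumerate}
\item If $N\le H$, then $HN=H$ and the map is the identity.
\item If $H=e$, then $HN=N$. The restriction of $\pi^\ast\alpha$ to $N$ is trivial, so both sides are $H^{\ast}(pt;\Fp)$. The map is $\res^N_e$ for the constant Mackey functor, which is the identity.
\item If $H=H_i$, then $HN=G$. This is the main obstacle.
\end{enumerate}

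In the third case I would argue as follows. By the adjunction computation above, the source is $\widetilde{H}^{\alpha}_{C_p}(S^0;\uFp)$. The composite $H_i\hookrightarrow G\xrightarrow{\pi}\overline{G}$ is an isomorphism $\phi_i$, so the target is $\widetilde{H}^{\phi_i^\ast\alpha}_{H_i}(S^0;\uFp)$. The map in question is then restriction along $\phi_i$, composed with the identification $(H\uFp_G)^N\simeq H\uFp_{\overline{G}}$. It is an isomorphism because pulling back along the group isomorphism $\phi_i$ carries $H\uFp_{\overline{G}}$ to $H\uFp_{H_i}$, as both are constant Mackey functors. The one point to verify carefully is that the equivalence $(H\uFp_G)^N\simeq H\uFp_{\overline{G}}$ is compatible with restriction to $H_i$. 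I would check this on $\underline{\pi}_0$, where everything is $\Fp$ with identity restrictions, and then use uniqueness of Eilenberg--Mac Lane spectra. As a sanity check, compare with the explicit classes in Proposition~\ref{c_2} and the Mackey structure in Proposition~\ref{mackey}: for example, $\res^{G}_{H_i}(a_\beta)=a_\xi$, $\res^{G}_{H_i}(u_\beta)=u_\xi$ and $\res^{G}_{H_i}(\kappa_\beta)=\kappa_\xi$ with $\xi=\res^G_{H_i}\beta$, which matches the restriction-to-$H_i$ behaviour already used in Proposition~\ref{q_i-map}.

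With the orbit cases settled, the cellular induction gives the natural isomorphism for all $G$-CW complexes $X$.
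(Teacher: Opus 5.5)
Your argument is correct, but it takes a different route from the paper. The paper's proof is two lines. It rewrites $S^{\pi^\ast\alpha}\wedge H\uFp$ as $\pi^\ast(S^\alpha\wedge H\uFp)$, then applies an orbit--inflation adjunction $[X,\pi^\ast E]^{\cG}\cong[X/(1\times C_p),E]^{C_p}$.

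Read with genuine inflation, both steps need hypotheses. The first identification holds only after restricting to subgroups meeting $N=1\times C_p$ trivially, since the genuine inflation of $H\uFp_{C_p}$ has nonzero higher $\pi^N_\ast$ by the tom Dieck splitting. The orbit adjunction is valid only for $X$ whose isotropy meets $N$ trivially. Both conditions hold for $X=E_{C_p}\Zp_+$, which is the only case the paper uses, and there its content is exactly your cases 2 and 3.

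You instead use the adjunction that always holds, $\mathrm{Inf}\dashv(-)^N$, together with $(H\uFp_{\cG})^N\simeq H\uFp_{C_p}$ and the projection formula, and then reduce to orbits by cellular induction. This costs a case analysis. In exchange it makes the comparison map explicit and proves the statement for every $\cG$-CW complex $X$, including those with cells of type $\cG/N$ or $\cG/\cG$, which the paper's argument does not cover as written.

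One tightening in case 3. Uniqueness of Eilenberg--Mac Lane spectra only gives an abstract equivalence, so argue directly about the specific map $\phi_i^\ast\bigl((H\uFp_{\cG})^N\bigr)=\res_{H_i}\mathrm{Inf}\bigl((H\uFp_{\cG})^N\bigr)\to\res_{H_i}H\uFp_{\cG}$ induced by the counit. At each level $H_i/L$ it is $\res^{LN}_{L}=\mathrm{id}_{\Fp}$ on $\underline{\pi}_0$, so it is an equivalence, and on $\pi_0^{H_i}$ it recovers $\res^{\cG}_{H_i}$, which is the map you need to be an isomorphism.
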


\begin{proof}
From the definition of Bredon cohomology via $G$-spectra, we have:
\[
[X, S^{\pi^\ast(\alpha)} \wedge H\uFp]^{C_p\times C_p} 
\cong [X, \pi^\ast(S^\alpha \wedge H\uFp)]^{C_p\times C_p} 
\cong [X/(1\times C_p), S^\alpha \wedge H\uFp]^{C_p}.
\]
This gives the result.
\end{proof}

\vsn

In particular, if we take $\bigstar_{\beta} \in \Z\{1, \beta\} = \RO(\cG/(1\times C_p))$, then for $X = E_{C_p}\Zp_{+}$ using Proposition~\ref{res.} we obtain:
\[
\widetilde{H}^{\bigstar_\beta}_{C_p}({B_{C_p} \Zp}_+; \uFp) \;\cong\; \widetilde{H}^{\bigstar_{\beta}}_{\cG}(E_{C_p}\Zp_{+}; \uFp).
\]
 Since the polynomial part of $\widetilde{H}^{\bigstar}_{\cG}(E_{C_p}\Zp_{+};\uFp)$  is now known explicitly in Theorem~\ref{main}, we may use this description to compute the polynomial part of the Bredon cohomology of $B_{C_p}\Zp$.

\begin{prop}\label{poly-BCpZp}
The polynomial part of the $RO(C_p)$--graded Bredon cohomology of \( B_{C_p}\Zp \) is given by
\[
\mathrm{P}(\widetilde{H}^{\bigstar_{\beta}}_{C_p}(B_{C_p}\Zp_{+};\uFp))
\;=\;
\Fp[a_{\beta},u_{\beta},\kappa_{\beta},\nu,c,b,\eta,\xi,\omega]/\sim,
\]
modulo the ideal generated by the following classes :
\begin{enumerate}
    \item $c^p-u_{\beta}b-a^{p-1}_{\beta}c^{p-1}$
    \item $\nu^2,\xi^2,\eta^2,\omega^2,\kappa^2_{\beta}$
    \item $\nu \xi,\nu \eta,\kappa_{\beta}\xi,\xi\eta$
\end{enumerate}
\end{prop}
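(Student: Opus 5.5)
The plan is to restrict Theorem~\ref{main} (case $i=p-1$) to the $\bigstar_\beta$-grading via Proposition~\ref{res.}, and to find which monomials in the generators of $\mathfrak{J}_{p-1}$ land in degrees $m+n\beta$. Since the polynomial part of $\widetilde{H}^{\bigstar}_{\cG}(E_{C_p}\Zp_+;\uFp)$ embeds into $\widetilde{H}^{\bigstar}_{\cG}(E\cG_+;\uFp)$, we may work inside the explicit ring of Proposition~\ref{cohomologyofeg}. There, $a_{\alpha_l}\mapsto (y_0-ly_1)u_{\alpha_l}$ and $\kappa_{\alpha_l}\mapsto(t_0-lt_1)u_{\alpha_l}$, so relations in the restricted subring reduce to polynomial identities in $t_0,t_1,y_0,y_1$ (with $t_0^2=t_1^2=0$) tensored with Laurent monomials in the $u$'s.

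First, identify the subring of $\bigstar_\beta$-degrees. A monomial has degree in $\Z\{1,\beta\}$ exactly when, for each $l$, the total $\alpha_l$-weight cancels. Using $v_{p-1}^{\pm}$ of degree $\sum_l\alpha_l-(p-1)\beta$ (up to an integer shift), the natural candidates are products involving one factor of each index $l\in[p-1]$, multiplied by $v_{p-1}^{-1}$. I would set
\[
\nu:=\kappa_{\beta}\text{-free class } v_{p-1}^{-1}\textstyle\prod_l\kappa_{\alpha_l}\text{-type},\qquad
c:=v_{p-1}^{-1}\,\textstyle\prod_{l}a_{\alpha_l}\cdot(\ldots),\qquad b:=v_{p-1}^{-1}\prod_l a_{\alpha_l},
\]
with $c$ of degree $2$ (the class restricting to $y_0$ times a unit) and $b$ the Euler-type class of degree $2p$. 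The odd classes $\nu,\eta,\xi,\omega$ are taken as $v_{p-1}^{-1}$ times suitable $w_Q,z_Q,v_S$ products. Concretely, I would compute the images in $E\cG$. Since $\prod_{l}(y_0-ly_1)=y_0^p-y_1^{p-1}y_0$, this yields $c\mapsto y_0u_\beta$-normalized, $b\mapsto (y_0^p-y_1^{p-1}y_0)\cdot u$-unit, and $a_\beta\mapsto y_1u_\beta$.

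Second, derive the relations. The key relation (1) is $c^p-u_\beta b-a_\beta^{p-1}c^{p-1}$, which should come from the identity $\prod_{l\in\Fp}(y_0-ly_1)=y_0^p-y_1^{p-1}y_0$ after inserting the $u_\beta$-powers needed to balance degrees. I would check it by computing all three terms in $E\cG$ and using injectivity on the polynomial part. Relations (2) and (3) follow from $t_0^2=t_1^2=0$: each of $\nu,\xi,\eta,\omega,\kappa_\beta$ maps to a linear combination of $t_0,t_1$ times even classes, and the listed products contain a repeated $t$ factor (for example $\kappa_\beta\xi$ when $\xi$ carries $t_1$). Completeness would be argued by showing that the listed classes generate the image of the restricted polynomial part. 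Any degree-$\bigstar_\beta$ monomial in $v_S,w_Q,z_Q,a_{\alpha_l},\kappa_{\alpha_l}$ can be rewritten, using Proposition~\ref{invertiable}(2),(3), as a polynomial in $a_\beta,\kappa_\beta,u_\beta$ and the new classes.

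The main obstacle is this generation/completeness step, together with pinning down exactly which $w_Q,z_Q$ combinations give $\eta,\xi,\omega$. I would first handle it by a Poincaré-series comparison. Restricting the $E\cG$-description to $\bigstar_\beta$, and using Proposition~\ref{res.} with the known nonequivariant cohomology $H^*(B\Zp)$ after inverting $u_\beta$, should bound the ranks from above. I would then check that the monomials surviving the relations realize these ranks.
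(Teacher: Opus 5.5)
Your overall route is the paper's. You restrict Theorem~\ref{main} to $\bigstar_\beta$-degrees via Proposition~\ref{res.}, form classes $v_{p-1}^{-1}\cdot(\text{one factor for each index } l)$, and compute inside $\widetilde{H}^{\bigstar}_{\cG}(E\cG_+;\uFp)$. The paper does no more than write down $\nu=\kappa_{\alpha_0}v_{\{1,\dots,p-1\}}/v_{p-1}$, $c=a_{\alpha_0}v_{\{1,\dots,p-1\}}/v_{p-1}$, $b=a_{\alpha_0}\cdots a_{\alpha_{p-1}}/v_{p-1}$, $\xi=w_{[p-1]}/v_{p-1}$, $\eta=z_{[p-1]}/v_{p-1}$ and $\omega=\kappa_{\alpha_0}a_{\alpha_1}\cdots a_{\alpha_{p-1}}/v_{p-1}$, and then asserts that they generate; it never checks the relations.

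Your specific guesses differ from these and do not work. A product of all the $\kappa_{\alpha_l}$ maps to a product of $p\geq 3$ linear forms in $\Lambda(t_0,t_1)$, so it is zero. The class $c$ sits in degree $\beta$, not $2$. With the paper's classes the images in $E\cG$ are $\nu\mapsto t_0u_\beta$, $c\mapsto y_0u_\beta$, $b\mapsto (y_0^p-y_0y_1^{p-1})u_\beta^{p-1}$, $\xi\mapsto -t_0t_1u_\beta$, $\eta\mapsto (t_1y_0-t_0y_1)u_\beta$ and $\omega\mapsto t_0(y_0^{p-1}-y_1^{p-1})u_\beta^{p-1}$.

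The genuine gap is in the step you rely on most, checking the relations in $E\cG$. Done honestly, that check does not produce the listed ideal.

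\begin{itemize}
\item Relation (1) is not homogeneous. We have $|c^p|=|u_\beta b|=p\beta$ but $|a_\beta^{p-1}c^{p-1}|=2(p-1)\beta$, so inserting $u_\beta$-powers cannot make it hold. Your identity $\prod_{l\in\Fp}(y_0-ly_1)=y_0^p-y_0y_1^{p-1}$ gives $c^p=u_\beta b+a_\beta^{p-1}c$ instead, which agrees with Proposition~\ref{FullBC_p}.
\item Your claim that each product in (3) contains a repeated $t$ fails for $\nu\eta$. Its image $t_0t_1y_0u_\beta^2$ is nonzero, and in fact $\nu\eta=-c\xi$.
\item The generation step via a Hilbert-series comparison cannot close. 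The listed ideal omits true relations such as $\nu\kappa_\beta+\xi u_\beta$, $\eta u_\beta-\kappa_\beta c+\nu a_\beta$, $\omega u_\beta-\nu(c^{p-1}-a_\beta^{p-1})$, $\nu b-c\omega$ and $\nu\omega$, none of which lies in it. Also, the $u_\beta$-inverted upper bound is $H^*(B\cG;\Fp)=\Lambda(t_0,t_1)\otimes\Fp[y_0,y_1]$, not $H^*(B\Zp)$.
\end{itemize}

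So your method is the right tool, but carried out it yields a corrected presentation rather than the one stated. As written, your derivations of (1) and of $\nu\eta=0$ would fail.
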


\begin{proof}
Define the following classes:
\[
\begin{aligned}
\nu &\,=\, \frac{\kappa_{\alpha_0} v_{\{p-1,p-2,\ldots,1\}}}{v_{p-1}}, 
&\qquad
c &\,=\, \frac{a_{\alpha_0} v_{\{p-1,p-2,\ldots,1\}}}{v_{p-1}}, \\[4pt]
b &\,=\, \frac{a_{\alpha_0} a_{\alpha_1} \cdots a_{\alpha_{p-1}}}{v_{p-1}}, 
&\qquad
\xi &\,=\, \frac{w_{[p-1]}}{v_{p-1}}, \\[4pt]
\eta &\,=\, \frac{z_{[p-1]}}{v_{p-1}}, 
&\qquad
\omega &\,=\, \frac{\kappa_{\alpha_0} a_{\alpha_1} \cdots a_{\alpha_{p-1}}}{v_{p-1}}.
\end{aligned}
\]
The above polynomial algebra generates all $\bigstar_{\beta} \in \mathbb{Z}\{1,\beta\}$--graded classes in
\(
\mathrm{P}(
\widetilde{H}^{\bigstar_{\beta}}_{\cG}(E_{C_p}\Zp_{+};\uFp)),
\)
and hence describes the polynomial part of
\(
\widetilde{H}^{\bigstar_{\beta}}_{C_p}(B_{C_p}\Zp_{+};\uFp).
\)
\end{proof}

\end{mysubsect}

\sect{Bredon cohomology of $E_{C_2}\Z/2$}
    
We now compute the case $p = 2$ in a similar manner. Here $\K$ will denote the Klein-four group $C_2\times C_2$.
In this situation, the nontrivial irreducible representations are denoted by 
$\alpha_0$, $\alpha_1$, and $\beta$, whose kernels are the subgroups 
$H_0 = \langle a \rangle$, $H_1 = \langle ab \rangle$, and $K = \langle b \rangle$, respectively. 
In contrast to the case of odd primes $p$, all these are one-dimensional real representations. 

To study the $\RO(\K):=\Z\{1,\alpha_0,\alpha_1,\beta\}$--graded Bredon cohomology of $E_{C_2}\Z/2$, we make use of the relationship between the universal spaces as described in Lemma~\ref{pbfamily}. Using the long exact sequence associated 
for the families $\mathcal{F}_{0} := \{1, H_0\}$ and $\mathcal{F}_{1} := \{1, H_1\}$ in Lemma~\ref{pbfamily}, 
One obtains the following diagram in which both rows are exact sequences, which is analogous to \eqref{pullp}:

   \begin{myeq}{\label{pull2}}
\xymatrix@C=.5cm
{
\cdots  \widetilde{H}^{\bigstar}_{\K}({E_{C_2}\Zz}_+;\uF) \ar[d]_-{\iota^{\bigstar}_1}\ar[r]^-{j^{\bigstar}_1}  &  \widetilde{H}^{\bigstar}_{\cG}({E\cF(H_0)}_+;\uF) \ar[d]^-{q^{\bigstar}_{0}}\ar[r]^-{\partial^{\bigstar}} &  \widetilde{H}^{\bigstar+1}_{\K}(E\cFH{1};\uF) \ar[d]^{=}  \cdots\\
\cdots \widetilde{H}^{\bigstar}_{\K}(E\cF(H_1)_+;\uF) \ar[r]^-{q^{\bigstar}_1} & \widetilde{H}^{\bigstar}_{\K}({E\K}_+;\uF) \ar[r]^-{\bar{\partial}^{\bigstar}} & \widetilde{H}^{\bigstar+1}_{\K}(E\cFH{1};\uF)  \cdots
}.
\end{myeq}

Proceeding analogously, one obtains the following computations for the 
Bredon cohomology of the spaces $E\K$, $E\mathcal{F}(H_i)$, and 
$E\cFH{i}$:

\begin{prop}\label{resuni}
    The Bredon cohomology of $E\K,E\cF(H_i)$ and $E\cFH{i}$ are given by:
    \begin{enumerate}
        \item \[
        \widetilde{H}^{\bigstar}_{\K}({E\K}_{+};\uF)=\dfrac{\F[a_{\alpha_0},a_{\alpha_1},a_{\beta},u^{\pm}_{\alpha_0},u^{\pm}_{\alpha_1},u^{\pm}_{\beta}]}{(a_{\alpha_0}u_{\alpha_1}u_{\beta}+u_{\alpha_0}a_{\alpha_1}u_{\beta}+u_{\alpha_0}u_{\alpha_1}a_{\beta})}
        \]
        \item For $i,j\in \{0,1\},$
        \[
        \widetilde{H}^{\bigstar}_{\K}(E\cF(H_i)_{+};\uF)=\bM_{2}(\beta)\otimes\F[a_{\alpha_i},u^{\pm}_{\alpha_i},u^{\pm}_{\alpha_j-\beta}\colon j\neq i],
        \]

\item For $i,j\in \{0,1\},$
\begin{align*}
& \widetilde{H}^{\bigstar}_{\K}(E\cFH{i};\uF)\cong \F\left\langle{\Sigma^{-1}\frac{1}{a^{k_1}_{\beta}u^{k_2}_{\beta}}\colon k_1,k_2\geq 1}\right\rangle\\  
&\bigoplus \Sigma^{-1} u_\beta^{-1}\,\F\!\left[
a_{\alpha_i},\, a_{\alpha_j},\, a_\beta,\,
\Bigl(\tfrac{u_{\alpha_j}}{u_\beta}\Bigr)^{\pm},\,
u_{\alpha_i}^{\pm},\, u_\beta^{-1}
\colon j\neq i\right]\mod \Image q^{\bigstar}_{i}.
\end{align*}

    \end{enumerate}
    
\end{prop}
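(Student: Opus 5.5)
We mirror the odd-prime computations of Section~2, specialising to $p=2$. The simplification is that all of $\alpha_0,\alpha_1,\beta$ are one-dimensional, so there are no $\kappa$-classes, and $\bM_2(\beta)$ from Proposition~\ref{c_2}(2) replaces $\bMp$.

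\textbf{Part (2): the space $E\cF(H_i)$.} We use $E\cF(H_i)\simeq E(\K/H_i)\simeq S(\infty\alpha_i)$ and run the homotopy fixed point spectral sequence. Its $E_2$-page is $H^\ast(B(\K/H_i);\pi^{H_i}_{\res(V)}H\uF)$, namely $\F[t]$ with $|t|=(1,0)$ tensored with the $H_i$-coefficient ring. Restricted to $H_i\cong C_2$, the representation $\beta$ becomes the sign representation and $\alpha_j$ ($j\neq i$) also restricts to the sign. Hence $\alpha_j-\beta$ restricts to $0$ and $\alpha_i$ restricts to trivial. This yields the units $u_{\alpha_i}^{\pm}$ and $u^{\pm}_{\alpha_j-\beta}$ and the factor $\bM_2(\beta)$. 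The spectral sequence collapses for degree reasons, exactly as in \eqref{comh1}, and there are no extension problems over $\F$. Finally, $t$ is identified with $a_{\alpha_i}u_{\alpha_i}^{-1}$ via the map from the coefficient ring, giving $\F[a_{\alpha_i},u^{\pm}_{\alpha_i}]$.

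\textbf{Part (1): the free space $E\K$.} The same spectral sequence gives $H^\ast(B\K;\F)=\F[t_0,t_1]$ tensored with $\F[u^{\pm}_{\alpha_0},u^{\pm}_{\alpha_1},u^{\pm}_\beta]$, since every $u_V$ restricts to a unit on the free part. As in Proposition~\ref{q_i-map}, we identify the Euler classes with $a_{\alpha_0}=t_0u_{\alpha_0}$, $a_{\alpha_1}=(t_0+t_1)u_{\alpha_1}$ and $a_\beta=t_1u_\beta$. Note that $\alpha_1$ is the character $\alpha_0\otimes\beta$, which gives the sum of characters $t_0+t_1$ in $H^1$. Eliminating $t_0,t_1$ leaves the single linear relation
\[
a_{\alpha_1}u_\beta u_{\alpha_0}=a_{\alpha_0}u_{\alpha_1}u_\beta+a_\beta u_{\alpha_0}u_{\alpha_1},
\]
which mod $2$ is the stated relation. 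We then check that this presentation exhausts the polynomial ring, because $t_0,t_1$ are recovered from $a_{\alpha_0},a_\beta$ times units.

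\textbf{Part (3): the space $E\cFH{i}$.} We use the cofibre sequence \eqref{cofseq3} and compute the kernel and cokernel of $q_i^\bigstar$. By the analogue of Proposition~\ref{q_i-map}, $q_i^\bigstar$ sends the polynomial part $\F[a_\beta,u_\beta]$ injectively to $\F[t_1u_\beta,u_\beta]$ and kills the negative classes $\Sigma^{-1}\frac{1}{a_\beta^{k_1}u_\beta^{k_2}}$ for degree reasons.
\begin{itemize}
\item The kernel consists of these negative classes tensored with the remaining polynomial factor, and contributes the first summand after the shift. These classes are $u$-periodic in the $\alpha$-directions, and we would absorb that periodicity into the stated bookkeeping.
\item The cokernel, shifted by $\Sigma^{-1}$, is computed with Proposition~\ref{alglemma}. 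We apply it with $x=u_\beta$, $y=u_{\alpha_j}$ and $i\neq j$ to identify what is missing from the image, namely the negative powers of $u_\beta$ that are not balanced by $u_{\alpha_j}/u_\beta$. This gives $\Sigma^{-1}u_\beta^{-1}\F[\ldots,(u_{\alpha_j}/u_\beta)^{\pm},u_{\alpha_i}^{\pm},u_\beta^{-1}]$ modulo the image.
\end{itemize}

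\textbf{Main obstacle.} We expect the hardest step to be the precise cokernel bookkeeping in (3). The image of $q_i^\bigstar$ is not a monomial subring once the relation from (1) is imposed, so we would first verify that the relation is compatible with the Proposition~\ref{alglemma} decomposition, by working monomial-by-monomial in $u$-exponents. Only after that check would we quotient by the image.
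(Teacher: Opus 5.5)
Your proposal is correct and follows the same route as the paper, which justifies this proposition only by proceeding as in the odd-prime Section 2. That route is the homotopy fixed point spectral sequence for $E\cF(H_i)\simeq S(\infty\alpha_i)$ and for $E\K$, Euler classes identified via $a_V=w_1(V)u_V$ (your $t_0,\ t_0+t_1,\ t_1$), and the kernel and cokernel of $q_i^\bigstar$ computed with Proposition~\ref{alglemma}.

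One small correction to part (3). The kernel of $q_i^\bigstar$ enters $\widetilde{H}^\bigstar_{\K}(E\cFH{i};\uF)$ unshifted; only the cokernel is shifted. It also genuinely carries the factor $\F[a_{\alpha_i},u^{\pm}_{\alpha_i},u^{\pm}_{\alpha_j-\beta}]$, as in Proposition~\ref{pcohefbhi} and as the paper later uses in Proposition~\ref{ker2}. So rather than absorbing this periodicity, you should read the first summand of (3) as including that factor.
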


As proved for odd prime $p$ in Proposition~\ref{integerlevel}, one derived $j^{\ast}_{1}$ (in the diagram \eqref{pull2}) is a zero morphism at the integer level. As a consequence, we have the following description of the $\Z$-graded cohomology of $E_{C_2}\Zz$ as follows: 
\begin{prop}
     The integer graded Bredon cohomology of the equivariant universal space is given by:
     \[
 \widetilde{H}^{*}_{\cG}(\bE_{+};\uF)\cong\Sigma^{-1}\frac{a_{\alpha_0}a_{\alpha_1}}{u_{\alpha_0}u_{\alpha_1}}\F[\frac{a_{\beta}}{u_{\beta}},\frac{a_{\alpha_0}}{u_{\alpha_0}},\frac{a_{\alpha_1}}{u_{\alpha_1}}]/\sim \cong \Sigma^{-1}\frac{a_{\alpha_0}a_{\alpha_1}}{u_{\alpha_0}u_{\alpha_1}}\F[\frac{a_{\alpha_0}}{u_{\alpha_0}},\frac{a_{\alpha_1}}{u_{\alpha_1}}]
 \]
 where $\sim$ is induced from the relation $a_{\alpha_0}u_{\alpha_1}u_{\beta}+a_{\alpha_1}u_{\alpha_0}u_{\beta}+a_{\beta_0}u_{\alpha_1}u_{\alpha_0}=0$.
 \end{prop}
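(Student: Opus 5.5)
The plan is to read off the integer-graded part from the diagram \eqref{pull2}, using the Mayer--Vietoris type long exact sequence in integer degrees:
\[
\cdots \to \widetilde{H}^{n-1}_{\K}(E\cF(H_0)_+;\uF) \xrightarrow{\partial} \widetilde{H}^{n}_{\K}(E\cFH{1};\uF) \to \widetilde{H}^{n}_{\K}(\bE_+;\uF) \xrightarrow{j_1} \widetilde{H}^{n}_{\K}(E\cF(H_0)_+;\uF) \to \cdots .
\]
Since $j^{\ast}_1=0$ in integer degrees, by the analogue of Proposition~\ref{integerlevel}, each $\widetilde{H}^{n}_{\K}(\bE_+;\uF)$ is the cokernel of $\partial$ in degree $n-1$. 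So the whole proof reduces to identifying this cokernel in integer degrees.

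First I would list the integer-degree classes of $\widetilde{H}^{\ast}_{\K}(E\cF(H_0)_+;\uF)$ from Proposition~\ref{resuni}(2). In degree $\bigstar$, the degrees of $\alpha_0,\alpha_1,\beta$ must cancel, so the only monomials available are built from $a_{\alpha_0}/u_{\alpha_0}$, $a_\beta/u_\beta$, $u_{\alpha_0}^{\pm}$ and $u^{\pm}_{\alpha_1-\beta}$, with the exponents forced to balance. This gives $\F[a_{\alpha_0}/u_{\alpha_0},a_\beta/u_\beta]$ together with possible negative classes from $\bM_2(\beta)$, which vanish in integer degrees by Proposition~\ref{c_2}(2).

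Next I would compute the integer part of $\widetilde{H}^{\ast}_{\K}(E\cFH{1};\uF)$ from Proposition~\ref{resuni}(3). It is $\Sigma^{-1}$ of the integer-degree part of $\widetilde{H}_{\K}(E\K_+)$ modulo $\Image q_1$. Using Proposition~\ref{resuni}(1), the integer part of $\widetilde{H}_{\K}(E\K_+)$ is $\F[x_0,x_1,x_\beta]/(x_0+x_1+x_\beta)$, where $x_0=a_{\alpha_0}/u_{\alpha_0}$, $x_1=a_{\alpha_1}/u_{\alpha_1}$ and $x_\beta=a_\beta/u_\beta$; equivalently this is $\F[x_0,x_1]$. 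The image of $q_1$ in integer degrees is $\F[x_1,x_\beta]$. The connecting map $\partial$ sends the integer classes of $E\cF(H_0)$, namely $\F[x_0,x_\beta]$, to their classes modulo $\Image q_1$.

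The cokernel is then $\F[x_0,x_1]$ modulo $(\F[x_0,x_\beta]+\F[x_1,x_\beta])$. Substituting $x_\beta=x_0+x_1$, the sum $\F[x_0,x_0+x_1]+\F[x_1,x_0+x_1]$ is spanned by monomials of the form $x_0^i(x_0+x_1)^k$ and $x_1^j(x_0+x_1)^k$. I would show by a degree-wise dimension count that in each degree $d$ this subspace has dimension $2$ (for $d\ge1$), spanned by $x_0^d$ and $x_1^d$ modulo mixed terms. The quotient therefore has dimension $d+1-2=d-1$, which matches the number of monomials in $x_0x_1\F[x_0,x_1]$ of degree $d$. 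After the suspension $\Sigma^{-1}$, this gives $\Sigma^{-1}x_0x_1\F[x_0,x_1]$, which is exactly the claimed answer $\Sigma^{-1}\frac{a_{\alpha_0}a_{\alpha_1}}{u_{\alpha_0}u_{\alpha_1}}\F[\cdots]$. The first isomorphism displayed in the statement is the same computation before eliminating $x_\beta$ via the relation.

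The main obstacle is this dimension count. One has to check that $\F[x_0,x_0+x_1]+\F[x_1,x_0+x_1]$ is indeed $2$-dimensional in each degree modulo mixed terms, and in particular that mixed monomials $x_0^ix_1^j$ with $i,j\ge1$ are independent from this sum. This needs some care in characteristic $2$, since $(x_0+x_1)^{2^k}=x_0^{2^k}+x_1^{2^k}$ can cause unexpected coincidences. If the count fails, I would first compare dimensions in small degrees, and then instead identify the sum as the image of the restriction to the two coordinate subgroups, taking the complement to be spanned by $x_0x_1$-multiples.
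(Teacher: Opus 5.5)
Your overall route matches the paper's. You use \eqref{pull2}, argue that $j_1^\ast=0$ in integer degrees, and read off $\tH^{n}_{\K}(\bE_+;\uF)$ as the cokernel of $\partial=\bar\partial\circ q_0$. That cokernel is the integer part of $\tH^{\ast}_{\K}({E\K}_+;\uF)$ modulo the images of $q_0$ and $q_1$, shifted up by one.

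The computation goes wrong at the first step. In $\tH^{\bigstar}_{\K}(E\cF(H_0)_+;\uF)=\bM_2(\beta)\otimes\F[a_{\alpha_0},u^{\pm}_{\alpha_0},u^{\pm}_{\alpha_1-\beta}]$ (Proposition~\ref{resuni}), an integer-degree monomial must have exponent $0$ on $u_{\alpha_1-\beta}$, since nothing else can cancel the $\alpha_1$-component. Its $\bM_2(\beta)$-factor must then have $\beta$-degree $0$, which forces a constant. This is because $u_\beta$ is \emph{not} invertible in $\bM_2(\beta)$, and the classes $\Sigma^{-1}\frac{1}{a^j_\beta u^k_\beta}$ have $\beta$-degree $-(j+k)$.

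So the integer part is only $\F[x_0]$, with $x_0=a_{\alpha_0}/u_{\alpha_0}$. The class $x_\beta=a_\beta/u_\beta$ lives in $\tH^{\ast}_{\K}({E\K}_+;\uF)$, where $u_\beta$ is a unit, but not in the cohomology of $E\cF(H_i)$. Likewise the integer part of $\Image q_1$ is $\F[x_1]$, not $\F[x_1,x_\beta]$. As a sanity check: with constant coefficients the integer-graded groups are the cohomology of the orbit space, and $E\cF(H_i)/\K\simeq B(\K/H_i)$ has exactly one class in each degree.

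This slip does not merely make your dimension count delicate; it makes the count false, and the argument then proves the wrong answer. Since $x_\beta=x_0+x_1$, the pair $x_0,x_\beta$ is just a linear change of coordinates. Hence $\F[x_0,x_\beta]$ is already all of $\F[x_0,x_1]$; for instance $x_0^{d-1}x_1=x_0^{d-1}x_\beta+x_0^{d}$. Your cokernel would therefore vanish in every degree, and this has nothing to do with Frobenius in characteristic $2$.

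The repair is what your fallback hints at, but it requires discarding your identification of the groups. The image of $\F[x_0]\oplus\F[x_1]\to\F[x_0,x_1]$ in degree $m\ge 1$ is spanned by $x_0^m$ and $x_1^m$. So the cokernel is $(x_0x_1\F[x_0,x_1])_m$, of dimension $m-1$, and the shift gives the stated formula.

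The same data proves $j_1^\ast=0$ in positive degrees directly, rather than by analogy with Proposition~\ref{integerlevel}, which concerns $\partial$ for odd $p$. By commutativity of \eqref{pull2}, $q_0 j_1(z)=q_1\iota_1(z)\in \F x_0^n\cap \F x_1^n=0$ for $n\ge 1$, and $q_0$ is injective on $\F[x_0]$.

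In degree $0$, however, $j_1(1)=1\neq 0$, so the unit survives. The integer-graded ring is really $\F\cdot 1\oplus\Sigma^{-1}x_0x_1\F[x_0,x_1]$. The claim ``$j_1^\ast=0$ at the integer level'', which both you and the paper use, holds only in positive degrees.
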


The following proposition establishes the existence of an invertible class $v$, 
which plays the role analogous to the class appearing in Proposition~\ref{invertiable}.

\begin{prop}\label{inv}
There exists an invertible class $v \in \widetilde{H}^\bigstar_{\K}(\bE_+; \uF)$ of degree $\bigstar = \alpha_0 + \alpha_1 - \beta - 1$, which maps under $j_1^\bigstar$ to $u_{\alpha_0} u_{\alpha_1 - \beta}$. Moreover, the following identities hold:
\begin{enumerate}
\item $v u_\beta = u_{\alpha_0} u_{\alpha_1}$,
\item $v a_\beta = a_{\alpha_0} u_{\alpha_1} + u_{\alpha_0} a_{\alpha_1}$.
\end{enumerate}
\end{prop}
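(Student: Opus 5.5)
The plan is to construct $v$ as a lift along $j_1^{\bigstar}$ using the exact rows of diagram~\eqref{pull2}, exactly as the class $v_1$ was obtained in the base case of Proposition~\ref{invertiable}. First we consider the class $u_{\alpha_0}u_{\alpha_1-\beta}\in \widetilde{H}^{\alpha_0+\alpha_1-\beta-1}_{\K}(E\cF(H_0)_+;\uF)$, which is a unit there by Proposition~\ref{resuni}(2); its degree is $\alpha_0+\alpha_1-\beta-1$ because $|u_{\alpha_0}|=\alpha_0-1$ and $|u_{\alpha_1-\beta}|=\alpha_1-\beta$. Its image under $q_0^{\bigstar}$ in $\widetilde{H}^{\bigstar}_{\K}(E\K_+;\uF)$ is $u_{\alpha_0}u_{\alpha_1}u_\beta^{-1}$, by the $p=2$ analogue of Proposition~\ref{q_i-map}. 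This element is also the image under $q_1^{\bigstar}$ of $u_{\alpha_1}u_{\alpha_0-\beta}\in \widetilde{H}^{\bigstar}_{\K}(E\cF(H_1)_+;\uF)$. Hence $\bar\partial^{\bigstar}q_0^{\bigstar}(u_{\alpha_0}u_{\alpha_1-\beta})=0$, so $\partial^{\bigstar}(u_{\alpha_0}u_{\alpha_1-\beta})=0$ by commutativity of \eqref{pull2}. Exactness of the top row then gives a preimage $v$ under $j_1^{\bigstar}$.

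The same argument applied to $(u_{\alpha_0}u_{\alpha_1-\beta})^{-1}$, using $(u_{\alpha_1}u_{\alpha_0-\beta})^{-1}$ on the $H_1$ side, gives a class $v'$ with $j_1^{\bigstar}(v')=(u_{\alpha_0}u_{\alpha_1-\beta})^{-1}$. Then $j_1^{\bigstar}(vv'-1)=0$. To conclude $vv'=1$, we must know that $j_1^{\bigstar}$ is injective in degree $0$, or at least that $vv'-1$ can be corrected. We would use the Mayer--Vietoris form of Lemma~\ref{pbfamily}: the square is a homotopy pullback, so $\widetilde{H}^{\bigstar}_{\K}(\bE_+)$ sits in a long exact sequence with $\widetilde{H}^{\bigstar}(E\cF(H_0)_+)\oplus\widetilde{H}^{\bigstar}(E\cF(H_1)_+)\to \widetilde{H}^{\bigstar}(E\K_+)$. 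The kernel of $(j_1,\iota_1)$ in degree $0$ is then the image of $\widetilde{H}^{-1}_{\K}(E\K_+;\uF)$ modulo the images of the two $\widetilde{H}^{-1}(E\cF(H_i)_+)$. We would choose $v,v'$ as the classes determined by the compatible pairs $(u_{\alpha_0}u_{\alpha_1-\beta},u_{\alpha_1}u_{\alpha_0-\beta})$ and their inverses. Then $vv'-1$ comes from a degree $-1$ class of $E\K$ that is a polynomial in units and $a$'s. If necessary we would modify $v'$ by such a class and check, by a degree count, that the resulting $vv'$ equals $1+$ (a nilpotent element); a unit plus a nilpotent is again a unit.

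For the identities, map both sides to $\widetilde{H}^{\bigstar}(E\cF(H_0)_+)\oplus\widetilde{H}^{\bigstar}(E\cF(H_1)_+)$. For (1), $vu_\beta$ restricts to $u_{\alpha_0}u_{\alpha_1-\beta}u_\beta=u_{\alpha_0}u_{\alpha_1}$ on the $H_0$ side, and symmetrically on the $H_1$ side. For (2), recall that $a_{\alpha_1}$ restricted to $E\cF(H_0)$ is $\bigl(\tfrac{a_{\alpha_0}}{u_{\alpha_0}}u_\beta+a_\beta\bigr)u_{\alpha_1-\beta}$, the mod $2$ form of the formula in Proposition~\ref{boundary_0}. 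Using this, $a_{\alpha_0}u_{\alpha_1}+u_{\alpha_0}a_{\alpha_1}$ restricts to $u_{\alpha_0}u_{\alpha_1-\beta}a_\beta$, since the two copies of $a_{\alpha_0}u_{\alpha_1-\beta}u_\beta$ cancel mod $2$. In $E\K$, identity (2) is exactly the defining relation of Proposition~\ref{resuni}(1) divided by $u_\beta$.

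The main obstacle is that restriction to the two pieces need not be jointly injective. The discrepancy in each identity is the image of a degree-shifted class from $\widetilde{H}^{\bigstar-1}_{\K}(E\K_+)$. We plan to rule this out degreewise, by showing that in the degrees of (1), (2) and $vv'$ these classes lie in the image of $q_0^{\bigstar-1}-q_1^{\bigstar-1}$. We expect to prove this via the surjectivity statement of Proposition~\ref{alglemma}: it identifies the cokernel with negative-power monomials, and these cannot occur in the relevant degrees.
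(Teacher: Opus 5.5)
Your argument is correct and shares the paper's skeleton: $v$ is a lift along $j_1^{\bigstar}$ of $u_{\alpha_0}u_{\alpha_1-\beta}$, which lies in $\Ker\partial^{\bigstar}$ because $q_0^{\bigstar}(u_{\alpha_0}u_{\alpha_1-\beta})=q_1^{\bigstar}(u_{\alpha_1}u_{\alpha_0-\beta})$. The identities are then checked after restriction.

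Where you differ is in how you make restriction injective. The paper restricts only to $E\cF(H_0)$. It asserts that $j_1^{\bigstar}$ is an isomorphism in degrees $\alpha_0+\alpha_1-2$ and $\alpha_0+\alpha_1-1$ because $\widetilde{H}^{\bigstar}_{\K}(E\cFH{1};\uF)$ vanishes there. That requires controlling both $\Ker q_1^{\bigstar}$ and $\Coker q_1^{\bigstar-1}$, and it yields the stronger statement that $j_1^{\bigstar}$ alone identifies these groups with those of $E\cF(H_0)$.

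You instead use the Mayer--Vietoris sequence of the pullback square in Lemma~\ref{pbfamily}. This costs you a second (symmetric) check on $E\cF(H_1)$. In exchange, the only obstruction is $\Coker(q_0^{\bigstar-1}-q_1^{\bigstar-1})$, which involves only $\widetilde{H}^{\bigstar-1}_{\K}(E\K_+;\uF)$.

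The obstacle you anticipate is in fact trivial. The monomial $a_{\alpha_0}^ia_{\alpha_1}^ja_\beta^ku_{\alpha_0}^lu_{\alpha_1}^mu_\beta^n$ has total dimension $i+j+k\ge 0$. Hence
$\widetilde{H}^{-1}_{\K}(E\K_+;\uF)=\widetilde{H}^{\alpha_0+\alpha_1-3}_{\K}(E\K_+;\uF)=0$. Also $\widetilde{H}^{\alpha_0+\alpha_1-2}_{\K}(E\K_+;\uF)=\F\{u_{\alpha_0}u_{\alpha_1}\}$, and this class equals $q_0^{\bigstar}(u_{\alpha_0}u_{\alpha_1-\beta}u_\beta)$.

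So joint restriction is injective in degrees $0$, $\alpha_0+\alpha_1-2$ and $\alpha_0+\alpha_1-1$. In particular $vv'=1$ on the nose, with no need for the ``unit plus nilpotent'' fallback or Proposition~\ref{alglemma}. Both identities then follow from your restriction computations.

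A side benefit of your route is that it genuinely proves invertibility. The paper only infers it from the fact that every power $u_{\alpha_0}^{k}u_{\alpha_1-\beta}^{k}$ lifts, which by itself does not show that the lifts multiply to $1$.
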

\begin{proof}
For the degree $\bigstar = k(\alpha_0 + \alpha_1 - 1 - \beta)$, one checks that the class
\[
u_{\alpha_0}^{\,k} u_{\alpha_1-\beta}^{\,k}
\]
lies in $\Ker(\partial^{\bigstar})$ in~\eqref{pull2} for every $k \in \Z$. Consequently, there exists an invertible class $v$, defined to be the preimage of $u_{\alpha_0}u_{\alpha_1-\beta}$ under the map $j^{\bigstar}_1$.

For the relations, observe that $j^{\bigstar}_1$ is an isomorphism in degrees 
$\bigstar = \alpha_0+\alpha_1-2$ and $\bigstar = \alpha_0+\alpha_1-1$, since the cohomology of $E\cFH{1}$ vanishes in the corresponding degrees in~\eqref{pull2}. Under this identification via $j^{\bigstar}_1$, the generators correspond as follows:
\[
u_{\alpha_0} \mapsto u_{\alpha_0}, \qquad
u_{\alpha_1} \mapsto u_{\alpha_1-\beta}u_{\beta}, \qquad
a_{\alpha_0} \mapsto a_{\alpha_0}, \qquad
a_{\alpha_1} \mapsto \left(\frac{a_{\alpha_0}}{u_{\alpha_0}}u_{\beta}-a_{\beta}\right) u_{\alpha_1-\beta}.
\]
\end{proof}

For the computation of $\widetilde{H}^{\bigstar}_{\K}(E_{C_2}\Zz_{+};\uF)$, one can consider the following short exact sequence:
\begin{myeq}\label{sesec}
    0 \to \Coker \partial^{\bigstar-1} \to \widetilde{H}^{\bigstar}_{\K}(\bE_{+};\uF) \to \Ker \partial^{\bigstar} \to 0 
\end{myeq}
as a vector space over $\F$. The following proposition gives both sides objects:

\begin{prop}\label{ker2}
The Kernel and Cokernel of the connecting homomorphism $\partial^\bigstar:\widetilde{H}^{\bigstar}_{\K}(E\cF(H_0)_{+};\uF) \to \widetilde{H}^{\bigstar+1}_{\K}(E\cFH{1};\uF)$ from the short exact sequence in \eqref{sesec} are given as follow:

\begin{enumerate}
    \item The kernel is:
    \begin{align*}
    \Ker(\partial^\bigstar) =\; & 
    \F[a_{\beta}, u_{\beta}, u_{\alpha_0}, u_{\alpha_1}, a_{\alpha_0}, ((a_{\alpha_0}/u_{\alpha_0})u_\beta - a_\beta)u_{\alpha_1 - \beta}, (u_{\alpha_0}u_{\alpha_1 - \beta})^{\pm}] \\
    & \bigoplus\; \F\left\langle \Sigma^{-1}\frac{1}{a^{k_1}_\beta u^{k_2}_\beta} \,\colon\, {k_1},{k_2} \geq 1 \right\rangle [a_{\alpha_0}, u_{\alpha_0}^{\pm}, u_{\alpha_1 - \beta}^{\pm}].
    \end{align*}
    
    \item The cokernel is:
    \begin{align*}
    \Coker(\partial^\bigstar) =\; & 
    \F\left\langle \Sigma^{-1}\frac{1}{a_\beta^j u_\beta^k} \,\colon\, j,k \geq 1 \right\rangle [a_{\alpha_1}, u_{\alpha_1}^{\pm}, u_{\alpha_0 - \beta}^{\pm}] \\
    & \bigoplus\; \Sigma^{-1} u_\beta^{-1}  \F\left[
    a_{\alpha_0}, a_{\alpha_1}, \frac{a_\beta}{u_\beta}, \frac{u_{\alpha_1}}{u_\beta}, \frac{u_{\alpha_0}}{u_\beta}, \left(\frac{u_{\alpha_0}u_{\alpha_1}}{u_\beta}\right)^{\pm}, u_\beta^{-1}
    \right] /\sim
    \end{align*}
\end{enumerate}
where $\sim$ denotes the equivalence relations arising from the image of $q_1^\bigstar$ and the internal structure of $\widetilde{H}^\bigstar_{\K}({E\K}_+; \uF)$.
\end{prop}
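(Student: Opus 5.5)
The plan is to compute both pieces directly from the bottom row of diagram~\eqref{pull2}, exactly as in the proof of Proposition~\ref{boundary_0}. The square in Lemma~\ref{pbfamily} makes $\partial^{\bigstar}$ equal to $\bar\partial^{\bigstar}\circ q_0^{\bigstar}$. Exactness of the bottom row gives $\Ker\bar\partial^{\bigstar}=\Image q_1^{\bigstar}$. Hence
\[
\Ker\partial^{\bigstar}=(q_0^{\bigstar})^{-1}\bigl(\Image q_0^{\bigstar}\cap \Image q_1^{\bigstar}\bigr),
\]
and $\Coker\partial^{\bigstar}$ is $\widetilde{H}^{\bigstar+1}_{\K}(E\cFH{1};\uF)$ modulo $\bar\partial(\Image q_0^{\bigstar})$. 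All the input is Proposition~\ref{resuni}, together with the $p=2$ analogue of Proposition~\ref{q_i-map}: $q_i^\bigstar$ fixes $u_{\alpha_i}$, $a_{\alpha_i}$, $u_\beta$ and $a_\beta$, sends $u_{\alpha_j-\beta}\mapsto u_{\alpha_j}u_\beta^{-1}$, and kills the negative classes $\Sigma^{-1}\frac{1}{a_\beta^{k_1}u_\beta^{k_2}}$ for degree reasons.

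\textbf{Kernel.} The negative summand $\F\langle\Sigma^{-1}\frac{1}{a_\beta^{k_1}u_\beta^{k_2}}\rangle[a_{\alpha_0},u^{\pm}_{\alpha_0},u^{\pm}_{\alpha_1-\beta}]$ lies in $\Ker q_0^\bigstar$, so it lies in $\Ker\partial^\bigstar$. This gives the second summand of (1).

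For the polynomial part $\F[a_\beta,u_\beta,a_{\alpha_0},u^{\pm}_{\alpha_0},u^{\pm}_{\alpha_1-\beta}]$, I identify the subring of its image in $\widetilde{H}^\bigstar_\K(E\K_+;\uF)$ that also lies in $\Image q_1^\bigstar=\F[a_\beta,u_\beta,a_{\alpha_1},u^{\pm}_{\alpha_1},u^{\pm}_{\alpha_0-\beta}]$. The common classes are $a_\beta$, $u_\beta$, $u_{\alpha_0}$, and $u_{\alpha_1}=u_{\alpha_1-\beta}u_\beta$. The relation in Proposition~\ref{resuni}(1) shows that $a_{\alpha_1}$ equals $\bigl(\tfrac{a_{\alpha_0}}{u_{\alpha_0}}u_\beta-a_\beta\bigr)u_{\alpha_1-\beta}$ in $E\K$-cohomology. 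The invertible product $u_{\alpha_0}u_{\alpha_1-\beta}$ maps to $u_{\alpha_1}u_{\alpha_0-\beta}$, which lies in both images; this is the class $v$ of Proposition~\ref{inv}.

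To show these generate the intersection, I use a monomial-basis argument. After inverting $u$'s, both images sit inside a Laurent ring in $u_{\alpha_0},u_{\alpha_1},u_\beta$ over $\F[a\text{'s}]/(\text{relation})$. For each $\RO(\K)$-degree I compare the exponents of $u_{\alpha_0}$, $u_{\alpha_1}$ and $u_\beta$ allowed on each side. The negative powers of $u_{\alpha_0}$ and $u_{\alpha_1}$ can only occur jointly through $(u_{\alpha_0}u_{\alpha_1-\beta})^{-1}$. This is the same bookkeeping as Proposition~\ref{alglemma}.

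\textbf{Cokernel.} By Proposition~\ref{resuni}(3), $\widetilde{H}^{\bigstar+1}_\K(E\cFH{1};\uF)$ has two summands.

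The torsion summand $\F\langle\Sigma^{-1}\frac{1}{a_\beta^ju_\beta^k}\rangle[a_{\alpha_1},u^\pm_{\alpha_1},u^\pm_{\alpha_0-\beta}]$ comes from $\Ker q_1^\bigstar$ shifted. It is not hit by $\partial^\bigstar$, since $\partial^\bigstar$ factors through $\bar\partial^{\bigstar}$ on the $E\K$ term, and the torsion of $E\cF(H_0)$ maps to zero there. So it survives intact.

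The other summand is $\Sigma^{-1}\Coker q_1^\bigstar$. Here I mod out by $\bar\partial(\Image q_0^\bigstar)$, which by Proposition~\ref{integerlevel} and its analogue contains the classes $\Sigma^{-1}u_{\alpha_0}^{-n}$ and $\Sigma^{-1}u_\beta^{-n}u_{\alpha_1}^n$. What remains is $\Sigma^{-1}\bigl(E\K\text{-cohomology}/(\Image q_0^\bigstar+\Image q_1^\bigstar)\bigr)$. I rewrite it with the generators listed in (2), using Proposition~\ref{alglemma} twice, once with $x=u_\beta$ and $y=u_{\alpha_0}$, and once with $x=u_\beta$ and $y=u_{\alpha_1}$.

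\textbf{Main obstacle.} The hardest step is proving that the intersection $\Image q_0^\bigstar\cap\Image q_1^\bigstar$ is no larger than the claimed ring, and correspondingly identifying $\Image q_0^\bigstar+\Image q_1^\bigstar$. The ring $\widetilde{H}^\bigstar_\K(E\K_+;\uF)$ is not a domain with a monomial basis, because of the relation in Proposition~\ref{resuni}(1). To handle this, I first use that relation to eliminate $a_\beta$, writing $a_\beta=u_\beta\bigl(\tfrac{a_{\alpha_0}}{u_{\alpha_0}}+\tfrac{a_{\alpha_1}}{u_{\alpha_1}}\bigr)$. This gives a free basis of monomials in $a_{\alpha_0}$, $a_{\alpha_1}$ and Laurent $u$'s. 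The comparison then reduces to bounding exponents degree by degree.
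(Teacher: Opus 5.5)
Your proposal is correct and follows the paper's route. The paper proves this ``in the same manner as Proposition~\ref{boundary_0}'': it takes $\Ker\partial^{\bigstar}=(q_0^{\bigstar})^{-1}(\Image q_0^{\bigstar}\cap\Image q_1^{\bigstar})$ and reads off the cokernel from the images under $\partial^{\bigstar}$ of classes such as $u_{\alpha_0}^{-n}$ and $u_{\alpha_1-\beta}^{n}$; your identification of the non-torsion part with $\Sigma^{-1}\bigl(\widetilde{H}^{\bigstar}_{\K}({E\K}_+;\uF)/(\Image q_0^{\bigstar}+\Image q_1^{\bigstar})\bigr)$ just makes that precise. In the monomial-basis step, state explicitly that $a_\beta/u_\beta=a_{\alpha_0}/u_{\alpha_0}+a_{\alpha_1}/u_{\alpha_1}$, so by a triangular change of basis each image is, degree by degree, spanned by monomials in $a_{\alpha_0}/u_{\alpha_0}$ and $a_{\alpha_1}/u_{\alpha_1}$; that is what makes the intersection and the sum cut out by exponent inequalities.
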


\begin{proof}
The proof proceeds in the same manner as in Proposition~\ref{boundary_0}.
 \end{proof}

\vsn

Using the square-zero extension structure of the ring, we arrive at the complete description of the cohomology of $\bE$:

\begin{thm}\label{main2}
There is an isomorphism of graded rings:
\[
\widetilde{H}^{\bigstar}_{\cG}(\bE_+; \uF) \cong 
\frac{
    \left[ \F[a_\beta, u_\beta] 
    \oplus 
    \F\langle{\Sigma^{-1}\frac{1}{a^j_{\beta}u^k_{\beta}}:j,k\geq 1}\rangle    \right]
    [a_{\alpha_0}, u_{\alpha_0}, a_{\alpha_1}, u_{\alpha_1}, v^{\pm}]
}{
    \left( vu_\beta - u_{\alpha_0} u_{\alpha_1},\; va_\beta - (a_{\alpha_0}u_{\alpha_1} + u_{\alpha_0}a_{\alpha_1}) \right)
}
\]
where $v$ is the invertible class in degree $ \alpha_0 + \alpha_1 - \beta - 1$, as constructed in Proposition~\ref{inv}.
\end{thm}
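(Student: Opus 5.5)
We compute $\widetilde{H}^{\bigstar}_{\K}(\bE_+;\uF)$ from the short exact sequence \eqref{sesec}, using the descriptions of $\Ker\partial^{\bigstar}$ and $\Coker\partial^{\bigstar-1}$ in Proposition~\ref{ker2}. We then resolve the ring structure by mapping everything into the known target rings $\widetilde{H}^{\bigstar}_{\K}(E\cF(H_0)_+;\uF)$ and $\widetilde{H}^{\bigstar}_{\K}(E\K_+;\uF)$. First we define a ring map $\Phi$ from the proposed ring $R$ to $\widetilde{H}^{\bigstar}_{\K}(\bE_+;\uF)$. The classes $a_\beta,u_\beta,a_{\alpha_0},u_{\alpha_0},a_{\alpha_1},u_{\alpha_1}$ are pulled back along $\bE_+\to S^0$. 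The class $v^{\pm}$ is the invertible class of Proposition~\ref{inv}. The classes $\Sigma^{-1}\frac{1}{a_\beta^ju_\beta^k}$ are pulled back from $S^0$ as well, since $\bE/\K\to\ast$ in $\bigstar_\beta$-degrees is compatible with Proposition~\ref{res.}. Proposition~\ref{inv} shows that the two relations hold, so $\Phi$ is well defined. The square-zero behaviour of the $\Sigma^{-1}$ classes comes from the corresponding fact in $\bM_2(\beta)$.

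Next we check that $\Phi$ is surjective. The composite $R\to \widetilde{H}^{\bigstar}(\bE_+)\xrightarrow{j_1^{\bigstar}}\Ker\partial^{\bigstar}$ is onto. Indeed, $j_1^{\bigstar}(v)=u_{\alpha_0}u_{\alpha_1-\beta}$ and $j_1^{\bigstar}(a_{\alpha_1})=((a_{\alpha_0}/u_{\alpha_0})u_\beta-a_\beta)u_{\alpha_1-\beta}$, and the negative classes of $\Ker\partial^{\bigstar}$ are hit because $u_{\alpha_1-\beta}^{\pm}=v^{\pm}u_{\alpha_0}^{\mp}$, with $u_{\alpha_0}^{-1}=v^{-1}u_{\alpha_1}u_\beta^{-1}\cdot$ suitably rewritten. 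So we must also hit $\Coker\partial^{\bigstar-1}$. Here the key observation is that $\Sigma^{-1}u_\beta^{-1}\F[\dots]$ modulo relations is exactly what is produced in $R$ by the products $v^{-n}\cdot(\text{polynomial})$ that lie in the kernel of $j_1^{\bigstar}$. Concretely, in $R$ the element $v^{-1}u_{\alpha_0}u_{\alpha_1}=u_\beta$, and negative $u_\beta$-powers appear only through $v$. The classes of degree $\Sigma^{-1}u_\beta^{-1}(\dots)$ should be matched with elements such as $v^{-1}a_{\alpha_0}a_{\alpha_1}\cdot\frac{1}{\dots}$. We would check this by comparing with the image in $\widetilde{H}^{\bigstar}_{\K}(E\K_+;\uF)$ via $\iota_1^{\bigstar}$ and the Bockstein-type boundary $\bar\partial$ in \eqref{pull2}.

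For injectivity, and hence the isomorphism, the plan is a dimension count degree by degree. We put an $\F$-basis on $R$ using the relations to eliminate $u_{\alpha_1}$ (via $u_{\alpha_1}=vu_\beta u_{\alpha_0}^{-1}$ only after localization, so we instead use a Gröbner-type normal form). The leading terms are $u_{\alpha_0}u_{\alpha_1}$ and $u_{\alpha_0}a_{\alpha_1}$, so monomials avoid these. We then compare this basis with the sum of the dimensions of $\Ker\partial^{\bigstar}$ and $\Coker\partial^{\bigstar-1}$ from Proposition~\ref{ker2}. Since everything is over $\F$, extension problems in \eqref{sesec} are additively trivial, and the multiplicative extensions are forced by $\Phi$.

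The main obstacle is the identification of the cokernel summand: showing that $\Sigma^{-1}u_\beta^{-1}\F[\dots]/\sim$ is generated, as an $R$-module, by products involving $v^{-1}$, with no new indecomposables. The first thing to try is to test on the integer-graded part, where the $\Z$-graded computation preceding Proposition~\ref{inv} gives $\Sigma^{-1}\frac{a_{\alpha_0}a_{\alpha_1}}{u_{\alpha_0}u_{\alpha_1}}\F[\dots]$. We would verify that $v^{-1}a_{\alpha_0}a_{\alpha_1}u_\beta^{-1}$-type elements realise these classes. Multiplication by the units $v^{\pm}$ and $u$-classes then spreads this to all $\RO(\K)$-degrees.
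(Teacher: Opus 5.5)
Your framework (a global ring map $\Phi$ out of the proposed ring $R$, surjectivity checked against \eqref{sesec}, then a degreewise count) is reasonable. The kernel half of surjectivity is also correct: $j_1^{\bigstar}(v^{-\ell}u_{\alpha_1}^{\ell})=u_\beta^{\ell}u_{\alpha_0}^{-\ell}$, so $v^{-\ell}u_{\alpha_1}^{\ell}\,\Sigma^{-1}\frac{1}{a_\beta^{j}u_\beta^{k+\ell}}$ hits $u_{\alpha_0}^{-\ell}\,\Sigma^{-1}\frac{1}{a_\beta^{j}u_\beta^{k}}$, which is exactly the paper's kernel correspondence.

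The genuine gap is the cokernel step, and the mechanism you propose there cannot work. Inside $\widetilde{H}^{\bigstar}_{\K}(\bE_+;\uF)$ the summand $\Coker\partial^{\bigstar-1}$ is precisely $\Ker j_1^{\bigstar}$. So you need elements of $R$ killed by $j_1^{\bigstar}\circ\Phi$, and no nonzero polynomial element, with or without $v^{-1}$, is killed. Since $v$ is a unit, the two relations eliminate $u_\beta$ and $a_\beta$, so the polynomial part of $R$ is $\F[a_{\alpha_0},u_{\alpha_0},a_{\alpha_1},u_{\alpha_1},v^{\pm}]$. The map $j_1^{\bigstar}\circ\Phi$ sends its generators to $a_{\alpha_0}$, $u_{\alpha_0}$, $\bigl(\frac{a_{\alpha_0}}{u_{\alpha_0}}u_\beta+a_\beta\bigr)u_{\alpha_1-\beta}$, $u_\beta u_{\alpha_1-\beta}$ and the unit $u_{\alpha_0}u_{\alpha_1-\beta}$. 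These generate the fraction field of the domain $\F[a_\beta,u_\beta,a_{\alpha_0},u_{\alpha_0}^{\pm},u_{\alpha_1-\beta}^{\pm}]$, which has transcendence degree five. Hence they are algebraically independent, and $j_1^{\bigstar}\circ\Phi$ is injective on the polynomial part. Because $j_1^{\bigstar}\circ\Phi$ respects the splitting into polynomial and $\Sigma^{-1}$-parts, $\Ker(j_1^{\bigstar}\circ\Phi)$ lies entirely in the $\Sigma^{-1}$-summand.

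Your integer-degree test fails for the same reason. In integer degrees the polynomial part of $R$ is just $\F\{1\}$, whereas $\Ker j_1^{\bigstar}$ contains the classes $\Sigma^{-1}\frac{a_{\alpha_0}a_{\alpha_1}}{u_{\alpha_0}u_{\alpha_1}}\F[\dots]$ in degrees $\geq 3$. Also, $u_\beta$ is not a unit in $R$, so there is no element ``$v^{-1}a_{\alpha_0}a_{\alpha_1}u_\beta^{-1}$''. Contrary to your remark, negative powers of $u_\beta$ enter $R$ only through the classes $\Sigma^{-1}\frac{1}{a_\beta^{j}u_\beta^{k}}$, not through $v$.

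The missing idea is that both pieces of the cokernel are realised inside the square-zero summand. They come from products of $\Sigma^{-1}\frac{1}{a_\beta^{j}u_\beta^{k}}$ with $v^{-1}$ and $\alpha$-classes, which $j_1^{\bigstar}$ kills because $u_\beta^{k}\cdot\Sigma^{-1}\frac{1}{a_\beta^{j}u_\beta^{k}}=0=a_\beta^{j}\cdot\Sigma^{-1}\frac{1}{a_\beta^{j}u_\beta^{k}}$ in $\bM_2(\beta)$. This is what the paper's proof does. For example, it sends $\Sigma^{-1}\frac{1}{a_\beta^{j}u_\beta^{k}}\mapsto\bigl(\frac{u_{\alpha_1}a_{\alpha_0}}{v}\bigr)^{k}\Sigma^{-1}\frac{1}{a_\beta^{j+k}u_\beta^{k}}$, whose image under $j_1^{\bigstar}$ is $\bigl(\frac{a_{\alpha_0}}{u_{\alpha_0}}\bigr)^{k}u_\beta^{k}\,\Sigma^{-1}\frac{1}{a_\beta^{j+k}u_\beta^{k}}=0$. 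Likewise, the nonzero element $v^{-2}a_{\alpha_0}^{2}a_{\alpha_1}u_{\alpha_1}\,\Sigma^{-1}\frac{1}{a_\beta u_\beta}$ of $R$, of integer degree $3$, lies in $\Ker(j_1^{\bigstar}\circ\Phi)$.

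The same point affects your injectivity count. A normal form for the polynomial generators does not compute the $\Sigma^{-1}$-summand of $R$. That summand is not free over $\F[a_\beta,u_\beta]$ and has annihilations such as $v^{-k}u_{\alpha_0}^{k}u_{\alpha_1}^{k}\cdot\Sigma^{-1}\frac{1}{a_\beta^{j}u_\beta^{k}}=0$. One clean way to control it: $u_{\alpha_0}u_{\alpha_1}$ and $a_{\alpha_0}u_{\alpha_1}+u_{\alpha_0}a_{\alpha_1}$ form a regular sequence in $\F[a_{\alpha_0},u_{\alpha_0},a_{\alpha_1},u_{\alpha_1}]$. So the Laurent ring is flat over $\F[a_\beta,u_\beta]$, and, for instance, $f\cdot\Sigma^{-1}\frac{1}{a_\beta u_\beta}=0$ in $R$ exactly when $f$ lies in the ideal these two elements generate.
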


\begin{proof}
The proof proceeds by showing explicit isomorphisms of $\Ker(\partial^\bigstar)$ and $\Coker(\partial^\bigstar)$ as module over the ring 
$\F[a_{\alpha_0}, a_{\alpha_1}, u_{\alpha_0}, u_{\alpha_1}, v^{\pm}]$ via the identifications:
\begin{enumerate}
    \item For the kernel:
    \[
    \F\left\langle
        \frac{u_{\alpha_1}^{ \ell}}{v^\ell} \cdot \Sigma^{-1}\frac{1}{a_\beta^j u_\beta^{k +  \ell}} 
    \right\rangle [a_{\alpha_0}, u_{\alpha_0}, v^{\pm}]
    \oplus 
    \F[a_\beta, u_\beta, a_{\alpha_0}, a_{\alpha_1}, u_{\alpha_0}, u_{\alpha_1}, v^{\pm}].
    \]
    for $\ell\geq 0$ and $j,k\geq 1$.
    
    This follows from a basis-level correspondence under $\partial^\bigstar$ via the relations:
    \[
    \Sigma^{-1} \frac{1}{a_\beta^j u_\beta^k} \mapsto \Sigma^{-1} \frac{1}{a_\beta^j u_\beta^k},\quad
    u^a_{\alpha_1} \cdot \Sigma^{-1} \frac{1}{a_\beta^j u_\beta^{k+a}} \mapsto u^a_{\alpha_1-\beta} \cdot \Sigma^{-1} \frac{1}{a_\beta^j u_\beta^k},\quad
     v^a \cdot \Sigma^{-1} \frac{1}{a_\beta^j u_\beta^k} \mapsto u^{a}_{\alpha_0}u^{a}_{\alpha_1-\beta} \cdot \Sigma^{-1} \frac{1}{a_\beta^j u_\beta^k}.
    \]

    \item For the cokernel:
    \begin{align*}
        \F\left\langle
            \frac{u^k_{\alpha_1}a^{k+a}_{\alpha_0} }{v^{k+a}}  \cdot \Sigma^{-1}\frac{1}{a_\beta^{j +a+ k} u_\beta^{k}} 
        \right\rangle [a_{\alpha_1}, u_{\alpha_1}, v^{\pm}] 
        \\
        \oplus\;
        \F\left\langle 
            u_{\alpha_0}^a u_{\alpha_1}^b \left( \frac{u_{\alpha_1} a_{\alpha_0}}{v} \right)^{a+b+m+n} \cdot \Sigma^{-1} \frac{1}{a_\beta^{n+a+b} u_\beta^{n+m+a+b}} 
        \right\rangle [a_{\alpha_0}, a_{\alpha_1}, v^{\pm}].
    \end{align*}
    for $j,k,n\geq 1$ and $a,b,m\geq 0$.
    
    This follows from similar correspondence:
    \begin{align*}
    \Sigma^{-1} \frac{1}{a_\beta^j u_\beta^k} \mapsto \left( \frac{u_{\alpha_1}a_{\alpha_0}}{v} \right)^k \cdot \Sigma^{-1} \frac{1}{a_\beta^{j+k} u_\beta^k}, \\
    \Sigma^{-1} u^{a}_{\alpha_0}u^{b}_{\alpha_1}u_\beta^{-(a+b+n+m)} a_\beta^m \mapsto u^{a}_{\alpha_0}u^{b}_{\alpha_1}\left( \frac{u_{\alpha_1} a_{\alpha_0}}{v} \right)^{m+n+a+b} \cdot \Sigma^{-1} \frac{1}{a_\beta^{n+a+b} u_\beta^{m+n+a+b}}    
    \end{align*}
    
\end{enumerate}
\end{proof}

\vsn

Using Proposition~\ref{res.}, we obtain an analogue of Proposition~\ref{poly-BCpZp} for the case $p = 2$.  
In this situation, all classes of degree $\bigstar_{\beta} \in \mathbb{Z}\{1,\beta\}$ are explicitly determined.

\begin{corollary}{\label{cohomologyofclassfying}}
There is an isomorphism of $\RO(C_2)$-graded $\F$-algebras:
\[
\widetilde{H}^{\bigstar_{\beta}}_{C_2}(B_{C_2} \Z/2_+; \uF) \;\cong\;
\frac{
    \left[ \F[a_\beta, u_\beta] 
    \oplus 
   \bigoplus_{j,k\geq 1}\F \langle{\Sigma^{-1}\frac{1}{a^j_{\beta}u^k_{\beta}}}\rangle
    \right][c, b]
}{
    (c^2 = a_\beta c + u_\beta b)
}.
\]
\end{corollary}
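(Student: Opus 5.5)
Via Proposition~\ref{res.}, $\widetilde{H}^{\bigstar_{\beta}}_{C_2}(B_{C_2}\Zz_+;\uF)$ is the subring of Theorem~\ref{main2} in gradings $\Z\{1,\beta\}$; the plan is to identify that subring.

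We first name the $\bigstar_\beta$-graded generators. Set
\[
c := \frac{a_{\alpha_0}u_{\alpha_1}}{v}, \qquad b := \frac{a_{\alpha_0}a_{\alpha_1}}{v}.
\]
Since $|v| = \alpha_0+\alpha_1-\beta-1$, we get $|c| = \beta+1$ and $|b| = \beta+2$, so both lie in $\Z\{1,\beta\}$. The two relations of Theorem~\ref{main2} say $u_{\alpha_1}/v = u_\beta/u_{\alpha_0}$ after inverting $u_{\alpha_0}$, but since $u_{\alpha_0}$ is not invertible we will keep everything in terms of $v$. The key computation is then
\[
c^2 = \frac{a_{\alpha_0}^2u_{\alpha_1}^2}{v^2},\qquad
a_\beta c = \frac{a_{\alpha_0}u_{\alpha_1}\, v a_\beta}{v^2} = \frac{a_{\alpha_0}u_{\alpha_1}(a_{\alpha_0}u_{\alpha_1}+u_{\alpha_0}a_{\alpha_1})}{v^2},
\]
and
\[
u_\beta b = \frac{a_{\alpha_0}a_{\alpha_1}\, v u_\beta}{v^2} = \frac{a_{\alpha_0}a_{\alpha_1}u_{\alpha_0}u_{\alpha_1}}{v^2}.
\]
Over $\F$ these give $c^2 = a_\beta c + u_\beta b$.

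Next we show that $\F[a_\beta,u_\beta][c,b]$ surjects onto the polynomial part in $\bigstar_\beta$-degrees. A monomial $a_{\alpha_0}^{e_0}u_{\alpha_0}^{f_0}a_{\alpha_1}^{e_1}u_{\alpha_1}^{f_1}v^n$ has $\alpha_0$-coefficient $e_0+f_0+n$ and $\alpha_1$-coefficient $e_1+f_1+n$. Requiring both to vanish forces $n=-(e_0+f_0)=-(e_1+f_1)$. We then rewrite each such monomial, using $u_{\alpha_0}u_{\alpha_1}=vu_\beta$, as a product of the four ratios
\[
\frac{a_{\alpha_0}u_{\alpha_1}}{v},\quad \frac{u_{\alpha_0}a_{\alpha_1}}{v} = a_\beta + c,\quad \frac{a_{\alpha_0}a_{\alpha_1}}{v}=b,\quad \frac{u_{\alpha_0}u_{\alpha_1}}{v}=u_\beta.
\]
Such a rewriting exists by pairing each $\alpha_0$-factor with an $\alpha_1$-factor, which is possible since the two counts agree. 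Hence the polynomial part is generated by $a_\beta,u_\beta,c,b$. The torsion summand $\F\langle\Sigma^{-1}\frac{1}{a^j_\beta u^k_\beta}\rangle$ contributes in $\bigstar_\beta$-degrees exactly through products with these same $v$-ratios, which gives the stated $[c,b]$-module form.

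For injectivity, i.e.\ that no further relations hold, we map to $\widetilde{H}^\bigstar_{\K}(E\K_+;\uF)$ of Proposition~\ref{resuni} and compare Poincaré series in each $\Z\{1,\beta\}$-degree. In each fixed degree, $\F[a_\beta,u_\beta][c,b]/(c^2+a_\beta c+u_\beta b)$ is free over $\F[a_\beta,u_\beta,b]$ on $\{1,c\}$. We match this count against the count of admissible monomials in Theorem~\ref{main2} modulo its two relations.

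The main obstacle is this dimension count. In particular, we must check that the ideal generated by the two relations of Theorem~\ref{main2}, once restricted to $\bigstar_\beta$-degrees, is generated by $c^2+a_\beta c+u_\beta b$ alone. We would handle this with a Gröbner-style normal form: use the relation $vu_\beta=u_{\alpha_0}u_{\alpha_1}$ to eliminate $u_{\alpha_0}u_{\alpha_1}$, and the second relation to eliminate $u_{\alpha_0}a_{\alpha_1}$.
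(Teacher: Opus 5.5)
Your core step is the paper's proof. The paper also writes $c$ and $b$ as $v^{-1}$-multiples and verifies $c^2=a_\beta c+u_\beta b$ by expanding in characteristic $2$. The paper's choice $c=v^{-1}u_{\alpha_0}a_{\alpha_1}$ equals your $c$ plus $a_\beta$. This makes no difference, because $(c+a_\beta)^2=a_\beta(c+a_\beta)+u_\beta b$ holds exactly when $c^2=a_\beta c+u_\beta b$ does.

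Your degrees are off by one. Since $|u_{\alpha_1}|=\alpha_1-1$, you get $|a_{\alpha_0}u_{\alpha_1}|=\alpha_0+\alpha_1-1$, and therefore $|c|=\beta$ and $|b|=\beta+1$. With your values $\beta+1$ and $\beta+2$, the relation you are proving would not be homogeneous.

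The paper stops after checking the relation. So your pairing argument for generation (forcing $n=-(e_0+f_0)=-(e_1+f_1)$) is a correct addition, not a reproduction.

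For the step showing there are no further relations, mapping to $\widetilde{H}^{\bigstar}_{\K}(E\K_+;\uF)$ only certifies the polynomial part. Every class $\Sigma^{-1}\frac{1}{a_\beta^j u_\beta^k}$ is annihilated by a power of $u_\beta$, which is a unit there, so the entire torsion summand maps to zero.

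The Gr\"obner normal form is also more than you need. Since $v$ is a unit, the two relations of Theorem~\ref{main2} simply eliminate $a_\beta$ and $u_\beta$. Hence the polynomial part is $P=\F[a_{\alpha_0},u_{\alpha_0},a_{\alpha_1},u_{\alpha_1},v^{\pm}]$. Your own bookkeeping then shows that $P_{\bigstar_\beta}=\bigoplus_N \F[a_{\alpha_0},u_{\alpha_0}]_N\otimes\F[a_{\alpha_1},u_{\alpha_1}]_N\,v^{-N}$. This is the Segre product, with the classical presentation $\F[z_{00},z_{01},z_{10},z_{11}]/(z_{00}z_{11}+z_{01}z_{10})$. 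Substituting $z_{00}=b$, $z_{01}=c$, $z_{10}=v^{-1}u_{\alpha_0}a_{\alpha_1}=c+a_\beta$ and $z_{11}=u_\beta$ turns this into exactly $\F[a_\beta,u_\beta,c,b]/(c^2+a_\beta c+u_\beta b)$. That settles both generation and the absence of other relations for the polynomial part.

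For the torsion part, let $T=\bigoplus_{j,k\ge1}\F\langle\Sigma^{-1}\frac{1}{a_\beta^j u_\beta^k}\rangle$. The $\bigstar_\beta$-piece of the torsion in Theorem~\ref{main2} is $T\otimes_{\F[a_\beta,u_\beta]}P_{\bigstar_\beta}$. Because the relation is monic in $c$, $P_{\bigstar_\beta}$ is free over $\F[a_\beta,u_\beta]$ on $\{b^t,\,b^tc\}_{t\ge0}$. This gives exactly the $[c,b]$-module form in the statement, with no dimension count required.
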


\begin{proof}
In $\widetilde{H}^\bigstar_{\cG}(\bE_+; \uF)$ we have:
\[
a_\beta = v^{-1}(u_{\alpha_0} a_{\alpha_1} + a_{\alpha_0} u_{\alpha_1}), \quad
u_\beta = v^{-1} u_{\alpha_0} u_{\alpha_1}.
\]
Now define
\[
c := v^{-1} u_{\alpha_0} a_{\alpha_1}, \quad
b := v^{-1} a_{\alpha_0} a_{\alpha_1}.
\]
Then a direct computation yields:
\[
c^2 = (v^{-1} u_{\alpha_0} a_{\alpha_1})^2 = v^{-2} u_{\alpha_0}^2 a_{\alpha_1}^2,
\]
while
\[
a_\beta c + u_\beta b = v^{-1}(u_{\alpha_0} a_{\alpha_1} + a_{\alpha_0} u_{\alpha_1}) \cdot v^{-1} u_{\alpha_0} a_{\alpha_1}
+ v^{-1} u_{\alpha_0} u_{\alpha_1} \cdot v^{-1} a_{\alpha_0} a_{\alpha_1}.
\]
Both sides reduce to the same expression, so the claimed relation holds.
\end{proof}

\sect{The ring structure of $\widetilde{H}_{C_p\times C_p }^\bs(S^0; \uFp)$}

In this section, we compute the $\RO(\cG)$--graded Bredon cohomology of a point with coefficients in the constant Mackey functor $\uFp$.  
Our approach follows the \emph{Tate square} formalism~\cite{GM95}, as input from the cohomology of the equivariant universal space $E_{C_p}\Zp$ from Theorem~\ref{main} and Theorem~\ref{main2}.  
We begin with the case of odd primes $p$, where Theorem~\ref{main} describes the polynomial part $\mathrm{P}(\widetilde{H}^{\bigstar}_{\cG}(E_{C_p}\Zp_{+};\uFp))$ of the ring $\widetilde{H}^{\bigstar}_{\cG}(E_{C_p}\Zp_{+};\uFp)$.  
For $p = 2$, we subsequently give an explicit description of all the classes in the cohomology of a point.

\begin{mysubsect}{For an odd prime $p$} 

The following is a homotopy pullback diagram of $\cG$–spectra:

\begin{myeq}\label{tatep}
\xymatrix@C=0.4cm{
  E_{C_p}\Zp_+ \wedge H\uFp \ar[r]^-{f} \ar[d]_\simeq 
    & H\uFp \ar[d] \ar[r] 
    & \widetilde{E_{C_p}\Zp} \wedge H\uFp \ar[d] \\
  E_{C_p}\Zp_+ \wedge \Fun(E_{C_p}\Zp_+, H\uFp) \ar[r] 
    & \Fun(E_{C_p}\Zp_+, H\uFp) \ar[r]^-{\mathcal{L}} 
    & \widetilde{E_{C_p}\Zp} \wedge \Fun(E_{C_p}\Zp_+, H\uFp)
}
\end{myeq}
It is known as \emph{Tate} diagram. The same approach has been employed in a variety of computations, some of the
most significant examples are $\pi_{\bigstar}H\uZ$ for $G=C_2$ in \cite{Gre17}, $\pi_{\bigstar}H\uZp$ for $G=C_p$ in \cite{BG21}.

Here, the left vertical map is an equivalence of $\cG$–spectra \cite{GM95} and the bottom row is obtained by applying the function spectrum $\Fun(E_{C_p}\Zp_+, -)$ to the top row~\eqref{tatep}.
So the homotopy groups of $E_{C_p}\Zp_+ \wedge H\uFp$ can be computed as the homotopy pullback of the bottom row. 

 We start our computation from $\Fun(E_{C_p}\Zp_{+},H\uFp)$, $\pi_{\bigstar}\Fun(E_{C_p}\Zp_{+},\uFp)$ is given by Theorem~\ref{main}, along with non-polynomial classes which are denoted by the  factor $R$ 
    \begin{myeq}
\pi_{\bigstar}\Fun(E_{C_p}\Zp_{+},\uFp)= \dfrac{\Fp\big[
a_{\beta},u_{\beta},\kappa_{\beta},
a_{\alpha_\ell},\kappa_{\alpha_\ell}
,v_{S},z_{Q},w_{Q},v_{p-1}^{\pm}
\mid \ell \in [i]\big]}{\mathfrak{J}_{p-1}}\bigoplus R.
    \end{myeq}

Since \(
\widetilde{E_{C_p}\Zp}\simeq S^{\infty V}
\) for $V= \bigoplus_{0\leq r \leq p-1}\alpha_r$, 
\[
\pi_{\bigstar}\big(\widetilde{E_{C_p}\Zp}\wedge \Fun(E_{C_p}\Zp_{+},H\uFp)\big)
 \;=\; 
\pi_{\bigstar}\Fun(E_{C_p}\Zp_{+},H\uFp)\big[a^{\pm1}_{\alpha_r} \;:\; 0\le r\le p-1\big].
\]

 Here \(\mathcal{L}_{\bigstar}\) is localisation at the multiplicative set generated by the classes 
\(\{a_{\alpha_r}\mid 0\le r\le p-1\}\cup\{1\}\). Consequently, those elements of 
\(
\pi_{\bigstar}\big(\widetilde{E_{C_p}\Zp}\wedge \Fun(E_{C_p}\Zp_{+},H\uFp)\big)
\)
which do not lie in the image of \(\mathcal{L}_{\bigstar}\) are precisely those containing at least one factor \(a_{\alpha_r}\) in the denominator.  

 Using the equivalence of the left vertical map in the Tate square, we obtain the following:
\begin{prop}
    There is an isomorphism
    \begin{align*}
\pi_{\bigstar}\big(E_{C_p}\Zp_{+}\wedge H\uFp\big)
\;&\cong\;
\Bigg[\frac{\Fp[a_{\beta},\kappa_{\beta},u_{\beta}]}{(\kappa_{\beta}^{2})}
\;\otimes\;
\bigoplus_{1\le i\le p-1} Q_i\Bigg] 
\quad\text{(modulo $\mathrm{Im}(\ell_{\bigstar})$)}\\
&\bigoplus  R_{C} \bigoplus \Ker \mathcal{L}_{\bigstar}
    \end{align*}

where each \(Q_i\) is described by
\[
Q_{i}
\;=\;
\bigoplus_{\substack{|S_i| = i \\ S_i \subset [p-1]}}
\Fp\Big\langle 
\Sigma^{-1}\!\left(\frac{1}{\prod_{s\in S_i} a_{\alpha_s}}\right)
\Big\rangle
\big[
a_{\alpha_{n}},\,
a^{-1}_{\alpha_j},\,
\kappa_{\alpha_\ell},\,
v_{S},\,
z_{Q},\,
w_{Q},\,
v_{p-1}^{\pm}
\big],
\]
with
\[
j\in S_i, \qquad 
n\in [p-1]\setminus S_i, \qquad 
0\le \ell\le p-1,
\]
and \(S\) and \(Q\) range over the nonempty subsets of \([p-1]\), with \(Q\) required to have cardinality at least three and $R_{C}$ denote the cokernel of $\mathcal{L}_{\bigstar}$ corresponding to the summand $R$. 
\end{prop}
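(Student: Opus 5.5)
The plan is to read off $\pi_{\bigstar}(E_{C_p}\Zp_+\wedge H\uFp)$ from the bottom row of the Tate diagram \eqref{tatep}. The left vertical map is an equivalence, so $E_{C_p}\Zp_+\wedge H\uFp$ is the homotopy fibre of
\[
\mathcal{L}\colon \Fun(E_{C_p}\Zp_+,H\uFp)\to \widetilde{E_{C_p}\Zp}\wedge \Fun(E_{C_p}\Zp_+,H\uFp).
\]
The associated long exact sequence gives a short exact sequence of $\Fp$-vector spaces
\[
0\to \Sigma^{-1}\Coker \mathcal{L}_{\bigstar+1}\to \pi_{\bigstar}\big(E_{C_p}\Zp_+\wedge H\uFp\big)\to \Ker \mathcal{L}_{\bigstar}\to 0 .
\]
Over $\Fp$ this splits additively. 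It therefore suffices to identify $\Ker\mathcal{L}_{\bigstar}$ and $\Coker\mathcal{L}_{\bigstar}$, using Theorem~\ref{main} for the source. The summand $R$ contributes the part $R_C$ of the cokernel, which I keep as a formal summand.

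For the cokernel on the polynomial part, I write the target as $\mathrm{P}\big[a^{\pm1}_{\alpha_r}\big]$, with $\mathrm{P}=\Fp[\dots]/\mathfrak{J}_{p-1}$ the ring of Theorem~\ref{main}. The image of $\mathcal{L}_{\bigstar}$ consists of the monomials with no $a_{\alpha_r}$ in a denominator. Cokernel monomials are graded by the set $S_i\subset[p-1]$ of indices $s$ for which $a_{\alpha_s}$ appears with negative exponent, with $|S_i|=i$ and $1\le i\le p-1$. For each such $S_i$ I apply a multi-variable version of Proposition~\ref{alglemma} to the variables $a_{\alpha_s}$, $s\in S_i$. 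This shows that the quotient in this stratum is spanned by $\Sigma^{-1}\big(1/\prod_{s\in S_i}a_{\alpha_s}\big)$ times monomials in:
\begin{itemize}
\item $a_{\alpha_j}^{-1}$ for $j\in S_i$;
\item $a_{\alpha_n}$ for $n\notin S_i$;
\item the classes $\kappa_{\alpha_\ell},v_S,z_Q,w_Q,v_{p-1}^{\pm}$.
\end{itemize}
This is $Q_i$. The factor $\Fp[a_\beta,\kappa_\beta,u_\beta]/(\kappa_\beta^2)$ appears as a tensor factor because these classes are never inverted.

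The relations in $\mathfrak{J}_{p-1}$ do not respect the $S_i$-stratification. For instance, Proposition~\ref{invertiable}(2) expresses $v_Sa_\beta$ in terms of $v_{S\setminus\{j\}}a_{\alpha_j}$. Consequently some elements of $\Fp[a_\beta,\dots]\otimes\bigoplus_i Q_i$ already lie in the image, and one has to quotient by $\mathrm{Im}(\ell_\bigstar)$, as in the statement. The cleanest way to see this is to map everything injectively into the localization of $\widetilde{H}^\bigstar_\cG(E\cG_+;\uFp)$, using the injectivity in the proof of Theorem~\ref{main}, and work with explicit monomials in $t_l,y_l,u$.

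For the kernel, an element of $\mathrm{P}$ is killed by $\mathcal{L}$ exactly when it is $a_{\alpha_r}$-power torsion. This is possible, since for example $a_{\alpha_0}\cdots$ interacts with $\kappa$-classes through $\kappa^2=0$-type relations. I do not compute it further and keep $\Ker\mathcal{L}_\bigstar$ as a summand, as the statement does.

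I expect the main obstacle to be the precise bookkeeping of the cokernel strata modulo the relations $\mathfrak{J}_{p-1}$. Concretely, one must check that after inverting the $a_{\alpha_r}$ for $r\in S_i$, the classes $a_{\alpha_n}$ with $n\notin S_i$ can be kept as polynomial generators without double counting against the other strata. To handle this, I would first check the case $|S_i|=1$ directly through the injective map to $E\cG$. I would then induct on $|S_i|$ using the partial-fraction identity of Proposition~\ref{alglemma}.
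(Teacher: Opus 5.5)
Your proposal is correct and follows the paper's own route. The left vertical equivalence in the Tate square \eqref{tatep} identifies $\pi_{\bigstar}(E_{C_p}\Zp_{+}\wedge H\uFp)$ with $\Ker\mathcal{L}_{\bigstar}\oplus\Sigma^{-1}\Coker\mathcal{L}_{\bigstar+1}$. The cokernel is then read off as the classes with at least one $a_{\alpha_r}$ in a denominator: your $S_i$-stratification, with the cross-strata relations absorbed into ``modulo the image''. Finally, $\Ker\mathcal{L}_{\bigstar}$ and $R_C$ are kept as formal summands.

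One aside you never use is inaccurate. Since $\mathrm{P}$ injects into $\widetilde{H}^{\bigstar}_{\cG}(E\cG_{+};\uFp)$, where $a_{\alpha_r}$ acts by the non-zero-divisor $(y_0-ry_1)u_{\alpha_r}$, the polynomial part has no $a_{\alpha_r}$-power torsion. So $\Ker\mathcal{L}_{\bigstar}$ meets $\mathrm{P}$ trivially, and no $\kappa^2=0$ mechanism produces kernel classes there.
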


 Again, in order to determine  
\(
\pi_{\bigstar}\big(\widetilde{E_{C_p}\Zp}\wedge H\uFp\big),
\)
It suffices to compute it in degrees 
\(
\bigstar_{\beta}\in \mathbb{Z}\{1,\beta\}.
\)
As a consequence of the upper cofiber sequence in~\eqref{tatep}, the computation of the map
\[
f_{\bigstar_{\beta}} \colon 
\pi_{\bigstar_{\beta}}\!\left(E_{C_p}\Zp_{+}\wedge H\uFp\right) 
\longrightarrow 
\pi_{\bigstar_{\beta}} H\uFp
\]
provides the required information and yields the desired result, where the expression of the domain ring can be obtained by separating all the $\bigstar_\beta$--degree classes from the above expression and 
working modulo the relations. Since $\Ker\mathcal{L}_{\bigstar_{\beta}}\subset \widetilde{H}^{\bigstar_{\beta}}_{\cG}(E_{C_p}\Zp_{+};\uFp)\cong \widetilde{H}^{\bigstar_{\beta}}_{C_p}(B_{C_p}\Zp_{+};\uFp)$, where the $\RO(C_p)$--graded Bredon cohomology of $B_{C_p}\Zp$ is given as
\begin{prop}\cite[Proposition 6]{HK22}{\label{FullBC_p}}
For an odd prime $p$, the Bredon cohomology $\widetilde{H}^{\bigstar_{\beta}}_{C_{p}}(B_{C_p}\Zp_{+};\uFp)$ is isomorphic to 
\begin{align*}
    \left[\frac{\Fp[a_{\beta},u_{\beta},\kappa_{\beta}]}{(\kappa^{2}_{\beta})} \bigoplus_{j_1,j_2 >0}\Fp\langle\Sigma^{-1}\frac{\kappa^{\epsilon}_{\beta}}{a^{j_1}_{\beta}u^{j_2}_{\beta}}\rangle\right][b]\bigotimes \Lambda_{\Fp}[\omega] 
    \bigoplus  \Biggl[\Sigma^{\beta-2}\Biggl[\Fp\langle{u_{\beta},\kappa_{\beta}}\rangle[a_{\beta},u_{\beta}]\\  \bigoplus_{j_1,j_2 >0}\Fp\langle\Sigma^{-1}\frac{\kappa^{\epsilon}_{\beta}}{a^{j_1}_{\beta}u^{j_2}_{\beta}}\rangle\bigoplus\Sigma^{-1}\frac{1}{a_{\beta}}\Fp[\frac{1}{a_{\beta}}] \Biggr][c,b]\Biggr]/(u_{\beta}b-c^p+a^{p-1}_{\beta}c) \bigotimes \Fp\langle c,\nu\rangle
\end{align*}
     where $|b|=(p-1)\beta+2,$ $|\omega|=(p-1)\beta+1,$ $|c|=\beta$ and $|\nu|=\beta-1$.
\end{prop}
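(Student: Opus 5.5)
We will compute $\widetilde{H}^{\bigstar_\beta}_{C_p}(B_{C_p}\Zp_+;\uFp)$ by using a $C_p$-cell structure on $B_{C_p}\Zp$, rather than by continuing the $\cG$-equivariant tower. The relevant fact is that $B_{C_p}\Zp$ is the $C_p$-space $\cP(\cU_{C_p})\times$-type lens space $S(\infty\rho_{\C})/\Zp$. Concretely, take $E_{C_p}\Zp\simeq S(\infty\,\C[\cG/H_0]\oplus\cdots)$ restricted appropriately, i.e.\ the unit sphere of $\infty$ copies of $\bigoplus_{r}\alpha_r$ regarded as complex representations on which $1\times C_p$ acts freely. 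Its quotient by $1\times C_p$ is then an infinite $C_p$-equivariant lens space $L=S(\infty W)/\Zp$. Here $W$ is the complex $\cG$-representation $\bigoplus_{r=0}^{p-1}\lambda_r$ with $\lambda_r$ the character trivial on $H_r$.

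\textbf{Step 1: a cell filtration.} Filter $L$ by $L_n=S(nW)/\Zp$ and further refine by adding one line $\lambda_r$ at a time. Each successive cofibre is a Thom space over the previous stage. Passing to the quotient, the $C_p$-equivariant cells come in the pattern (one even cell $\Sigma^{V}$, one odd cell $\Sigma^{V'}$), with $V$ running through the $C_p$-representations $j\beta+\ldots$. This gives a cell structure with cells in dimensions $\{0,\beta-1+?,\beta,\ldots\}$ periodic with period $(p-1)\beta+2$, which is $|b|$. Using the cofibre sequences $L_{k-1}\to L_k\to L_k/L_{k-1}$ together with Proposition~\ref{mackey}, we obtain an additive computation. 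Along the way we will check that the relevant connecting maps vanish for degree reasons, using the Mackey functor values of $\uH^\alpha_{C_p}(S^0;\uFp)$ in Proposition~\ref{mackey}: these vanish in the appropriate ranges.

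\textbf{Step 2: additive answer.} Each cell contributes a shifted copy of $\bMp(\beta)$ or of the quotient $\Sigma^{\beta-2}[\ldots]$ pattern. The second pattern arises when an odd cell is attached nontrivially, i.e.\ the attaching map hits $a_\beta$. This is precisely what produces the summand with $\Fp\langle u_\beta,\kappa_\beta\rangle[a_\beta,u_\beta]$ and the $\Sigma^{-1}a_\beta^{-1}\Fp[a_\beta^{-1}]$ tail. We will identify these summands by computing the kernel and cokernel of multiplication by $a_\beta$ on $\bMp(\beta)$.

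\textbf{Step 3: ring structure.} We will identify $b,c,\nu,\omega$ with classes from Proposition~\ref{poly-BCpZp}, via Proposition~\ref{res.} and Theorem~\ref{main}. The relation $u_\beta b=c^p-a_\beta^{p-1}c$ is then checked after mapping injectively to $\widetilde{H}^\bigstar_\cG(E\cG_+;\uFp)$, where $c\mapsto$ a product of $(y_0-ly_1)$'s. The identity $\prod_{l}(y_0-ly_1)=y_0^p-y_0y_1^{p-1}$ gives the relation. Exterior relations such as $\omega^2=0$ follow from $\kappa^2=0$.

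The main obstacle is Step 2, specifically determining the attaching-map differentials with $\RO(C_p)$-grading. We would resolve this by restricting to the underlying nonequivariant lens space, where the answer $\Fp[y]\otimes\Lambda[x]$ is known, and by passing to geometric fixed points, where $L^{C_p}$ is a disjoint union of $p$ copies of $B\Zp$. Consistency between these two restrictions then pins down each differential.

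Since this statement is quoted from \cite[Proposition 6]{HK22}, an alternative route is to simply cite that source.
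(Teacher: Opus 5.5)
The paper gives no proof of this statement. It imports it from \cite[Proposition 6]{HK22}, so your closing sentence is exactly the paper's route. As an independent argument, however, your cellular sketch has a genuine gap at the one step with real content: the connecting maps.

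\textbf{The first connecting map is nonzero.} Run your own filtration. $L_1=S(\lambda_0)/K$ is a circle with \emph{trivial} $C_p$-action, since $H_0\cong\cG/K$ acts trivially on $\lambda_0$. Next, $L_2/L_1\simeq S^{\xi'}\vee S^{\xi'+1}$ with $\xi'=\lambda_0|_{H_1}\sim\beta$; this is a Thom space over the circle $S(\lambda_1)/K$, not over the previous stage. In general the cells sit in degrees $n((p-1)\beta+2)+r\beta$ and $n((p-1)\beta+2)+r\beta+1$ for $0\le r\le p-1$, so there is no cell in degree $\beta-1$. The connecting map sends the generator $\sigma\in\widetilde{H}^1_{C_p}(L_{1+};\uFp)$ into $\widetilde{H}^{2-\beta}_{C_p}(S^0;\uFp)\oplus\widetilde{H}^{1-\beta}_{C_p}(S^0;\uFp)=\Fp\{x\}\oplus 0$. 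Here $x$ spans the top value of $\uFp^{*}$ (Proposition~\ref{mackey}). The map is forced to be nonzero: $\widetilde{H}^1_{C_p}(L_{2+};\uFp)\cong H^1(S(\lambda_0\oplus\lambda_1)/\cG;\Fp)=0$, because $H_0$ and $H_1$ generate $\cG$, so the orbit space is simply connected.

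\textbf{Consequences for Steps 1 and 2.} The assertion in Step~1 that the connecting maps vanish for degree reasons is false. The mechanism in Step~2 is also misidentified: the attaching class is $x$, not $a_\beta$. The class $x$ is annihilated by $a_\beta$, $u_\beta$ and $\kappa_\beta$. Indeed $xa_\beta$ and $x\kappa_\beta$ land in the zero groups $\widetilde{H}^2$ and $\widetilde{H}^1$, and $xu_\beta=0$ by Frobenius reciprocity, since $x$ is a transfer and $\tr=p=0$ in degree $0$. So the kernel of multiplication by $x$ is what produces a class $\nu=\sigma u_\beta$ in degree $\beta-1$ together with the $(u_\beta,\kappa_\beta)$-pattern. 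By contrast, the kernel and cokernel of $a_\beta$ on $\bMp$ are both killed by $a_\beta$. Anything assembled from them is $a_\beta$-power torsion and cannot contain $\Fp\langle u_\beta,\kappa_\beta\rangle[a_\beta,u_\beta]$.

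\textbf{Your detection method cannot see this differential.} Restriction in $\uFp^{*}$ is multiplication by $p$, so $\res^{C_p}_e(x)=0$. Since $a_\beta x=0$, $x$ also dies in geometric fixed points. Hence the underlying lens space and $L^{C_p}\simeq\coprod_p B\Zp$ see $\delta(\sigma)=x$ and $\delta(\sigma)=0$ identically, and ``consistency'' pins nothing down. The missing input is the orbit-space identification $\widetilde{H}^n_{C_p}(X_+;\uFp)\cong H^n(X/C_p;\Fp)$, or explicit cellular cochains with $\uFp$-coefficients. Moreover, after this first nonzero differential the earlier stage is no longer $\bMp$-free. Later connecting maps are then not given by single elements of $\bMp$, and your sketch does not address them.

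\textbf{Step 3.} This step only reaches the polynomial classes. The map to $\widetilde{H}^{\bigstar}_{\cG}(E\cG_+;\uFp)$ kills every $\Sigma^{-1}$-class (Proposition~\ref{q_i-map}), so it cannot establish the module structure on the torsion summands. Also, it is $b$ that maps to $\prod_l(y_0-ly_1)u_\beta^{p-1}$, while $c\mapsto y_0u_\beta$. With that correction, your check of $u_\beta b=c^p-a_\beta^{p-1}c$ via $\prod_{l\in\Fp}(y_0-ly_1)=y_0^p-y_0y_1^{p-1}$ is right.
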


\begin{prop}{\label{kernel}}
    The kernel of the localisation map $\mathcal{L}_{\bigstar}$, when restricted to the $\bigstar_\beta\in\Z\{1,\beta\}$ degree is trivial, i.e.,
    \[
    \Ker \mathcal{L}_{\bigstar_\beta}=0.
    \]
\end{prop}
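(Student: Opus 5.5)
The plan is to show that in degrees $\bigstar_\beta=m+n\beta$ the localisation map
\[
\mathcal{L}_{\bigstar}\colon \pi_{\bigstar}\Fun(E_{C_p}\Zp_+,H\uFp)\to \pi_{\bigstar}\Fun(E_{C_p}\Zp_+,H\uFp)[a^{\pm1}_{\alpha_r}\colon 0\le r\le p-1]
\]
is injective. Equivalently, no nonzero class of $\widetilde{H}^{\bigstar_\beta}_{\cG}(E_{C_p}\Zp_+;\uFp)$ is annihilated by a monomial in the Euler classes $a_{\alpha_r}$. We will use the identification $\widetilde{H}^{\bigstar_\beta}_{\cG}(E_{C_p}\Zp_+;\uFp)\cong \widetilde{H}^{\bigstar_\beta}_{C_p}(B_{C_p}\Zp_+;\uFp)$ from Proposition~\ref{res.}, together with the explicit description in Proposition~\ref{FullBC_p}. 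Since $a_{\alpha_r}$ has degree $\alpha_r\notin\Z\{1,\beta\}$, we cannot test $a_{\alpha_r}$-torsion inside the $\bigstar_\beta$ part alone. Instead we will test it against the product $a_{\alpha_0}\cdots a_{\alpha_{p-1}}$, divided by a suitable power of the invertible class $v_{p-1}$. This brings us back to $\Z\{1,\beta\}$-degrees, up to a unit.

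First we check the degrees. The class $v_{p-1}$ has degree $\sum_\ell\alpha_\ell-(p-1)\beta-2(p-1)$, because it maps to $\prod_\ell u_{\alpha_\ell}/u_\beta^{p-1}$. Hence $b=a_{\alpha_0}\cdots a_{\alpha_{p-1}}/v_{p-1}$ has degree $(p-1)\beta+2(p-1)$, which matches the class $b$ of Proposition~\ref{FullBC_p} up to the normalisation used there. Inverting all the $a_{\alpha_r}$ therefore inverts $b$, and conversely, in the $\bigstar_\beta$ part $\mathcal{L}$ is injective if and only if multiplication by $b$ is injective on $\widetilde{H}^{\bigstar_\beta}_{C_p}(B_{C_p}\Zp_+;\uFp)$. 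We then read this off from Proposition~\ref{FullBC_p}.
\begin{itemize}
\item In the first summand, $b$ appears as a free polynomial generator over $\bigl[\Fp[a_\beta,u_\beta,\kappa_\beta]/(\kappa_\beta^2)\oplus\bigoplus\Fp\langle\Sigma^{-1}\kappa^\epsilon_\beta/a^{j_1}_\beta u^{j_2}_\beta\rangle\bigr]\otimes\Lambda_{\Fp}[\omega]$, so multiplication by $b$ is injective there.
\item In the second summand, the relation $u_\beta b=c^p-a_\beta^{p-1}c$ lets us view the summand as a module over $\Fp[c,b]$. We will show it is free over $\Fp[b]$, with basis given by the powers $c^k$ for $0\le k\le p-1$ times the listed bracket.
\end{itemize}

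The main obstacle is this second summand, which involves the negative classes $\Sigma^{-1}\kappa^\epsilon_\beta/(a_\beta^{j_1}u_\beta^{j_2})$ and $\Sigma^{-1}a_\beta^{-1}\Fp[a_\beta^{-1}]$. There, $u_\beta$ acts non-injectively, so we cannot simply divide the relation by $u_\beta$. The first thing to try is to order the monomials $c^kb^\ell$ and verify that the relation only rewrites $c^p$ in terms of lower powers of $c$ and terms involving $b$. If that holds, a leading-term argument shows that $b\cdot x=0$ forces $x=0$.

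As a cross-check, we will also argue geometrically. Multiplication by $a_{\alpha_0}\cdots a_{\alpha_{p-1}}$ is the map induced by $S^0\to S^{V}$ with $V=\bigoplus_r\alpha_r$. On $E_{C_p}\Zp$ its cofibre is $E_{C_p}\Zp_+\wedge S(V)_+$, which carries only free $\cG$-cells, because $E_{C_p}\Zp$ has isotropy in $\{e,H_i\}$ and $S(V)$ has isotropy avoiding each $H_i$. So the kernel of iterated multiplication is controlled by $\widetilde{H}^{\bigstar}_{\cG}(E\cG_+\wedge\cdots)$. In $\bigstar_\beta$-degrees, the comparison with the injectivity statement in Theorem~\ref{main} (the polynomial part embeds into $\widetilde{H}^\bigstar_{\cG}(E\cG_+;\uFp)$, where the $a$'s act through $y_0-\ell y_1$) shows that no polynomial class is killed. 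Together with the summand-by-summand analysis above for the non-polynomial classes, this gives $\Ker\mathcal{L}_{\bigstar_\beta}=0$.
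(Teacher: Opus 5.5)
Your argument is correct in its main line, and it takes a different route from the paper's.

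The paper argues by contradiction through the Tate square \eqref{tatep}. A nonzero $x\in\Ker\mathcal{L}_{\bigstar_\beta}$ is lifted to $\pi_{\bigstar_\beta}(E_{C_p}\Zp_+\wedge H\uFp)$ and pushed by $f$ into $\pi_{\bigstar_\beta}H\uFp\cong\bMp$. Its image there is compared, via the middle vertical map, with its image in $\widetilde{H}^{\bigstar_\beta}_{C_p}(B_{C_p}\Zp_+;\uFp)$. Polynomial images are ruled out because that map is the identity on polynomial classes. Images of the form $\Sigma^{-1}\kappa_\beta^{\epsilon}/(a_\beta^{j}u_\beta^{k})$ are ruled out by listing their possible targets from Proposition~\ref{FullBC_p} and using a degree argument.

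You instead note that $\Ker\mathcal{L}_\bigstar$ is exactly the $a_V$-power torsion, where $V=\bigoplus_r\alpha_r$. You then use the unit $v_{p-1}$ from Proposition~\ref{invertiable} to turn this into $b$-power torsion for $b=a_{\alpha_0}\cdots a_{\alpha_{p-1}}/v_{p-1}$, a class in $\Z\{1,\beta\}$-degrees. The statement thus reduces to $b$ being a nonzerodivisor on $\widetilde{H}^{\bigstar_\beta}_{C_p}(B_{C_p}\Zp_+;\uFp)$. This follows from Proposition~\ref{FullBC_p}: the relation $c^p=a_\beta^{p-1}c+u_\beta b$ is monic in $c$, so the second summand is free over its base ring adjoined $b$, with basis $1,c,\dots,c^{p-1}$. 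The $u_\beta$-torsion among the negative classes plays no role.

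Your route treats polynomial and non-polynomial classes uniformly and avoids the diagram chase and case analysis. It does need two inputs that you should state explicitly:
\begin{enumerate}
\item the summands in Proposition~\ref{FullBC_p} are stable under multiplication by $b$;
\item $a_{\alpha_0}\cdots a_{\alpha_{p-1}}/v_{p-1}$ is (a nonzero multiple of) the generator $b$ there. The paper makes this identification in Proposition~\ref{poly-BCpZp}; cite it rather than appealing to a ``normalisation''.
\end{enumerate}

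Two corrections.
\begin{enumerate}
\item Your degree count is off. Since $v_{p-1}$ maps to $\prod_\ell u_{\alpha_\ell}/u_\beta^{p-1}$, it has degree $\sum_\ell\alpha_\ell-(p-1)\beta-2$. Hence $|b|=(p-1)\beta+2$, which matches Proposition~\ref{FullBC_p} exactly.
\item Drop the geometric cross-check, because its key claim is false. $S(V)$ does not avoid the $H_i$: the sphere $S(\alpha_i)\subset S(V)$ is pointwise fixed by $H_i=\ker\alpha_i$. So $E_{C_p}\Zp_+\wedge S(V)_+$ has $H_i$-isotropy and is not $\cG$-free. The cross-check is not needed, since your summand-by-summand analysis already covers every class.
\end{enumerate}
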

\begin{proof}
  Note that from the equivalence of the the left vertical arrow in the Tate square \eqref{tatep}, we may identify \(\pi_{\bigstar}E_{C_p}\Zp_{+}\wedge H\uFp \cong \Ker \mathcal{L}_{\bigstar} \oplus \Sigma^{-1}\Coker \mathcal{L}_{\bs+1}. \) Thus the  composition 
  \[
  \Ker \mathcal{L}_{\bigstar} \hookrightarrow \pi_{\bigstar}E_{C_p}\Zp_{+}\wedge H\uFp \to \pi_{\bigstar}\Fun(E_{C_p}\Zp_{+}, H\uFp)
  \]
  is injective.  In particular, if
$x\in \Ker \mathcal{L}_{\bigstar_\beta}$ is nonzero, then its image
$f_{\bigstar_\beta}(x)$ in
$\pi_{\bigstar_\beta}\Fun(E_{C_p}\Zp_{+},H\uFp)$
is also nonzero.

Suppose first that $f_{\bigstar_\beta}(x)$ is represented by a
polynomial class.  Since the middle vertical map in
\eqref{pullp} acts as the identity on polynomial classes, the image of
$f_{\bigstar_\beta}(x)$ under localisation is nonzero.  This contradicts
the commutativity of the left square in \eqref{pullp}, as $x$ lies in the
kernel of $\mathcal{L}_{\bigstar_\beta}$.  Hence this case cannot occur.

It remains to consider the case in which
$f_{\bigstar_\beta}(x)$ is a nonpolynomial class of the form \(
\Sigma^{-1}\frac{\kappa_\beta^{\epsilon}}{a_\beta^{j}u_\beta^{k}}.
\) By Proposition~\ref{FullBC_p}, such a class can map only to one of the
following elements:
\begin{align*}
\Bigl\{
& b^{\ell}\omega^{\epsilon_1}\Sigma^{-1}
  \frac{\kappa_\beta^{\epsilon_0}}{a_\beta^{j_1}u_\beta^{k_1}},\;
  c^{n}b^{\ell}\Sigma^{\beta-3}
  \frac{\kappa_\beta^{\epsilon_0}}{a_\beta^{j_1}u_\beta^{k_1}}, \\
& \nu c^{m}b^{\ell}\Sigma^{\beta-3}
  \frac{\kappa_\beta^{\epsilon_0}}{a_\beta^{j_1}u_\beta^{k_1}},\;
  c^{n}b^{\ell}\Sigma^{\beta-3}\frac{1}{a_\beta^{j_1}},\;
  \nu c^{m}b^{\ell}\Sigma^{\beta-3}\frac{1}{a_\beta^{j_1}}
\Bigr\},
\end{align*}
where $\epsilon_0,\epsilon_1\in\{0,1\}$, $n,j_1,k_1\ge 1$, and
$\ell,m\ge 0$.
 If the vertical map sends
$\Sigma^{-1}\frac{\kappa_\beta^{\epsilon}}{a_\beta^{j}u_\beta^{k}}$
to itself, then localisation again produces a nonzero class, contradicting
$x\in\Ker \mathcal{L}_{\bigstar_\beta}$.  For all remaining possibilities,
degree considerations force $j_1>j$.  Using the multiplicative structure,
this implies that the vertical map would send the zero class to a
nonzero class, which is impossible.

Thus no nonzero element of $\Ker \mathcal{L}_{\bigstar_\beta}$ can exist, and
the claim follows.
\end{proof}

Hence the expression of $\pi_{\bigstar_{\beta}}\!\left(E_{C_p}\Zp{}_{+}\wedge H\uFp\right)$ is given by
\begin{align*}
\pi_{\bigstar_{\beta}}\!\left(E_{C_p}\Zp{}_{+}\wedge H\uFp\right)
\;&=\;
\Bigg[\dfrac{\Fp[a_{\beta},\kappa_{\beta},u_{\beta}]}{(\kappa_{\beta}^{2})}
\;\otimes\;
\bigoplus_{1\le i \le p-1} R_{i}\Bigg] \mod(\Image \mathcal{L}_{\bigstar}) \bigoplus R_{C_{\beta}} 
\end{align*}
where $R_{i}$ is given by
\begin{align*}
R_{i}
&=
\bigoplus_{\substack{|S_i| = i \\ S_i \subset [p-1]}}
\Bigg[
\Fp\Bigg\langle
\Sigma^{-1}\frac{v_{S_i}}{\prod_{s\in S_i} a_{\alpha_s}},\;
\Sigma^{-1}\frac{\kappa_{\alpha_{m_1}}\, v_{S_i\setminus\{m_1\}}}{\prod_{s\in S_i} a_{\alpha_s}},\;
\Sigma^{-1}\frac{\kappa_{\alpha_{m_1}}\kappa_{\alpha_{m_2}}\, v_{S_i\setminus\{m_1,m_2\}}}{\prod_{s\in S_i} a_{\alpha_s}},\\[4pt]
&\qquad\qquad
\Sigma^{-1}\frac{w_{L_i}\, v_{S_i\setminus L_i}}{\prod_{s\in S_i} a_{\alpha_s}},\;
\Sigma^{-1}\frac{z_{L_i}\, v_{S_i\setminus L_i}}{\prod_{s\in S_i} a_{\alpha_s}},\;
\Sigma^{-1}\frac{\kappa_{\alpha_{m_1}}\, z_{D_i}\, v_{S_i\setminus(D_i\cup\{m_1\})}}{\prod_{s\in S_i} a_{\alpha_s}}
\Bigg\rangle
\Bigg]
\\[6pt]
&\qquad\qquad\otimes\;
\Fp\Bigg[
\frac{v_{E_i}}{\prod_{s\in E_i} a_{\alpha_s}},\;
\frac{\kappa_{\alpha_j}}{a_{\alpha_j}},\;
\frac{w_{T_i}}{\prod_{s\in T_i} a_{\alpha_s}},\;
\frac{z_{T_i}}{\prod_{s\in T_i} a_{\alpha_s}}
\Bigg],
\end{align*}
where 
\[
L_i, D_i, E_i, T_i \subset S_i,\qquad 
|L_i|, |D_i|, |T_i| \ge 3,\qquad 
j,m_1, m_2 \in S_i \text{ with } m_1 \notin D_i ,
\]
$R_{C_{\beta}}$ is the corresponding part of $R_{C}$ obtained by seprating out the classes in $\bigstar_{\beta}$ degrees.
\begin{remark}
\begin{enumerate}
    \item Although additional classes can appear in intermediate expressions, they can all be shown using the relations to coincide with the classes listed above.
    \item For $R_1$ and $R_2$, fewer terms occur, since the corresponding sets contain no subsets of cardinality at least three.
\end{enumerate}
\end{remark}

\item \begin{prop}{\label{fmap}}
The induced ring homomorphism 
$f_{\bigstar_{\beta}}$
is trivial, where using Proposition~\ref{res.}, the codomain is
\[
\pi_{\bigstar_{\beta}} H\uFp \;=\; \frac{\Fp[a_{\beta},\kappa_{\beta},u_{\beta}]}{(\kappa_{\beta}^{2})}\bigoplus_{j,k\geq 1} \Fp\{\Sigma^{-1}\frac{\kappa^{\epsilon}_{\beta}}{a^{j}_{\beta}u^{k}_{\beta}}\}
\]
\end{prop}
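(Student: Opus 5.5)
The strategy is to use the factorization of $f$ through the Tate square \eqref{tatep} together with degree bookkeeping in the $\bigstar_\beta$-range. By the left vertical equivalence in \eqref{tatep}, the composite of $f_{\bigstar_\beta}$ with $\pi_{\bigstar_\beta}H\uFp\to\pi_{\bigstar_\beta}\Fun(E_{C_p}\Zp_+,H\uFp)$ equals the map $\pi_{\bigstar_\beta}(E_{C_p}\Zp_+\wedge\Fun(E_{C_p}\Zp_+,H\uFp))\to\pi_{\bigstar_\beta}\Fun(E_{C_p}\Zp_+,H\uFp)$ in the bottom row. By Proposition~\ref{kernel}, the source in $\bigstar_\beta$-degrees is just $\Sigma^{-1}\Coker\mathcal{L}_{\bigstar_\beta+1}$. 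By exactness of the bottom row, this boundary-type part maps to zero in $\pi_{\bigstar}\Fun(E_{C_p}\Zp_+,H\uFp)$, since the bottom row is a cofibre sequence and the classes coming from $\Sigma^{-1}\Coker\mathcal{L}$ are exactly those killed there. Hence $f_{\bigstar_\beta}(x)$ lies in the kernel of $\pi_{\bigstar_\beta}H\uFp\to\widetilde{H}^{\bigstar_\beta}_{\cG}(E_{C_p}\Zp_+;\uFp)\cong\widetilde{H}^{\bigstar_\beta}_{C_p}(B_{C_p}\Zp_+;\uFp)$, which is identified through Proposition~\ref{res.}.

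The key steps, in order, are as follows.

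First, I would check that the map $\pi_{\bigstar_\beta}H\uFp\to\widetilde{H}^{\bigstar_\beta}_{C_p}(B_{C_p}\Zp_+;\uFp)$ is injective. This map is induced by $B_{C_p}\Zp_+\to S^0$. The polynomial classes $a_\beta,u_\beta,\kappa_\beta$ go to the corresponding classes in the first summand of Proposition~\ref{FullBC_p}, which is visibly free over $\bM_p(\beta)$ on $b^\ell\omega^\epsilon$. The negative classes $\Sigma^{-1}\kappa_\beta^\epsilon/(a_\beta^ju_\beta^k)$ map to the identically named classes in that summand, because they are built by the same cofibre sequences after pulling back along $C_p\times C_p\to C_p$. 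So $\bM_p(\beta)$ appears as the $b^0\omega^0$ summand, and the map is a split injection. This injectivity forces $f_{\bigstar_\beta}(x)=0$.

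Second, as a consistency check, I would verify on generators, by degree reasons, that no class of $R_i\otimes\Fp[a_\beta,\kappa_\beta,u_\beta]$ in a $\bigstar_\beta$-degree can hit $1$, $a_\beta$, $u_\beta$ or $\kappa_\beta$. Every element of $R_i$ carries a $\Sigma^{-1}$ and a product of $a_{\alpha_s}^{-1}$'s balanced by $v$-classes. This pins down its $\alpha$-free total degree, and comparing with the degrees $m+n\beta$ of $\bM_p(\beta)$ shows that the fixed-point dimension is off by one from any polynomial generator. Since $f$ is a map of modules over $\pi_\bigstar H\uFp$, it is determined on module generators, and it then vanishes on the rest.

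The main obstacle is the first step. Because $f_{\bigstar_\beta}$ is described only as a "ring homomorphism" on a nonunital source, I must argue that the classes in $\Sigma^{-1}\Coker\mathcal{L}$ genuinely die in $\pi_\bigstar\Fun(E_{C_p}\Zp_+,H\uFp)$ and not merely modulo $R$. I would first try to read this off directly from the long exact sequence of the bottom row of \eqref{tatep}. If the nonpolynomial summand $R_{C_\beta}$ causes trouble, I would fall back on the degree argument of the proof of Proposition~\ref{kernel}, comparing exponents $j_1>j$ of $a_\beta$.
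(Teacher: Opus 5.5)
Your proposal is correct and is essentially the paper's argument. By the left square of \eqref{tatep} and exactness of the bottom row, the middle vertical map kills $\Image f_{\bigstar_\beta}$ because $\Ker\mathcal{L}_{\bigstar_\beta}=0$ (Proposition~\ref{kernel}); injectivity of that middle map in $\bigstar_\beta$-degrees, which the paper simply asserts, then finishes the proof. So your worry about the summand $R$ is moot, and your second ``consistency check'' step can be dropped (it would not suffice on its own anyway).

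For the injectivity, a cleaner route than matching named classes in Proposition~\ref{FullBC_p} is to use a $\cG$-map $\cG/H_0\to E_{C_p}\Zp$, which exists since $H_0\in\cF_{gr}$. The middle map followed by pullback along it is $\res^{\cG}_{H_0}$. This is an isomorphism in $\bigstar_\beta$-degrees, because $H_0\to\cG/K$ is an isomorphism and $(H\uFp)^K$ is the Eilenberg--Mac\,Lane spectrum of the constant Mackey functor for $\cG/K$. Hence the middle map is split injective there.
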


\begin{proof} 
$f_{\bigstar_{\beta}}$ is trivial as no class can hit $\pi_{\bigstar_{\beta}}H\uFp$ as through the middle vertical map~\eqref{tatep} all the classes map nontrivially, but all the classes in $\pi_{\bigstar_{\beta}}E_{C_p}\Zp_{+}\wedge H\uFp$ come from $\Coker \mathcal{L}_{\bigstar_{\beta}+1}$, hence it contradict the commutativity of the left square in \eqref{tatep}. 
\end{proof}

As a consequence of the above proposition and the top cofiber sequence in the Tate square \eqref{tatep}, we obtain
\[
\pi_{\bigstar}\bigl(\widetilde{E_{C_p}\Zp}\wedge H\uFp\bigr)
   =
   \Biggl[
   \pi_{\bigstar_{\beta}}H\uFp
      \;\bigoplus\;
      \Sigma\, \pi_{\bigstar_{\beta}-1}\bigl(E_{C_p}\Zp_{+}\wedge H\uFp\bigr)
   \Biggr]
   \;\otimes\;
   \Fp[a_{\alpha_r}^{\pm1} \mid 0\le r \le p-1].
\]

 In the next few propositions, we construct explicit new classes in
\(\Ker(\partial_{\bigstar})\), analogous to the classes described in 
Propositions~\ref{invertiable} and~\ref{wz}.

First, we prove the existence of the class $k^{(i)}_{S_i}$, which is analogous to the previously defined class $v_{S}$ in Proposition~\ref{invertiable} for the subgroup $H_i$. 
\begin{prop}{\label{kclass}}
Let $0\leq i \leq p-1$, for each order subset $S_{i}$ of $[p]\setminus \{i\}$ of having cardinality atleast two, there exists a class $k^{(i)}_{S_{i}}$ in $\widetilde{H}^{\bigstar}_{\cG}(S^0;\uFp)$ with the following property
 \begin{myeq}{\label{kproperty}}
k^{(i)}_{S_i}u^{|S_i|-1}_{\alpha_i}-\prod_{j\in S_i}u_{\alpha_j}=0
    \end{myeq}

Moreover, it acts on $\widetilde{H}^{\bigstar}_{\cG}(E_{C_p}\Zp_{+};\uFp)$ by the multiplication of class $ v_{S_i\setminus\{p\}}
    \left(\frac{v_{[p-1]\setminus\{i\}}}{v_{p-1}}\right)^{|S_i|-1}$.
   
\end{prop}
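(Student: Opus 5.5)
The plan is to build $k^{(i)}_{S_i}$ as a compatible pair in the homotopy pullback given by the Tate square \eqref{tatep}. Throughout, I read the index $p\in[p]$ as standing for $\beta$, so that $u_{\alpha_p}:=u_\beta$. The class then sits in degree
\[
\bigstar_{S_i}=\sum_{j\in S_i}\alpha_j-(|S_i|-1)\alpha_i-2 .
\]
A class in $\pi_{-\bigstar_{S_i}}H\uFp$ is produced by giving two pieces of data whose images under localisation agree: a class $x\in\pi_\bigstar\Fun(E_{C_p}\Zp_+,H\uFp)$, and a class $y\in\pi_\bigstar(\widetilde{E_{C_p}\Zp}\wedge H\uFp)$. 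Any ambiguity in the choice of lift is then measured by $\Sigma^{-1}\Coker\mathcal L$.

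\textbf{Step 1 (the $\Fun$-corner).} I take
\[
x=v_{S_i\setminus\{p\}}\Bigl(\frac{v_{[p-1]\setminus\{i\}}}{v_{p-1}}\Bigr)^{|S_i|-1}.
\]
This is legitimate because $v_{p-1}$ is invertible by Proposition~\ref{invertiable}. I will then check \eqref{kproperty} at this level.

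Property (1) of Proposition~\ref{invertiable}, applied to the disjoint sets $[p-1]\setminus\{i\}$ and $\{i\}$, gives $v_{[p-1]\setminus\{i\}}u_{\alpha_i}=v_{p-1}u_\beta$. Hence
\[
\frac{v_{[p-1]\setminus\{i\}}}{v_{p-1}}\,u_{\alpha_i}=u_\beta,
\qquad\text{so}\qquad
x\,u_{\alpha_i}^{|S_i|-1}=v_{S_i\setminus\{p\}}\,u_\beta^{|S_i|-1}.
\]
Using the formula $v_S\mapsto\prod_{j\in S}u_{\alpha_j}/u_\beta^{|S|-1}$ from Theorem~\ref{main}, together with injectivity on the polynomial part, this product equals $\prod_{j\in S_i}u_{\alpha_j}$. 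This holds both when $p\in S_i$ and when $p\notin S_i$. The degree count matches $\bigstar_{S_i}$ in both cases.

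\textbf{Step 2 (the $\widetilde{E}$-corner and compatibility).} This is the step I expect to be the main obstacle. I must show that $\mathcal L(x)$ lies in the image of $\pi_\bigstar(\widetilde{E_{C_p}\Zp}\wedge H\uFp)\to\pi_\bigstar(\widetilde{E_{C_p}\Zp}\wedge\Fun(E_{C_p}\Zp_+,H\uFp))$. By the discussion after Proposition~\ref{fmap}, that source is
\[
\bigl[\pi_{\bigstar_\beta}H\uFp\oplus\Sigma\pi_{\bigstar_\beta-1}(E_{C_p}\Zp_+\wedge H\uFp)\bigr]\otimes\Fp[a^{\pm}_{\alpha_r}].
\]
I will first try to rewrite $\mathcal L(x)$ using the linear relations (2) of Proposition~\ref{invertiable}, which become usable once the $a_{\alpha_r}$ are inverted. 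Concretely, relation (2) lets me express each $v_{S}$ with $|S|\ge2$ as an $\Fp$-combination of terms $v_{S\setminus\{j\}}a_{\alpha_j}a_\beta^{-1}$. Inverting instead the $a_{\alpha_r}$, I expect $\mathcal L(x)$ to become a Laurent monomial in the $a_{\alpha_r}$ times a class of $\bigstar_\beta$-degree, namely a polynomial in $a_\beta,u_\beta,\kappa_\beta$, which visibly lies in the image of $\pi_{\bigstar_\beta}H\uFp\otimes\Fp[a^{\pm}_{\alpha_r}]$. This identification produces $y$.

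If this direct rewriting fails, the fallback is a degree argument. In the degree $-\bigstar_{S_i}$, I would show that the relevant group of $\Sigma^{-1}\Coker\mathcal L_{\bigstar+1}$, equivalently $\widetilde H$ of $\widetilde{E_{C_p}\Zp}$ in the adjacent degree, vanishes. I would do this by restricting to each order-$p$ subgroup and using Proposition~\ref{mackey}. That vanishing makes the lift automatic.

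\textbf{Step 3 (relation and action).} Define $k^{(i)}_{S_i}$ to be the class obtained from the pair $(x,y)$ via the pullback. Its action on $\widetilde H^\bigstar_\cG(E_{C_p}\Zp_+;\uFp)$ is, by construction, multiplication by its image $x$, which is the second assertion of the statement.

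To prove \eqref{kproperty} in $\widetilde H^\bigstar_\cG(S^0;\uFp)$, I will argue as follows. The element $k^{(i)}_{S_i}u_{\alpha_i}^{|S_i|-1}-\prod_j u_{\alpha_j}$ maps to zero in the $\Fun$-corner by Step 1. It also maps to zero in the $\widetilde E$-corner, since $y$ is determined by $\mathcal L(x)$. It therefore lies in $\Sigma^{-1}\Coker\mathcal L$ in that degree. The remaining point is to check that this group is zero in the degree of the $u$-monomials, which I plan to do using Proposition~\ref{kernel} and the same degree bookkeeping as in Step 2. If it is not zero, I would absorb the discrepancy by modifying $k^{(i)}_{S_i}$ by an element of $\Sigma^{-1}\Coker\mathcal L$.
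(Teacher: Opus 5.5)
Your architecture matches the paper's. You use the same class $x=v_{S_i\setminus\{p\}}\bigl(v_{[p-1]\setminus\{i\}}/v_{p-1}\bigr)^{|S_i|-1}$, the same strategy of inverting the $a_{\alpha_r}$ and using the relations of Proposition~\ref{invertiable} to show that $\mathcal{L}(x)$ comes from $\widetilde{E_{C_p}\Zp}\wedge H\uFp$, and the same lift through the Tate square \eqref{tatep}. Your Step~1 is correct, and it supplies the computation behind the paper's one-line claim about the action.

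The gap is in Step~2, which you rightly call the crux: the outcome you predict there is false. After normalising into $\bigstar_\beta$-degree by an $a$-monomial, $\mathcal{L}(x)$ is not a polynomial in $a_\beta,u_\beta,\kappa_\beta$, and it does not come from $\pi_{\bigstar_\beta}H\uFp\otimes\Fp[a^{\pm}_{\alpha_r}]$.

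Take $i=0$ and $S_0=\{1,2\}$. The normalised degree is the integer $-2$, where $\pi_{\bigstar_\beta}H\uFp=0$. Yet the paper's substitution $a_{\alpha_0}v_{[p-1]\setminus\{0\}}=a_{\alpha_1}v_{[p-1]\setminus\{1\}}+a_\beta v_{p-1}$, followed by relations (1) and (2), gives
\[
\frac{a_{\alpha_0}}{a_{\alpha_1}a_{\alpha_2}}\,\mathcal{L}(x)\;=\;2\,\frac{u_{\alpha_2}}{a_{\alpha_2}}-\frac{u_{\alpha_1}}{a_{\alpha_1}}\;\neq\;0 .
\]
Both sides map to $y_0/\bigl((y_0-y_1)(y_0-2y_1)\bigr)$ in the localised cohomology of $E\cG$. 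For $S_0=\{1,p\}$ one gets $u_\beta+a_\beta\,u_{\alpha_1}/a_{\alpha_1}$ instead.

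So the required preimage necessarily involves the summand $\Sigma\pi_{\bigstar_\beta-1}(E_{C_p}\Zp_+\wedge H\uFp)$, and in the first example lies entirely in it. This is why the paper only claims a representative in $\pi_{\bigstar_\beta}(\widetilde{E_{C_p}\Zp}\wedge H\uFp)$.

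Your fallback does not repair this. The obstruction to lifting $x$ is its coboundary in $\widetilde{H}^{\bigstar+1}_{\cG}(\widetilde{E_{C_p}\Zp};\uFp)$. That is not the group $\Sigma^{-1}\Coker\mathcal{L}_{\bigstar+1}\subseteq\pi_\bigstar(E_{C_p}\Zp_+\wedge H\uFp)$ you identify it with, and you give no argument that it vanishes.

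The missing observation is elementary. The classes $u_{\alpha_j},a_{\alpha_j},u_\beta,a_\beta$ all live in $\widetilde{H}^{\bigstar}_{\cG}(S^0;\uFp)$, so it suffices to show that $x\,a_{\alpha_i}^{|S_i|-1}$ is a polynomial in them.

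By injectivity on the polynomial part, this can be checked in $E\cG$, where $x\,a_{\alpha_i}^{|S_i|-1}$ equals $(y_0-iy_1)^{|S_i|-1}\prod_{j\in S_i}u_{\alpha_j}$. Each factor $y_0-iy_1$ is absorbed using two of the $u$'s:
\[
(y_0-iy_1)u_{\alpha_{j_1}}u_{\alpha_{j_2}}=\lambda\,a_{\alpha_{j_1}}u_{\alpha_{j_2}}+\mu\,u_{\alpha_{j_1}}a_{\alpha_{j_2}},\qquad \lambda+\mu=1,\ \lambda j_1+\mu j_2=i .
\]
When $p\in S_i$ one can instead use $(y_0-iy_1)u_{\alpha_j}u_\beta=a_{\alpha_j}u_\beta+(j-i)u_{\alpha_j}a_\beta$. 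Since $|S_i|\ge 2$, there are enough $u$'s for all $|S_i|-1$ steps. Then $\mathcal{L}(x)$ is visibly the localisation of a class of the point.

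In Step~3, note two further problems. First, $y$ is determined by $\mathcal{L}(x)$ only modulo the kernel of the right vertical map. Second, Proposition~\ref{kernel} concerns $\bigstar_\beta$-degrees only, whereas \eqref{kproperty} lives in degree $\sum_{j\in S_i}\alpha_j-2|S_i|$. The paper simply asserts the relation from exactness, so your more careful bookkeeping there is not yet backed by an argument either.
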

\begin{remark}
    For convenience, we use $\alpha_p$ for $\beta$ representation.
\end{remark}
\begin{proof}
    Here, we prove the existence for the case $i=0$; the other instances follow by the same argument.

\medskip
\noindent\textbf{Existence.}
Let \( S_0\) be any nonempty subset of $[p]\setminus\{0\}$. If $p\notin S_0$, then consider the following degree
\[
\Sigma_{j\in S_0}\alpha_{j}
   -2-(|S_0|-1)\alpha_{0}
   =
   \Sigma_{j\in S_0}\alpha_{j}
   -2-(|S_0|-1)\beta
   +(|S_0|-1)(\beta-\alpha_{0}),
\]
 Consider the following class
\begin{myeq}\label{inver}
    v_{S_0\setminus\{p\}}
    \left(\frac{v_{[p-1]\setminus\{0\}}}{v_{p-1}}\right)^{|S_0|-1}.
\end{myeq}

\medskip

We now show that there exists a class in 
\(\pi_{\bigstar}\bigl(\widetilde{E_{C_p}\Zp}\wedge H\uFp\bigr)\)
that maps, under the boundary map
\(\partial_{\bigstar}\),
to the \emph{negative suspension} of the class represented in \eqref{inver}.  
Since \eqref{inver} lies in \(\Image(\mathcal{L}_{\bigstar})\), it is zero in  
\(\pi_{\bigstar}(E_{C_p}\Zp_{+}\wedge H\uFp)\); hence it has a representative in \(\Ker(\partial_{\bigstar})\).
  
Consider the following class in \(\bigstar_{\beta}\in \Z\{1,\beta\}\)degree, obtained using the invertibility of the classes 
\(a_{\alpha_r}\) for \(0\le r\le p-1\):
\[
\frac{v_{S_0\setminus\{p\}}}{\prod_{j\in S_0\setminus\{p\}} a_{\alpha_j}}
\left(
   \frac{a_{\alpha_0}\,v_{[p-1]\setminus\{0\}}}{v_{p-1}}
\right)^{|S_0|-1}
=
\frac{v_{S_0\setminus\{p\}}}{v_{p-1}^{\,|S_0|-1}\prod_{j\in S_0\setminus\{p\}} a_{\alpha_j}}
\left(
   a_{\alpha_l}v_{[p-1]\setminus\{\ell\}}
   + \ell\, a_{\beta} v_{p-1}
\right)^{|S_0|-1},
\]
where \(\ell\in S_0\setminus\{p\}\). 
We used the relation
\[
v_{S}\,v_{[p-1]\setminus\{\ell\}}
   = v_{S\setminus\{\ell\}}\, v_{p-1}.
\]

By repeatedly applying the same substitution, one eliminates all factors of \(v_{p-1}\) from the denominator.  
Hence, the resulting class has a representation in
\[
\pi_{\bigstar_{\beta}}\bigl(\widetilde{E_{C_p}\Zp}\wedge H\uFp\bigr).
\]

Thus, the construction ensures that the class in \eqref{inver} lie in $\pi_{\bigstar}\bigl(\widetilde{E_{C_p}\Zp}\wedge H\uFp\bigr)$ and map to zero by \(\partial_{\bigstar}\), and therefore yields a class
\[
k^{(0)}_{S_0}\in \widetilde{H}^{\bigstar}_{\cG}(S^0;\uFp)
\]
The relation holds from the exactness of the top cofiber sequence~\eqref{tatep}. The action of the class follows from the definition of class along with the above relation.
\end{proof}

Similarly, there are classes analogous to $z_{S}$ and $w_{S}$ for the subgroup $H_i$.
\begin{prop}{\label{phiclass}}
For $0 \leq i \leq p-1$, there exist classes 
\[
\psi^{(i)}_{T_i},\; \phi^{(i)}_{T_i} \in \Ker(\partial_{\bigstar}),
\]
where $T_i$ denotes an subset of $[p]\setminus\{i\}$ of cardinality at least $3$ which satisfy the following relations
 \begin{myeq}{\label{phiproperty}}
\psi^{(i)}_{T_i}u^{|T_i|-2}_{\alpha_i}-\kappa_{\alpha_{s_0}}\kappa_{\alpha_{s_1}}\prod_{j\in {T_i}\setminus{\{s_0,s_1\}}}u_{\alpha_j}=0, \quad \phi^{(i)}_{T_i}u^{|T_i|-2}_{\alpha_i}-(\kappa_{\alpha_{s_0}}a_{\alpha_{s_1}}-\kappa_{\alpha_{s_1}}a_{\alpha_{s_0}})\prod_{j\in {T_i}\setminus{\{s_0,s_1\}}}u_{\alpha_j}=0.
    \end{myeq}
Moreover, the action of classes $\psi^{(i)}_{T_i}$ and $\phi^{(i)}_{T_i}$ on $\widetilde{H}^{\bigstar}_{\cG}(E_{C_p}\Zp_{+};\uFp)$ is given by the multiplication of classes $z_{T_i\setminus\{p\}}
    \left(\frac{v_{[p-1]\setminus\{i\}}}{v_{p-1}}\right)^{|T_i|-2}$ and $w_{T_i\setminus\{p\}}
    \left(\frac{v_{[p-1]\setminus\{i\}}}{v_{p-1}}\right)^{|T_i|-2}$ respectively. 
\end{prop}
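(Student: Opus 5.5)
The plan is to follow the construction of $k^{(i)}_{S_i}$ in Proposition~\ref{kclass} verbatim, now starting from the classes $w_Q$ and $z_Q$ of Proposition~\ref{wz} instead of the classes $v_S$. As there, we treat $i=0$; the other cases follow by permuting the roles of the subgroups $H_i$. Fix $T_0\subseteq[p]\setminus\{0\}$ with $|T_0|\ge 3$, and let $s_0<s_1$ be its two smallest elements. The target degree is $\bigstar=|\kappa_{\alpha_{s_0}}\kappa_{\alpha_{s_1}}|+\sum_{j\in T_0\setminus\{s_0,s_1\}}\alpha_j-(|T_0|-2)\alpha_0-(|T_0|-2)\cdot 2$, with the analogous degree for $\phi^{(0)}_{T_0}$. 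As in \eqref{inver}, rewrite $(|T_0|-2)(\beta-\alpha_0)$ to see that
\[
w_{T_0\setminus\{p\}}\Bigl(\tfrac{v_{[p-1]\setminus\{0\}}}{v_{p-1}}\Bigr)^{|T_0|-2},\qquad z_{T_0\setminus\{p\}}\Bigl(\tfrac{v_{[p-1]\setminus\{0\}}}{v_{p-1}}\Bigr)^{|T_0|-2}
\]
are well-defined classes of this degree in $\pi_\bigstar\Fun(E_{C_p}\Zp_+,H\uFp)$. This uses Theorem~\ref{main}, with $v_{p-1}$ invertible and the convention $\alpha_p=\beta$. When $p\in T_0$, the missing factor $u_\beta$ is absorbed through $v_{S_1}v_{S_2}=v_{S_1\cup S_2}u_\beta$.

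Next we show that these classes lie in $\Image\mathcal{L}_\bigstar$ modulo classes coming from $\pi_\bigstar(\widetilde{E_{C_p}\Zp}\wedge H\uFp)$, so that they lift along the Tate square \eqref{tatep}. Invert the $a_{\alpha_r}$ and rewrite $\frac{a_{\alpha_0}v_{[p-1]\setminus\{0\}}}{v_{p-1}}$ as $\frac{1}{v_{p-1}}\bigl(a_{\alpha_\ell}v_{[p-1]\setminus\{\ell\}}+\ell\,a_\beta v_{p-1}\bigr)$, exactly as in the proof of Proposition~\ref{kclass}, using relation (2) of Proposition~\ref{invertiable}. Then apply $v_Sv_{[p-1]\setminus\{\ell\}}=v_{S\setminus\{\ell\}}v_{p-1}$ repeatedly to clear $v_{p-1}$ from the denominators. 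For the $w$- and $z$-factors, we also need the analogous identities $w_Q v_{[p-1]\setminus\{\ell\}}=w_{Q\setminus\{\ell\}}v_{p-1}$ for $\ell\notin\{s_0,s_1\}$, and similarly for $z_Q$. We will check these by injecting into $\widetilde{H}^\bigstar_{\cG}(E\cG_+;\uFp)$ via the images listed in Theorem~\ref{main}. After this cancellation, the element is expressed in $\bigstar_\beta$-degrees times Laurent monomials in the $a_{\alpha_r}$. Hence it represents a class of $\pi_\bigstar(\widetilde{E_{C_p}\Zp}\wedge H\uFp)$, which is described via Proposition~\ref{fmap} as $\pi_{\bigstar_\beta}H\uFp\otimes\Fp[a^{\pm}_{\alpha_r}]$ plus a suspension term. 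The pullback property of \eqref{tatep} then produces a class in $\Ker(\partial_\bigstar)$, hence in $\widetilde{H}^\bigstar_\cG(S^0;\uFp)$. Call the two classes obtained $\psi^{(0)}_{T_0}$ and $\phi^{(0)}_{T_0}$, matched to the $z$- and $w$-expressions as in the statement. Their action on $\widetilde{H}^\bigstar_\cG(E_{C_p}\Zp_+;\uFp)$ is then by construction multiplication by the displayed class.

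For the relations \eqref{phiproperty}, multiply the constructed class by $u_{\alpha_0}^{|T_0|-2}$ and compare the result with $\kappa_{\alpha_{s_0}}\kappa_{\alpha_{s_1}}\prod u_{\alpha_j}$ (respectively, the $z$-type expression). Both sides map to the same element of $\pi_\bigstar\Fun(E_{C_p}\Zp_+,H\uFp)$. This is verified on the polynomial part by pushing to $E\cG$, where $u_{\alpha_0}\cdot\frac{v_{[p-1]\setminus\{0\}}}{v_{p-1}}\mapsto u_\beta$. Both sides also map to the same element of the localized term. By exactness of the top row of \eqref{tatep}, the difference therefore comes from $\pi_\bigstar(E_{C_p}\Zp_+\wedge H\uFp)$ in a degree where, by Proposition~\ref{kernel}, there is no kernel contribution. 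We then argue that it vanishes.

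The main obstacle is the cancellation step: verifying the $w$/$z$ analogues of $v_Sv_{[p-1]\setminus\{\ell\}}=v_{S\setminus\{\ell\}}v_{p-1}$, together with the cross terms arising when $\ell\in\{s_0,s_1\}$. In that case we would first use relations (2)–(3) of Proposition~\ref{invertiable} to trade $\kappa_{\alpha_\ell}$ and $a_{\alpha_\ell}$ for $\kappa_\beta$ and $a_\beta$ plus other terms, and then check the result by injectivity into the cohomology of $E\cG$.
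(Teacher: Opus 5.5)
Your proposal is essentially the paper's proof. It has the same steps: reduce to $i=0$, take $w_{T_0\setminus\{p\}}\bigl(v_{[p-1]\setminus\{0\}}/v_{p-1}\bigr)^{|T_0|-2}$ (and its $z$-analogue), rewrite $a_{\alpha_0}v_{[p-1]\setminus\{0\}}=a_{\alpha_\ell}v_{[p-1]\setminus\{\ell\}}+\ell\,a_\beta v_{p-1}$, clear $v_{p-1}$ from the denominator via $w_{Q}v_{[p-1]\setminus\{\ell\}}=c\,w_{Q\setminus\{\ell\}}v_{p-1}$, lift through the Tate square \eqref{tatep}, and read off \eqref{phiproperty} from exactness.

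The obstacle you expect for $\ell\in\{s_0,s_1\}$ does not occur: in $E\cG$ the classes $w_Q$ and $z_Q$ map to $(s_0-s_1)t_0t_1$, resp.\ $(s_0-s_1)(t_0y_1-t_1y_0)$, times $\prod_{d\in Q}u_{\alpha_d}/u_\beta^{|Q|-2}$, so the cancellation holds for every $\ell\in Q$ with a scalar $c$ and no cross terms. Also build $\psi^{(i)}_{T_i}$ from $w$ and $\phi^{(i)}_{T_i}$ from $z$, as the paper's proof does and \eqref{phiproperty} forces, because the statement's description of the actions has $w$ and $z$ interchanged.
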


\begin{proof}
Again, we establish the existence of the claimed classes for $i=0$; the remaining cases follow by symmetry.

\noindent 
For the existence of $\psi^{(0)}_{T_0}$: 
let $T_0\subseteq \{1,2,\ldots,p\}$ be such a subset. We seek a class in degree
\[
\sum_{\substack{j \in T_0}}\alpha_j - 2 - (|T_0|-2)\alpha_0
  = \sum_{\substack{j \in T_0}}\alpha_j - 2 - (|T_0|-2)\beta 
    - (|T_0|-2)(\alpha_0-\beta).
\]
Following the strategy of the preceding proposition, we aim to construct a class in 
$\pi_{\bigstar}(\widetilde{E_{C_p}\Zp}\wedge H\uFp)$ whose boundary under $\partial_{\bigstar}$ is the \emph{negative suspension} of class
\begin{myeq}\label{wclass}
   w_{T_0\setminus\{p\}}
    \left(\frac{v_{[p-1]\setminus\{0\}}}{v_{p-1}}\right)^{|T_0|-2},
\end{myeq}
for $s\neq t\in T_0$.

Consider now the corresponding element in 
$\pi_{\bigstar_{\beta}}(\widetilde{E_{C_p}\Zp}\wedge H\uFp)$, obtained using the invertibility of 
$a_{\alpha_r}$ for $0\le r\le p-1$:
\begin{myeq}\label{psi-construction}
    \frac{w_{T_0\setminus\{p\}}}{\prod_{j\in S_0\setminus\{p\}}a_{\alpha_j}}
    \left(
        a_{\alpha_0}\frac{v_{[p-1]\setminus\{0\}}}{v_{p-1}}
    \right)^{|T_0|-2}
    =
    \frac{w_{T_0\setminus\{p\}}}{
        v_{p-1}^{|T_0|-2}
        \prod_{j\in S_0\setminus\{p\}}a_{\alpha_j}
    }
    \big(
        a_{\alpha_\ell}v_{[p-1]\setminus\{\ell\}}
        + \ell\,a_{\beta}v_{p-1}
    \big)^{|T_0|-2},
\end{myeq}
for some $\ell\in S_0\setminus\{p\}$.
Using the relation
\[
w_{T_0\setminus\{p\}}v_{[p-1]\setminus\{\ell\}}
    = cw_{T_0\setminus\{\ell,p\}}v_{p-1},
\]
where $c\in \Fp$ be some scalar depending on the minimum of $T_{0}\setminus\{p\}$ and $T_{0}\setminus\{\ell,p\}$, and iterating this process, one can eliminate all occurrences of $v_{p-1}$ in the denominator. Hence class in~\eqref{psi-construction} live in $\pi_{\bigstar_{\beta}}(\widetilde{E_{C_p}\Zp}\wedge H\uFp)$ upto relations. Consequently, the resulting class in~\eqref{wclass} lies in 
$\pi_{\bigstar}(\widetilde{E_{C_p}\Zp}\wedge H\uFp)$. But by the boundary map $\partial_{\bigstar}$, it maps to zero (as class in~\eqref{wclass} lies in $\Image \mathcal{L}_{\bigstar}$). Hence, there exists a corresponding class 
\[
\psi^{(0)}_{T_0}\in \Ker \partial_{\bigstar}
\]
representing an element of $\Ker(\partial_{\bigstar})$.

An analogous argument establishes the existence of 
$\phi^{(0)}_{T_0}$.

Again, the relations hold due to the exactness of the top cofiber sequence~\eqref{tatep}, and the action holds due to relations with the definition of class.
\end{proof}

From the top cofiber sequence in \eqref{tatep}, we can conclude the expression of some special classes from $\Ker \partial_{\bigstar}$,  which in turn gives the polynomial part $\mathrm{P}(\widetilde{H}^{\bigstar}_{\cG}(S^0;\uFp))$ of cohomology of a point in $\uFp$--coefficient.

\begin{thm}{\label{cohomologyofpointoddp}}
    For an odd prime $p$, the polynomial part of the cohomology of a point in the coefficient $\uFp$ is given by
    \[
    \mathrm{P}(\widetilde{H}^{\bigstar}_{\cG}(S^0;\uFp))=\Fp[a_{\beta},\kappa_{\beta},u_{\beta},a_{\alpha_i},\kappa_{\alpha_{i}},u_{\alpha_i},w_{T},z_{T},v_{S},k^{(i)}_{S_{i}},\phi^{(i)}_{T_i},\psi^{(i)}_{T_i}]/\sim
    \]
    where $S$ and $T$ are the subset of $[p-1]$ having cardianlity atleast two and three respectively, and for $0\leq i \leq p-1$, $S_{i}$ and $T_{i}$ are the subset of $[p]\setminus \{i\}$ of having cardinality atleast two and three respectively, where $\sim$ denotes the relations which can be pullback from the composition map  $\pi_{\bigstar}H\uFp \to \pi_{\bigstar}\Fun(E_{C_p}\Zp_{+},H\uFp) \to \pi_{\bigstar}\Fun(E\cG_{+},H\uFp) $ which is explicitly given as: 

    \[
\begin{aligned}
a_{\beta} &\,\mapsto\, y_1u_{\beta}, 
&\qquad & \kappa_{\beta} \,\mapsto\, t_1u_{\beta},&
u_{\beta} \,\mapsto\, u_{\beta}, \\[4pt]
a_{\alpha_i} &\,\mapsto\, (y_0-iy_1)u_{\alpha_i}, 
&\qquad & \kappa_{\alpha_i} \,\mapsto\, (t_0-it_1)u_{\alpha_i},&
u_{\alpha_i} \,\mapsto\, u_{\alpha_i}, \\[4pt]
w_{T} &\,\mapsto\, (s_0-s_1)t_0t_1\frac{\prod_{j\in T}u_{\alpha_j}}{u^{|T|-2}_{\beta}}, 
&\qquad & z_{T} \,\mapsto\, (s_0-s_1)(t_0y_1-t_1y_0)\frac{\prod_{j\in T}u_{\alpha_j}}{u^{|T|-2}_{\beta}},\\[4pt]
v_{S} &\,\mapsto\, \frac{\prod_{j\in S}u_{\alpha_j}}{u^{|S|-1}_{\beta}}, 
&\qquad & k^{(i)}_{S_i} \,\mapsto\, \frac{\prod_{j\in S_i}u_{\alpha_j}}{u^{|S_i|-1}_{\alpha_i}},\\[4pt]
\phi^{(i)}_{T_i} &\,\mapsto\, (s_0-s_1)t_0t_1\frac{\prod_{j\in T_i}u_{\alpha_j}}{u^{|T_i|-2}_{\alpha_i}}, 
&\qquad & \psi^{(i)}_{T_i} \,\mapsto\, (s_0-s_1)(t_0y_1-t_1y_0)\frac{\prod_{j\in T_i}u_{\alpha_j}}{u^{|T_i|-2}_{\alpha_i}},
\end{aligned}
\]
where $s_0<s_1$ denotes the consecutive lowest member of the corresponding set. 
\end{thm}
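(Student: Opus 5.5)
The plan is to read off $\mathrm{P}(\widetilde{H}^{\bigstar}_{\cG}(S^0;\uFp))$ from the top cofiber sequence of the Tate square \eqref{tatep},
\[
E_{C_p}\Zp_+\wedge H\uFp \xrightarrow{f} H\uFp \to \widetilde{E_{C_p}\Zp}\wedge H\uFp ,
\]
using the descriptions of the two outer terms obtained above. By Proposition~\ref{fmap}, $f_\bigstar$ is zero in $\bigstar_\beta$-degrees, and by $a_{\alpha_r}$-periodicity of the right-hand term it is zero in all degrees. Hence
\[
\widetilde{H}^{\bigstar}_{\cG}(S^0;\uFp)\cong \Ker(\partial_\bigstar)\subseteq \pi_\bigstar(\widetilde{E_{C_p}\Zp}\wedge H\uFp),
\]
and the polynomial part is the subring of this kernel generated by classes without negative suspensions. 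The first step is to list the obvious generators. These are $a_\beta,\kappa_\beta,u_\beta,a_{\alpha_i},\kappa_{\alpha_i},u_{\alpha_i}$, which are classical Euler, Thom and $\kappa$ classes. Next come $v_S,w_T,z_T$: their images in $\pi_\bigstar\Fun(E_{C_p}\Zp_+,H\uFp)$ are the classes of Theorem~\ref{main}, and they lift to $S^0$ by the same argument as in Proposition~\ref{kclass}, namely writing them with denominators in $a_{\alpha_r}$ only. Finally there are $k^{(i)}_{S_i}$, $\phi^{(i)}_{T_i}$, $\psi^{(i)}_{T_i}$, whose existence is Propositions~\ref{kclass} and \ref{phiclass}.

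The second step is to determine the images in $\widetilde{H}^\bigstar_\cG(E\cG_+;\uFp)$. I compose $\pi_\bigstar H\uFp\to\pi_\bigstar\Fun(E_{C_p}\Zp_+,H\uFp)$ with the restriction to $E\cG$. For the basic classes this is Proposition~\ref{q_i-map} together with Proposition~\ref{cohomologyofeg}. For $v_S,w_T,z_T$ it is the formula in Theorem~\ref{main}, simplified using $\kappa_{\alpha_s}\kappa_{\alpha_t}=(t_0-st_1)(t_0-tt_1)u_{\alpha_s}u_{\alpha_t}$ and $t_0^2=t_1^2=0$. This gives $(s-t)t_0t_1$ up to sign, and similarly $(s-t)(t_0y_1-t_1y_0)$ for the $z$-classes. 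For $k^{(i)},\phi^{(i)},\psi^{(i)}$ I use the defining relations \eqref{kproperty} and \eqref{phiproperty}. Since $u_{\alpha_i}$ is invertible in $\widetilde{H}^\bigstar_\cG(E\cG_+;\uFp)$, dividing by the appropriate power of $u_{\alpha_i}$ gives the stated images.

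The third step, which is the main obstacle, is to show that these classes generate the whole polynomial part and that the composite to $E\cG$ is injective on it, so that all relations are pulled back. For generation, I take a polynomial class $x\in\Ker\partial_\bigstar$. Its image under $\mathcal{L}$ is a polynomial expression in the generators of Theorem~\ref{main} with the $a_{\alpha_r}$ inverted. Clearing $a_{\alpha_r}$-denominators, using the identities $a_{\alpha_0}v_{[p-1]\setminus\{0\}}=a_{\alpha_\ell}v_{[p-1]\setminus\{\ell\}}+\ell a_\beta v_{p-1}$ of Proposition~\ref{kclass} and then Proposition~\ref{invertiable}(2),(3), should express $x$ through the $k^{(i)},\phi^{(i)},\psi^{(i)}$ times the basic classes. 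This requires a case analysis on which $u$-denominator ($u_\beta$ or some $u_{\alpha_i}$) survives; the degree bookkeeping there is where I expect difficulty.

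For injectivity, I argue that polynomial classes in $\pi_\bigstar H\uFp$ map injectively to $\pi_\bigstar\Fun(E_{C_p}\Zp_+,H\uFp)$, as in the proof of Proposition~\ref{kernel}. The polynomial part there embeds in $\widetilde{H}^\bigstar_\cG(E\cG_+;\uFp)$ by the proof of Theorem~\ref{main}. Hence the relations $\sim$ are exactly those holding among the listed images, which gives the stated presentation.
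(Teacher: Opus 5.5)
Your route is the paper's: use the top row of the Tate square \eqref{tatep}, produce generators as lifts of elements of $\Ker\partial_\bigstar$ (classes with no $a_{\alpha_r}$ in the denominator, plus those of Propositions~\ref{kclass} and~\ref{phiclass}), and read off $\sim$ from injectivity of the polynomial part along $\pi_\bigstar H\uFp\to\pi_\bigstar\Fun(E_{C_p}\Zp_+,H\uFp)\to\pi_\bigstar\Fun(E\cG_+,H\uFp)$.

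Your opening reduction, however, is false. From $f_{\bigstar_\beta}=0$ and $a_{\alpha_r}$-periodicity of $\pi_\bigstar(\widetilde{E_{C_p}\Zp}\wedge H\uFp)$ you cannot conclude $f_\bigstar=0$ in all degrees. By exactness, $\Image f_\bigstar$ is the kernel of $\pi_\bigstar H\uFp\to\pi_\bigstar(\widetilde{E_{C_p}\Zp}\wedge H\uFp)$, i.e.\ the torsion for the multiplicative set generated by the $a_{\alpha_r}$. Periodicity of the target says nothing about torsion in the source outside the $\bigstar_\beta$-degrees. Such torsion exists. Inflating $\Sigma^{-1}\frac{1}{a_\xi u_\xi}\in\bM_p(\xi)$ along $\cG\to\cG/H_0$ gives a nonzero class, since inflation is an isomorphism in inflated degrees, exactly as in Proposition~\ref{res.}. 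This class is annihilated by $a_{\alpha_0}$. The paper's own $p=2$ answer (Theorem~\ref{cohomologyofpoint}) contains the analogous classes $\Sigma^{-1}\frac{1}{a^j_{\alpha_0}u^k_{\alpha_0}}$, and they come precisely from the cokernel of $\partial$, i.e.\ from $\Image f$. So $\pi_\bigstar H\uFp\not\cong\Ker\partial_\bigstar$; there is only an extension $0\to\Coker\partial_{\bigstar+1}\to\pi_\bigstar H\uFp\to\Ker\partial_\bigstar\to 0$.

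For the polynomial part this is repairable, but it shifts where the weight of the argument lies. Your generators are lifts of elements of $\Ker\partial_\bigstar$, so they are determined only modulo $\Image f_\bigstar$. Describing $\mathrm{P}(\widetilde{H}^\bigstar_\cG(S^0;\uFp))$ as the subring of $\Ker\partial_\bigstar$ generated by classes without negative suspensions is therefore not justified. What rules out a nonzero polynomial expression lying in $\Image f_\bigstar$, and what determines $\sim$, is injectivity of the polynomial subring into $\widetilde{H}^\bigstar_\cG(E\cG_+;\uFp)$. That is how the paper orders things: existence from $\Ker\partial_\bigstar$, relations from injectivity. You should drop the identification and argue in that order.

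Proposition~\ref{kernel} does not supply the first injectivity. It concerns $\Ker\mathcal{L}_{\bigstar_\beta}$ inside $\pi_\bigstar\Fun(E_{C_p}\Zp_+,H\uFp)$, not the map $\pi_\bigstar H\uFp\to\pi_\bigstar\Fun(E_{C_p}\Zp_+,H\uFp)$. The paper instead appeals to the ring map fixing $a$, $u$, $\kappa$ and sending the other generators to nonzero classes, then uses injectivity of the second map from \eqref{pullp}. Nonvanishing on generators alone does not give injectivity on polynomials, so a real argument is needed here in either version.

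Finally, reading off the images of $\phi^{(i)}_{T_i}$ and $\psi^{(i)}_{T_i}$ from \eqref{phiproperty} as you propose gives them with $t_0t_1$ and $t_0y_1-t_1y_0$ interchanged relative to the displayed formulas. The paper labels these two classes inconsistently, so you need to fix one convention.
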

\begin{proof}
    Above polynomial structure is obtained from the computation of kernel of the connecting morphism $ \partial_{\bigstar}:\pi_{\bigstar}(\widetilde{E_{C_p}\Zp}\wedge H\uFp)\to \pi_{\bigstar-1}({E_{C_p}\Zp}_{+}\wedge H\uFp)$ using the fact that any class with $a_{\alpha_r}$ not in the denominator must lie in $\Ker \partial_{\bigstar}$, other classes follow from the Proposition~\ref{kclass} and Proposition~\ref{phiclass}. 
    
    One can prove that the map $P(\pi_{\bigstar}H\uFp) \to P(\pi_{\bigstar}\Fun(E_{C_p}\Zp_{+}, H\uFp))$ is injective, using the fact that $a$, $u$ and $\kappa$ map to themselves and other classes map nontrivially using the ring structure of the map. Since the latter map in the composition is injective on these classes by \eqref{pullp}, it follows that the composite map is injective when we restrict our domain to the polynomial part.    
\end{proof}
 \end{mysubsect} 

\sect{The ring structure of $\widetilde{H}_{\mathcal{K}_4}^\bs(S^0; \uF)$}
 We now consider the case $p=2$.  Although the overall strategy is the same as that used for odd primes, in this case, we carry out the computation of the \emph{entire} $\RO(\K)$–graded Bredon cohomology ring, rather than restricting to its polynomial part. Consider the following Tate Square

\begin{myeq}\label{tate2}
\xymatrix@C=0.4cm{
  \bE_+ \wedge H\uF \ar[r]^-{f} \ar[d]_\simeq 
    & H\uF \ar[d] \ar[r] 
    & \widetilde{\bE} \wedge H\uF \ar[d] \\
  \bE_+ \wedge \Fun(\bE_+, H\uF) \ar[r] 
    & \Fun(\bE_+, H\uF) \ar[r]^-{\mathcal{L}} 
    & \widetilde{\bE} \wedge \Fun(\bE_+, H\uF)
}
\end{myeq}

\begin{enumerate}
\item The computation begins with the homotopy groups
$
\pi_{\bigstar}\Fun(\bE_{+},H\uF),
$
which are given by Theorem~\ref{main2}:
\[
\pi_{\bigstar}\Fun(\bE_{+},H\uF)\cong \widetilde{H}^{-\bigstar}_{\cG}(\bE_{+};\uF).
\]

\item Since \(\widetilde{\bE}\simeq S^{\infty(\alpha_0\oplus\alpha_1)}\), we obtain
\[
\pi_{\bigstar}\big(\widetilde{\bE}\wedge\Fun(\bE_+,H\uF)\big)
\cong
\pi_{\bigstar}\Fun(\bE_+,H\uF)\big[a_{\alpha_0}^{\pm 1},a_{\alpha_1}^{\pm 1}\big].
\]

\item  Since the
ring has zero divisors, the localisation map \(\mathcal{L}_{\bigstar}\) is not injective.
Following the lower cofiber sequence and the left-vertical equivalence in the
Tate-square diagram, one computes \(\pi_{\bigstar}(\bE_{+}\wedge H\uF)\) as follows.

\begin{prop}\label{right}
The homotopy \(\pi_{\bigstar}(\bE_{+}\wedge H\uF)\) decomposes as
\[
\pi_{\bigstar}(\bE_{+}\wedge H\uF)
\;\cong\;
\Sigma^{-1}\Coker(\mathcal{L}_{\bigstar+1})
\;\oplus\; \Ker(\mathcal{L}_{\bigstar}),
\]
where the cokernel summand has the explicit description
\begin{align*}
\Coker(\mathcal{L}_{\bigstar})
&\cong
\bM_{2}(\beta) \otimes\Bigg[\F\!\left\langle\Sigma^{-1}\frac{1}{a_{\alpha_0}}\right\rangle
\big[a_{\alpha_0}^{-1},a_{\alpha_1},u_{\alpha_0},u_{\alpha_1},v^{\pm}\big] \\
&\quad\oplus
\F\!\left\langle\Sigma^{-1}\frac{1}{a_{\alpha_1}}\right\rangle
\big[a_{\alpha_0},a_{\alpha_1}^{-1},u_{\alpha_0},u_{\alpha_1},v^{\pm}\big] \\
&\quad\oplus
\F\!\left\langle\Sigma^{-1}\frac{1}{a_{\alpha_0}a_{\alpha_1}}\right\rangle
\big[a_{\alpha_0}^{-1},a_{\alpha_1}^{-1},u_{\alpha_0},u_{\alpha_1},v^{\pm}\big]\Bigg] \mod(\Image \mathcal{L}_{\bigstar}),
\end{align*}
and the kernel is the ideal of \(\widetilde{H}^{\bigstar}_{\cG}(\bE_{+};\uF)\)
generated by classes of the form
\[
u_{\alpha_i}^{m}\,\Sigma^{-1}\frac{1}{a_{\beta}^{{k_1}}u_{\beta}^{k_2}}, \quad \text{such that} \quad a^{m}_{\alpha_j}u_{\alpha_i}^{m}\,\Sigma^{-1}\frac{1}{a_{\beta}^{{k_1}}u_{\beta}^{k_2}}=0
\]
for $i\neq j \in \{0,1\}$, subject to the relations induced from Theorem~\ref{main}.  In particular,
the relations are the following (for \(i\neq j\)):
\begin{enumerate}
  \item \(v\,u_{\alpha_i}^{m}\,\Sigma^{-1}\frac{1}{a_{\beta}^{k_1}u_{\beta}^{k_2}}
        = u_{\alpha_i}^{m+1}u_{\alpha_j}\,\Sigma^{-1}\frac{1}{a_{\beta}^{k_1}u_{\beta}^{k_2+1}},\)
  \item \(v\,u_{\alpha_i}^{m}\,\Sigma^{-1}\frac{1}{a_{\beta}^{k_1}u_{\beta}^{k_2}}
        = u_{\alpha_i}^{m+1}a_{\alpha_j}\,\Sigma^{-1}\frac{1}{a_{\beta}^{k_1+1}u_{\beta}^{k_2}}
        + u_{\alpha_i}^{m}u_{\alpha_j}a_{\alpha_i}\,\Sigma^{-1}\frac{1}{a_{\beta}^{k_1+1}u_{\beta}^{k_2}}.\)
\end{enumerate}
\end{prop}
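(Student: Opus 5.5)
We would argue from the bottom row of the Tate square \eqref{tate2}, that is, the cofiber sequence
\[
\bE_+\wedge \Fun(\bE_+,H\uF)\to \Fun(\bE_+,H\uF)\xrightarrow{\mathcal{L}}\widetilde{\bE}\wedge\Fun(\bE_+,H\uF),
\]
together with the left vertical equivalence. The long exact sequence in $\pi_\bigstar$ then gives a short exact sequence
\[
0\to \Sigma^{-1}\Coker(\mathcal{L}_{\bigstar+1})\to \pi_\bigstar(\bE_+\wedge H\uF)\to \Ker(\mathcal{L}_\bigstar)\to 0
\]
of $\F$-vector spaces, and it splits over $\F$. So the decomposition itself is formal. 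The content is the explicit description of the two outer terms.

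\textbf{Cokernel.} Since $\widetilde{\bE}\simeq S^{\infty(\alpha_0\oplus\alpha_1)}$, the map $\mathcal{L}_\bigstar$ is the localisation
\[
R\to R[a_{\alpha_0}^{-1},a_{\alpha_1}^{-1}],\qquad R=\widetilde H^{-\bigstar}_{\K}(\bE_+;\uF),
\]
with $R$ as in Theorem~\ref{main2}. We would first treat the polynomial part $\F[a_\beta,u_\beta][a_{\alpha_0},u_{\alpha_0},a_{\alpha_1},u_{\alpha_1},v^{\pm}]$ modulo the two relations. Using $v$ invertible, the relations express $u_\beta$ and $a_\beta$ in terms of the $\alpha$-classes. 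The quotient of the localised ring by $R$ is then spanned by monomials with a negative power of $a_{\alpha_0}$, of $a_{\alpha_1}$, or of both. This is the analogue of Proposition~\ref{alglemma} for the multiplicative set $\{a_{\alpha_0}^m a_{\alpha_1}^n\}$. Sorting these monomials by which $a_{\alpha_i}$ occur with negative exponent gives the three summands. The factor $\bM_2(\beta)$ records that the $\beta$-part, including the torsion classes $\Sigma^{-1}\frac{1}{a_\beta^j u_\beta^k}$, rides along. We write the result modulo $\Image\mathcal{L}_\bigstar$, since the relations mix the summands.

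\textbf{Kernel.} An element lies in $\Ker\mathcal{L}_\bigstar$ exactly when it is killed by some $a_{\alpha_0}^m a_{\alpha_1}^n$. The polynomial part is $a_{\alpha_i}$-torsion free: after inverting $v$, it embeds in the $E\K$-cohomology of Proposition~\ref{resuni}, where the $a$'s act injectively on the relevant monomials. So the kernel lives in the torsion summand $\bigoplus\F\langle\Sigma^{-1}\frac{1}{a_\beta^{k_1}u_\beta^{k_2}}\rangle[a_{\alpha_0},u_{\alpha_0},a_{\alpha_1},u_{\alpha_1},v^{\pm}]$.

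On this summand we would use the relations $vu_\beta=u_{\alpha_0}u_{\alpha_1}$ and $va_\beta=a_{\alpha_0}u_{\alpha_1}+u_{\alpha_0}a_{\alpha_1}$ to rewrite multiplication by $v$. This yields relations (a) and (b). Multiplying by $a_{\alpha_j}$ then pushes the $a_\beta$ exponent down through (b), and the $u_\beta$ exponent down through (a). After finitely many steps the exponent falls below $1$, and the class vanishes because $\Sigma^{-1}\frac{1}{a_\beta^{0}u_\beta^{k}}=0$. Hence a class $u_{\alpha_i}^m\Sigma^{-1}\frac{1}{a_\beta^{k_1}u_\beta^{k_2}}$ is annihilated by $a_{\alpha_j}^m$, and these classes generate the kernel ideal.

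\textbf{Main obstacle.} The hardest step is proving the converse: nothing outside this ideal is $a$-torsion. For that we would pick a monomial basis normalised by (a) and (b), say with the smallest power of $v$. On this basis, multiplication by $a_{\alpha_j}$ is triangular with respect to the pair of exponents $(k_1,k_2)$, and we would check that it is injective off the stated classes. This is the analogue of the argument in Proposition~\ref{kernel}.
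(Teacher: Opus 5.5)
The framework is the paper's own. The lower row of the Tate square \eqref{tate2}, together with the left vertical equivalence, gives the $\F$-split sequence, so everything reduces to the kernel and cokernel of inverting $a_{\alpha_0},a_{\alpha_1}$. Your cokernel sketch is at the paper's level of detail. You are also right that the polynomial part contributes nothing to the kernel: with $v$ invertible it is just $A=\F[a_{\alpha_0},u_{\alpha_0},a_{\alpha_1},u_{\alpha_1},v^{\pm}]$, a domain.

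The gap is in the kernel step. The statement only uses those $u_{\alpha_i}^m\Sigma^{-1}\frac{1}{a_\beta^{k_1}u_\beta^{k_2}}$ that \emph{are} killed by $a_{\alpha_j}^m$. You drop this condition and assert, by a descent in the $\beta$-exponents, that every such class is killed. That is false. From $va_\beta=a_{\alpha_0}u_{\alpha_1}+u_{\alpha_0}a_{\alpha_1}$ and $a_\beta\Sigma^{-1}\frac{1}{a_\beta u_\beta}=0$ you only get
\[
a_{\alpha_0}u_{\alpha_1}\,\Sigma^{-1}\frac{1}{a_\beta u_\beta}=a_{\alpha_1}u_{\alpha_0}\,\Sigma^{-1}\frac{1}{a_\beta u_\beta},
\]
and here no exponent has dropped. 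The relation behind (b) trades $a_{\alpha_j}u_{\alpha_i}$ for $va_\beta$ plus the cross term $a_{\alpha_i}u_{\alpha_j}$. The cross term lowers nothing, and it can only be absorbed through (a) at the cost of a further factor $u_{\alpha_i}$.

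Indeed $a_{\alpha_0}u_{\alpha_1}^2\Sigma^{-1}\frac{1}{a_\beta u_\beta}=0$, but $u_{\alpha_1}\Sigma^{-1}\frac{1}{a_\beta u_\beta}$ is not $a$-power torsion at all. To see this, read the torsion summand of Theorem~\ref{main2} as the \v{C}ech module $A_{u_\beta a_\beta}/(A_{u_\beta}+A_{a_\beta})$. After inverting $a_{\alpha_0},a_{\alpha_1}$, the elements $u_{\alpha_0},a_\beta$ become polynomial coordinates, since $u_{\alpha_1}=a_{\alpha_0}^{-1}(va_\beta+u_{\alpha_0}a_{\alpha_1})$. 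Then $(u_\beta,a_\beta)=(u_{\alpha_0}^2,a_\beta)$, and the class becomes $v/(u_{\alpha_0}a_\beta)$, a nonzero basis element. Similarly, $u_{\alpha_1}^3\Sigma^{-1}\frac{1}{a_\beta^{2}u_\beta}$ is torsion while $u_{\alpha_1}^3\Sigma^{-1}\frac{1}{a_\beta u_\beta^{2}}$ is not. Whether a class lies in the kernel genuinely depends on $m$ relative to $(k_1,k_2)$.

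So your generating set is too large: already for $m=0$, and for $m=1$ as above, it puts non-torsion classes into $\Ker(\mathcal{L}_{\bigstar})$. The converse you flag as the main obstacle is therefore not the only missing piece; the inclusion you treat as routine already fails without the hypothesis in the statement. What is needed is an actual determination of the $a$-power torsion of the torsion summand.

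One clean way to do this is to identify that summand with the local cohomology $H^2_{(u_\beta,a_\beta)}(A)$. The pair $u_\beta,a_\beta$ is a regular sequence in $A$, and $\sqrt{(u_\beta,a_\beta,a_{\alpha_0}a_{\alpha_1})}=(u_{\alpha_0},a_{\alpha_0})\cap(u_{\alpha_1},a_{\alpha_1})$. A Grothendieck spectral sequence plus Mayer--Vietoris then shows that the $(a_{\alpha_0}a_{\alpha_1})$-power torsion is $H^2_{(u_{\alpha_0},a_{\alpha_0})}(A)\oplus H^2_{(u_{\alpha_1},a_{\alpha_1})}(A)$. It is embedded, for instance, by $\Sigma^{-1}\frac{1}{a_{\alpha_0}u_{\alpha_0}}\mapsto\frac{u_{\alpha_1}^2}{v^2}\Sigma^{-1}\frac{1}{a_\beta u_\beta}$. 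This makes the annihilation condition in the statement explicit and reproduces the representatives listed in the paper's later table.
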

\begin{remark}{\label{reduct}}
Note that $\F[a_{\beta},u_\beta]$ term (from $\bM_{2}(\beta)$) can be ignored in the expression of $\Ker\mathcal{L}_{\bigstar}$, as all the corresponding classes can be produced using the remaining classes and relations.     
\end{remark}
\item The computation of \(\pi_{\bigstar}(\widetilde{\bE}\wedge H\uF)\) reduces
to understanding the map \(f_{\bigstar_{\beta}}\).
Following Proposition~\ref{kernel} and Proposition~\ref{fmap}, we have the following:

\begin{prop}
The map
\[
f_{\bigstar_{\beta}}\colon
\pi_{\bigstar_{\beta}}(\bE_{+}\wedge H\uF)
\longrightarrow
\pi_{\bigstar_{\beta}} H\uF
\]
is trivial.  where,
\begin{align*}
    \pi_{\bigstar_{\beta}}(\bE_{+}\wedge H\uF)
\cong
\bM_{2}(\beta)\otimes\Bigg[
\F\!\left\langle\Sigma^{-1}\frac{u_{\alpha_0}}{a_{\alpha_0}}\right\rangle\big[\tfrac{u_{\alpha_0}}{a_{\alpha_0}}\big] \\
\oplus
\F\!\left\langle\Sigma^{-1}\frac{u_{\alpha_1}}{a_{\alpha_1}}\right\rangle\big[\tfrac{u_{\alpha_1}}{a_{\alpha_1}}\big]
\;\oplus\;
\F\!\left\langle\Sigma^{-1}\frac{v}{a_{\alpha_0}a_{\alpha_1}}\right\rangle\big[\tfrac{u_{\alpha_0}}{a_{\alpha_0}},\tfrac{u_{\alpha_1}}{a_{\alpha_1}},\tfrac{v}{a_{\alpha_0}a_{\alpha_1}}\big]
\Bigg]/\!\sim_{1},
\end{align*}

where \(\sim_{1}\) denotes the relations inherited from Proposition~\ref{right},
and \(\pi_{\bigstar_{\beta}} H\uF \cong \bM_{2}(\beta)\) by Proposition~\ref{res.}.
\end{prop}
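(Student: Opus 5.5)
The statement has two parts: the description of $\pi_{\bigstar_\beta}(\bE_+\wedge H\uF)$, and the vanishing of $f_{\bigstar_\beta}$. I will prove them in that order, following the pattern of Propositions~\ref{kernel} and~\ref{fmap} in the odd case.

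\textbf{The domain.} I will use the decomposition of Proposition~\ref{right}, $\pi_{\bigstar}(\bE_+\wedge H\uF)\cong \Sigma^{-1}\Coker(\mathcal{L}_{\bigstar+1})\oplus\Ker(\mathcal{L}_\bigstar)$, and restrict to $\bigstar_\beta\in\Z\{1,\beta\}$.

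First, I will show that $\Ker\mathcal{L}_{\bigstar_\beta}=0$. By Proposition~\ref{right}, the kernel is generated by classes $u_{\alpha_i}^m\Sigma^{-1}\frac{1}{a_\beta^{k_1}u_\beta^{k_2}}$. Each such class has a nonzero $\alpha_i$-coefficient $m$ in its degree. Relations (a) and (b) trade $u_{\alpha_i}$ only for combinations involving $v$ and $u_{\alpha_j}$. A class with vanishing $\alpha_0,\alpha_1$ coefficients would therefore need a factor $v^{-m}$ together with $u_{\alpha_j}^{m}$. Multiplying by $a_{\alpha_j}^m$ and using the relation $va_\beta=a_{\alpha_0}u_{\alpha_1}+u_{\alpha_0}a_{\alpha_1}$ shows it is then not $a_{\alpha_j}$-torsion, which is a contradiction. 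This reproduces the argument of Proposition~\ref{kernel} via the injectivity of $\Ker\mathcal{L}\hookrightarrow\pi_\bigstar\Fun(\bE_+,H\uF)$. Remark~\ref{reduct} removes the $\F[a_\beta,u_\beta]$ part.

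Next, I will extract the $\bigstar_\beta$-graded part of $\Sigma^{-1}\Coker\mathcal{L}_{\bigstar+1}$ from the three summands of Proposition~\ref{right}.
\begin{itemize}
\item In the summand with denominator $a_{\alpha_0}$, a monomial has degree in $\Z\{1,\beta\}$ exactly when the $\alpha_0$-weights cancel and no $\alpha_1$-weight remains. The only $\alpha_0$-weighted generators are $u_{\alpha_0}$, $a_{\alpha_0}^{-1}$ and $v$, and $v$ also carries $\alpha_1$. Powers of $v$ must then pair with $a_{\alpha_1}^{-1}$, which is absent here, or with $u_{\alpha_1}$; the latter is eliminated by $vu_\beta=u_{\alpha_0}u_{\alpha_1}$. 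This leaves $\Sigma^{-1}\frac{u_{\alpha_0}}{a_{\alpha_0}}\F[\frac{u_{\alpha_0}}{a_{\alpha_0}}]$ tensored with $\bM_2(\beta)$.
\item The summand with denominator $a_{\alpha_1}$ is symmetric.
\item In the mixed summand, the degree-zero ratios in $\alpha_0,\alpha_1$ are $\frac{u_{\alpha_0}}{a_{\alpha_0}}$, $\frac{u_{\alpha_1}}{a_{\alpha_1}}$ and $\frac{v}{a_{\alpha_0}a_{\alpha_1}}$, whose degree is $-\beta-1$. The leading generator is $\Sigma^{-1}\frac{v}{a_{\alpha_0}a_{\alpha_1}}$. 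The relations of Theorem~\ref{main2}, after dividing by $a_{\alpha_0}a_{\alpha_1}$, give $\sim_1$.
\end{itemize}
The main bookkeeping obstacle is checking that every monomial of $\bigstar_\beta$-degree reduces, modulo $\Image\mathcal{L}$ and the two relations, to one of these forms. I will do this by induction on the total power of $v$.

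\textbf{Triviality of $f$.} I will argue as in Proposition~\ref{fmap}. Since $\Ker\mathcal{L}_{\bigstar_\beta}=0$, every class in the domain lies in $\Sigma^{-1}\Coker\mathcal{L}$. Its image under the left vertical equivalence followed by $\pi_\bigstar(\bE_+\wedge\Fun(\bE_+,H\uF))\to\pi_\bigstar\Fun(\bE_+,H\uF)$ is therefore zero, because it factors through the connecting map of the bottom cofibre sequence.

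Now suppose $f_{\bigstar_\beta}(x)\ne0$ in $\bM_2(\beta)$. The middle vertical map $\pi_{\bigstar_\beta}H\uF\to\pi_{\bigstar_\beta}\Fun(\bE_+,H\uF)$ is injective on $\bM_2(\beta)$: by Theorem~\ref{main2} and Corollary~\ref{cohomologyofclassfying}, $a_\beta,u_\beta$ and the classes $\Sigma^{-1}\frac{1}{a_\beta^ju_\beta^k}$ survive. Hence $f_{\bigstar_\beta}(x)$ maps to a nonzero class, while commutativity of the left square of~\eqref{tate2} forces that image to be zero. This contradiction shows $f_{\bigstar_\beta}=0$.

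The step I expect to need the most care is establishing injectivity of the middle vertical map on the negative classes $\Sigma^{-1}\frac{1}{a_\beta^ju_\beta^k}$. I would deduce it from Theorem~\ref{main2}, where these classes appear as a free $\bM_2(\beta)$-summand, restricted to $\bigstar_\beta$ degrees via Proposition~\ref{res.}.
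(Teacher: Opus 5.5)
Your skeleton is the one the paper intends; the paper's own proof only points to Propositions~\ref{kernel} and~\ref{fmap}. The final deduction is also fine: if $\Ker\mathcal{L}_{\bigstar_\beta}=0$ and the middle vertical map $m\colon\pi_{\bigstar_\beta}H\uF\to\pi_{\bigstar_\beta}\Fun(\bE_+,H\uF)$ is injective, then commutativity of the left square of~\eqref{tate2} forces $f_{\bigstar_\beta}=0$.

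\textbf{The gap.} It is in the proof that $\Ker\mathcal{L}_{\bigstar_\beta}=0$, and that is where all the content lies. By exactness of the bottom row and the left vertical equivalence, $\Ker\mathcal{L}_{\bigstar_\beta}=m\bigl(\Image f_{\bigstar_\beta}\bigr)$ exactly. So, given injectivity of $m$, this step is equivalent to the proposition itself and needs a genuine non-torsion input. Your argument does not supply one. Rewriting with $va_\beta=a_{\alpha_0}u_{\alpha_1}+u_{\alpha_0}a_{\alpha_1}$ can prove that a class vanishes. It cannot prove that a product such as $a_{\alpha_j}^m z$ is nonzero; that needs a normal form or a detecting map.

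Your reduction to a factor $v^{-m}u_{\alpha_j}^m$ also misses $\bigstar_\beta$-degree products in which $v^{-1}$ pairs with $a_{\alpha_j}$. Examples are $c=v^{-1}u_{\alpha_0}a_{\alpha_1}$ and $b=v^{-1}a_{\alpha_0}a_{\alpha_1}$. The same slip occurs in your description of the domain, where you say powers of $v$ must pair with $a_{\alpha_1}^{-1}$ or $u_{\alpha_1}$. There it is harmless: such monomials reduce to the listed ones via the same relation, because classes with no $a_\alpha$-denominator vanish in $\Coker\mathcal{L}$. It should still be said.

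\textbf{How to close it.} Argue directly in $\pi_\bigstar H\uF$. Since $\pi_\bigstar(\widetilde{\bE}\wedge H\uF)$ is $\pi_\bigstar H\uF$ with $a_{\alpha_0},a_{\alpha_1}$ inverted, exactness of the top row shows that $\Image f_{\bigstar_\beta}$ is precisely the $a_{\alpha_0}a_{\alpha_1}$-power torsion in $\pi_{\bigstar_\beta}H\uF\cong\bM_2(\beta)$. So it suffices to show there is none; $\Ker\mathcal{L}_{\bigstar_\beta}=0$ then follows.
\begin{enumerate}
\item Polynomial classes are detected in $\widetilde H^{\bigstar}_{\K}(E\K_+;\uF)\cong H^*(B\K;\F)[u^{\pm}_{\alpha_0},u^{\pm}_{\alpha_1},u^{\pm}_{\beta}]$. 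This is a domain in which each $a_V$ is a nonzero degree-one class times a unit.
\item For the negative cone, multiplying $\Sigma^{-1}\frac{1}{a_\beta^{j}u_\beta^{k}}$ by $a_\beta^{j-1}u_\beta^{k-1}$ reduces everything to $\theta:=\Sigma^{-1}\frac{1}{a_\beta u_\beta}=\tr^{\K}_{K}(1)$.
\item The cofibre sequence $\K/H_{i+}\to S^0\to S^{\alpha_i}$ identifies the kernel of multiplication by $a_{\alpha_i}$ with the image of $\tr^{\K}_{H_i}$.
\item By Proposition~\ref{c_2}, the relevant $H_i$-group is zero in the degree of $a_{\alpha_0}^Na_{\alpha_1}^M\theta$ whenever $(N,M)\neq(0,0)$.
\item In the degree of $\theta$ itself that group is spanned by $\tr^{H_i}_e(1)$. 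Its transfer is $\tr^{\K}_{K}\tr^{K}_e(1)=0$, since $\tr^K_e$ is multiplication by $2$ on $\uF$.
\end{enumerate}
Hence $a_{\alpha_0}^Na_{\alpha_1}^M\theta\neq 0$ for all $N,M$, and $\bM_2(\beta)$ has no such torsion.

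Finally, the step you flagged as delicate, injectivity of $m$, is easy. Since $\bE^{H_0}\neq\emptyset$, $\res^{\K}_{H_0}$ factors through $m$. And $\res^{\K}_{H_0}$ is injective on $\bM_2(\beta)$: it sends $a_\beta,u_\beta$ to $a_\sigma,u_\sigma$, and by the double coset formula it sends $\theta$ to $\tr^{H_0}_e(1)\neq 0$.
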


\item Finally, since
\[
\pi_{\bigstar}\big(\widetilde{\bE}\wedge H\uF\big)
\cong
\pi_{\bigstar_{\beta}}\big(\widetilde{\bE}\wedge H\uF\big)[a_{\alpha_0}^{\pm1},a_{\alpha_1}^{\pm1}],
\]
and the previous calculations imply:

\begin{prop}
The homotopy  \(\pi_{\bigstar}(\widetilde{\bE}\wedge H\uF)\) admit the decomposition
\[
\begin{aligned}
\pi_{\bigstar}(\widetilde{\bE}\wedge H\uF)
&\cong
\bM_{2}(\beta)[a_{\alpha_0}^{\pm1},a_{\alpha_1}^{\pm1}] \\
&\quad\oplus
\bM_{2}(\beta)\otimes\Big[
\F\langle u_{\alpha_0}\rangle[u_{\alpha_0},a_{\alpha_0}^{\pm1},a_{\alpha_1}^{\pm1}]
\oplus
\F\langle u_{\alpha_1}\rangle[u_{\alpha_1},a_{\alpha_0}^{\pm1},a_{\alpha_1}^{\pm1}] \\
&\qquad\qquad\oplus\;
\F\langle v\rangle[u_{\alpha_0},u_{\alpha_1},v,a_{\alpha_0}^{\pm1},a_{\alpha_1}^{\pm1}]
\Big]/\!\sim_{2},
\end{aligned}
\]
where \(\sim_{2}\) denotes corresponding relations induced from \(\sim_{1}\).
\end{prop}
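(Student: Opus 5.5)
The decomposition follows from the top cofiber sequence of the Tate square \eqref{tate2}, $\bE_+\wedge H\uF \xrightarrow{f} H\uF \to \widetilde{\bE}\wedge H\uF$, together with the observation that everything is determined by its restriction to degrees $\bigstar_\beta\in\Z\{1,\beta\}$. Since $\widetilde{\bE}\simeq S^{\infty(\alpha_0\oplus\alpha_1)}$, multiplication by $a_{\alpha_0}$ and $a_{\alpha_1}$ is invertible on $\pi_\bigstar(\widetilde{\bE}\wedge H\uF)$. Every $\bigstar\in\RO(\K)$ is uniquely $\bigstar_\beta + m_0\alpha_0+m_1\alpha_1$, so
\[
\pi_{\bigstar}(\widetilde{\bE}\wedge H\uF)\cong \pi_{\bigstar_\beta}(\widetilde{\bE}\wedge H\uF)[a_{\alpha_0}^{\pm1},a_{\alpha_1}^{\pm1}].
\]
This is the formula recorded just before the statement, and it reduces the proof to computing the $\bigstar_\beta$-graded part.

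In degrees $\bigstar_\beta$ the long exact sequence of the top row reads
\[
\cdots\to\pi_{\bigstar_\beta}(\bE_+\wedge H\uF)\xrightarrow{f}\pi_{\bigstar_\beta}H\uF\to\pi_{\bigstar_\beta}(\widetilde{\bE}\wedge H\uF)\xrightarrow{\partial}\pi_{\bigstar_\beta-1}(\bE_+\wedge H\uF)\xrightarrow{f}\cdots .
\]
By the preceding proposition (the analogue of Proposition~\ref{fmap}), $f_{\bigstar_\beta}$ is zero. The sequence therefore splits into short exact sequences of $\F$-vector spaces:
\[
0\to \pi_{\bigstar_\beta}H\uF\cong\bM_2(\beta)\to \pi_{\bigstar_\beta}(\widetilde{\bE}\wedge H\uF)\to \Sigma\,\pi_{\bigstar_\beta-1}(\bE_+\wedge H\uF)\to 0 .
\]
The first term, after inverting the $a_{\alpha_i}$, gives the summand $\bM_2(\beta)[a_{\alpha_0}^{\pm1},a_{\alpha_1}^{\pm1}]$; it is the image of the unit map, so it is a subring.

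For the second summand, I take the explicit description of $\pi_{\bigstar_\beta}(\bE_+\wedge H\uF)$ from the preceding proposition. Its three families have generators $\Sigma^{-1}\frac{u_{\alpha_0}}{a_{\alpha_0}}$, $\Sigma^{-1}\frac{u_{\alpha_1}}{a_{\alpha_1}}$ and $\Sigma^{-1}\frac{v}{a_{\alpha_0}a_{\alpha_1}}$. Suspending once removes the $\Sigma^{-1}$, and each such class lifts to the class of the same name in $\pi_\bigstar(\widetilde{\bE}\wedge\Fun(\bE_+,H\uF))$, namely $u_{\alpha_0}a_{\alpha_0}^{-1}$, $u_{\alpha_1}a_{\alpha_1}^{-1}$ and $v\,a_{\alpha_0}^{-1}a_{\alpha_1}^{-1}$. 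These lie in the middle term, because the right vertical map of \eqref{tate2} together with $\partial$ identifies them with the cokernel classes of $\mathcal L_{\bigstar}$ described in Proposition~\ref{right}. After multiplying by $a_{\alpha_0}^{\pm1},a_{\alpha_1}^{\pm1}$ the families become
\[
\F\langle u_{\alpha_0}\rangle[u_{\alpha_0},a^{\pm1}_{\alpha_0},a^{\pm1}_{\alpha_1}],\qquad \F\langle u_{\alpha_1}\rangle[u_{\alpha_1},a^{\pm1}_{\alpha_0},a^{\pm1}_{\alpha_1}],\qquad \F\langle v\rangle[u_{\alpha_0},u_{\alpha_1},v,a^{\pm1}_{\alpha_0},a^{\pm1}_{\alpha_1}],
\]
each tensored with $\bM_2(\beta)$. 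The relations $\sim_1$ transport under this identification to relations $\sim_2$; for example $vu_\beta=u_{\alpha_0}u_{\alpha_1}$ and $va_\beta=a_{\alpha_0}u_{\alpha_1}+u_{\alpha_0}a_{\alpha_1}$ from Proposition~\ref{inv}.

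The main obstacle is splitting the short exact sequence compatibly with the module structure, and in particular checking that the chosen lifts introduce no hidden extensions. Over $\F$ the additive splitting is automatic. For the module structure, I would use that $\pi_\bigstar(\widetilde{\bE}\wedge H\uF)\to\pi_\bigstar(\widetilde{\bE}\wedge\Fun(\bE_+,H\uF))$ is a ring map, and that in the target the lifts $u_{\alpha_i}a_{\alpha_i}^{-1}$ and $v\,a_{\alpha_0}^{-1}a_{\alpha_1}^{-1}$ multiply exactly as stated, with the relations of Theorem~\ref{main2}. Comparing with this target, I expect to conclude that the products among lifts and with $\bM_2(\beta)$ are as asserted, up to $\sim_2$.
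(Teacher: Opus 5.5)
Your proposal follows the paper's argument: invert $a_{\alpha_0},a_{\alpha_1}$ (using $\widetilde{\bE}\simeq S^{\infty(\alpha_0\oplus\alpha_1)}$) to reduce to $\bigstar_\beta$-degrees, use the vanishing of $f_{\bigstar_\beta}$ in the top row of \eqref{tate2} to split $\pi_{\bigstar_\beta}(\widetilde{\bE}\wedge H\uF)$ as $\pi_{\bigstar_\beta}H\uF\oplus\Sigma\,\pi_{\bigstar_\beta-1}(\bE_+\wedge H\uF)$, and read off the summands from the preceding proposition. The paper never addresses the multiplicative extensions you flag, but your comparison strategy does work, because the right vertical map of \eqref{tate2} is injective: by the pullback property its kernel is the image in $\pi_\bigstar(\widetilde{\bE}\wedge H\uF)$ of $\Ker\bigl(\pi_\bigstar H\uF\to\pi_\bigstar\Fun(\bE_+,H\uF)\bigr)$, this kernel vanishes in $\bigstar_\beta$-degrees since $\res^{\K}_{H_0}$ is injective there and factors through $\widetilde{H}^{\bigstar}_{\K}(\bE_+;\uF)$ (as $\bE^{H_0}\neq\emptyset$), and $a_{\alpha_i}$-periodicity of source and target extends injectivity to all degrees.
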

\end{enumerate}


\begin{mysubsect}{The boundary map}

Since the computation of the kernel and the cokernel of the boundary map
\[
\partial_{\bigstar}:\pi_{\bigstar}\widetilde{\bE}\wedge H\uF \longrightarrow \pi_{\bigstar-1}\widetilde{\bE}\wedge \Fun(\bE_{+},H\uF)
\]
control the desired result, it suffices to analyse the restricted map
\[
\partial_{\bigstar_{\beta}}:\pi_{\bigstar_{\beta}}\widetilde{\bE}\wedge H\uF
=\pi_{\bigstar_{\beta}}H\uF \ \oplus\ \Sigma\,\pi_{\bigstar_{\beta}-1}\bE_{+}\wedge H\uF
\ \longrightarrow\ \pi_{\bigstar_{\beta}-1}\bE_{+}\wedge H\uF
\]
together with the $\pi_{\bigstar}H\uF$--module structure.

\begin{prop}\label{boundary}
The boundary map for $\bigstar_{\beta}\in \Z\{1,\beta\}$ is given by
\[
\partial_{\bigstar_{\beta}}:\pi_{\bigstar_{\beta}}\widetilde{\bE}\wedge H\uF
=\pi_{\bigstar_{\beta}}H\uF \oplus\Sigma\,\pi_{\bigstar_{\beta}-1}\bE_{+}\wedge H\uF
\ \longrightarrow\ \pi_{\bigstar_{\beta}-1}\bE_{+}\wedge H\uF
\]
is given by
\[
\partial_{\bigstar_{\beta}}(x+y) \ =\ \Sigma^{-1} y,
\]
where $x \in \pi_{\bigstar_{\beta}}H\uF$ and $y \in \Sigma\,\pi_{\bigstar_{\beta}-1}\bE_{+}\wedge H\uF$.
\end{prop}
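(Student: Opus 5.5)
We prove the formula componentwise. First we identify the two summands of $\pi_{\bigstar_{\beta}}(\widetilde{\bE}\wedge H\uF)$ in terms of maps in the Tate square \eqref{tate2}. The top row gives a long exact sequence
\[
\cdots\to\pi_{\bigstar_\beta}(\bE_+\wedge H\uF)\xrightarrow{f}\pi_{\bigstar_\beta}H\uF\xrightarrow{g}\pi_{\bigstar_\beta}(\widetilde{\bE}\wedge H\uF)\xrightarrow{\delta}\pi_{\bigstar_\beta-1}(\bE_+\wedge H\uF)\xrightarrow{f}\cdots .
\]
By the preceding proposition, $f_{\bigstar_\beta}$ is trivial in every degree $\bigstar_\beta$. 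The sequence therefore splits into short exact sequences
\[
0\to\pi_{\bigstar_\beta}H\uF\xrightarrow{g}\pi_{\bigstar_\beta}(\widetilde{\bE}\wedge H\uF)\xrightarrow{\delta}\pi_{\bigstar_\beta-1}(\bE_+\wedge H\uF)\to0 .
\]
These are sequences of $\F$-vector spaces, so they split. The decomposition $\pi_{\bigstar_\beta}H\uF\oplus\Sigma\pi_{\bigstar_\beta-1}(\bE_+\wedge H\uF)$ used in the statement is exactly this splitting: the first summand is $\Image g$, and the second is a chosen section of $\delta$. The boundary map $\partial_{\bigstar_\beta}$ of the statement is the connecting map $\delta$ of this top cofiber sequence, transported to the bottom row through the equivalence in \eqref{tate2}.

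\emph{Vanishing on $x$.} By exactness, $\delta\circ g=0$, so $\partial_{\bigstar_\beta}(x)=0$ for every $x\in\Image g=\pi_{\bigstar_\beta}H\uF$.

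\emph{Identity on $y$.} The summand $\Sigma\pi_{\bigstar_\beta-1}(\bE_+\wedge H\uF)$ maps isomorphically under $\delta$ onto $\pi_{\bigstar_\beta-1}(\bE_+\wedge H\uF)$. With the section chosen so that $\delta(\Sigma z)=z$, we get $\partial_{\bigstar_\beta}(y)=\Sigma^{-1}y$. Linearity of $\delta$ then gives $\partial_{\bigstar_\beta}(x+y)=\Sigma^{-1}y$.

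The main obstacle is showing that the section, i.e.\ the names of the classes in $\Sigma\pi_{\bigstar_\beta-1}(\bE_+\wedge H\uF)$, can be chosen compatibly with the explicit description in the preceding proposition. That description names the classes by their images under the map to $\widetilde{\bE}\wedge\Fun(\bE_+,H\uF)$, for instance $\F\langle u_{\alpha_0}\rangle[u_{\alpha_0},a_{\alpha_0}^{\pm1},\dots]$. We must therefore check that $\delta$ sends each such named class to its desuspension rather than to that desuspension plus a correction term. To do this, we will compare with the bottom row of \eqref{tate2}, where the connecting map $\widetilde{\bE}\wedge\Fun(\bE_+,H\uF)\to\Sigma\,\bE_+\wedge\Fun(\bE_+,H\uF)$ sends a class with an $a_{\alpha_i}$ in the denominator to its $\Sigma^{-1}$-class in $\Coker\mathcal{L}$, by the construction in Proposition~\ref{right}.

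Naturality of connecting maps along the vertical maps of the Tate square shows that the two descriptions agree. Any possible discrepancy would lie in $\Image g$, and we can absorb it by redefining the section. Because $\partial$ is $\pi_{\bigstar}H\uF$-linear, it suffices to verify this on module generators, namely $\Sigma^{-1}\frac{u_{\alpha_i}}{a_{\alpha_i}}$ and $\Sigma^{-1}\frac{v}{a_{\alpha_0}a_{\alpha_1}}$. The formula then extends to all $\bigstar_\beta$ degrees.
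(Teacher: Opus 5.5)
Your proposal is correct and follows the same route as the paper, whose entire proof is the remark that the formula follows from the long exact sequence of the top cofiber sequence in \eqref{tate2}; your argument (triviality of $f_{\bigstar_\beta}$ in degrees $\bigstar_\beta$ and $\bigstar_\beta-1$ gives split short exact sequences, $\delta\circ g=0$, and $\delta$ inverts the chosen section) is exactly that remark written out. Your final paragraph, matching the section with the named classes, is not needed for the statement, since the decomposition there is defined by this splitting; moreover, as written, its reduction to the generators $\Sigma^{-1}\frac{u_{\alpha_i}}{a_{\alpha_i}}$ and $\Sigma^{-1}\frac{v}{a_{\alpha_0}a_{\alpha_1}}$ would not follow from $\pi_{\bigstar}H\uF$-linearity alone, because multiplication by $\frac{u_{\alpha_i}}{a_{\alpha_i}}$ does not come from $\pi_{\bigstar}H\uF$.
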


\begin{proof}
This follows directly from the long exact sequence of homotopy groups associated to the top cofiber sequence in the Tate diagram~\eqref{tate2}.
\end{proof}
\end{mysubsect}

\begin{mysubsect}{Existence of more special classes}
In Proposition~\ref{inv}, we established the existence of the class $v$ for the subgroup $K$.  
Here we show the existence of classes $k_0$ and $k_1$, which play analogous roles for the subgroups $H_0$ and $H_1$, respectively.

\begin{prop}\label{k_0k_1}
For $i,j \in \{0,1\}$ with $i \neq j$, there exists a class
\[
k_i \ \in\ \pi_{1+\alpha_i-\beta-\alpha_j}(H\uF)
\]
satisfying:
\begin{enumerate}
    \item $k_{i}u_{\alpha_i} = u_{\beta}u_{\alpha_j}$,
    \item $k_{i}a_{\alpha_i} = a_{\beta}u_{\alpha_j} + u_{\beta}a_{\alpha_j}$,
    \item $k_{i}k_{j} = u^2_{\beta}$,
    \item $k_{i}v = u^2_{\alpha_j}$.
\end{enumerate}
\end{prop}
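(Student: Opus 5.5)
The plan mirrors the construction of $v$ in Proposition~\ref{inv} and the classes $k^{(i)}_{S_i}$ in Proposition~\ref{kclass}, now in the Tate square~\eqref{tate2}. Take $i=0$, $j=1$; the case $i=1$ follows by the symmetry exchanging $H_0$ and $H_1$.

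\textbf{Step 1: a candidate in the localized term.} The degree $1+\alpha_0-\beta-\alpha_1$ is, up to sign conventions, the inverse of the degree of $u_{\alpha_0}u_{\alpha_1-\beta}$ shifted by $\alpha_0$. In $\widetilde{H}^{\bigstar}_{\K}(\bE_+;\uF)$ (Theorem~\ref{main2}) I would consider
\[
x:=u_\beta u_{\alpha_1}u_{\alpha_0}^{-1}=v^{-1}u_{\alpha_1}^2 ,
\]
using $vu_\beta=u_{\alpha_0}u_{\alpha_1}$. The right-hand expression $v^{-1}u_{\alpha_1}^2$ is a genuine class, since $v$ is invertible. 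Then I would map $x$ under $\mathcal{L}$ to $\pi_\bigstar(\widetilde{\bE}\wedge\Fun(\bE_+,H\uF))$. Because $x$ has no $a_{\alpha_r}$ in a denominator, it lies in $\Image\mathcal{L}_\bigstar$. So its image in $\pi_\bigstar(\bE_+\wedge H\uF)\cong\Sigma^{-1}\Coker\mathcal{L}\oplus\Ker\mathcal{L}$ (Proposition~\ref{right}) vanishes on the cokernel part.

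\textbf{Step 2: lift to $\pi_\bigstar(\widetilde{\bE}\wedge H\uF)$.} Here I use the description of $\pi_\bigstar(\widetilde{\bE}\wedge H\uF)$ as $\pi_{\bigstar_\beta}(\widetilde{\bE}\wedge H\uF)[a_{\alpha_0}^{\pm},a_{\alpha_1}^{\pm}]$. Multiplying by suitable powers of $a_{\alpha_0}^{\pm}$ and $a_{\alpha_1}^{\pm}$, I rewrite $x$ in a $\bigstar_\beta$-degree, exactly as in the proof of Proposition~\ref{kclass}. Using $va_\beta=a_{\alpha_0}u_{\alpha_1}+u_{\alpha_0}a_{\alpha_1}$, one gets
\[
\frac{u_{\alpha_1}a_{\alpha_0}}{v}=a_\beta+\frac{u_{\alpha_0}a_{\alpha_1}}{v},
\]
which removes the inverse of $v$ and shows that $x\cdot a_{\alpha_0}a_{\alpha_1}^{-1}$ is expressible through classes of $\pi_{\bigstar_\beta}(\widetilde{\bE}\wedge H\uF)$. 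Then:
\begin{enumerate}
\item By Proposition~\ref{boundary}, the boundary $\partial$ kills the $\pi_{\bigstar_\beta}H\uF$-summand.
\item The remaining component has to be checked to lie in the kernel of $\partial$, i.e.\ its $\Sigma^{-1}$ image in $\pi(\bE_+\wedge H\uF)$ is zero.
\end{enumerate}
Exactness of the top row of~\eqref{tate2} then gives a class $k_0\in\pi_{1+\alpha_0-\beta-\alpha_1}H\uF$ mapping to $x$ in $\widetilde{H}^\bigstar(\bE_+)$. I expect this to be the main obstacle: showing that the candidate genuinely lies in $\Ker\partial$. In particular, one must verify that no nonzero contribution from $\Ker\mathcal{L}$ (the $u_{\alpha_i}^m\Sigma^{-1}\frac{1}{a_\beta^{k_1}u_\beta^{k_2}}$ classes) appears in this degree. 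I would try a degree count against the explicit list in Proposition~\ref{right}.

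\textbf{Step 3: relations.} Relations (1)--(4) are checked after mapping to $\widetilde{H}^\bigstar(E\K_+;\uF)$ via Proposition~\ref{resuni}(1), where $u$'s are invertible:
\begin{enumerate}
\item $k_0u_{\alpha_0}=u_\beta u_{\alpha_1}$ holds tautologically.
\item $k_0a_{\alpha_0}=a_\beta u_{\alpha_1}+u_\beta a_{\alpha_1}$ is the relation $a_{\alpha_0}u_{\alpha_1}u_\beta+u_{\alpha_0}a_{\alpha_1}u_\beta+u_{\alpha_0}u_{\alpha_1}a_\beta=0$ divided by $u_{\alpha_0}$.
\item $k_0k_1=u_\beta^2$ holds since the $u_{\alpha}$-factors cancel.
\item $k_0v=u_{\alpha_1}^2$ holds because $v\mapsto u_{\alpha_0}u_{\alpha_1}u_\beta^{-1}$.
\end{enumerate}
To transfer these identities back to $\pi_\bigstar H\uF$, I need injectivity of $\pi_\bigstar H\uF\to\widetilde{H}^\bigstar(E\K_+)$ in these specific degrees. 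I would obtain this as in the proof of Theorem~\ref{cohomologyofpointoddp}: $f$ is zero on the relevant degrees (Proposition~\ref{fmap}), so the map to $\Fun(\bE_+,H\uF)$ is injective there, and the polynomial part injects further into $E\K$ by~\eqref{pull2}.
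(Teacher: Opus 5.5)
\textbf{There is a genuine gap in Step 2.} Your candidate is the right one: multiplying your $\bigstar_\beta$-rewrite $x\,a_{\alpha_0}a_{\alpha_1}^{-1}=u_\beta+a_\beta\frac{u_{\alpha_1}}{a_{\alpha_1}}$ back by $a_{\alpha_1}a_{\alpha_0}^{-1}$ gives exactly the class $k'_0=\frac{a_{\alpha_1}}{a_{\alpha_0}}u_\beta+\frac{u_{\alpha_1}}{a_{\alpha_0}}a_\beta$ that the paper lifts. But you leave membership in $\Ker\partial$ open, and the check you propose would fail.

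You plan to verify $\partial=0$ on the $\bigstar_\beta$-degree element via Proposition~\ref{boundary}. The connecting map is linear over $\pi_\bigstar H\uF$, but not over the inverses $a_{\alpha_r}^{-1}$, which exist only in $\pi_\bigstar(\widetilde{\bE}\wedge H\uF)$. So the vanishing of $\partial$ is not preserved when you move a class between degrees by $a_{\alpha_0}a_{\alpha_1}^{-1}$.

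In fact the $\bigstar_\beta$-rewrite is \emph{not} in $\Ker\partial$. Its boundary is the class of $u_\beta+a_\beta u_{\alpha_1}a_{\alpha_1}^{-1}$ in $\Coker\mathcal{L}$. Modulo the non-polynomial classes, $\Image\mathcal{L}$ in degree $\beta-1$ is spanned by $u_\beta=v^{-1}u_{\alpha_0}u_{\alpha_1}$. Neither $u_\beta+a_\beta u_{\alpha_1}a_{\alpha_1}^{-1}=a_{\alpha_0}u_{\alpha_1}^2/(va_{\alpha_1})$ nor $a_\beta u_{\alpha_1}a_{\alpha_1}^{-1}$ vanishes after inverting the $a_{\alpha_r}$. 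So your check (2) returns $a_\beta\Sigma^{-1}\frac{u_{\alpha_1}}{a_{\alpha_1}}\neq 0$.

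\textbf{The missing idea} is what makes the paper's proof one line. The left vertical map of \eqref{tate2} is an equivalence. Hence the top connecting map factors as the right vertical map $r$ followed by the bottom connecting map. So for every $y$,
\[
\partial y=\Sigma^{-1}[\,r(y)\,]\in\Sigma^{-1}\Coker\mathcal{L}.
\]
This never has a $\Ker\mathcal{L}$ component, so no degree count is needed, and it vanishes if and only if $r(y)\in\Image\mathcal{L}$.

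Apply this in the original degree. The class $k'_0$ is visibly a product of images of $a_{\alpha_1},u_\beta,u_{\alpha_1},a_\beta$ with $a_{\alpha_0}^{-1}$, so it exists without any $\bigstar_\beta$ detour. The identity $v(a_{\alpha_1}u_\beta+u_{\alpha_1}a_\beta)=a_{\alpha_0}u_{\alpha_1}^2$ gives $r(k'_0)=\mathcal{L}(u_{\alpha_1}^2/v)=\mathcal{L}(x)$. Hence $\partial k'_0=\Sigma^{-1}\frac{u_{\alpha_1}^2}{v}=0$, which is the paper's argument. Your Step~1 remark that $x\in\Image\mathcal{L}$ already contains this; you applied it to the wrong element.

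Top-row exactness alone only lifts $k'_0$, so its image in $\pi_\bigstar\Fun(\bE_+,H\uF)$ is $x$ up to an element of $\Ker\mathcal{L}$. To have $k_0$ map to $x$ itself, lift the compatible pair $(x,k'_0)$ through the Mayer--Vietoris sequence of the homotopy pullback.

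\textbf{Step 3 also needs another justification.} The injectivity you invoke does not follow from Proposition~\ref{fmap}. That result concerns odd $p$ and $\bigstar_\beta$-degrees, whereas relations (1), (2), (4) sit in cohomological degrees $\alpha_1+\beta-2$, $\alpha_1+\beta-1$, $2\alpha_1-2$. In any case, vanishing of $f$ only gives injectivity into $\pi_\bigstar(\widetilde{\bE}\wedge H\uF)$, not into $\pi_\bigstar\Fun(\bE_+,H\uF)$.
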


\begin{proof}
Consider the class
\[
\frac{a_{\alpha_1}}{a_{\alpha_0}}u_{\beta} + \frac{u_{\alpha_1}}{a_{\alpha_0}}a_{\beta}
\ \in\ \pi_{\bigstar}\widetilde{\bE}\wedge H\uF.
\]
Since
\[
\partial_{\bigstar}\!\left( \frac{a_{\alpha_1}}{a_{\alpha_0}}u_{\beta} + \frac{u_{\alpha_1}}{a_{\alpha_0}}a_{\beta} \right)
= \Sigma^{-1}\!\left( \frac{a_{\alpha_1}}{a_{\alpha_0}}u_{\beta} + \frac{u_{\alpha_1}}{a_{\alpha_0}}a_{\beta} \right)
= \Sigma^{-1}\frac{u^2_{\alpha_1}}{v} = 0,
\]
it lies in $\ker(\partial_{\bigstar})$.   
This produces the desired class $k_0 \in \pi_{\bigstar}H\uF$ to be the preimage of the above class. Similarly, one may construct a class $k_1.$
The listed relations follow from the exactness of the top cofiber sequence in the  diagram~\eqref{tate2}.
\end{proof}
\end{mysubsect}

\begin{notation}
For simplicity, we denote $k'_0:=\frac{a_{\alpha_1}}{a_{\alpha_0}}u_{\beta} + \frac{u_{\alpha_1}}{a_{\alpha_0}}a_{\beta}$ and $k'_1:=\frac{a_{\alpha_0}}{a_{\alpha_1}}u_{\beta} + \frac{u_{\alpha_0}}{a_{\alpha_1}}a_{\beta}$ be the images of $k_0$ and $k_1$ in $\pi_{\bigstar}\widetilde{\bE}\wedge H\uF$ respectively, under the relevant quotient map.  
\end{notation}

To compute $\Ker \partial_{\bigstar}$,
consider a class
\[
x(k)=
a^{-i}_{\alpha_1}k'^n_1u^m_{\alpha_0}a^\ell_{\alpha_0}
\,\Sigma^{-1}\frac{1}{a^j_{\beta}u^k_{\beta}}\in \pi_{\bigstar}\widetilde{\bE}\wedge H\uF
\] 
where $i,j\geq 1$ and $n,m,\ell\geq 0$ be fixed integers. Here we will define shorthand $\Xi$, which gives the range of $k$ such that $x(k)$ lies in $\Ker \partial_{\bigstar}$.

{\bf I.} Start with a basic case

\[
a^{-i}_{\alpha_1}u^m_{\alpha_0}
\,\Sigma^{-1}\frac{1}{a^j_{\beta}u^k_{\beta}}.
\]
In $\pi_{\bigstar}(\bE_{+}\wedge H\uF)$ we have:
\begin{align*}
\partial_{\bigstar}\!\left(a^{-i}_{\alpha_1}u^m_{\alpha_0}\Sigma^{-1}\frac{1}{a^j_{\beta}u^k_{\beta}}\right)
&= \Sigma^{-1}\frac{u^m_{\alpha_0}}{a^i_{\alpha_1}}
   \,\Sigma^{-1}\frac{1}{a^j_{\beta}u^k_{\beta}}\\
&= \Sigma^{-1}\frac{u^m_{\alpha_0}}{v\,a^i_{\alpha_1}}
   \cdot v\,a_{\beta}\,\Sigma^{-1}\frac{1}{a^{j+1}_{\beta}u^k_{\beta}}\\
&= \Sigma^{-1}\frac{u^m_{\alpha_0}}{v\,a^i_{\alpha_1}}
   \cdot \big(u_{\alpha_0}a_{\alpha_1} + u_{\alpha_1}a_{\alpha_0}\big)
   \,\Sigma^{-1}\frac{1}{a^{j+1}_{\beta}u^k_{\beta}}\\
&= \Sigma^{-1}\frac{u^{m+1}_{\alpha_0}}{v\,a^{i-1}_{\alpha_1}}
   \,\Sigma^{-1}\frac{1}{a^{j+1}_{\beta}u^k_{\beta}}
   \;+\;
   \Sigma^{-1}\frac{u^{m-1}_{\alpha_0}a_{\alpha_0}}{a^i_{\alpha_1}}
   \,\Sigma^{-1}\frac{1}{a^{j+1}_{\beta}u^{k-1}_{\beta}}
\end{align*}

From this calculation, we see:  
\[
\partial_{\bigstar}\!\left(a^{-i}_{\alpha_1}u^m_{\alpha_0}\Sigma^{-1}\frac{1}{a^j_{\beta}u^k_{\beta}}\right) = 0 \quad\Longleftrightarrow\quad m \ge k.
\]

{\bf II.} Consider the general class
\[
a^{-i}_{\alpha_1}k'^n_1u^m_{\alpha_0}a^\ell_{\alpha_0}
\,\Sigma^{-1}\frac{1}{a^j_{\beta}u^k_{\beta}}.
\] 
where $i,j\geq 1$ and $n,m,\ell\geq 0$ be fixed integers.

In $\pi_{\bigstar}(\bE_{+}\wedge H\uF)$ we have:
\begin{align*}
\partial_{\bigstar}\!\left(
a^{-i}_{\alpha_1}k'^n_1u^m_{\alpha_0}a^\ell_{\alpha_0}
\Sigma^{-1}\frac{1}{a^j_{\beta}u^k_{\beta}}
\right)
&= \Sigma^{-1}
\frac{k'^n_1u^m_{\alpha_0}(a_{\alpha_0}u_{\beta})^\ell}{a^i_{\alpha_1}}
\;\Sigma^{-1}\frac{1}{a^j_{\beta}u^{k+l}_{\beta}}\\
&= \Sigma^{-1}
\frac{k'^n_1u^m_{\alpha_0}\,\big(k'_1a_{\alpha_1}+a_{\beta}u_{\alpha_0}\big)^\ell}
     {a^i_{\alpha_1}}
\;\Sigma^{-1}\frac{1}{a^j_{\beta}u^{k+\ell}_{\beta}}.
\end{align*}

{\bf III.} Expansion and term-by-term analysis

Expanding \(\big(k'_1a_{\alpha_1}+a_{\beta}u_{\alpha_0}\big)^\ell\) gives terms of the form
\[
{k'}_1^{b}a_{\alpha_1}^b\;a_{\beta}
^{\,\ell-b}u_{\alpha_0}^{\,\ell-b},
\]
for some $0 \le b \le \ell$. 

The corresponding term in $\pi_{\bigstar} \bE_{+}\wedge H\uF$ is
\[
\Sigma^{-1}
\frac{k'^{n+b}_1\,u^{m+\ell-b}_{\alpha_0}}{a^{\,i-b}_{\alpha_1}}
\;\Sigma^{-1}\frac{1}{a^{j+b-l}_{\beta}u^{k+\ell}_{\beta}}
\;=\;
\Sigma^{-1}
\frac{u^{\,2n+2b+m+l-b}_{\alpha_0}}{v^{n+b}\,a^{\,i-b}_{\alpha_1}}
\;\Sigma^{-1}\frac{1}{a^{j+b-l}_{\beta}u^{k+\ell}_{\beta}},
\]
provided $j+b-\ell > 0$ so that the $a_\beta$-exponent is positive.

{\bf IV.} From the above analysis, we set:
\begin{enumerate}
\item Let $\{\,0 = b_0 < b_1 < \cdots < b_t = \ell\,\}$ be the set of $b$-values appearing in the binomial expansion (these are the exponents of $k'_1a_{\alpha_1}$).  
Choose the \emph{smallest} $b_s$ such that $j + b_s - \ell > 0$.
\item Then $\Xi(n,m,\ell,i,j)$ is a $2^\mathbb{N}$-valued function defined as: 
\begin{myeq}{\label{cond}}
\begin{cases}
[1,2n + b_s + m],  &\text{ if } i - b_s > 0,\\
 \mathbb{N}, &\text{if } i-b_{s}\leq 0
\end{cases}
\end{myeq}
\end{enumerate}

\begin{prop}{\label{xk}}
    A class $x(k)=
a^{-i}_{\alpha_1}k'^n_1u^m_{\alpha_0}a^\ell_{\alpha_0}
\,\Sigma^{-1}\frac{1}{a^j_{\beta}u^k_{\beta}}\in \pi_{\bigstar}\widetilde{\bE}\wedge H\uF$ lie in $\Ker \partial_{\bigstar}$ if and only if $k \in \Xi(n,m,\ell,i,j)$. 
\end{prop}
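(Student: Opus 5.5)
The plan is to compute $\partial_{\bigstar}(x(k))$ explicitly in $\pi_{\bigstar-1}(\bE_{+}\wedge H\uF)$. By Proposition~\ref{boundary}, $\partial_{\bigstar}$ is the negative suspension of the component of $x(k)$ in $\Sigma\,\pi_{\bigstar_\beta-1}(\bE_+\wedge H\uF)$, extended $a_{\alpha_r}^{\pm1}$-linearly. So $x(k)\in\Ker\partial_{\bigstar}$ exactly when the resulting element of $\Sigma^{-1}\Coker(\mathcal{L}_{\bigstar})$ vanishes, i.e.\ when it lies in $\Image\mathcal{L}_{\bigstar}$ modulo the relations of Proposition~\ref{right}.

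First I would rewrite $a_{\alpha_0}^\ell$ through the relation $a_{\alpha_0}u_\beta=k'_1a_{\alpha_1}+a_\beta u_{\alpha_0}$, which comes from Proposition~\ref{k_0k_1}(2) with $i=1$. This trades $\Sigma^{-1}\frac{1}{a_\beta^ju_\beta^k}$ for $\Sigma^{-1}\frac{1}{a_\beta^ju_\beta^{k+\ell}}$, as in step II, and expanding binomially gives the sum over $b$ from step III. Each summand is then simplified with $k'_1=u_{\alpha_0}^2/v$, which is Proposition~\ref{k_0k_1}(4) transported to the localisation. The result is the expression
\[
\Sigma^{-1}\frac{u^{2n+b+m+\ell}_{\alpha_0}}{v^{n+b}a^{i-b}_{\alpha_1}}\Sigma^{-1}\frac{1}{a^{j+b-\ell}_\beta u^{k+\ell}_\beta}.
\]
Summands with $j+b-\ell\le 0$ vanish, because $a_\beta^{\ge0}$ times a negative class is zero in $\bM_2(\beta)$. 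This is why only the $b\ge b_s$ summands survive.

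Next I decide when each surviving summand is zero in the cokernel. If $i-b\le 0$, there is no $a_{\alpha_1}$ in the denominator, so the summand lies in $\Image\mathcal{L}_{\bigstar}$ and is zero. If $i-b>0$, I reduce to the basic case I, since $v^{-1}$ is invertible and can be absorbed. Case I shows that $u^{M}_{\alpha_0}a_{\alpha_1}^{-(i-b)}\Sigma^{-1}\frac{1}{a_\beta^{J}u_\beta^{K}}$ is zero exactly when the power of $u_{\alpha_0}$ dominates the $u_\beta$-exponent. The mechanism is that $u_{\alpha_0}u_{\alpha_1}=vu_\beta$ lets one move $u_\beta$ across, together with the kernel relations (a),(b) of Proposition~\ref{right}. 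Applying this to the $b=b_s$ summand gives the condition $k\le 2n+b_s+m$ after cancelling the shift by $\ell$. I must double-check that bookkeeping of the exponent in the draft; the stated bound $[1,2n+b_s+m]$ is what I would aim for. The summands with $b>b_s$ carry larger $u_{\alpha_0}$ powers and smaller $a_{\alpha_1}$ powers, so they vanish whenever the $b_s$ summand does.

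Finally I would argue that the summands cannot cancel one another, which gives the converse: if $k\notin\Xi$, the $b_s$ summand is nonzero. These classes sit in distinct $a_\beta$-exponents $j+b-\ell$ and distinct $a_{\alpha_1}$-exponents, so they are linearly independent in the additive basis of Proposition~\ref{right}. The main obstacle is this independence together with the precise threshold in case I. The relations (a),(b) can identify classes with different exponents, so I would first normalise every summand to the basis of $\Sigma^{-1}\Coker\mathcal{L}$ given in Proposition~\ref{right}. Only then would I read off the independence and the vanishing condition, checking the bound $\Xi(n,m,\ell,i,j)$ in the two cases of \eqref{cond}.
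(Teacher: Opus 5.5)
Your route is the paper's own (steps I--IV before the statement), and the sufficiency direction is right, including your exponent bookkeeping. The $b$-summand has $u_{\alpha_0}$-exponent $2n+b+m+\ell$ against $u_\beta$-exponent $k+\ell$. The vanishing half of case I is an induction on the $a_{\alpha_1}$- and $u_\beta$-exponents using $vu_\beta=u_{\alpha_0}u_{\alpha_1}$ and $va_\beta=a_{\alpha_0}u_{\alpha_1}+u_{\alpha_0}a_{\alpha_1}$; it ends either in $\Image\mathcal{L}_\bigstar$ or at $u_\beta$-exponent $0$. Summands with $b<b_s$ die for $a_\beta$-exponent reasons. Those with $b\ge b_s$ die once $k\le 2n+b_s+m$, or lie in $\Image\mathcal{L}_\bigstar$ when $i\le b$. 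Two points need repair.

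\textbf{The formula for $\partial_\bigstar$.} Your justification is wrong as stated. Proposition~\ref{boundary} only concerns degrees $\bigstar_\beta$. Moreover, $\partial_\bigstar$ is linear over $\pi_\bigstar H\uF$ but not over $a_{\alpha_r}^{-1}$, because its target $\pi_{\bigstar-1}(\bE_+\wedge H\uF)$ is not $a_{\alpha_r}$-local. Taken literally, ``extend $a^{\pm1}_{\alpha_r}$-linearly'' gives $\partial_\bigstar\bigl(a_{\alpha_1}^{-i}\Sigma^{-1}\frac{1}{a_\beta^ju_\beta^k}\bigr)=0$, since $\Sigma^{-1}\frac{1}{a_\beta^ju_\beta^k}$ lies in the $\pi_{\bigstar_\beta}H\uF$-summand. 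Yet the proposition, with $n=m=\ell=0$ and $i\ge 1$, says this class never lies in $\Ker\partial_\bigstar$. The correct mechanism is naturality of the connecting maps in the Tate square~\eqref{tate2}, whose left vertical map is an equivalence. One maps $x$ into $\pi_\bigstar(\widetilde{\bE}\wedge\Fun(\bE_+,H\uF))$ and projects onto $\Sigma^{-1}\Coker\mathcal{L}_\bigstar$. This is what step I actually uses, and it yields the criterion you then state.

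\textbf{The ``only if'' direction.} This is the more serious point: your plan does not prove it, for three reasons.

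First, the proposed independence of the summands fails. Case I rests on the rewriting $\Sigma^{-1}\frac{1}{a_\beta^Ju_\beta^K}=v^{-1}(a_{\alpha_0}u_{\alpha_1}+u_{\alpha_0}a_{\alpha_1})\Sigma^{-1}\frac{1}{a_\beta^{J+1}u_\beta^K}$. Applied to the $b$-summand, it produces \emph{exactly} the $(b+1)$-summand plus a correction term carrying an extra factor $a_{\alpha_0}$. So neither the $a_\beta$-exponent nor the $a_{\alpha_1}$-exponent is an invariant in $\Coker\mathcal{L}_\bigstar$. When $\binom{\ell}{b}$ and $\binom{\ell}{b+1}$ are both odd, the two summands partially cancel.

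Second, you cannot ``normalise to the basis of Proposition~\ref{right}'': that proposition describes $\Coker\mathcal{L}_\bigstar$ only modulo $\Image\mathcal{L}_\bigstar$ and gives no basis.

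Third, the non-vanishing half of case I does not follow from the rewriting. That half claims $u_{\alpha_0}^M a_{\alpha_1}^{-I}\Sigma^{-1}\frac{1}{a_\beta^Ju_\beta^K}\neq0$ in $\Coker\mathcal{L}_\bigstar$ when $M<K$. The rewriting can only ever exhibit vanishing. For $M=0$, step I's formula even involves $u_{\alpha_0}^{-1}$, and the iteration never terminates.

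Closing the gap needs an independent way to detect nonzero classes in $\Coker\mathcal{L}_\bigstar$, which embeds in $\pi_{\bigstar-1}(\bE_+\wedge H\uF)$. Options include an explicit basis in the relevant degrees, or a direct computation of these homology groups of $\bE$ via the cofibre sequences of Section~3. You would then have to check that the whole sum, not just its $b_s$-term, survives when $i>b_s$ and $k>2n+b_s+m$.

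The paper offers nothing further here; step I merely asserts ``zero iff $m\ge k$''. So your proposal is as complete as the paper's argument, but as it stands it proves only the ``if'' direction.
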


In fact, in the case of $i-b_s\leq 0$, the class $x(k)$ can be represented in a way that it does not have $a_{\alpha_1}$ factor in the denominator, hence it lies in the second summand of $\Ker \partial_{\bigstar}$ as expressed below. 

\begin{prop}\label{ker}
The kernel and cokernel of $\partial_{\bigstar}$ can be described as follows
\begin{enumerate}
\item
\begin{align*}
\Ker \partial_{\bigstar}
&= \F[a_{\beta},u_{\beta},a_{\alpha_0},u_{\alpha_0},a_{\alpha_1},u_{\alpha_1},k'_0,k'_1,v] \\
&\quad\oplus \F\!\left\langle \Sigma^{-1}\frac{1}{a^j_{\beta}u^k_{\beta}} \right\rangle[a_{\alpha_0},a_{\alpha_1},u_{\alpha_0},u_{\alpha_1},k'_0,k'_1,v] \\
&\quad\oplus \F\!\left\langle a^{-i}_{\alpha_1}k'^n_1u^m_{\alpha_0}a^\ell_{\alpha_0}
\Sigma^{-1}\frac{1}{a^j_{\beta}u^k_{\beta}} \,:\,b_s<i, k \leq 2n+b_s+m \right\rangle[u_{\alpha_1},k'_0,v] \\
&\quad\oplus \F\!\left\langle a^{-i}_{\alpha_0}k'^n_0u^m_{\alpha_1}a^\ell_{\alpha_1}
\Sigma^{-1}\frac{1}{a^j_{\beta}u^k_{\beta}} \,:\,b_s<i, k \leq 2n+b_s+m\right\rangle[u_{\alpha_0},k'_1,v].
\end{align*}
where $k'_0:=\frac{a_{\alpha_1}}{a_{\alpha_0}}u_{\beta} + \frac{u_{\alpha_1}}{a_{\alpha_0}}a_{\beta}$ and $k'_1:=\frac{a_{\alpha_0}}{a_{\alpha_1}}u_{\beta} + \frac{u_{\alpha_0}}{a_{\alpha_1}}a_{\beta}$.
\item
\begin{align*}
\Coker \partial_{\bigstar}
&= \Big[\F\!\left\langle \Sigma^{-1}\frac{1}{va_{\alpha_0}} \right\rangle[a^{-1}_{\alpha_0},a_{\alpha_1},u_{\alpha_0},u_{\alpha_1},v^{-1}] \\
&\quad\oplus \F\!\left\langle \Sigma^{-1}\frac{1}{va_{\alpha_1}} \right\rangle[a_{\alpha_0},a^{-1}_{\alpha_1},u_{\alpha_0},u_{\alpha_1},v^{-1}] \\
&\quad\oplus \F\!\left\langle \Sigma^{-1}\frac{1}{va_{\alpha_0}a_{\alpha_1}} \right\rangle[a^{-1}_{\alpha_0},a^{-1}_{\alpha_1},u_{\alpha_0},u_{\alpha_1},v^{-1}] \Big] \\
&\quad\oplus \F\!\left\langle \Sigma^{-1}\frac{1}{a^{j}_{\beta}u^{k}_{\beta}} \right\rangle
\otimes\Big[
\F\!\left\langle \Sigma^{-1}\frac{1}{va_{\alpha_0}} \right\rangle[a^{-1}_{\alpha_0},v^{-1},u_{\alpha_0},u_{\alpha_1}] \\
&\quad\oplus \F\!\left\langle \Sigma^{-1}\frac{1}{va_{\alpha_1}} \right\rangle[a^{-1}_{\alpha_1},v^{-1},u_{\alpha_0},u_{\alpha_1}] \\
&\quad\oplus \F\!\left\langle \Sigma^{-1}\frac{1}{va_{\alpha_0}a_{\alpha_1}} \right\rangle[a^{-1}_{\alpha_0},a^{-1}_{\alpha_1},v^{-1},u_{\alpha_0},u_{\alpha_1}]
\Big] \\
&\quad\oplus \Ker \mathcal{L}_{\bigstar}.
\end{align*}
modulo the restricted relations from $\pi_{\bigstar}\bE_{+}\wedge H\uF$ and $\Image \partial_{\bigstar}$.
\end{enumerate}
\end{prop}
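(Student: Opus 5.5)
The computation is organised around the decomposition of $\partial_{\bigstar}$ given in Proposition~\ref{boundary}, extended to all of $\RO(\K)$ by the $\pi_{\bigstar}H\uF$--module structure. Since $\pi_{\bigstar}(\widetilde{\bE}\wedge H\uF)$ is obtained from its $\bigstar_\beta$-graded part by adjoining $a_{\alpha_0}^{\pm1},a_{\alpha_1}^{\pm1}$, every class is a sum of a polynomial class, possibly with $a_{\alpha_i}$ in the denominator, and a class of the form (monomial)$\cdot\Sigma^{-1}\frac{1}{a_\beta^j u_\beta^k}$. The boundary $\partial_{\bigstar}$ sends such a class to its negative suspension in $\pi_{\bigstar-1}(\bE_+\wedge H\uF)$. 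By Proposition~\ref{right}, that target is $\Sigma^{-1}\Coker\mathcal{L}\oplus\Ker\mathcal{L}$. So the task splits into two parts: decide which monomials become zero modulo $\Image\mathcal{L}$, and identify the image.

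\textbf{Kernel.} I would proceed in three steps.

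\begin{enumerate}
\item Classes with no $a_{\alpha_i}$ in the denominator lie in $\Image\mathcal{L}$, so their suspensions vanish. This gives the first two summands, generated by $a,u$ and by $v$ (Proposition~\ref{inv}).

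\item The classes $k'_0,k'_1$ lie in the kernel. They have $a_{\alpha_i}$ in the denominator, but Proposition~\ref{k_0k_1} shows $\partial(k'_i)=\Sigma^{-1}u_{\alpha_j}^2/v=0$.

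\item For monomials that genuinely involve $a_{\alpha_1}^{-i}$ together with the torsion classes $\Sigma^{-1}\frac{1}{a_\beta^ju_\beta^k}$, I would invoke Proposition~\ref{xk}. This says that $x(k)$ lies in the kernel exactly when $k\in\Xi$. When $i-b_s\le 0$, the class can be rewritten without $a_{\alpha_1}$ in the denominator, using $a_{\alpha_0}u_\beta=k'_1a_{\alpha_1}+a_\beta u_{\alpha_0}$, so it is absorbed into the second summand. When $b_s<i$, it contributes the third summand with the bound $k\le 2n+b_s+m$. The fourth summand follows by the symmetry $0\leftrightarrow1$.
\end{enumerate}

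It remains to argue that these families span the kernel. For this I would filter an arbitrary kernel element by its $a_{\alpha_0}$- and $a_{\alpha_1}$-denominator exponents and use the relations $v u_\beta=u_{\alpha_0}u_{\alpha_1}$ and $va_\beta=a_{\alpha_0}u_{\alpha_1}+u_{\alpha_0}a_{\alpha_1}$ to reduce to the normal forms $x(k)$.

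\textbf{Cokernel.} The cokernel of $\partial_{\bigstar}$ consists of the part of $\pi_{\bigstar-1}(\bE_+\wedge H\uF)$ not hit. By Proposition~\ref{boundary}, $\partial$ restricted to the suspended summand is surjective onto $\Sigma^{-1}\Coker\mathcal{L}$ in $\bigstar_\beta$-degrees. After inverting $a_{\alpha_i}$ in $\pi_{\bigstar}(\widetilde{\bE}\wedge H\uF)$, however, only those classes arise that are divisible by powers of $a_{\alpha_i}^{-1}$ times $u$'s and \emph{nonnegative} powers of $v$. What is missed is therefore the part of $\Sigma^{-1}\Coker\mathcal{L}$ carrying negative powers of $v$, namely the summands with $\Sigma^{-1}\frac{1}{va_{\alpha_0}}$ and so on, together with their products with $\Sigma^{-1}\frac{1}{a_\beta^ju_\beta^k}$.

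The summand $\Ker\mathcal{L}_{\bigstar}$ is never hit. The reason is that Proposition~\ref{fmap} gives $f$ trivial, so $\partial$ surjects onto $\pi_{\bigstar-1}(\bE_+\wedge H\uF)$ only in the suspended summand. Hence $\Ker\mathcal{L}$ survives entirely in the cokernel.

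\textbf{Main obstacle.} The hardest step is the precise bookkeeping of the relations modulo $\Image\mathcal{L}$ and $\Image\partial$. One must show that the listed families are linearly independent and exhaust everything. In particular, the non-trivial identifications between the $v^{-1}$-families and the classes obtained via $k'_i$ must be handled correctly. I would attack this degreewise: fix $\bigstar=a\alpha_0+b\alpha_1+c\beta+d$, count dimensions on both sides of the exact sequence, and compare with the counts from Theorem~\ref{main2} and Proposition~\ref{right}.
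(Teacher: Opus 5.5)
Your proposal follows essentially the same route as the paper's proof. Classes free of $a_{\alpha_i}^{-1}$, together with $k'_0,k'_1$ (Proposition~\ref{k_0k_1}), give the first two summands; the $\Xi$-criterion of Proposition~\ref{xk} gives the last two; and the cokernel consists of the $v^{-1}$-classes and $\Ker\mathcal{L}_{\bigstar}$. Your normal-form and dimension-count steps go somewhat beyond what the paper checks.

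One correction: $\Ker\mathcal{L}_{\bigstar}$ is not missed because $f$ is trivial, since in $\bigstar_\beta$-degrees that triviality makes $\partial$ surjective. It is missed because, via the left vertical equivalence in \eqref{tate2}, $\partial$ factors through the bottom-row connecting map, whose image is exactly the $\Sigma^{-1}\Coker\mathcal{L}$ part.
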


\begin{proof}
Every class in $\pi_{\bigstar}\bE_{+} \wedge H\uF$ is either a multiple of $a^{-1}_{\alpha_0}$ or $a^{-1}_{\alpha_1}$, or from $\Ker\mathcal{L}_{\bigstar}$ but due to Proposition \ref{boundary}, nothing hits $\Ker\mathcal{L}_{\bigstar}$. Therefore, all classes free of $a^{-1}_{\alpha_0}$, or   $a^{-1}_{\alpha_1}$ must lie in $\Ker \partial_{\bigstar}$.  In fact, together with Propositions \ref{k_0k_1}, the first two summands lie in $\Ker \partial_{\bigstar}$. The final two summands lie in $\Ker \partial_{\bigstar}$ by the $\Xi$-criterion established above in Proposition~\ref{xk}, and they continue to satisfy the $\Xi$-criterion when taking polynomials over the given classes.

For the cokernel, note that no class maps to the one involving $v^{-1}$ or to $\Ker\mathcal{L}_{\bigstar}$, and together with Remark \ref{reduct}, yielding the stated decomposition.
\end{proof}

\begin{mysubsect}{Symmetry and identifications in the coefficient ring} For the sake of symmetry in the structure of $\pi_{\bigstar} H \uF$, we introduce an equivalence relation among certain newly defined classes. This equivalence is encoded by the following correspondence:
\begin{myeq}  \label{equiv1}
    \begin{cases}
        \Sigma^{-1}\dfrac{u_{\alpha_0}}{v a_{\alpha_0}} 
            \ \longmapsto\  
            k_{1} \, \Sigma^{-1} \dfrac{1}{a_{\alpha_0} u_{\alpha_0}}, \\[1.2em]
        \Sigma^{-1}\dfrac{u_{\alpha_0}^{\,2n-k}}{v^{n} a_{\alpha_0}^{\,j}} 
            \ \longmapsto\  
            k^{n}_{1} \, \Sigma^{-1} \dfrac{1}{a_{\alpha_0}^{\,j} u_{\alpha_0}^{\,k}}, 
            & 2n \geq k, \\[1.2em]
        \Sigma^{-1}\dfrac{a_{\alpha_1}}{v a_{\alpha_0}} 
            \ \longmapsto\  
            k_{1} a_{\alpha_1} \, \Sigma^{-1} \dfrac{1}{a_{\alpha_0} u_{\alpha_0}^{\,2}}
            = a_{\beta} \, \Sigma^{-1} \dfrac{1}{a_{\alpha_0} u_{\alpha_0}}, \\[1.2em]
        \Sigma^{-1}\dfrac{u_{\alpha_1}}{v a_{\alpha_0}}
            \ \longmapsto\ 
            k_1u_{\alpha_1} \, \Sigma^{-1} \dfrac{1}{a_{\alpha_0} u^2_{\alpha_0}}
            = u_{\beta} \, \Sigma^{-1} \dfrac{1}{a_{\alpha_0} u_{\alpha_0}}.
    \end{cases}    
\end{myeq}

\begin{remark}
The above correspondence is well-defined in $\pi_{\bigstar} H \uF$ because all terms are of the same degree and compatible with the multiplicative structure. Furthermore, the relations are chosen so that the grading and the multiplicative generators behave symmetrically under this identification.
\end{remark}

In particular, the vanishing relation
\[
\Sigma^{-1} \dfrac{u_{\alpha_0}^{\,\ell} u_{\alpha_1}^{\,m}}{v^{k} a_{\alpha_0}^{\,n}} = 0
\quad\text{whenever}\quad \ell+m \geq 2k
\]
is preserved under the correspondence, ensuring that the quotient by this equivalence inherits the same annihilation patterns.

Since class $\Sigma^{-1}\frac{1}{a^{j}_{\beta}u^{k}_{\beta}} $ exists in $ \pi_{\bigstar}H\uF$ which motivates us to define classes like $\Sigma^{-1}\frac{1}{a^{j}_{\alpha_0}u^{k}_{\alpha_0}} $ and $\Sigma^{-1}\frac{1}{a^{j}_{\alpha_1}u^{k}_{\alpha_1}} $ in $\pi_{\bigstar}H\uF$. To prove the existence of such classes in the corresponding degree, we define the following equivalence in  $\Ker\mathcal{L}_{\bigstar}$:

Since $\frac{u^2_{\alpha_1}}{v^2}\Sigma^{-1}\frac{1}{a_{\beta}a_{\beta}}$ and $\frac{u^2_{\alpha_0}}{v^2}\Sigma^{-1}\frac{1}{a_{\beta}a_{\beta}}$ lie in $\Ker\mathcal{L}_{\bigstar}$ of degree $2-2\alpha_0$, and $2-2\alpha_1$ respectively, we  define classes $\Sigma^{-1}\frac{1}{a_{\alpha_0}u_{\alpha_0}}$ and $\Sigma^{-1}\frac{1}{a_{\alpha_1}u_{\alpha_1}}$ to be their image in $\pi_{\bigstar}H\uF$ in their respective degree, and using the simple steps we can proof the existence of isomorphic preimage of $\F{\langle{\Sigma^{-1}\frac{1}{a^j_{\alpha_0}u^k_{\alpha_0}}}\rangle}_{j,k\geq 1} $ and $\F{\langle{\Sigma^{-1}\frac{1}{a^j_{\alpha_1}u^k_{\alpha_1}}}\rangle}_{j,k\geq 1} $ in $\Ker\mathcal{L}_{\bigstar}$.

We describe an explicit procedure for producing a representative in
$\Ker \mathcal{L}_{\bigstar}$ of the class
$\Sigma^{-1}\frac{1}{a_{\alpha_0}^{\,j}u_{\alpha_0}^{\,k}}$.

\begin{enumerate}
\item If
\[
\frac{u_{\alpha_1}^{\,j+k}}{v^{\,j+k}}
\Sigma^{-1}\frac{1}{a_{\beta}^{\,j}u_{\beta}^{\,k}}
\in \Ker \mathcal{L}_{\bigstar},
\]
then this element represents the desired class.

\item Otherwise, consider
\[
\frac{u_{\alpha_1}^{\,j+k+1}}{v^{\,j+k+1}}
\Sigma^{-1}\frac{1}{a_{\beta}^{\,j+1}u_{\beta}^{\,k}}.
\]
If this element lies in $\Ker \mathcal{L}_{\bigstar}$, multiply by $a_{\alpha_0}$
to correct the degree. If not, repeat this step until such an element is found.
\end{enumerate}

Several explicit representatives obtained by this procedure are listed below.

\begin{center}
    
\begin{tabular}{|c|c|c|}
  \hline
  Class & Preimage \\
  \hline
   $\Sigma^{-1}\frac{1}{a_{\alpha_0}u_{\alpha_0}}$  & $\frac{u^2_{\alpha_1}}{v^2}\Sigma^{-1}\frac{1}{a_{\beta}u_{\beta}}$      \\
  $\Sigma^{-1}\frac{1}{a^2_{\alpha_0}u_{\alpha_0}}$  & $\frac{u^3_{\alpha_1}}{v^3}\Sigma^{-1}\frac{1}{a^2_{\beta}u_{\beta}}$      \\
  $\Sigma^{-1}\frac{1}{a_{\alpha_0}u^2_{\alpha_0}}$  & $a_{\alpha_0}\frac{u^4_{\alpha_1}}{v^4}\Sigma^{-1}\frac{1}{a^2_{\beta}u^2_{\beta}}$      \\
  $\Sigma^{-1}\frac{1}{a^2_{\alpha_0}u^2_{\alpha_0}}$  & $\frac{u^4_{\alpha_1}}{v^4}\Sigma^{-1}\frac{1}{a^2_{\beta}u^2_{\beta}}$      \\
  $\Sigma^{-1}\frac{1}{a_{\alpha_0}u^3_{\alpha_0}}$  & $a^3_{\alpha_0}\frac{u^7_{\alpha_1}}{v^7}\Sigma^{-1}\frac{1}{a^4_{\beta}u^3_{\beta}}$      \\
        
  \hline

\end{tabular} 

\end{center}

\end{mysubsect}

\begin{prop}{\label{equiv}}
    There is an equivalence of rings 
\begin{enumerate}
    \item 
    \begin{myeq}
    \begin{aligned}
        \F\langle{\Sigma^{-1}\frac{1}{va_{\alpha_0}}}\rangle[a^{-1}_{\alpha_0},u_{\alpha_0},a_{\alpha_1},u_{\alpha_1},v^{-1}] \oplus \F\langle{\Sigma^{-1}\frac{1}{va_{\alpha_1}}}\rangle[a_{\alpha_0},u_{\alpha_0},a^{-1}_{\alpha_1},u_{\alpha_1},v^{-1}]\\
        \oplus \F\langle{\Sigma^{-1}\frac{1}{va_{\alpha_0}a_{\alpha_1}}}\rangle[a^{-1}_{\alpha_0},u_{\alpha_0},a^{-1}_{\alpha_1},u_{\alpha_1},v^{-1}]\notag \oplus \Ker l_{\bigstar}
    \end{aligned}
    \end{myeq}
    and 
    \begin{align*}
       \F\langle{\Sigma^{-1}\frac{1}{a^{j}_{\alpha_0}u^{k}_{\alpha_0}}}\rangle[a_{\alpha_1},u_{\alpha_1},a_{\beta},u_{\beta},k_0,k_1,v] \oplus \F\langle{\Sigma^{-1}\frac{1}{a^{j}_{\alpha_1}u^{k}_{\alpha_1}}}\rangle[a_{\alpha_0},u_{\alpha_0},a_{\beta},u_{\beta},k_0,k_1,v]\\
 \oplus       \F\langle{a^{-i}_{\alpha_1}k^{n}_1a^{\ell}_{\beta}u^{m}_{\beta}\Sigma^{-1}\frac{1}{a^{j}_{\alpha_0}u^{k}_{\alpha_0}}\colon b_s<i, k \leq 2n+b_s+m}\rangle[u_{\alpha_1},k_0,v]\\ \oplus \F\langle{a^{-i}_{\alpha_0}k^{n}_0a^{\ell}_{\beta}u^{m}_{\beta}\Sigma^{-1}\frac{1}{a^{j}_{\alpha_1}u^{k}_{\alpha_1}}\colon b_s<i, k \leq 2n+b_s+m}\rangle[u_{\alpha_0},k_1,v].
    \end{align*}
where $\ell,m,n\geq 0$ and $i,j,k\geq 1$.
\item \begin{align*}
    \Big[
\F\!\left\langle \Sigma^{-1}\frac{1}{va_{\alpha_0}} \right\rangle[a^{-1}_{\alpha_0},a_{\alpha_1},v^{-1},u_{\alpha_0},u_{\alpha_1}] 
\quad\oplus \F\!\left\langle \Sigma^{-1}\frac{1}{va_{\alpha_1}} \right\rangle[a_{\alpha_0},a^{-1}_{\alpha_1},v^{-1},u_{\alpha_0},u_{\alpha_1}] \\
\quad\oplus \F\!\left\langle \Sigma^{-1}\frac{1}{va_{\alpha_0}a_{\alpha_1}} \right\rangle[a^{-1}_{\alpha_0},a^{-1}_{\alpha_1},v^{-1},u_{\alpha_0},u_{\alpha_1}]
\Big] \otimes \F\langle{\Sigma^{-1}\frac{1}{a^s_{\beta}u^t_{\beta}}}\rangle
\end{align*}
and
\begin{align*}
\Big[\F\langle{\Sigma^{-1}\frac{1}{a^{j}_{\alpha_0}u^{k}_{\alpha_0}}}\rangle[a^{\pm}_{\alpha_1},u_{\alpha_1},k_0,k_1,v]  \oplus \F\langle{\Sigma^{-1}\frac{1}{a^{j}_{\alpha_1}u^{k}_{\alpha_1}}}\rangle[a^{\pm}_{\alpha_0},u_{\alpha_0},k_0,k_1,v]\Big]\otimes \F\langle{\Sigma^{-1}\frac{1}{a^s_{\beta}u^t_{\beta}}}\rangle
\end{align*} 
where $s,t,j,k \geq 1$.
    \end{enumerate}
    subject to the following relation:
        \begin{enumerate}
        \item $k_0u_{\alpha_0}-u_{\alpha_1}u_{\beta}$
    \item $k_1u_{\alpha_1}-u_{\alpha_0}u_{\beta}$
    \item $vu_{\beta}-u_{\alpha_0}u_{\alpha_1}$
    \item $k_0a_{\alpha_0}-a_{\beta}u_{\alpha_1}-a_{\alpha_1}u_{\beta}$
    \item $k_1a_{\alpha_1}-a_{\beta}u_{\alpha_0}-a_{\alpha_0}u_{\beta}$
    \item $va_{\beta}-a_{\alpha_0}u_{\alpha_1}-a_{\alpha_1}u_{\alpha_0}$
    \item $k_0k_1-u^2_{\beta}$
    \item $k_0v-u^2_{\alpha_1}$
    \item $k_1v-u^2_{\alpha_0}$
    \item $k_0 \, \Sigma^{-1} \frac{1}{a_{\alpha_1} u_{\alpha_1}} =
            k_1 \, \Sigma^{-1} \frac{1}{a_{\alpha_0} u_{\alpha_0}}$,
    \item $k_r \, \Sigma^{-1} \frac{1}{a_{\beta} u_{\beta}} =
            v \, \Sigma^{-1} \frac{1}{a_{\alpha_r} u_{\alpha_r}} \quad r=0,1.$

    \end{enumerate}
\end{prop}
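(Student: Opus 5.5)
The plan is to construct explicit, degree-preserving bijections between the additive bases on the two sides of each equivalence, using the identifications in \eqref{equiv1} together with the table of representatives in $\Ker\mathcal{L}_{\bigstar}$. Once the bases are matched, I will check that the bijections respect multiplication by the polynomial generators listed on each side.

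The first step is to verify relations (1)--(11). Relations (1)--(9) are already available. Relations (1), (2), (4), (5), (7), (8), (9) are exactly Proposition~\ref{k_0k_1}, and (3), (6) come from Proposition~\ref{inv}, transported to $\pi_{\bigstar}H\uF$. To transport them, I will use that $k_0,k_1,v$ have images $k'_0,k'_1$ in $\pi_\bigstar\widetilde{\bE}\wedge H\uF$, and that the composite to $\pi_\bigstar \Fun(E\K_+,H\uF)$ is injective on polynomial classes.

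Relations (10) and (11) involve torsion classes, and I will check them through their representatives in $\Ker\mathcal{L}_\bigstar$. For (11) with $r=0$, the representative of $\Sigma^{-1}\frac{1}{a_{\alpha_0}u_{\alpha_0}}$ is $\frac{u_{\alpha_1}^2}{v^2}\Sigma^{-1}\frac{1}{a_\beta u_\beta}$. Multiplying by $v$ and using $k_0v=u_{\alpha_1}^2$ gives $k_0\Sigma^{-1}\frac{1}{a_\beta u_\beta}$, as required. Relation (10) then follows by applying (11) twice and cancelling via (7) and (8), after checking that both sides lie in the same degree $2-\alpha_0-\alpha_1+\beta-1$ up to the shift. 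I will confirm this degree match directly.

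The second step is part (1), the $\Ker\mathcal{L}$-type summands. I will send $\Sigma^{-1}\frac{u_{\alpha_0}^{2n-k}}{v^n a_{\alpha_0}^j}$ to $k_1^n\Sigma^{-1}\frac{1}{a_{\alpha_0}^ju_{\alpha_0}^k}$, extended linearly over $\F[a_{\alpha_1},u_{\alpha_1}]$. The cases $2n<k$ are handled by the representatives produced by the iterative procedure in the table, which multiplies by powers of $a_{\alpha_0}$.

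For surjectivity I will argue as follows. Every basis element $\Sigma^{-1}\frac{P}{v^n a^{j}_{\alpha_0}}$ with $P$ a monomial in $u_{\alpha_0},u_{\alpha_1},a_{\alpha_1}$ can be rewritten using (3), (6), (8), (9) as $k_1^n$ times a class with $P$ absorbed into $a_\beta,u_\beta,u_{\alpha_0}^{-k}$, exactly as in the last two lines of \eqref{equiv1}. The vanishing pattern $\ell+m\ge 2k$ matches the condition that the target has nonpositive $u_{\alpha_0}$-exponent.

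The mixed summand with $a_{\alpha_0}^{-1}a_{\alpha_1}^{-1}$ I will match against the third and fourth summands on the right. To do this I will take the $\Xi$-conditions of Proposition~\ref{xk} and Proposition~\ref{ker} and substitute $k_1'\mapsto k_1$ and $u_{\alpha_0}^m a_{\alpha_0}^\ell\mapsto a_\beta^\ell u_\beta^m$ using (4) and (5). The constraint $k\le 2n+b_s+m$ then transfers verbatim.

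The third step is part (2), which carries the factor $\F\langle\Sigma^{-1}\frac{1}{a^s_\beta u^t_\beta}\rangle$. I will tensor the bijection from part (1) with this factor. The only change is that the $a_\beta,u_\beta$ polynomial variables become redundant, since products of $\Sigma^{-1}\frac{1}{a_\beta^su_\beta^t}$ with $a_\beta,u_\beta$ only shift $s,t$. At the same time $a_{\alpha_1}$ becomes invertible, because the $\Sigma^{-1}$ torsion classes are killed by neither $a_{\alpha_1}$ nor its inverse in the cokernel description of Proposition~\ref{ker}.

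The main obstacle is bookkeeping, not existence. I must show that the maps are injective, meaning no two basis elements on the left collapse after imposing relations (10)--(11) and the image of $\partial_\bigstar$. My approach is to compare Poincaré series degree by degree in each fixed $\RO(\K)$-degree, which reduces to counting lattice points satisfying the $\Xi$ and vanishing inequalities on both sides.
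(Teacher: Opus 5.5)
Your matching of the two sides is the paper's. You use \eqref{equiv1}, rewrite through $v$ and a binomial expansion, send terms with $2n+m+b-k\ge 0$ to the $\Coker\partial_{\bigstar}$-summands and those with $2n+m+b-k<0$ to the table representatives in $\Ker\mathcal{L}_{\bigstar}$, and deduce (2) from (1). The paper's proof is exactly this expansion applied to the right-hand generators $k_1^{n}a_\beta^{\ell}u_\beta^{m}\Sigma^{-1}\frac{1}{a_{\alpha_0}^{j}u_{\alpha_0}^{k}}$, and that is what gives surjectivity.

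\textbf{The failing step: relation (10).} Put $A_r=\Sigma^{-1}\frac{1}{a_{\alpha_r}u_{\alpha_r}}$ and $X=\Sigma^{-1}\frac{1}{a_\beta u_\beta}$.

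\begin{itemize}
\item Eliminating $X$ between the two instances of (11) gives only $v\,(k_1A_0)=k_0k_1X=v\,(k_0A_1)$. You cannot cancel $v$ in $\widetilde{H}^{\bigstar}_{\K}(S^0;\uF)$, since it is a unit only in $\widetilde{H}^{\bigstar}_{\K}(\bE_+;\uF)$.
\item Worse, both products lie in degree $|v|+|k_0|+|A_1|=0$, and by relations (8), (9) they equal $u_{\alpha_1}^2A_1$ and $u_{\alpha_0}^2A_0$. Each $A_r$ is in the image of $f$, which is an ideal meeting $\widetilde{H}^{0}_{\K}(S^0;\uF)=\F\{1\}$ trivially. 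So both products vanish, and what you have derived is $0=0$.
\item The repair is to compute before applying $f$. $\pi_{\bigstar}(\bE_+\wedge H\uF)$ is a module over $\pi_{\bigstar}\Fun(\bE_+,H\uF)$, where $v$ is invertible and $k_0,k_1$ act as $u_{\alpha_1}^2v^{-1}$ and $u_{\alpha_0}^2v^{-1}$.
\item Take the table representatives $\frac{u_{\alpha_1}^2}{v^2}\Sigma^{-1}\frac{1}{a_\beta u_\beta}$ of $A_0$ and $\frac{u_{\alpha_0}^2}{v^2}\Sigma^{-1}\frac{1}{a_\beta u_\beta}$ of $A_1$. On these, both $k_1A_0$ and $k_0A_1$ become $\frac{u_{\alpha_0}^2u_{\alpha_1}^2}{v^3}\Sigma^{-1}\frac{1}{a_\beta u_\beta}$.
\end{itemize}

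Your check of (11) has a milder form of the same conflation. The $\Sigma^{-1}\frac{1}{a_\beta u_\beta}$ inside the representative is a class of $\widetilde{H}^{\bigstar}_{\K}(\bE_+;\uF)$. The $X$ in (11) is detected in $\pi_{\bigstar}(\widetilde{\bE}\wedge H\uF)$ and is not in the image of $f$. To identify $k_0X$ with $f$ of the lift of $\frac{u_{\alpha_1}^2}{v}\Sigma^{-1}\frac{1}{a_\beta u_\beta}$, you must trace $X$ through the middle vertical map of \eqref{tate2}.

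\textbf{Smaller corrections.}

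\begin{itemize}
\item \emph{Transferring the $\Xi$-bound.} The classes $a_{\alpha_1}^{-i}{k'_1}^{n}u_{\alpha_0}^{m}a_{\alpha_0}^{\ell}\Sigma^{-1}\frac{1}{a_\beta^{j}u_\beta^{k}}$ of Proposition~\ref{ker} and $a_{\alpha_1}^{-i}k_1^{n}a_\beta^{\ell}u_\beta^{m}\Sigma^{-1}\frac{1}{a_{\alpha_0}^{j}u_{\alpha_0}^{k}}$ lie in degrees that differ by interchanging $\alpha_0$ and $\beta$. So no use of relations (4), (5) turns one constraint into the other. The paper re-derives the bound directly:
  \begin{itemize}
  \item trade $\Sigma^{-1}\frac{1}{a_{\alpha_0}^{j}u_{\alpha_0}^{k}}$ for $u_{\alpha_0}^{\ell}\Sigma^{-1}\frac{1}{a_{\alpha_0}^{j}u_{\alpha_0}^{k+\ell}}$;
  \item write $u_{\alpha_0}^{\ell}a_\beta^{\ell}=(k_1a_{\alpha_1}+u_\beta a_{\alpha_0})^{\ell}$ and expand;
  \item take the least $b_s$ with $j-\ell+b_s>0$.
  \end{itemize}
  This lands in the $\Sigma^{-1}\frac{1}{va_{\alpha_0}a_{\alpha_1}}$-summand when $b_s<i$ and $k\le 2n+m+b_s$.
\item \emph{Both signs of $a_{\alpha_1}$ in part (2).} The reason is that this bound becomes vacuous after multiplying by $\Sigma^{-1}\frac{1}{a_\beta^{s}u_\beta^{t}}$, so the $a_{\alpha_1}^{-i}$-summands merge with the nonnegative powers. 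That is what you need to argue, not that classes are ``killed by neither $a_{\alpha_1}$ nor its inverse''.
\item \emph{Injectivity.} A Poincar\'e-series comparison presupposes explicit bases of both quotients: modulo relations (1)--(11) on one side, and modulo $\Image\partial_{\bigstar}$ and the inherited relations on the other. You have not produced these. The paper makes no injectivity argument beyond the rewriting, so this part of your plan needs genuinely new input.
\end{itemize}
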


\begin{proof}
\begin{enumerate}
    \item  Proof follows from the equivalence given in $\eqref{equiv1}$, Consider the following class and equivalence
\begin{align*}
    k^{n}_1a^{\ell}_{\beta}u^{m}_{\beta}\Sigma^{-1}\frac{1}{a^{j}_{\alpha_0}u^{k}_{\alpha_0}}&= {v^{-(\ell+m)}}k^{n}_1v^\ell a^{\ell}_{\beta}v^mu^{m}_{\beta}\Sigma^{-1}\frac{1}{a^{j}_{\alpha_0}u^{k}_{\alpha_0}}\\
    &= \frac{u^{2n+m}_{\alpha_0}u^m_{\alpha_1} (a_{\alpha_1}u_{\alpha_0}+u_{\alpha_1}a_{\alpha_0})^{\ell}}{v^{\ell+m+n}}\Sigma^{-1}\frac{1}{a^{j}_{\alpha_0}u^{k}_{\alpha_0}}
    \end{align*}
which have the following type term:
\[
\frac{u^{2n+m+b}_{\alpha_0}u^{m+\ell-b}_{\alpha_1} a^b_{\alpha_1}}{v^{\ell+m+n}}\Sigma^{-1}\frac{1}{a^{j-\ell+b}_{\alpha_0}u^{k}_{\alpha_0}}
\]
for some $0 \leq b \leq \ell$, since the above factor is zero if $j+b-\ell\leq 0$. For $j+b-\ell> 0$, note that if $2n+m+b-k\geq 0$, then class lie in $\Coker \partial_{\bigstar}$ and have the following correspondence
\[
{u^{2n+m+b-k}_{\alpha_0}u^{m+\ell-b}_{\alpha_1} a^b_{\alpha_1}}\Sigma^{-1}\frac{1}{v^{\ell+m+n}a^{j-\ell+b}_{\alpha_0}}\in\Coker \partial_{\bigstar}
\]     
and if $2n+m+b-k< 0$, then class lie in $\Ker\mathcal{L}_{\bigstar}$ and have the following correspondence
\[
\frac{u^{m+\ell-b}_{\alpha_1} a^b_{\alpha_1}}{v^{\ell+m+n}}\Sigma^{-1}\frac{1}{a^{j-\ell+b}_{\alpha_0}u^{k-2n-m-b}_{\alpha_0}}
\]    
as one can identify the class $\Sigma^{-1}\frac{1}{a^{j-\ell+b}_{\alpha_0}u^{k-2n-m-b}_{\alpha_0}}$ in $\Ker\mathcal{L}_{\bigstar}$.

For the equivalence with $a^{-i}_{\alpha_0}$--class, consider the class 
\begin{align*}
    \frac{k^{n}_1a^{\ell}_{\beta}u^{m}_{\beta}}{a^i_{\alpha_1}}\Sigma^{-1}\frac{1}{a^{j}_{\alpha_0}u^{k}_{\alpha_0}}&= \frac{k^{n}_1v^mu^{m}_{\beta}u^\ell_{\alpha_0}a^{\ell}_{\beta}}{v^ma^{i}_{\alpha_1}}\Sigma^{-1}\frac{1}{a^{j}_{\alpha_0}u^{k+\ell}_{\alpha_0}}\\
    &= \frac{k^{n}_1v^mu^{m}_{\beta}({k_1a_{\alpha_1}+u_{\beta}a_{\alpha_0}})^{\ell}}{v^{m}a^{i}_{\alpha_1}}\Sigma^{-1}\frac{1}{a^{j}_{\alpha_0}u^{k+\ell}_{\alpha_0}}\\
    &=\frac{u^{2n+m}_{\alpha_0}u^{m}_{\alpha_1}({k_1a_{\alpha_1}+u_{\beta}a_{\alpha_0}})^{\ell}}{v^{n+m}a^{i}_{\alpha_1}}\Sigma^{-1}\frac{1}{a^{j}_{\alpha_0}u^{k+\ell}_{\alpha_0}}
    \end{align*}

which have the following type term:
\[
\frac{u^{2n+m}_{\alpha_0}u^{m}_{\alpha_1}k^{b}_1 a^b_{\alpha_1}u^{\ell-b}_{\beta}a^{\ell-b}_{\alpha_0}}{a^i_{\alpha_1}v^{m+n}}\Sigma^{-1}\frac{1}{a^{j}_{\alpha_0}u^{k+\ell}_{\alpha_0}} =
\frac{u^{2n+b+m-k}_{\alpha_0}u^{m+\ell-b}_{\alpha_1} }{a^{i-b}_{\alpha_1}}\Sigma^{-1}\frac{1}{v^{\ell+m+n}a^{j+b-\ell}_{\alpha_0}}
\]
Again consider the order set $\{0=b_0<b_1<\cdots<b_t=\ell\}$ consist of the exponent of $k_1a_{\alpha_1}$. Pick the smallest $b_s$ such that $j-\ell+b_s> 0$, then if $i-b_s\leq 0$, then there will be no $a_{\alpha_1}$ term in the denominator and the class up to relation is equal to the earlier one. But for $i-b_{s}> 0$, then class has representation in $\Coker \partial_{\bigstar}$ for $k\leq 2n+m+b_s$.

\item Proof follows using $(1)$, and restriction on $k$ is redundant because of multiplication with $\Sigma^{-1}\frac{1}{a^s_{\beta}u^t_{\beta}}$ factor.
\end{enumerate}
\end{proof}

Using the Proposition $\ref{ker}$ and the above equivalence, we have the cohomology ring of a point. 
\begin{thm}\label{cohomologyofpoint}
The $\RO(\K)$–graded Bredon cohomology of a point is given by
\begin{align*}
    \widetilde{H}^{\bigstar}_{\K}(S^0;\uF) &\cong
    \F[a_{\beta}, u_{\beta}, a_{\alpha_0}, u_{\alpha_0}, a_{\alpha_1}, u_{\alpha_1}, k_0, k_1, v] \\
    &\quad\oplus\ \F\Big\langle \Sigma^{-1} \frac{1}{a_{\beta}^{\,j} u_{\beta}^{\,k}} \Big\rangle
        [\,a_{\alpha_0}, a_{\alpha_1}, u_{\alpha_0}, u_{\alpha_1}, v, k_0, k_1\,] \\
    &\quad\oplus\ \F\Big\langle \Sigma^{-1} \frac{1}{a_{\alpha_0}^{\,j} u_{\alpha_0}^{\,k}} \Big\rangle
        [\,a_{\beta}, a_{\alpha_1}, u_{\beta}, u_{\alpha_1}, v, k_0, k_1\,] \\
    &\quad\oplus\ \F\Big\langle \Sigma^{-1} \frac{1}{a_{\alpha_1}^{\,j} u_{\alpha_1}^{\,k}} \Big\rangle
        [\,a_{\beta}, a_{\alpha_0}, u_{\beta}, u_{\alpha_0}, v, k_0, k_1\,] \\
    &\quad\oplus\ \F\Big\langle a_{\alpha_1}^{-i} k_1^{\,n} u_{\alpha_0}^{\,m} a_{\alpha_0}^{\,\ell} \,
        \Sigma^{-1} \frac{1}{a_{\beta}^{\,j} u_{\beta}^{\,k}} \ \colon\ b_s<i; k \leq 2n+b_s+m \Big\rangle
        [\, u_{\alpha_1}, k_0, v\,] \\
    &\quad\oplus\ \F\Big\langle a_{\alpha_0}^{-i} k_0^{\,n} u_{\alpha_1}^{\,m} a_{\alpha_1}^{\,\ell} \,
        \Sigma^{-1} \frac{1}{a_{\beta}^{\,j} u_{\beta}^{\,k}} \ \colon\ b_s<i; k \leq 2n+b_s+m \Big\rangle
        [\, u_{\alpha_0}, k_1, v\,] \\
    &\quad\oplus\ \F\Big\langle a_{\alpha_1}^{-i} k_1^{\,n}u_{\beta}^{\,m} a_{\beta}^{\,\ell}  \,
        \Sigma^{-1} \frac{1}{a_{\alpha_0}^{\,j} u_{\alpha_0}^{\,k}} \ \colon\ b_s<i; k \leq 2n+b_s+m \Big\rangle
        [\, u_{\alpha_1}, v, k_0\,] \\
    &\quad\oplus\ \F\Big\langle a_{\alpha_0}^{-i} k_0^{\,n} u_{\beta}^{\,m} a_{\beta}^{\,\ell}  \,
        \Sigma^{-1} \frac{1}{a_{\alpha_1}^{\,j} u_{\alpha_1}^{\,k}} \ \colon\ b_s<i; k \leq 2n+b_s+m \Big\rangle
        [\, u_{\alpha_0}, v, k_1\,] \\
    &\quad\oplus\ \F\Big\langle \Sigma^{-1} \frac{1}{a_{\beta}^{\,s} u_{\beta}^{\,t}} \Big\rangle \ \otimes\ 
        \Big[
            \F\Big\langle \Sigma^{-1} \frac{1}{a_{\alpha_0}^{\,j} u_{\alpha_0}^{\,k}} \Big\rangle
                [\,a_{\alpha_1}^{\pm}, u_{\alpha_1}, v, k_0, k_1\,] \\
    &\hspace{5cm} \oplus\ 
            \F\Big\langle \Sigma^{-1} \frac{1}{a_{\alpha_1}^{\,j} u_{\alpha_1}^{\,k}} \Big\rangle
                [\,a_{\alpha_0}^{\pm}, u_{\alpha_0}, v, k_0, k_1\,]
        \Big]
\end{align*}
where $n,m,\ell\geq0$; $i,j,k,s,t\geq1$ and $0\leq b_s\leq \ell$ as defined earlier in \eqref{cond}.

subject to the relations:
\begin{enumerate}
    \item $k_0 u_{\alpha_0} = u_{\alpha_1} u_{\beta}$,
    \item $k_1 u_{\alpha_1} = u_{\alpha_0} u_{\beta}$,
    \item $v u_{\beta} = u_{\alpha_0} u_{\alpha_1}$,
    \item $k_0 a_{\alpha_0} = a_{\beta} u_{\alpha_1} + a_{\alpha_1} u_{\beta}$,
    \item $k_1 a_{\alpha_1} = a_{\beta} u_{\alpha_0} + a_{\alpha_0} u_{\beta}$,
    \item $v a_{\beta} = a_{\alpha_0} u_{\alpha_1} + a_{\alpha_1} u_{\alpha_0}$,
    \item $k_0 k_1 = u_{\beta}^2$,
    \item $k_0 v = u_{\alpha_1}^2$,
    \item $k_1 v = u_{\alpha_0}^2$,
    \item $k_0 \, \Sigma^{-1} \frac{1}{a_{\alpha_1} u_{\alpha_1}} =
            k_1 \, \Sigma^{-1} \frac{1}{a_{\alpha_0} u_{\alpha_0}}$,
    \item $k_r \, \Sigma^{-1} \frac{1}{a_{\beta} u_{\beta}} =
            v \, \Sigma^{-1} \frac{1}{a_{\alpha_r} u_{\alpha_r}}, \quad r=0,1$.
\end{enumerate}
\end{thm}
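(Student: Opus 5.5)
The plan is to read $\widetilde{H}^{\bigstar}_{\K}(S^0;\uF)\cong\pi_{-\bigstar}H\uF$ off the upper cofiber sequence of the Tate square \eqref{tate2}, $\bE_+\wedge H\uF\xrightarrow{f}H\uF\to\widetilde{\bE}\wedge H\uF$. This gives a short exact sequence of $\F$-vector spaces
\[
0\to \Coker\partial_{\bigstar+1}\to \pi_{\bigstar}H\uF\to \Ker\partial_{\bigstar}\to 0 .
\]
The left-hand term is the image of $f_\bigstar$. By Proposition~\ref{right} it is $\Sigma^{-1}\Coker(\mathcal{L})\oplus\Ker\mathcal{L}$ modulo $\Image\partial$, and by Proposition~\ref{fmap} (adapted to $p=2$ as in the proposition on $f_{\bigstar_\beta}$) the composite to $\pi_{\bigstar_\beta}H\uF$ vanishes in $\RO(C_2)$-degrees. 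Both ends have already been computed in Proposition~\ref{ker}. So the first step is to take the kernel part of Proposition~\ref{ker}(1) verbatim and lift its generators to $\pi_\bigstar H\uF$:
\begin{enumerate}
\item $a_\beta,u_\beta,a_{\alpha_i},u_{\alpha_i}$ lift tautologically.
\item $k'_0,k'_1$ lift to $k_0,k_1$ by Proposition~\ref{k_0k_1}.
\item $v$ lifts to the Tate-square lift of the invertible class of Proposition~\ref{inv}.
\end{enumerate}
This produces the polynomial summand, the $\Sigma^{-1}\frac{1}{a_\beta^ju_\beta^k}$ summand and the two $a_{\alpha_i}^{-i}$ summands cut out by the $\Xi$-criterion of Proposition~\ref{xk}.

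The second step is to identify $\Coker\partial$ from Proposition~\ref{ker}(2) with the remaining summands. Here I would apply Proposition~\ref{equiv} directly:
\begin{enumerate}
\item Part (1) of Proposition~\ref{equiv} rewrites the $\Sigma^{-1}\frac{1}{va_{\alpha_0}}$, $\Sigma^{-1}\frac{1}{va_{\alpha_1}}$, $\Sigma^{-1}\frac{1}{va_{\alpha_0}a_{\alpha_1}}$ families together with $\Ker\mathcal{L}$. These become the families $\Sigma^{-1}\frac{1}{a_{\alpha_r}^ju_{\alpha_r}^k}$ with coefficients in $a_\beta,u_\beta,a_{\alpha_{1-r}},u_{\alpha_{1-r}},v,k_0,k_1$, plus the two $a_{\alpha}^{-i}k^n a_\beta^\ell u_\beta^m$ families. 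The representatives in $\Ker\mathcal{L}$ come from the table of preimages, and the correspondence is \eqref{equiv1}.
\item Part (2) of Proposition~\ref{equiv} handles the pieces tensored with $\Sigma^{-1}\frac{1}{a_\beta^s u_\beta^t}$ and yields the last bracketed summand.
\end{enumerate}
Since everything is over $\F$, the short exact sequence splits additively.

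The third step is the multiplicative structure. Relations (1)–(9) are obtained by mapping to $\pi_\bigstar\Fun(\bE_+,H\uF)$ and further to $\widetilde H^\bigstar_{\K}(E\K_+;\uF)$ of Proposition~\ref{resuni}, where all these classes are explicit Laurent expressions. The polynomial part injects there, in the same way as in the proof of Theorem~\ref{cohomologyofpointoddp}. For example, $k_0\mapsto u_{\alpha_1}u_\beta/u_{\alpha_0}$ and $v\mapsto u_{\alpha_0}u_{\alpha_1}/u_\beta$, which gives $k_0k_1=u_\beta^2$ and $k_0v=u_{\alpha_1}^2$. Relations (10)–(11) involve torsion classes that die in $\Fun(\bE_+,-)$. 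For these I would check them in $\Ker\mathcal{L}$ using the representatives $\frac{u_{\alpha_1}^2}{v^2}\Sigma^{-1}\frac{1}{a_\beta u_\beta}$, together with relations (1)–(9) and Proposition~\ref{inv}. Products of two negative classes vanish for degree reasons, as in the square-zero structure of Theorem~\ref{main2}.

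The main obstacle is the second step: showing that the correspondence \eqref{equiv1} is genuinely a bijection onto $\Coker\partial\oplus\Ker\mathcal{L}$ modulo $\Image\partial$, with no overlaps or missing classes among the families indexed by $b_s$. I would attack this degree by degree. First, fix the $a_{\alpha_0}$ and $a_{\alpha_1}$ exponents. Then expand $(k_1a_{\alpha_1}+u_\beta a_{\alpha_0})^\ell$ binomially and track the smallest surviving $b_s$, exactly as in steps I–IV preceding Proposition~\ref{xk}. Finally, compare dimensions on both sides in each $\RO(\K)$-degree.
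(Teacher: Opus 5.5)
Your overall route is the paper's own: the short exact sequence from the top row of the Tate square \eqref{tate2}; $\Ker\partial$ taken from Proposition~\ref{ker}(1), with $k'_0,k'_1,v$ lifted via Propositions~\ref{k_0k_1} and~\ref{inv}; $\Coker\partial$ rewritten through \eqref{equiv1} and Proposition~\ref{equiv}; and relations (1)--(11) read off from the defining properties of $k_0,k_1,v$.

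\textbf{The gap.} One step of your multiplicative argument is false. You claim that products of two negative classes vanish ``for degree reasons, as in the square-zero structure of Theorem~\ref{main2}''. This fails for negative classes attached to different subgroups. Start from $\widetilde{H}^{4-2\alpha_0-2\beta}_{\K}(S^0;\uF)\cong\widetilde{H}^{4}_{\K}(S^{2\alpha_0+2\beta};\uF)\cong\widetilde{H}^{4}(S^{2\alpha_0+2\beta}/\K;\F)$. The last isomorphism holds because the restrictions of $\uF$ are identities, so integer-graded Bredon cohomology is cohomology of the orbit space. Since $H_0\cap K=e$, the group $\K$ acts on $\mathbb{R}^2_{\alpha_0}\oplus\mathbb{R}^2_{\beta}$ as $\{\pm1\}\times\{\pm1\}$, factor by factor. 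Hence the natural map $S^{2\alpha_0+2\beta}/\K\to S^{2\alpha_0}/\K\wedge S^{2\beta}/\K\cong S^4$ is a homeomorphism. So the product $\Sigma^{-1}\frac{1}{a_{\alpha_0}u_{\alpha_0}}\cdot\Sigma^{-1}\frac{1}{a_{\beta}u_{\beta}}$ of the two degree-$2$ generators is the nonzero class in that degree. Compare $C_2$: there the analogous map $S^{4\sigma}/C_2\to (S^{2\sigma}/C_2)^{\wedge 2}$ is generically $2$-to-$1$, which is why $\theta^2=0$.

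\textbf{Why Theorem~\ref{main2} does not help.} The map $f$ is a $\pi_\bigstar H\uF$-module map. So such a product equals $f$ applied to $\Sigma^{-1}\frac{1}{a_\beta u_\beta}\cdot\tilde x$, where $\tilde x\in\pi_\bigstar(\bE_+\wedge H\uF)$ lifts a class of $\Ker\mathcal{L}$. The square-zero structure of Theorem~\ref{main2} only kills this after projecting to $\Ker\mathcal{L}$. It can survive in $\Sigma^{-1}\Coker\mathcal{L}$, in the $\bM_2(\beta)$-part of Proposition~\ref{right}. That is exactly where the last summand of the statement, $\F\langle\Sigma^{-1}\frac{1}{a_\beta^su_\beta^t}\rangle\otimes\F\langle\Sigma^{-1}\frac{1}{a_{\alpha_r}^ju_{\alpha_r}^k}\rangle[\ldots]$, comes from via Proposition~\ref{equiv}(2). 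Your blanket vanishing claim is therefore inconsistent with the ring you are trying to describe. The fix is:
\begin{enumerate}
\item assert vanishing only for products within a single negative family (these classes are inflated from a quotient $C_2$, where the negative cone is square-zero);
\item treat mixed products through this hidden extension.
\end{enumerate}

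\textbf{A smaller slip.} The classes in (10)--(11) do not die in $\Fun(\bE_+,H\uF)$. By construction, $\Sigma^{-1}\frac{1}{a_{\alpha_r}u_{\alpha_r}}$ is the image of a nonzero element of $\Ker\mathcal{L}$, and $\Sigma^{-1}\frac{1}{a_\beta u_\beta}$ is nonzero in $\widetilde{H}^{\bigstar}_{\K}(\bE_+;\uF)$; they die only in $E\K$. Checking (10)--(11) in $\Ker\mathcal{L}$, as you then propose, is the right move.
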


\begin{remark}
    In the above expression, one can add more factors for a symmetric structure, but all those classes are already present in the above expression up to the above relations.  
\end{remark}

\sect{Equivariant Complex Projective Spaces}

For a complex representation $V$ of an abelian group $G$, one has a complex projective space $\cP(V)$, which is the space of complex lines in $V$ where the action of $G$ on $\cP(V)$ is induced from the action on $V$. Here, the underlying space is complex projective space $\C P^{\dim V-1}$. In this section, we will construct the complex projective space for the complete universe $\cU=\varinjlim_{n} n \cdot \rho_{\C}$
where $\rho_{\C}$ is the complex regular representation.

We first treat the case of an odd prime $p$, deferring the case $p=2$. For the group $\cG$, the irreducible characters are
\[
\chi^{i,j}(a^s b^r)=\exp(\frac{2\pi\iota}{p}(si+rj)),
\qquad 0\le i,j,r,s\le p-1.
\]  We order the characters lexicographically by $(i,j)$, and the tensor product satisfies
\[
\chi^{i_1,j_1}\otimes \chi^{i_2,j_2}
=\chi^{\,i_1+i_2 \bmod p,\; j_1+j_2 \bmod p}.
\]

This ordering induces an increasing filtration of the $\cG$--space $\cP(\rho_{\C})$:
\begin{myeq}{\label{filtration}}
\cP(1_{\C})
\subset \cP(1_{\C}\oplus\chi^{0,1})
\subset \cP(1_{\C}\oplus\chi^{0,1}\oplus\chi^{0,2})
\subset \cdots .
\end{myeq}

Using the above ordering and filtration, we define a $\cG$--representation $V_k$, where
$k=mp+\ell$ with $0\le m,\ell \le p-1$, by
\begin{myeq}
V_k :=
\bigoplus_{i=0}^{m-1}\;\bigoplus_{j=0}^{p-1} \chi^{i,j}
\;\oplus\;
\bigoplus_{t=0}^{\ell} \chi^{m,t}.
\end{myeq}

There is a cofiber sequence
\begin{myeq}{\label{cofiber}}
\cP(V_{k-1})_{+} \longrightarrow \cP(V_k)_{+}
\longrightarrow S^{\chi^{p-m,p-\ell}\otimes V_{k-1}},
\end{myeq}
where the representation on the sphere is
\begin{myeq}
\chi^{p-m,p-\ell}\otimes V_{k-1}
=
\bigoplus_{i=0}^{m-1}\;\bigoplus_{j=0}^{p-1} \chi^{p-m+i,\;p-\ell+j}
\;\oplus\;
\bigoplus_{t=0}^{\ell-1} \chi^{0,\;p-\ell+t}.
\end{myeq}

This representation may be identified, up to exponent, with the previously defined
representations $\alpha_i$ and $\beta$ by computing kernels. Indeed, 
due to the Proposition~\ref{exponentequivalence}, it suffices to identify the summands up to exponent.

For $p=2$, the filtration of $\cP(\rho_{\C})$ is
\begin{myeq}
\cP(1_{\C})
\subset \cP(1_{\C}\oplus 2\alpha_0)
\subset \cP(1_{\C}\oplus 2\alpha_0\oplus 2\alpha_1)
\subset \cP(\rho_{\C}).
\end{myeq}
Analogous to \eqref{cofiber}, the corresponding sphere representations are
\[
\{\,2\alpha_0,\; 2\alpha_1+2\beta,\; 2\alpha_0+2\alpha_1+2\beta\,\}.
\]

The following proposition gives a vanishing criterion for the cohomology groups for a specific type of grading in $\RO(C_p\times C_p)$ for any prime $p$.  

\begin{prop}\label{vanish}
Let $\gamma \in \RO(C_p\times C_p)$ be such that $|\gamma^{H}|\leq -1$ for all $H\leq C_p\times C_p$.  
Then,
\[
\widetilde{H}^{\gamma}_{\cG}(S^0;\uFp) = 0.
\]
where $p$ is any prime.
\end{prop}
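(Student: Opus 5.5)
The plan is to rewrite the group as a mapping group into an Eilenberg--Mac~Lane spectrum and then use connectivity. By the definition of Bredon cohomology recalled in Section~1,
\[
\widetilde{H}^{\gamma}_{\cG}(S^0;\uFp)=[S^0,S^{\gamma}\wedge H\uFp]^{\cG}\cong[S^{-\gamma},H\uFp]^{\cG}.
\]
Since $H\uFp$ has its homotopy Mackey functors concentrated in degree $0$, this group vanishes as soon as the representation sphere spectrum $X:=S^{-\gamma}$ is equivariantly $0$-connected. Here $0$-connected means $\underline{\pi}_n(X)=0$ for all $n\le 0$, or equivalently that $X$ admits a $\cG$-CW structure with cells $\cG/H_+\wedge S^n$ only for $n\ge 1$. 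The statement then follows from the standard Postnikov and $t$-structure vanishing $[X,Y]^{\cG}=0$ for $X$ $1$-connective and $Y$ $0$-truncated, or from a cellular induction using $\widetilde{H}^{-n}_{\cG}(\cG/H_+;\uFp)=\uFp(\cG/H)$ for $n=0$ and $0$ otherwise.

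To get the connectivity, I will use geometric fixed points. For every $H\le\cG$ we have $\Phi^H S^{-\gamma}\simeq S^{-|\gamma^H|}$, a (virtual) non-equivariant sphere of dimension $-|\gamma^H|\ge 1$ by hypothesis, so it is $0$-connected. The key lemma is that a $\cG$-spectrum all of whose geometric fixed points are $0$-connected is itself $0$-connected. I will prove it by induction over the subgroup lattice using isotropy separation, exactly in the spirit of the filtration \eqref{tower}.

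For a family $\cF$ and a subgroup $H$ minimal outside $\cF$, the cofiber $E\cF\langle H\rangle\wedge X$ (for the space $E\cF\langle H\rangle$ introduced in Section~1) is built from cells induced from $\cG/H$. Its connectivity is governed by the $W_{\cG}H$-spectrum $\Phi^HX$, which is $0$-connected because its underlying spectrum is. Since $\cG$ is abelian of order $p^2$, there are only three kinds of subgroup: $e$, the $p+1$ subgroups of order $p$, and $\cG$ itself. I will therefore filter
\[
\ast\subset E\{e\}_+\subset E\mathcal P_+\subset S^0,
\]
where $\mathcal P$ is the family of proper subgroups, smash each stage with $X$, and check that each successive cofiber is $0$-connected. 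Connectivity is preserved under extensions in cofiber sequences, so $X$ is $0$-connected.

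The main obstacle is making the middle step precise. I need that a spectrum of the form $E\cF\langle H\rangle\wedge X$, which is concentrated at $H$, is $0$-connected as soon as $\Phi^HX$ is. My first attempt is to identify $[E\cF\langle H\rangle\wedge X,Y]^{\cG}$ with $[\Phi^H X,\Phi^H Y]^{W_{\cG}H}$-type data by means of the Tate-square style equivalences already used in \eqref{tatep}. If that is awkward, the fallback is to build $X=S^{-\gamma}$ explicitly. I would write $-\gamma=V-n$ with $V$ an honest representation, use the cell structure of $S^V$ (whose $H$-fixed cells have dimensions between $|V^{K}|$ for $K\ge H$), and observe that after desuspending by $n$ every cell of type $\cG/H$ has dimension at least $|V^H|-n=-|\gamma^H|\ge 1$.
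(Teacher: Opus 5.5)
Your argument is correct, but it takes a genuinely different route from the paper's.

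The paper works entirely inside $\RO(\cG)$-graded cohomology. Writing $\gamma=b+\sum_i c_i\alpha_i+d\beta$, it uses the cofibre sequences $S(\beta)_+\to S^0\to S^{\beta}$ (and their analogues for the $\alpha_i$) to remove positive multiples of each irreducible one at a time. It kills the orbit terms using $\widetilde{H}^{\bigstar}_{\cG}(\cG/K_+;\uFp)\cong\widetilde{H}^{\res_K(\bigstar)}_{K}(S^0;\uFp)$ together with the $C_p$ values in Proposition~\ref{mackey}. Once all coefficients are nonpositive, it runs the same sequences backwards to reach the integer degree $b=|\gamma^{\cG}|<0$.

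You instead dualize to $[S^{-\gamma},H\uFp]^{\cG}$. You then reduce to the standard fact that connectivity of a genuine $G$-spectrum is detected on geometric fixed points, using $\Phi^HS^{-\gamma}\simeq S^{-|\gamma^H|}$. Your route buys generality: it uses nothing about $\uFp$ beyond $H\uFp$ being $0$-truncated. So it proves the vanishing for every finite group and every coefficient Mackey functor $\uM$, with no input from the coefficient rings of subgroups. The paper's route uses only machinery already set up in the text, but it depends on the specific $C_p$ (resp.\ $C_2$) computations.

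Two technical remarks.

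First, the middle step you flag is easiest done directly on homotopy Mackey functors. For $|H|=p$, smash the cofibre of $E\cG_+\to E\cF(H)_+$ with $X$. The resulting $\underline{\pi}_\ast$ vanishes at levels $\cG/M$ with $M\not\supseteq H$. At $\cG/H$ it equals $\pi_\ast\Phi^HX$, and at $\cG/\cG$ it equals $\pi_\ast\bigl((\Phi^HX)_{h(\cG/H)}\bigr)$ by the Adams isomorphism. Similarly, $E\cG_+\wedge X$ has levels $(\res_eX)_{hM}$. Your coarse layer $E\mathcal{P}_+/E\cG_+$ splits as the wedge of these $p+1$ pieces.

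Second, your cellular fallback would not suffice as written, for two reasons.
\begin{itemize}
\item $-\gamma$ need not have the form $V-n$ with $V$ an actual representation. For odd $p$, $\gamma=\alpha_0-3$ satisfies the hypothesis but is not of this form.
\item The stated bound is reversed: $|V^H|$ is the top dimension of $\cG/H$-cells in $S^V$, not the bottom one.
\end{itemize}
When $-\gamma=V-n$ does hold, every cell of $S^V$ has dimension at least $|V^{\cG}|$, so only the condition at $H=\cG$ is needed. That is exactly the situation the paper's first reduction step arranges.
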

\begin{proof}
We prove the statement for odd $p$; the other case is similar.

Consider $\gamma=b+\Sigma^{p-1}_{i=0}c_i\alpha_i+d\beta \in \RO(\cG)$ with the above conditions on the fixed point dimensions. Note that $b$ is negative.

The first step is to reduce $\gamma$ to a virtual representation $\gamma_0$ with
$b<0$ and $c_i,d\le 0$. Suppose $d>0$. We claim that
\begin{myeq}{\label{betafree}}
\widetilde{H}^{\gamma}_{\cG}(S^0;\uFp)
\cong
\widetilde{H}^{\gamma-d\beta}_{\cG}(S^0;\uFp).
\end{myeq}

This follows from the long exact sequence associated to the isotropy cofibration
\[
S(\beta)_{+} \longrightarrow S^0 \longrightarrow S^{\beta}.
\]
In degree $\gamma$, this yields
\begin{myeq}{\label{isotropy1}}
\cdots \to
\widetilde{H}^{\gamma-1}_{\cG}(S(\beta)_{+};\uFp)
\to
\widetilde{H}^{\gamma-\beta}_{\cG}(S^0;\uFp)
\to
\widetilde{H}^{\gamma}_{\cG}(S^0;\uFp)
\to
\widetilde{H}^{\gamma}_{\cG}(S(\beta)_{+};\uFp)
\to \cdots .
\end{myeq}

The outer terms are computed using the cofiber sequence
\[
\cG/K_{+} \longrightarrow \cG/K_{+} \longrightarrow S(\beta)_{+},
\qquad K=\ker(\beta).
\]
Thus, to prove \eqref{betafree}, it suffices to show that
\[
\widetilde{H}^{\bigstar}_{\cG}(\cG/K_{+};\uFp)=0
\quad\text{for}\quad \bigstar=\gamma,\ \gamma-1.
\]
Using the standard identification
\[
\widetilde{H}^{\bigstar}_{\cG}(\cG/K_{+};\uFp)
\cong
\widetilde{H}^{\res_{K}(\bigstar)}_{K}(S^0;\uFp),
\]
the claim follows.

For the above degree $\bigstar$, the corresponding restriction degree will be
\[\res_{K}(\gamma)=(b+2d)+\xi\Sigma_{i=0}^{p-1}c_i \mbox{ and }\res_{K}(\gamma-1)=(b+2d-1)+\xi\Sigma_{i=0}^{p-1}c_i\]
Thus $\widetilde{H}^{\bigstar}_{\cG}(\cG/K_{+};\uFp)$, which is $\widetilde{H}^{\res_{K}(\bigstar)}_{K}(S^0;\uFp)$, vanish for the above degree due to Proposition~\ref{mackey}. Applying the same technique iteratively, we get $\gamma_{0}$ with all its coefficients non-positive.

Assume now that
\(
\gamma_0 = b + \sum_{i=0}^{p-1} c_i \alpha_i + d\beta,
\quad b<0,\ \ c_i,d \le 0.
\)
Applying the same argument to the above cofiber sequences, we obtain an isomorphism
\[
\widetilde{H}^{\gamma_0}_{\cG}(S^0;\uFp)
\cong
\widetilde{H}^{b}_{\cG}(S^0;\uFp).
\]
Since $b<0$ is a negative integer grading, the reduced cohomology of a point vanishes.  
This completes the proof.
\end{proof}

Since the connecting homomorphisms in the cofiber sequence \eqref{cofiber} in Bredon cohomology are
$\widetilde H^{\bigstar}_{C_p\times C_p}(S^0;\uFp)$-module maps, and the suspension degrees of the sphere terms satisfy the hypotheses of Proposition \ref{vanish}, the relevant cohomology groups vanish. Consequently, all boundary maps are trivial, and the desired splitting follows.

Moreover, since cofiber sequences are preserved under smash products, one may inductively compute the Bredon cohomology of the $r$-fold smash power of the equivariant projective space for both prime cases. The argument again reduces to Proposition \ref{vanish}.

\begin{thm}{\label{cohomologyofcp}}
\begin{enumerate}
\item For an odd prime $p$ and $r\geq 2$, the additive structure of the Bredon cohomology of the $r$-fold smash power of the equivariant projective space is given by:
\begin{align*}
    \widetilde{H}^{\bigstar}_{\cG}\!\left({\cP(\cU)^{\wedge (r)}};\uFp\right)
        \;\cong\; 
        \bigoplus_{k=0}^{\infty}\bigoplus_{ i=0 }^{p-1}\bigoplus_{ j=0 }^{p-1}\widetilde{H}^{\bigstar-j\Sigma_{\ell=0}^{p-1}\alpha_{\ell}-i\beta-k\rho_{\C}}_{\cG}\left(\cP(\cU)^{\wedge (r-1)};\uFp\right),
\end{align*}
 and $r=1$ is given as
\begin{myeq}\label{cohpu}
    \widetilde{H}^{\bigstar}_{\cG}\left(\cP(\cU);\uFp\right)= \bigoplus_{k=0}^{\infty} \bigoplus_{i=0}^{p-1}\bigoplus_{j=0}^{p-1}\widetilde{H}^{\bigstar-j\Sigma_{\ell=0}^{p-1}\alpha_{\ell}-i\beta-k\rho_{\C}}_{\cG}\left(S^0;\uFp\right) 
\end{myeq}
with $(i,j,k)\neq (0,0,0)$.    
    \item For $p=2$ and $r\geq 2$, the additive structure of the Bredon cohomology of the $r$-fold smash power of the equivariant projective space is given by:
\begin{align*}
        \widetilde{H}^{\bigstar}_{\K}\!\left({\cP(\cU)^{\wedge (r)}};\uF\right)
        \;\cong\;
        \bigoplus_{i=1}^{  \infty}&
        \widetilde{H}^{\bigstar-i\rho_{\C}}_{\K}\!\left({\cP(\cU)^{\wedge (r-1)}};\uF\right) \\
        \bigoplus_{i=0}^{  \infty}&\Big(\widetilde{H}^{\bigstar-i\rho_{\C}-2\alpha_0}_{\K}\!\left({\cP(\cU)^{\wedge (r-1)}};\uF\right) \\
        &\oplus \widetilde{H}^{\bigstar-i\rho_{\C}-2\alpha_1-2\beta}_{\K}\!\left({\cP(\cU)^{\wedge (r-1)}};\uF\right) \\
        &\oplus \widetilde{H}^{\bigstar-i\rho_{\C}-2\alpha_0-2\alpha_1-2\beta}_{\K}\!\left({\cP(\cU)^{\wedge (r-1)}};\uF\right)\Big),
\end{align*}
with the base case $r=1$ is given by
 \begin{align*}
\widetilde{H}^{\bigstar}_{\K}\!\left({\cP(\cU)};\uF\right)
        \;\cong\;
        \bigoplus_{i=1}^{  \infty}&
        \widetilde{H}^{\bigstar-i\rho_{\C}}_{\K}\!\left({S^0};\uF\right) 
         \bigoplus_{i=0}^{  \infty} \Big(\widetilde{H}^{\bigstar-i\rho_{\C}-2\alpha_0}_{\K}\!\left({{S^0}};\uF\right) \\
        &\oplus \widetilde{H}^{\bigstar-i\rho_{\C}-2\alpha_1-2\beta}_{\K}\!\left({{S^0}};\uF\right) 
        \oplus \widetilde{H}^{\bigstar-i\rho_{\C}-2\alpha_0-2\alpha_1-2\beta}_{\K}\!\left({{S^0}};\uF\right)\Big).
\end{align*}
\end{enumerate}
\end{thm}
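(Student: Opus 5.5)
I would prove Theorem~\ref{cohomologyofcp} by running the cellular filtration \eqref{filtration} of $\cP(\rho_{\C})$, and then of $\cP(n\rho_{\C})$ for all $n$, through the long exact sequences of the cofibre sequences \eqref{cofiber}, and showing that every connecting homomorphism vanishes. The cofibre of $\cP(V_{k-1})_+\to\cP(V_k)_+$ is the representation sphere $S^{W_k}$ with $W_k=\chi^{p-m,p-\ell}\otimes V_{k-1}$. By Proposition~\ref{exponentequivalence}, each irreducible summand of $W_k$ may be replaced, up to exponent, by $\alpha_i$ or $\beta$ or a trivial summand according to its kernel. So the first step is bookkeeping: group the cells by the nontrivial characters. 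For the block $\chi^{m,\cdot}$ with $m=0$ the kernel is $K$, and for $m\neq 0$ one reads off the kernel of $\chi^{m,t}$. Summing over one full copy of $\rho_{\C}$, I expect the sphere degrees to be exactly $k\rho_{\C}+j\sum_\ell\alpha_\ell+i\beta$ with $0\le i,j\le p-1$. For $p=2$ they are the listed degrees $\{2\alpha_0,\,2\alpha_1+2\beta,\,2\alpha_0+2\alpha_1+2\beta\}$ shifted by $i\rho_{\C}$. This matches the indexing in the statement, with the basepoint cell removed because we take reduced cohomology of $\cP(\cU)$.

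Next I would show that each long exact sequence splits. The sequence reads
\[
\cdots\to\widetilde H^{\bigstar-1}_{\cG}(\cP(V_{k-1})_+)\xrightarrow{\ \delta\ }\widetilde H^{\bigstar}_{\cG}(S^{W_k})\to\widetilde H^{\bigstar}_{\cG}(\cP(V_k)_+)\to\widetilde H^{\bigstar}_{\cG}(\cP(V_{k-1})_+)\to\cdots .
\]
By induction, $\widetilde H^\bigstar_{\cG}(\cP(V_{k-1})_+)$ is a free $\widetilde H^\bigstar_{\cG}(S^0)$-module on generators in the earlier cell degrees $W_{k'}$, $k'<k$. Since $\delta$ is a module map, it is determined by the images of these generators. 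Each such image lies in
\[
\widetilde H^{W_{k'}-1}_{\cG}(S^{W_k})=\widetilde H^{W_{k'}-W_k-1}_{\cG}(S^0).
\]
The filtration adds cells in increasing order, so $V_{k'}\subset V_{k-1}$, and I expect the key inequality $|(W_{k'}-W_k)^H|\le 0$ for all $H\le\cG$. Hence $\gamma=W_{k'}-W_k-1$ satisfies $|\gamma^H|\le-1$, and Proposition~\ref{vanish} gives $\delta=0$. The same argument shows that the generators lift, so each step splits as a free module.

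Passing to $\cP(\cU)=\mathrm{colim}\,\cP(n\rho_{\C})$, the restriction maps are surjective, so the $\lim^1$ term vanishes and the $r=1$ formula follows. For $r\ge2$, I would smash the filtration of the last factor with $\cP(\cU)^{\wedge(r-1)}$. The cofibres become $S^{W_k}\wedge\cP(\cU)^{\wedge(r-1)}$, whose cohomology is, by induction, a shifted sum of shifts of $\widetilde H^\bigstar_{\cG}(S^0)$. The connecting maps again reduce to groups $\widetilde H^\gamma_{\cG}(S^0)$ with $|\gamma^H|\le -1$, so they vanish, yielding the recursive formula. The case $p=2$ is identical, with $\uF$ coefficients and the three-step filtration.

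The main obstacle is the fixed-point dimension inequality. Since $W_{k'}$ and $W_k$ are twisted by different characters, comparing $\dim (W_{k'})^H$ with $\dim W_k^H$ for every $H$ requires care. My first attempt would be to use that $W_k$ contains $\chi^{-1}\otimes V_{k'}$ as a summand of larger dimension, and to check directly, subgroup by subgroup, that $W_k^H$ has dimension at least that of $W_{k'}^H$. If this fails for some $H$, I would order the characters so that the trivial-on-$H$ characters appear early, and fall back on the $\RO(\cG)$ identifications of Proposition~\ref{exponentequivalence}.
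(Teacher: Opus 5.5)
Your plan is the paper's own: filter by characters, observe that the connecting maps are $\widetilde H^{\bigstar}_{\cG}(S^0)$-module maps, kill them with Proposition~\ref{vanish}, and induct over smash factors. But the step everything rests on fails, and it fails for the paper's one-line argument too.

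First, $\delta$ raises degree by one. A generator in degree $W_{k'}$ therefore lands in $\widetilde H^{W_{k'}+1}_{\cG}(S^{W_k})=\widetilde H^{\gamma}_{\cG}(S^0)$ with $\gamma=W_{k'}-W_k+1$, not $W_{k'}-W_k-1$. To apply Proposition~\ref{vanish} you would then need $|(W_{k'}-W_k)^H|\le -2$ for every $H$. This fails at $H=\cG$ whenever $1\le k'<k$ lie in the same copy of $\rho_{\C}$. There the new character has not occurred earlier in that copy, so $W_k^{\cG}$ and $W_{k'}^{\cG}$ have equal dimension and $|\gamma^{\cG}|=1$.

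Even your weaker inequality $|(W_{k'}-W_k)^H|\le 0$ fails. For $p=2$, $W_1=2\alpha_0$ and $W_2=2\alpha_1+2\beta$ give $|(W_1-W_2)^{H_0}|=2$. Reordering cannot fix this. Let $\chi_1$ be the first nontrivial character and $H=\ker\chi_1$. Then $W_1^H\neq 0$, while the first later $W_k$ whose character is nontrivial on $H$ has $W_k^H=0$.

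More seriously, the connecting map is genuinely nonzero, so no sharper vanishing lemma can rescue the argument. Take $p=2$ and let $g_1$ be the generator of the $2\alpha_0$-cell. Restrict $\delta(g_1)\in\widetilde H^{1+2\alpha_0-2\alpha_1-2\beta}_{\K}(S^0;\uF)$ to $H_0$, where $\alpha_0\mapsto 1$ and $\alpha_1,\beta\mapsto\sigma$. The filtration becomes $\cP(\C)\subset\cP(\C^2)\subset\cP(\C^2\oplus\C_-)$. Then $\res_{H_0}\delta(g_1)$ is the image, under the connecting map in integer degree $2$, of the generator of $\widetilde H^{2}_{C_2}(\cP(\C^2)_+;\uF)\cong H^2(S^2;\F)$.

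In integer degrees, Bredon cohomology with $\uF$ coefficients is the cohomology of the orbit space. Here $\cP(\C^2\oplus\C_-)/C_2$ is $S^2$ with a cone on $\mathbb{R}P^3$ attached along the map induced by the Hopf map. That map is a circle bundle $\mathbb{R}P^3\to S^2$ of Euler class $2$. The connecting map is therefore its pullback $H^2(S^2;\F)\to H^2(\mathbb{R}P^3;\F)$, which is an isomorphism by the mod $2$ Gysin sequence. Hence $\delta(g_1)\neq 0$.

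In degree $2\alpha_0$ only finitely many cells contribute; cells containing $\rho_{\C}$ give zero by Proposition~\ref{vanish}. All the groups involved are finite. So this nonzero $d_1$ makes $\dim_{\F}\widetilde H^{2\alpha_0}_{\K}(\cP(\cU);\uF)$ strictly smaller than the degree-$2\alpha_0$ part of the right-hand side. In other words, the additive formula itself fails in that degree.

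For odd $p$, restrict to the order-$p$ subgroup on which the first $p$ characters are trivial. The boundary out of the $(p-1)$-st cell becomes the pullback $H^{2p-2}(\C P^{p-1};\Fp)\to H^{2p-2}(L^{2p-1};\Fp)$ along a lens-space circle bundle of Euler class $p$, again an isomorphism. This is the cell-shifting phenomenon familiar from the $C_2$ theory. A correct treatment must compute these differentials rather than assume they vanish, and the answer will not be the plain sum over cells.
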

By Theorem~\ref{cohomologyofcp}, the cohomology of an $r$--fold smash product of
equivariant projective spaces can be expressed as a suspension of the cohomology of a
point. In the odd prime case, each summand has the form
\begin{myeq}{\label{cohoofcpp}}
\widetilde{H}^{\,
\bigstar
-\sum_{j=0}^{p-1}\sum_{i=1}^{p-1} k^{(j)}_{i}\,(jU_0+i\beta)
-\sum_{j=1}^{p-1} k^{(j)}_{0}\, jU_0
-\sum_{s=1}^{r} d_s \rho_{\C}
}_{\cG}\!\left(S^0;\uFp\right),
\end{myeq}
where \(U_0=\bigoplus_{\ell=0}^{p-1}\alpha_\ell\).

The nonnegative integers \(k^{(j)}_{i}\) and \(k^{(j)}_{0}\) satisfy
\[
\sum_{j=0}^{p-1}\sum_{i=1}^{p-1} k^{(j)}_{i}
+\sum_{j=1}^{p-1} k^{(j)}_{0} \le r,
\]
and at least
$
r-\Biggl(
\sum_{j=0}^{p-1}\sum_{i=1}^{p-1} k^{(j)}_{i}
+\sum_{j=1}^{p-1} k^{(j)}_{0}
\Biggr)
$
of the integers \(d_1,\dots,d_r\) are strictly positive.

\medskip

For the group \(\K\), the analogous \(r\)-fold smash product of equivariant projective spaces has the summand of shifted suspension of cohomology of a point which have the form
\begin{myeq}
\widetilde{H}^{\,
    \bigstar
    -\sum_{m=1}^r i_m\rho_{\C}
    -k_1(2\alpha_0)
    -k_2(2\alpha_1+2\beta)
    -k_3(2\alpha_0+2\alpha_1+2\beta)
}_{\cG}(S^0;\uF),
\end{myeq}
where \(k_1,k_2,k_3\) are nonnegative integers with
\(k_1+k_2+k_3\le r\), and at least \(r-(k_1+k_2+k_3)\) of the \(d_s\geq 0\) are
strictly positive.

\begin{prop}{\label{pnof}} 
\begin{enumerate}
    \item For an odd prime $p$, the Mackey functor-valued cohomology $\underline{H}^{\bigstar}_{\cG}\left(\cP(\cU)^{\wedge {(r)}};\uFp\right)$ does not contain the constant Mackey functor $\uFp$  for  $\bigstar=r\beta+2r(p-1)$.
    \item For $p=2$, the Mackey functor-valued cohomology  $\underline{H}^{\bigstar}_{\K}\left(\cP(\cU)^{\wedge {(r)}};\uF\right)$ does not contain the constant Mackey functor $\uF$ for $\bigstar=2r\alpha_0+2r$.
\end{enumerate}

 \end{prop}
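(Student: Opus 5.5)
We plan to use the additive splitting of Theorem~\ref{cohomologyofcp} and its Mackey functor refinement. The cofiber sequences \eqref{cofiber} are sequences of $\cG$-spaces, so the same splitting argument holds orbitwise: apply it to $\cG/H_+\wedge \cP(\cU)^{\wedge(r)}$ for each $H\le\cG$. The boundary maps still vanish by Proposition~\ref{vanish}, now applied to restricted gradings. This gives a splitting of Mackey functors
\[
\underline{H}^{\bigstar}_{\cG}\!\left(\cP(\cU)^{\wedge(r)};\uFp\right)\cong\bigoplus \underline{H}^{\bigstar-\gamma}_{\cG}(S^0;\uFp),
\]
where $\gamma$ runs over the shifts listed in \eqref{cohoofcpp}. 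For $p=2$ the shifts are those in the analogous display. So it suffices to show that for $\bigstar=r\beta+2r(p-1)$ (resp.\ $2r\alpha_0+2r$) no summand $\underline{H}^{\bigstar-\gamma}_{\cG}(S^0;\uFp)$ contains $\uFp$ as a summand.

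\textbf{Step 1: necessary conditions for a copy of $\uFp$.} We extract these from the $C_p$ computations in Proposition~\ref{mackey}. A summand $\uFp$ has $\Fp$ at $\cG/e$ and at every $\cG/H$, with all restrictions isomorphisms. At $\cG/e$ the value is $\widetilde H^{|\bigstar-\gamma|}(pt)$, so we need $|\bigstar-\gamma|=0$. Restricting to each order-$p$ subgroup $H$, Proposition~\ref{mackey} shows the $C_p$-level functor equals $\uFp$ only when $|\res_H(\bigstar-\gamma)|=0$ and $|(\bigstar-\gamma)^{H}|\le 0$. When the $H$-fixed dimension is positive we get $\uFp^\ast$ instead, whose restriction is zero. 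In that case no summand of the $\cG$-Mackey functor can be $\uFp$, since $\res^{\cG}_e$ factors through $\res^{H}_e$.

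\textbf{Step 2: dimension bookkeeping.} Write $\bigstar-\gamma$ in terms of $c$, $\alpha_\ell$ and $\beta$ using $\rho_{\C}=2+2\sum_\ell\alpha_\ell+2\beta$ up to exponent (Proposition~\ref{exponentequivalence}). Then impose the total-dimension condition. For $\bigstar=r\beta+2r(p-1)$ we have $|\bigstar|=2r+2r(p-1)=2rp$, and each $\rho_{\C}$ or $jU_0+i\beta$ shift contributes a fixed real dimension. The condition $|\bigstar-\gamma|=0$ then forces specific combinations of $k^{(j)}_i$ and $d_s$. For each surviving combination we will compute the $K$-fixed dimension. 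On $K$, the classes $\beta$ and $\rho_{\C}$ contribute, but $\beta^K$ is fixed, so $|(\bigstar-\gamma)^K|=2r(p-1)-2\sum d_s-(\text{trivial parts})$. We aim to show this is strictly positive in every case with $|\bigstar-\gamma|=0$, because the $\beta$-coefficient $r$ of $\bigstar$ is too large to be cancelled by only $r$ available shifts. Positivity gives a $\uFp^\ast$ at level $K$ and hence rules out $\uFp$. For $p=2$ we run the same argument with $H_0$: the shift $2r\alpha_0$ makes the $H_0$-fixed dimension $2r$ minus the trivial parts of the shifts, and again we show it is positive whenever the underlying dimension vanishes.

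\textbf{Step 3: the main obstacle.} The hardest part is the case analysis in Step~2. We must check, over all tuples $(k^{(j)}_i,d_s)$ satisfying the constraint after \eqref{cohoofcpp}, that vanishing underlying dimension forces a positive fixed dimension at some order-$p$ subgroup. The first thing we will try is a linear inequality. Let $N=\sum k^{(j)}_i+\sum k^{(j)}_0\le r$ and $D=\sum d_s$. Express the underlying and $K$-fixed (resp.\ $H_0$-fixed) dimensions as affine functions of $N$, $D$ and the $\beta$-multiplicities. Then show that setting the first to zero makes the second at least $2$, using the bound $N\le r$ together with at least $r-N$ of the $d_s$ being positive. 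If some boundary tuple escapes this inequality, we fall back on the ring structure from Theorem~\ref{cohomologyofpointoddp} (resp.\ Theorem~\ref{cohomologyofpoint}). There we check directly that the generator in that degree is an $a$- or $\kappa$-multiple, so its transfer and restriction pattern is that of $\langle\Fp\rangle$ or $\uFp^\ast$, not $\uFp$.
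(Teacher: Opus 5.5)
Your strategy is the same as the paper's. You split $\underline{H}^{\bigstar}_{\cG}(\cP(\cU)^{\wedge(r)};\uFp)$ into shifted copies of $\underline{H}^{\bigstar-\gamma}_{\cG}(S^0;\uFp)$ and restrict to $K$ (resp.\ $H_0$ for $p=2$). Then Proposition~\ref{mackey} shows that whenever $|\bigstar-\gamma|=0$ the fixed-point dimension is positive, so the restriction is $\uFp^{\ast}$ with $\res^K_e=0$, which rules out $\uFp$. Spelling out the Mackey-functor version of the splitting, which the paper uses tacitly, is a good addition. The gap is Step~2, which is the whole content of the proof. You never carry out the inequality, and the formula you write for it is wrong in a way that makes your target inequality false.

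Since $K=\ker\beta$, the summand $r\beta$ is $K$-fixed. So $|\bigstar^{K}|=2rp$, not $2r(p-1)$; likewise $|(2r\alpha_0+2r)^{H_0}|=4r$, not $2r$. Also, up to exponent, $\rho_{\C}\simeq 2+(p-1)\bigl(\sum_{\ell}\alpha_\ell+\beta\bigr)$, so $|\rho_{\C}|=2p^2$ and $|\rho_{\C}^{K}|=2p$. Your decomposition $2+2\sum_\ell\alpha_\ell+2\beta$ is correct only for $p=3$.

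Starting from $2r(p-1)$, strict positivity fails at boundary tuples. Take $r=p$ and the shifts $U_0+(p-1)\beta$ plus $p-1$ copies of $(p-1)\beta$. Then $|\gamma|=2p^2=|\bigstar|$ and $|\gamma^{K}|=2p(p-1)$, so your count gives $0$. For $p=2$, $r=2$, the shifts $2\alpha_0$ and $2\alpha_0+2\alpha_1+2\beta$ do the same. This is what pushes you to the ring-structure fallback. That fallback is too vague to count as an argument, and for odd $p$ Theorem~\ref{cohomologyofpointoddp} only describes the polynomial part anyway.

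With the correct count no fallback is needed, and you recover exactly the paper's argument. Because $\bigstar$ is $K$-fixed,
\[
|(\bigstar-\gamma)^{K}|=|\bigstar-\gamma|+\bigl(|\gamma|-|\gamma^{K}|\bigr),\qquad |\gamma|-|\gamma^{K}|=\sum 2pj\,k^{(j)}_{i}+\sum 2pj\,k^{(j)}_{0}+(2p^{2}-2p)\sum_s d_s .
\]
The last quantity vanishes only if every $d_s=0$ and every shift is a pure $i\beta$. The constraint after \eqref{cohoofcpp} then forces exactly $r$ such shifts, so $|\gamma|\le 2r(p-1)<2rp$ and $|\bigstar-\gamma|\neq 0$. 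Hence $|\bigstar-\gamma|=0$ implies $|(\bigstar-\gamma)^{K}|>0$. (The paper writes $4\sum_s d_s$ for the $K$-fixed contribution of the $\rho_{\C}$'s; $2p\sum_s d_s$ is correct, and the argument works either way.) For $p=2$ the only $H_0$-fixed shift is $2\alpha_0$, and $r$ of them give $|\gamma|=2r<4r$.
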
 

\begin{proof}
We prove the statement for odd \(p\); the \(p=2\) case is analogous. Suppose, towards a contradiction, that the constant Mackey functor \(\uFp\)
occurs as a summand of
$
\underline{H}^{\bigstar}_{\cG}\bigl(\cP(\cU)^{\wedge r};\uFp\bigr)
$
for
$
\bigstar = r\beta + 2r(p-1).
$
Restricting this copy of \(\uFp\) to the subgroup \(K=\langle b\rangle\)
gives the constant Mackey functor \(\uFp\) on \(K\); in particular
\(\res_K(\uFp)=\uFp\).

By the description in \eqref{cohoofcpp}, any summand in degree \(\bigstar\), gives a degree of the form
\[
\alpha
= r\beta + 2r(p-1)
  - \sum_{j=0}^{p-1}\sum_{i=1}^{p-1} k^{(j)}_{i}\,(jU_0 + i\beta)
  - \sum_{j=1}^{p-1} k^{(j)}_{0}\, jU_0
  - \sum_{s=1}^{r} d_s \rho_{\C},
\]

Consider the following computation
\[
\bigl|\res_K(\alpha)\bigr|
= 2rp
  - \sum_{j=0}^{p-1}\sum_{i=1}^{p-1} k^{(j)}_{i}\, (2pj + 2i)
  - \sum_{j=1}^{p-1} 2pj\, k^{(j)}_{0}
  - 2p^2\sum_{s=1}^{r} d_s.
\]
and
\[
\bigl|(\res_K(\alpha))^{K}\bigr|
= 2rp
  - \sum_{j=0}^{p-1}\sum_{i=1}^{p-1} k^{(j)}_{i}\,2i
  - 4\sum_{s=1}^{r} d_s.
\]
If $\bigl|\res_K(\alpha)\bigr|=0$, we find on substituting
\[
\bigl|(\res_K(\alpha))^{K}\bigr|
= \sum_{j=0}^{p-1}\sum_{i=1}^{p-1} 2pj\,k^{(j)}_{i}
  + \sum_{j=1}^{p-1} 2pj\, k^{(j)}_{0}
  + (2p^2-4)\sum_{s=1}^{r} d_s\geq 0.
\]

Claim: $\bigl|(\res_K(\alpha))^{K}\bigr|>0$. 

If $\bigl|(\res_K(\alpha))^{K}\bigr|=0$ implies $d_{s}=0$, $k^{(j)}_{i}=0$ for all $j\neq 0$ which implies $\Sigma_{i=1}^{p-1}k^{(0)}_{i}=r$, but $\bigl|\res_K(\alpha)\bigr|=2(rp-\Sigma^{p-1}_{i=1}ik^{(0)}_{i})> 0$ as $\max \{\Sigma^{p-1}_{i=1}ik^{(0)}_{i}\}=(p-1)r$, contradicting Proposition~\ref{mackey}. Therefore, no copy of the constant Mackey
functor \(\uFp\) can occur in degree \(\bigstar=r\beta+2r(p-1)\), completing
the proof for odd \(p\).
\end{proof}

In the following, we prove that a certain cohomology operation cannot be lifted to $\cG$ and $\K$ level, 
which is the similar result given by Caruso 
\cite{Car99} for 
$C_p$ group.  
\begin{thm}{\label{nonlift}}
Let $\theta$ be a cohomology operation of degree $2r(p-1)$ and $2r$ for an odd $p$ and $p=2$ case, respectively which does not involve the B\"ockstein $\beta$.  
Then there does not exist an equivariant cohomology operation $\tilde{\theta}$ lifting $\theta$.
\end{thm}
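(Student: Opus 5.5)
We follow Caruso's strategy for $C_p$ and test a putative lift against the Bredon cohomology of $\cP(\cU)^{\wedge(r)}$. We treat odd $p$; the case $p=2$ is identical, with $\bigstar=2r\alpha_0+2r$ from Proposition~\ref{pnof}(2) in place of $r\beta+2r(p-1)$.

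Suppose $\tilde\theta\in\cA^{\bigstar}_{\cG}$ lifts $\theta$, and let $\tilde\theta$ have degree $\gamma\in\RO(\cG)$ restricting to $|\theta|$. We pick the test class first. Let $x\in\widetilde H^{\beta+2}_{\cG}(\cP(\cU);\uFp)$ be the generator corresponding to the cell $S^{\beta+2}$ in the filtration \eqref{filtration}, i.e.\ the summand with $(i,j,k)=(1,0,0)$ in \eqref{cohpu}. Its underlying nonequivariant class is the generator $x_e\in H^2(\C P^\infty;\Fp)$. We then form the external product $x^{\wedge r}$ on $\cP(\cU)^{\wedge(r)}$, whose restriction to the trivial subgroup is $x_e^{\otimes r}$.

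Since $\theta$ does not involve the Bockstein, it is a polynomial in the reduced powers $P^i$. The Cartan formula together with $P^1x_e=x_e^p$ gives that $\theta(x_e^{\otimes r})$ is a nonzero combination of monomials $x_e^{a_1}\otimes\cdots\otimes x_e^{a_r}$. Concretely, $P^r$ acting on $x_e^{\otimes r}$ yields $(x_e^p)^{\otimes r}\neq 0$, and a general Bockstein-free $\theta$ of degree $2r(p-1)$ is reduced to this case using the Milnor basis. We would therefore set up the argument so that the relevant component is $P^r$, or more generally a nonzero admissible combination.

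The Mackey-functor structure now gives the contradiction. A stable operation $\tilde\theta$ induces a map of Mackey functors
\[\underline H^{r\beta+2r}_{\cG}(\cP(\cU)^{\wedge(r)};\uFp)\to \underline H^{r\beta+2r+\gamma}_{\cG}(\cP(\cU)^{\wedge(r)};\uFp),\]
compatible with restrictions. Choosing $\gamma$ so that $r\beta+2r+\gamma=r\beta+2r(p-1)$ after the exponent reduction of Proposition~\ref{exponentequivalence}, the class $x^{\wedge r}$ generates a copy of $\uFp$ in the source; this follows from Theorem~\ref{cohomologyofcp}, since that summand is $\widetilde H^{0}_{\cG}(S^0;\uFp)$ shifted. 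The image $\tilde\theta(x^{\wedge r})$ restricts at $\cG/e$ to $\theta(x_e^{\otimes r})\neq 0$, and it has compatible restrictions along every $\cG/H$ because restrictions in $\uFp$ are isomorphisms. Hence the image sub-Mackey functor contains a copy of $\uFp$ in the target degree, contradicting Proposition~\ref{pnof}(1).

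The main obstacle is this last step. We must show that a nonzero image whose bottom restriction is nonzero genuinely forces a $\uFp$ summand in the target, rather than a subfunctor such as $\uFp^{*}$ or a non-split extension. We also have to handle the fact that $\gamma$ is only determined up to the ambiguity in $\RO(\cG)$ with fixed underlying dimension. We would first use the additive splitting of Theorem~\ref{cohomologyofcp} to see that each summand is a shifted copy of some $\underline H^{\alpha}_{\cG}(S^0;\uFp)$. Then we would argue summand by summand, using the restriction data from Proposition~\ref{mackey} applied to $\res_K$, exactly as in the proof of Proposition~\ref{pnof}. The goal there is to show that no summand admits a map from $\uFp$ which is injective at $\cG/e$, and this is where the dimension count $|\res_K(\alpha)^K|>0$ from that proof enters.
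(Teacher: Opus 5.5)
Your strategy is the paper's: evaluate a putative lift on the $r$-fold external power of the bottom class of $\cP(\cU)$ and play it off against Proposition~\ref{pnof}. However, the degree bookkeeping is wrong, and the way you patch it is not legitimate.

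The summand $(i,j,k)=(1,0,0)$ of \eqref{cohpu} is $\widetilde H^{\bigstar-\beta}_{\cG}(S^0;\uFp)$. So its copy of $\uFp$, i.e.\ the class $x$ whose underlying class is $x_e\in H^2(\C P^\infty;\Fp)$, sits in degree $\beta$, not $\beta+2$; the latter has underlying dimension $4$. In degree $r\beta+2r$ the summand carrying $x^{\wedge r}$ is $\underline H^{2r}_{\cG}(S^0;\uFp)=0$.

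You then ``choose $\gamma$'' so as to land in the degree of Proposition~\ref{pnof}. But $\gamma$ is the degree of the hypothetical lift, not a parameter at your disposal, and your choice even gives $|\gamma|=2r(p-2)\neq|\theta|$. In the paper's definition a lift has the same degree as $\theta$, so $\gamma=2r(p-1)$ is an integer and the ambiguity you plan to handle does not arise. Nor could it be handled in general: for arbitrary $\gamma$ with the right underlying dimension, lifts can exist (for $C_2$ the classical $Sq^{2i}$ lifts in degree $i+i\sigma$, by Hu--Kriz). Once the source degree is corrected to $r\beta$ (resp.\ $2r\alpha_0$ for $p=2$), the target is automatically $r\beta+2r(p-1)$ (resp.\ $2r\alpha_0+2r$), and this is exactly the paper's argument.

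The obstacle you isolate in your last paragraph is real, and the paper's proof passes over it. It goes from ``no direct summand isomorphic to $\uFp$'' straight to ``the composite is trivial'', which does not follow. Your summand-by-summand plan does not close it either, because Theorem~\ref{cohomologyofcp} only gives an additive splitting. The cofibre sequences give a filtration of Mackey functors with graded pieces $\underline H^{\alpha}_{\cG}(S^0;\uFp)$, and the property ``admits no map from $\uFp$ that is nonzero at the bottom'' is not preserved by extensions. Already for $C_p$, $\uFp$ is an extension of $\langle\Fp\rangle$ by the functor that is $\Fp$ at $C_p/e$ and $0$ at $C_p/C_p$, and neither piece has the property.

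For odd $p$, an extension-stable criterion is injectivity of transfers:
\begin{enumerate}
\item The count in the proof of Proposition~\ref{pnof} shows that every graded piece in degree $r\beta+2r(p-1)$ with nonzero value at $\cG/e$ restricts to $K$ as $\uFp^{*}$ (Proposition~\ref{mackey}). So $\tr^{K}_{e}$ is injective on its $\cG/e$-level.
\item This injectivity survives extensions, and it passes to the limit over the cellular filtration, whose tower maps are surjective.
\item Frobenius reciprocity gives $\tr^{K}_{e}\res^{K}_{e}=\tr^{K}_{e}(1)\cdot(-)=p\cdot(-)=0$.
\item Hence $\res^{\cG}_{e}=\res^{K}_{e}\res^{\cG}_{K}$ vanishes in that degree, contradicting $\res^{\cG}_{e}\tilde\theta(x^{\wedge r})=\theta(x_e^{\otimes r})\neq 0$.
\end{enumerate}
This needs no $\uFp$ summand in the source, only a class restricting to $x_e^{\otimes r}$. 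For $p=2$ an extra check is required, since the piece $\langle\Lambda\rangle$ of Proposition~\ref{mackey}(2) has zero transfer.

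Your Milnor-basis step can be made precise. A $P^R$ of degree $2r(p-1)$ has $\sum_j r_j\le r$, and it sends $x_e^{\otimes r}$ to the sum of the monomials in which exactly $r_j$ factors are raised to the power $p^j$. Hence distinct $R$ give linearly independent values, so every nonzero such $\theta$ is detected.
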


\begin{proof}
Let $p$ be an odd prime. Assume, toward a contradiction, that there exists an equivariant lift
\[
   \tilde{\theta}\colon H\underline{\mathbb{F}}_{p}
   \longrightarrow
   \Sigma^{2r(p-1)} H\underline{\mathbb{F}}_{p}
\]
whose underlying non-equivariant map agrees with $\theta$. 
Such a map induces, for every $G$-space $X$, a natural transformation of $\RO(\cG)$-graded Mackey functors
\[
   \underline{H}^{\bigstar}_{\mathcal G}(X;\underline{\mathbb F}_{p})
   \longrightarrow
   \underline{H}^{\bigstar+2r(p-1)}_{\mathcal G}(X;\underline{\mathbb F}_{p}),
\]
functorial in $X$.

Now take $X=\mathcal P(\mathcal U)^{\wedge (r)}$ and $\bigstar=r\beta$. 
By Theorem~\ref{cohoofcpp}, there is an inclusion of Mackey functors
\[
   \underline{H}^{0}_{\mathcal G}(S^{0};\underline{\mathbb F}_{p})
   \;\cong\;
   \underline{\mathbb F}_{p}
   \;\hookrightarrow\;
   \underline{H}^{\,r\beta}_{\mathcal G}
   \bigl(\mathcal P(\mathcal U)^{\wedge (r)};\underline{\mathbb F}_{p}\bigr).
\]

On the other hand, Proposition~\ref{pnof} shows that
\[
   \underline{H}^{\,r\beta+2r(p-1)}_{\mathcal G}
   \bigl(\mathcal P(\mathcal U)^{\wedge (r)};\underline{\mathbb F}_{p}\bigr)
\]
contains no direct summand isomorphic to $\underline{\mathbb F}_{p}$. 
Consequently, the composite
\[
   \underline{\mathbb F}_{p}
   \;\hookrightarrow\;
   \underline{H}^{\,r\beta}_{\mathcal G}
   \bigl(\mathcal P(\mathcal U)^{\wedge (r)};\underline{\mathbb F}_{p}\bigr)
   \;\xrightarrow{\;\tilde{\theta}\;}
   \underline{H}^{\,r\beta+2r(p-1)}_{\mathcal G}
   \bigl(\mathcal P(\mathcal U)^{\wedge (r)};\underline{\mathbb F}_{p}\bigr)
\]
is necessarily trivial as a morphism of Mackey functors.

Passing to underlying non-equivariant cohomology, the inclusion sends 
$\underline{\mathbb F}_{p}(\mathcal G/e)$ to the nonzero class 
\[
   \mathbb F_{p}\{x_{1}\otimes\cdots\otimes x_{r}\}.
\]
Since $\theta$ acts nontrivially on this element, the induced map on underlying cohomology is nonzero. 
This contradicts the triviality of the corresponding Mackey functor morphism. 
Hence no such lift $\tilde{\theta}$ can exist.

For $p=2$ case, the argument is analogous. One shows that there is no nontrivial Mackey functor morphism
\[
   \underline{H}^{2r\alpha_{0}}_{\K}
   \bigl(\mathcal P(\mathcal U)^{\wedge (r)};\underline{\mathbb F}\bigr)
   \longrightarrow
   \underline{H}^{2r\alpha_{0}+2r}_{\K}
   \bigl(\mathcal P(\mathcal U)^{\wedge (r)};\underline{\mathbb F}\bigr),
\]
and the same contradiction argument applies.
\end{proof}


\end{document}